\tikzset{snake it/.style={decorate, decoration=snake}}
\tikzset{zigzag/.style={decorate, decoration=zigzag}}
\definecolor{ao(english)}{rgb}{0.0, 0.5, 0.0}
\definecolor{eng}{rgb}{0.0, 0.5, 0.0}
\definecolor{apple}{rgb}{0.55, 0.71, 0.0}
\definecolor{cadmium}{rgb}{0.0, 0.42, 0.24}
\definecolor{darkspringgreen}{rgb}{0.09, 0.45, 0.27}
\definecolor{amethyst}{rgb}{0.6, 0.4, 0.8}
\definecolor{ao}{rgb}{0.0, 0.0, 1.0}
\definecolor{atomictangerine}{rgb}{1.0, 0.6, 0.4}
\definecolor{carmine}{rgb}{0.59, 0.0, 0.09}
\definecolor{toggle}{rgb}{1.0, 0.94, 0.96}
 \newcommand{\Bee}{b}
 \newcommand{\csigma}{{{{\color{sigmacolor!90}\bm\sigma}}}}
\newcommand{\eps}{ \varepsilon}
\newcommand{\y}{{\underline{y}}}
\newcommand{\x}{{\underline{x}}}
\tikzset{
  variable line width/.style={
    every variable line width/.append style={#1},
    to path={%
      \pgfextra{%
        \draw[every variable line width/.try,line width=\pgfkeysvalueof{/tikz/thickness}] (\tikztostart) -- (\tikztotarget);
      }%
      (\tikztotarget)
    },
  },
  thickness/.initial=0.6pt,
  every variable line width/.style={line cap=round, line join=round},
}
\newlength{\superthick}
\newlength{\cornerradius}
\tikzstyle{corner}=[rounded corners=\cornerradius]
\tikzstyle{dot}=[circle, inner sep=0pt, minimum size=4.8pt]
\tikzstyle{string}=[line width=\superthick]
\tikzstyle{std}=[string,dash pattern=on 0.9pt off 0.9pt]
\definecolor{realcyan!50}{rgb}{0,1,1}
\tikzstyle{unmarkedshading}=[copperred, pattern=north east lines, pattern color = copperred!40!white, rounded corners]
\tikzstyle{conditionalshading}=[plum, pattern=crosshatch dots, pattern color = plum, rounded corners]
\mathchardef\mhyphen="2D
\definecolor{mediumblue}{rgb}{0.0, 0.0, 0.8}
\renewcommand{\geq}{\geqslant}
\renewcommand{\leq}{\leqslant}
\tikzset{wei/.style= 
{red,double=red,double
distance=0.5pt}}
\newcommand{\cF}{\stt}
\newcommand{\cR}{\mathcal{R}}
\newcommand{\Z}{\mathbb{Z}}
\tikzset{wei2/.style={red,double=red,double
distance=0.5pt}}
\numberwithin{equation}{section}
\newtheorem{thm}{Theorem}[section]
\newtheorem{cor}[thm]{Corollary}
\newtheorem*{assump}{Standing assumptions}
\newtheorem{conj}[thm]{Conjecture}
\newtheorem{lem}[thm]{Lemma}
\newtheorem{prop}[thm]{Proposition}
\newtheorem*{prop*}{Proposition}
\newtheorem*{thmA}{Theorem A}
\newtheorem*{thmC*}{Theorem C}\newtheorem*{thm*}{Theorem D}
\newtheorem*{cor*}{Corollary}
\newtheorem*{conj*}{Conjecture A}
\newtheorem*{conj1*}{Conjecture B}
\newtheorem*{Acknowledgements*}{Acknowledgements}
\theoremstyle{rmk}
\theoremstyle{defn}
\newtheorem{rmk}[thm]{Remark}
\newtheorem{defn}[thm]{Definition}
\newtheorem{eg}[thm]{Example}
\newcommand{\rad}{\mathrm{rad}}
\newcommand{\res}{\mathrm{res}}
\newcommand{\Std}{{\rm Std}}
\newcommand{\Shape}{\operatorname{Shape}}
\newcommand{\la}{\lambda}
\newcommand{\K}{k}
\newcommand{\SSTS}{\mathsf{s}}
\newcommand{\SSTT}{\mathsf{t}}  
\newcommand{\SSTP}{\mathsf{p}}  
\newcommand{\SSTU}{\mathsf{u}}  
\newcommand{\SSTV}{\mathsf{v}}  
\newcommand{\SSTQ}{\mathsf{q}}  
\newcommand{\sts}{\mathsf{s}}  
\newcommand{\stt}{\mathsf{t}}  
\newcommand{\stu}{\mathsf{u}}  
\newcommand{\stv}{\mathsf{v}}  
\newcommand{\ZZ}{{\mathbb Z}}
\newcommand{\CC}{{\mathbb{C}}}
\tikzset{
ultra thin/.style= {line width=0.05pt},
very thin/.style=  {line width=0.2pt},
thin/.style=       {line width=0.1pt},
semithick/.style=  {line width=0.6pt},
thick/.style=      {line width=0.8pt},
very thick/.style= {line width=1.2pt},
ultra thick/.style={line width=1.6pt}
}
\crefname{ques}{Question}{Questions}
\crefname{defn}{Definition}{Definitions}
\crefname{thm}{Theorem}{Theorems}
\crefname{prop}{Proposition}{Propositions}
\crefname{lem}{Lemma}{Lemmas}
\crefname{cor}{Corollary}{Corollaries}
\crefname{conj}{Conjecture}{Conjectures}
\crefname{section}{Section}{Sections}
\crefname{subsection}{Section}{Sections} 
\crefname{eg}{Example}{Examples}
\crefname{figure}{Figure}{Figures}
\crefname{rem}{Remark}{Remarks}
\crefname{rmk}{Remark}{Remarks}
\crefname{equation}{equation}{equation}
\Crefname{ques}{Question}{Questions}
\Crefname{defn}{Definition}{Definitions}
\Crefname{thm}{Theorem}{Theorems}
\Crefname{prop}{Proposition}{Propositions}
\Crefname{lem}{Lemma}{Lemmas}
\Crefname{cor}{Corollary}{Corollaries}
\Crefname{conj}{Conjecture}{Conjectures}
\Crefname{section}{Section}{Sections}
\Crefname{subsection}{Subsection}{Subsections}
\Crefname{eg}{Example}{Examples}
\Crefname{figure}{Figure}{Figures}
\Crefname{rem}{Remark}{Remarks}
\Crefname{rmk}{Remark}{Remarks}
\newcommand{\param}{q_0}
\newcommand{\paramt}{q_n}
\newcommand{\ti}{{\underline{i}}}
\newcommand{\idemp}{e_\ti}
\newcommand{\idmep}{e_\ti}
\def\CC{\mathbb{C}}
\def\<{\langle}	\def\>{\rangle}
\def\({[\![}	\def\){]\!]}
\def\Box{\mathrm{box}}
\def\odd{\mathrm{odd}}
\def\even{\mathrm{even}}
\newcommand{\sm}{\scalebox{.5}[1.0]{\hspace{.2pt}$-$}}
\def\TLC{\mathrm{2TL}}
\def\HC{H}
\def\BC{B}
\def\Cal{N}
\definecolor{zajj}{HTML}{008148}
\definecolor{plum}{HTML}{9448BC}
\definecolor{copperred}{HTML}{DA6244}
\definecolor{orangepeel}{HTML}{FFA62B}
\definecolor{mantis}{HTML}{73BD61}
\definecolor{teal}{HTML}{247BA0}
\tikzstyle over=[draw=white,double=black,line width=2pt, double distance=.4pt]
\tikzstyle{B}=[draw, fill=black, circle, inner sep=0pt, outer sep=0pt, minimum size=5pt]
\tikzstyle{V}=[draw, fill =black, circle, inner sep=0pt, minimum size=1.5pt]
\colorlet{marker}{black}
\tikzstyle{M}=[draw, black, fill =marker, circle, double, inner sep=0pt, minimum size=5pt]
\tikzstyle{M1}=[draw, black, fill =marker!70, circle, double, inner sep=0pt, minimum size=5pt]
\tikzstyle{M2}=[draw, black, fill =marker!70!black, circle, double, inner sep=0pt, minimum size=5pt]
\tikzstyle{bV}=[draw, fill =black, circle, inner sep=0pt, minimum size=3.5pt]
\tikzstyle{cV}=[draw, fill =white, circle, inner sep=0pt, minimum size=3.5pt]
\tikzstyle{BoxArr}=[xscale = .2, yscale=-.2]
\newcommand\TIKZ[2][]{\begin{tikzpicture}[baseline={([yshift=-.8ex]current bounding box.center)}, #1]#2\end{tikzpicture}}
\def\Over[#1,#2][#3,#4]{ 
	\draw[style=over]   (#1,#2) .. controls ++(0,#4*.5-#2*.5) and ++(0,-#4*.5+#2*.5) .. (#3,#4);}
\def\Cross[#1,#2][#3,#4]{
	\Over[#3,#2][#1,#4]\Over[#1,#2][#3,#4]}
\def\Tops[#1][#2][#3]{
	\foreach\x in {#1}{
		\draw (\x+.15,#2) -- (\x+.15,#2+.1) (\x-.15,#2) -- (\x-.15,#2+.1) ;
		\draw (\x+.15,#2+.1) arc (0:360:1.5mm and .75mm);}
	\foreach \x in {1,...,#3} {\draw (\x,#2)  to (\x,#2+.05); \node[V] at (\x,#2+.05){};}
	}
\def\Bottoms[#1][#2][#3]{
	\foreach\x in {#1}{
		\draw (\x+.15,#2) -- (\x+.15,#2-.1) (\x-.15,#2) -- (\x-.15,#2-.1) ;
		\draw (\x+.15,#2-.1) arc (0:-180:1.5mm and .75mm);}
	\foreach \x in {1,...,#3} {\draw (\x,#2)  to (\x,#2-.05); \node[V] at (\x,#2-.05){};}
	}
\def\Caps[#1][#2,#3][#4]{
	\Tops[#1][#3][#4]
	\Bottoms[#1][#2][#4]
	}
\def\Pole[#1][#2,#3]{
	\shade[left color=white,right color=white] (#1+.15,#2) rectangle (#1-.15,#3);
	\draw[over] (#1+.15,#2) to (#1+.15,#3) (#1-.15,#2) to (#1-.15,#3) ;}
\def\Label[#1,#2][#3][#4]{
	\node[above, inner sep=0pt] at (#3,#2+.1) {#4};
	\node[below, inner sep=0pt] at (#3,#1-.1) {#4};		}
\def\Ez[#1]{\draw [over, bend left=75] (1,#1+1) to (1-.4,#1+.7)  (1-.4,#1+.3)  to (1,#1) ;
		\draw[densely dotted]  (1-.4,#1)--(1-.4,#1+1) ; \node[V] at (1-.4,#1+.3){}; \node[V] at  (1-.4,#1+.7){};}
\def\Ek[#1][#2]{\draw [over, bend right=75] (#2,#1+1) to (#2+.4,#1+.7)  (#2+.4,#1+.3)  to (#2,#1) ;
		\draw[densely dotted]  (#2+.4,#1)--(#2+.4,#1+1) ; \node[V] at (#2+.4,#1+.3){}; \node[V] at  (#2+.4,#1+.7){};}
\newcommand{\FIVE}[3]{
\foreach \x/\y in {#2}{
	\ifnum \x=\y 
		\draw[very thick, copperred, bend right=90]
			(\x-.6,0) .. controls +(-.35,-.4) and +(.35,-.4) .. (\x-.4,0);
	\else
		\draw[very thick, copperred, bend right=90] (\x-.5,0) to (\y-.5,0); \fi}
\foreach \x/\y in {#3}{
	\ifnum \x=\y 
		\draw[very thick, teal, bend right=90]
			(\x-.6,1) .. controls +(-.35,.4) and +(.35,.4) .. (\x-.4,1);
	\else
	\draw[very thick, teal, bend left=90] (\x-.5,1) to (\y-.5,1);\fi}
\draw (0,0) rectangle (5,1); \foreach \x in {1,2,3,4}{\draw (\x,0) to (\x,1);}
\node[M] at (2,1){};
\foreach \x [count=\c from 1] in {#1}{\node at (\c - .5,.5) {\small \strut $\x$};}
}
\newcommand{\SIX}[3]{ 
\foreach \x/\y in {#2}{
	\ifnum \x=\y 
		\draw[very thick, copperred, bend right=90]
			(\x-.6,0) .. controls +(-.35,-.4) and +(.35,-.4) .. (\x-.4,0);
	\else
		\draw[very thick, copperred, bend right=90] (\x-.5,0) to (\y-.5,0); \fi}
\foreach \x/\y in {#3}{
	\ifnum \x=\y 
		\draw[very thick, teal, bend right=90]
			(\x-.6,1) .. controls +(-.35,.4) and +(.35,.4) .. (\x-.4,1);
	\else
	\draw[very thick, teal, bend left=90] (\x-.5,1) to (\y-.5,1);\fi}
\draw (0,0) rectangle (6,1); \foreach \x in {1,2,3,4,5}{\draw (\x,0) to (\x,1);}
\node[M] at (2,1){};
\foreach \x [count=\c from 1] in {#1}{\node at (\c - .5,.5) {\small \strut $\x$};}
}
\definecolor{sapphire}{HTML}{004FB6}
\definecolor{shamrock}{HTML}{0B9A59}
\definecolor{grape}{HTML}{6E11A0}
\definecolor{sunflower}{HTML}{F6AE2D}
\definecolor{maize}{HTML}{FDE34F}
\definecolor{rose}{HTML}{A40044}
\definecolor{caribbeancurrent}{HTML}{00737D}
\definecolor{cerulean}{HTML}{008DC1}
\definecolor{darkblue}{HTML}{1E2DBA}
\definecolor{palatinate}{HTML}{2B3DDB}
\definecolor{seagreen}{HTML}{10AFAC}
\definecolor{yellowgreen}{HTML}{B4C969}
\definecolor{icterine}{HTML}{ECFA56}
\definecolor{moss}{HTML}{7D9620}
\definecolor{apple}{HTML}{93B31E}
\definecolor{pupureus}{HTML}{943EB6}
\definecolor{burntorange}{HTML}{C96B27}
\definecolor{darkcyan}{HTML}{008B89}
\colorlet{taucolor}{darkblue}
\colorlet{taucolorlight}{palatinate}
\colorlet{sigmacolor}{darkcyan}
\colorlet{sigmacolorlight}{seagreen}
\tikzstyle{tauplane}=[line width=2, taucolorlight!40, line cap=round]
\tikzstyle{sigmaplane}=[line width=2, sigmacolorlight!40, line cap=round]
\colorlet{pathsregioncolor}{black!50}
\tikzstyle{pathsregion}=[pathsregioncolor, fill=pathsregioncolor!20, dotted]
\tikzstyle{gridstyle}=[thin, black!50]
\colorlet{minustwocolor}{apple}
\colorlet{plusonecolor}{icterine}
\tikzstyle{plusone}=[plusonecolor, opacity = .25]
\tikzstyle{minustwo}=[minustwocolor, opacity = .25]
\colorlet{path1color}{black!80}
\colorlet{path2color}{pupureus}
\colorlet{path3color}{burntorange}
\tikzstyle{path1}=[line width=2.5, path1color, line cap=round]
\tikzstyle{path2}=[line width=2, path2color, line cap=round]
\tikzstyle{path3}=[line width=1.75, path3color, line cap=round]
\def\TILE{to ++(1,1) to ++(-1,1) to ++(-1,-1) to ++(1,-1)}
\tikzstyle{tilemarkers}=[circle, inner sep=0pt, fill=white, fill opacity=.3, text opacity=1]
\newcommand{\extra}{\vartheta }
\title{}
\author{}
\author[C.\ Bowman]{Chris Bowman}
\address{Department of Mathematics, 
University of York, Heslington, York,  UK}
\email{chris.bowman-scargill@york.ac.uk}
\author[Z.\ Daugherty]{Zajj Daugherty}
\address{Department of Mathematics and Statistics,
Reed College, Portland OR, USA} 
\email{zdaugherty@reed.edu}
 \author[M.\ De Visscher]{Maud De Visscher}
\address{Department of Mathematics, City St George's, University of London,   London, UK}
\email{maud.devisscher.1@citystgeorges.ac.uk}
\author[R.\ Muth]{Rob Muth\qquad} 
	\address{ 
Department of Mathematics and Computer Science,
Duquesne University,
Pittsburgh PA, USA}
\email{muthr@duq.edu}
 		\author[L.\ Poulain D'Andecy]{Loic Poulain D'Andecy \ \ \ }
 	\address{Laboratoire de Math\'{e}matiques de Reims, CNRS UMR 9008
       Universit\'e de Reims Champagne-Ardenne,
  Reims,   France}
 	 \email{loic.poulain-dandecy@univ-reims.fr}
\begin{document}

  \title
 {The orientifold Temperley--Lieb algebra}

  \begin{abstract} We construct gradings on the simple modules of 2-boundary Temperley--Lieb algebras and symplectic blob algebras
 by realising 
the latter algebras  as quotients of  Varagnolo--Vasserot's orientifold quiver Hecke algebras. 
We prove that the symplectic blob algebras are graded cellular and provide a conjectural algorithm for calculating their graded 
decomposition matrices. 
 In doing so, we give the first explicit family of finite-dimensional graded quotients of the orientifold quiver Hecke algebras, providing a new entry point for the structure of  these algebras---in the spirit of Libedinsky--Plaza's ``blob algebra approach'' to modular representation theory. 
  \end{abstract}

  \maketitle

  \section{Introduction}\label{intro}

The Temperley--Lieb algebras first appeared as transfer matrix algebras of the Potts model of statistical mechanics \cite{MR498284}.
 These  Temperley--Lieb algebras and their modules were later categorified  
  \cite{MR1740682,MR2174270,MR2521250,MR2918294,MR2726291}   
and developed into powerful functorial knot invariants, whose impact was dramatically illustrated in   Piccirillo's solution of the   Conway knot problem---a landmark result in quantum topology \cite{MR4076631}.
It is even hoped that these   Temperley--Lieb categorifications could    provide 
 the desired 4-dimensional topological quantum field theories 
   of Crane--Frenkel's higher categorical approach to 
the smooth   Poincar\'{e} conjecture \cite{MR4353348,Manolescu2022FourDimensionalTopology,ICM1}.

Martin--Saleur, Green--Martin--Parker,
 and de Gier--Nichols    introduced new boundary conditions and greatly generalised the transfer matrix algebras of the classical Potts model \cite{MR1267001,MR2354870,MR2928127,GN}.
In the case of a single-boundary,    prophetic conjectures of Martin--Woodcock and Libedinsky--Plaza posited that these generalised transfer matrix algebras are governed by 
$p$-Kazhdan--Lusztig polynomials---an unexpected and powerful bridge between statistical mechanics and modular representation theory
\cite{MW00,MR4100120}. 
  Libedinsky--Plaza's conjecture seeks, as they put it, {\em “to raise intuitions from physics towards the (nowadays) obscure land of modular representation theory”}.
These conjectures have since been confirmed \cite{MR4611117}, opening the door to what might be called 
a categorical statistical mechanics, wherein    
quiver Hecke  algebras and diagrammatic 
Soergel bimodules offer new gradings and tools for diagonalising   transfer matrices, and 
      boundary phenomena  are encoded  within higher  quantum topological and categorical structures.



This paper seeks to push this frontier further by incorporating the transfer matrix algebras of  {\em two-boundary} Potts models into this emerging categorical framework.  
 For one-boundary Potts models (the generalised {\em blob algebras}) this categorical link was 
 provided by  a  realisation of these algebras as 
 graded quotients of the quiver Hecke algebra \cite{Pla13,PR13,MR4401509}.
For the two-boundary Potts models (the {\em symplectic blob algebras}, also known as the 
{\em double quotients of the  2-boundary Temperley--Lieb algebra}) 
we   construct the transfer matrix  algebras as 
 graded quotients of the {\em orientifold quiver Hecke algebras}  of Varagnolo--Vasserot 
 \cite{MR2827096} which first arose in their work verifying  the Enomoto--Kashiwara conjectures \cite{MR2279279}.


\begin{thmA}
The  
 symplectic  blob algebra 
 is the   graded  quotient of the orientifold   quiver Hecke  algebra 
$\mathscr{H}_n  (q_0q_n, -q_0 q_n^{-1}, \vartheta )$
by the relations 
$$ 	y_1 e _{\SSTT_{(0,\extra )}}   =0 
  \quad \text{ and }\quad
\idmep=0\text { if $\ti \neq \res(\SSTS)$ for some $\SSTS\in \Std_n$ }
$$where $ \Std_n$ is the set of ``standard orientifold tableaux" and ${\SSTT_{(0,\extra )}}$ is the minimal  amongst such   tableaux.  This algebra  has a  graded cellular basis 
and its module category is  graded highest weight.
\end{thmA}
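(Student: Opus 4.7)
The plan is to follow the paradigm that Plaza and Plaza--Ryom-Hansen established for the one-boundary blob algebra, but adapted to the two-boundary setting where Varagnolo--Vasserot's orientifold quiver Hecke algebra $\mathscr{H}_n(q_0q_n,-q_0q_n^{-1},\vartheta)$ plays the role of the ordinary KLR algebra. The theorem bundles three assertions: (a) the symplectic blob algebra is isomorphic to a specific graded quotient of $\mathscr{H}_n$, (b) the resulting algebra admits a graded cellular basis, and (c) its module category is graded highest weight. I would prove these in that order.

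First, I would construct the map $\Phi$ from the ungraded symplectic blob algebra to the proposed graded quotient. On the quiver Hecke side, one has idempotents $e_{\underline i}$ indexed by residue sequences, KLR generators $y_k$, $\psi_k$, and the two ``boundary'' parameters encoded in the triple $(q_0q_n,-q_0q_n^{-1},\vartheta)$. The idea is to send the diagrammatic generators of the symplectic blob algebra (the two boundary loops $U_0,U_n$ and the bulk $U_i$) to certain sums of KLR-style elements weighted by the orientifold tableaux residue set $\Std_n$, much as in \cite{MR4401509,PR13}. Well-definedness of $\Phi$ requires verifying the symplectic blob relations on the quiver Hecke side; here the imposed relation $y_1 e_{\SSTT_{(0,\vartheta)}}=0$ together with the vanishing of $e_{\underline i}$ for $\underline i\notin\res(\Std_n)$ will exactly cut down the infinite-dimensional $\mathscr{H}_n$ to the correct finite-dimensional quotient, playing the combined role of the cyclotomic relation and the ``boundary truncation'' relation from the single-boundary story.

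Next I would establish bijectivity of $\Phi$. Surjectivity is the easier direction: one shows that the KLR generators $y_k,\psi_k$ acting on the idempotents $e_{\res(\SSTS)}$ for $\SSTS\in\Std_n$ can all be expressed as diagrammatic elements, using the Brundan--Kleshchev-style path calculus for cellular bases. For injectivity the natural strategy is a dimension/cellular basis count: the orientifold standard tableaux $\Std_n$ should be in bijection with a known cellular basis of the symplectic blob algebra (the two-boundary analogue of pairs of up-down tableaux, worked out by Green--Martin--Parker and de~Gier--Nichols). I would lift this into a candidate graded cellular basis $\{c_{\SSTS\SSTT}^{\boldsymbol\lambda}\}$ of the quotient, with $\boldsymbol\lambda$ running over the (bi)partitions labelling the blob cell modules, $\SSTS,\SSTT$ running over standard orientifold tableaux of shape $\boldsymbol\lambda$, and degrees assigned through the residue/degree statistics inherent to the orientifold framework. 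Verifying the Graham--Lehrer axioms for this basis---cell-datum structure, the anti-involution matching the symmetric bilinear form on blob cell modules, and the unitriangular multiplication rule---is the main technical obstacle, since the two-boundary case involves both left and right boundary residues interacting through $\vartheta$, and one must check that the cellular filtration is stable under multiplication by $y_k$ and $\psi_k$ in a graded-compatible way.

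Finally, once the graded cellular basis is in hand, the graded highest weight property follows along standard lines: one identifies the graded cell modules with graded ``Specht'' or standard modules, verifies that their simple heads $L(\boldsymbol\lambda)$ exhaust the graded simples, and confirms that the graded decomposition numbers $[\Delta(\boldsymbol\lambda):L(\boldsymbol\mu)]_q$ are polynomials in $q$ with $[\Delta(\boldsymbol\lambda):L(\boldsymbol\lambda)]_q=1$ and vanishing unless $\boldsymbol\mu\leq\boldsymbol\lambda$ in the cell order. The hardest step in the whole programme will be the injectivity/cellular basis step of the second paragraph: matching the diagrammatic cellular basis of the symplectic blob algebra with a KLR-style path basis of the orientifold quotient, because (unlike in the one-boundary case) the orientifold Serre relations mix the two boundaries nontrivially and one must check carefully that no additional relations are forced on the quotient.
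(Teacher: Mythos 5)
There is a genuine structural gap in the proposed proof. You want to build a homomorphism $\Phi$ \emph{from} the symplectic blob algebra \emph{into} the quotient of $\mathscr{H}_n(q_0q_n,-q_0q_n^{-1},\vartheta)$ by sending the diagrammatic generators $e_0,e_n,e_i$ to explicit sums of KLR-style elements and then checking the blob relations on the quiver-Hecke side. This is precisely the Brundan--Kleshchev / Plaza--Ryom-Hansen template that works for ordinary (one-boundary) cyclotomic KLR and blob algebras, but it is the one ingredient that is unavailable in the orientifold setting: the paper emphasises in the introduction that there is \emph{no} analogue of the Ariki--Koike/Brundan--Kleshchev generator-level isomorphism here, and no explicit image of the blob generators inside $\mathscr{H}_n$ is ever written down. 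The paper instead goes in the opposite direction: because $B_n(\kappa)$ is a finite-dimensional quotient of the two-boundary affine Hecke algebra $H_n$ and hence automatically a \emph{cyclotomic} quotient, the abstract Varagnolo--Vasserot / Poulain d'Andecy--Walker isomorphism theorem (cited as Theorem 4.2 in the paper) identifying cyclotomic quotients of $\mathscr{H}_n$ with cyclotomic quotients of $H_n$ immediately supplies a surjection $\Theta\colon\mathscr{H}_n\twoheadrightarrow B_n(\kappa)$. No generator formulas are required, and trying to produce them (as you propose) would amount to solving an open problem rather than proving the theorem.

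You also omit the other load-bearing pillar of the argument: the calibrated representation theory of $B_n(\kappa)$ at generic parameters (Section 3 of the paper, culminating in Proposition 3.3). This is where the poset $\Lambda_n$, the standard orientifold tableaux $\Std_n$, and the residue sequences are derived; it is what proves that the spectrum of the Jucys--Murphy elements $X_1,\dots,X_n$ lies in the prescribed set $I$, that every residue sequence occurring is of the form $\res(\SSTS)$ for $\SSTS\in\Std_n$ (justifying the relation $e_{\ti}=0$ otherwise), and, via the uniqueness statement for the residue sequence of $\SSTT_{(0,\vartheta)}$ (Proposition 4.5), that the generalised eigenspace is one-dimensional so $y_1e_{\SSTT_{(0,\vartheta)}}=0$ actually holds in $B_n(\kappa)$. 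Without this input you have no way to see that your two imposed relations are in the kernel of the quotient map, let alone that they are the \emph{only} ones needed. Your dimension-counting idea for closing the argument is in fact how the paper finishes (Corollary 5.25 compares the size of the spanning set produced by the cellular filtration with the Artin--Wedderburn dimension coming from the generic semisimple analysis, proving simultaneously that $\Theta$ descends to an isomorphism and that the spanning set is a basis); but the hard content is the construction of that spanning set inside the orientifold quotient (the chain of Lemmas 5.19--5.22 and Proposition 5.24), which your sketch compresses into a single sentence.
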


  This new graded structure allows us to formulate an LLT-style  \cref{conjforus} for calculating the (graded) decomposition numbers  and  simple characters 
  of these   transfer matrix  algebras; this is   phrased in the language of graded {\em orientifold} standard tableaux and inspired by work of Kleshchev--Nash \cite{KN10}.

 \smallskip\noindent{\bf Representation theory in statistical mechanics. }
 For statistical mechanicists, the two-boundary Temperley–Lieb algebra governs lattice models with nontrivial boundaries—loop models, Potts models, and the XXZ spin chain—encoding the reflection equations that ensure edge integrability \cite{statmex1,statmex5,statmex31,statmex32}.   If the Yang–Baxter equation controls bulk solvability, these algebras capture the integrable physics of competing boundaries \cite{statmex1,statmex5}. %
Their simple modules map physical intuition to boundary spectra: each simple corresponds to an invariant sector, determining eigenvalues, degeneracies, and scaling exponents \cite{statmex5,statmex6,statmex32}. Classifying simples identifies generic versus indecomposable or logarithmic spectra, the latter marking critical or non-unitary behaviour, and predicts when boundary parameters yield new fixed points, spectral coincidences, or logarithmic conformal field theories  \cite{statmex6,statmex31,statmex32}.

Calculating  the simple modules of two-boundary Temperley--Lieb algebras/symplectic blob algebras at $q$ a root of unity has long been considered beyond hope---it was realised 20 years ago that constructing the simple modules in full generality was  beyond the remit of classical Kazhdan--Lusztig theory;  
  since then,     effort has  focussed on understanding  
the monomial bases and presentations, 
 block and quasi-hereditary structure, understanding 
 simple modules in special cases, and the construction of full tilting modules  
  \cite{statmexblob3,GN,MR2928127,MR2927180,statmexblob1,statmexblob2,statmexblob67,MR3818281,statmexblob66,DR25b}. 
 Our  \cref{conjforus}  proposes an explicit algorithm to compute all graded simple characters of two-boundary Temperley–Lieb and symplectic blob algebras across both generic and non-generic parameters.  
Reciprocally---and  in the spirit of Libedinsky–Plaza's   vision of importing {\em “physical intuition in one of the most difficult problems in representation theory”}  \cite{MR4100120}---we now discuss how the physicists’ construction of the  symplectic blob algebra  provides us with a structural bulkhead for tackling a major open problem in   categorical representation theory.

\smallskip\noindent{\bf Statistical mechanics  in  categorical representation theory. }
 The finite-dimensional algebra perspective has become so central to the study of quiver Hecke  algebras that it is almost taken for granted. 
Through their  cyclotomic quotients, quiver Hecke  algebras can be approached using the full machinery of finite-dimensional representation theory— 
making explicit computation and combinatorial analysis possible; 
for example this is how   graded  decomposition matrices are defined and how almost all results on  graded  simple characters are conjectured and proven \cite{LLT,ELpaper,MR4611117,bk09}. 
 These cyclotomic quotients are also of interest on a higher structural level, as they categorify the highest weight representations of  Drinfel'd--Jimbo quantum groups \cite{MR2822211,kk12}. 
 Consequently,  the cyclotomic viewpoint is now woven into nearly every   aspect of the subject.

 By way of contrast,  absolutely nothing is known about the cyclotomic quotients of orientifold quiver Hecke algebras---for example  there is no analogue of the Ariki--Koike construction, and  we cannot even determine when a cyclotomic quotient is   non-zero! 
 These  cyclotomic quotients  
   should be hoped to categorify simple modules of the Enomoto--Kashiwara algebras \cite{MR2827096,MR2279279} and to   have rich connections (via Schur--Weyl duality   \cite{MR4666131}) with the emerging theory of 
  $\imath$quantum groups. 
 With no clear algebraic path toward such a general construction, we turn instead to statistical mechanics:  
our Theorem~A realises   the symplectic blob algebra as  the first non-trivial finite-dimensional graded quotient of an orientifold quiver Hecke algebra.
Thus our  \cref{conjforus}  provides the first   approach to constructing a  family of graded simple modules of 
orientifold quiver Hecke algebras (for non-generic parameters), and serves as the first step towards a general theory of cyclotomic quotients.

  \smallskip\noindent{\bf Structure of the paper. } In Section 2 we recall the background on 2-boundary Hecke and  Temperley--Lieb algebras  and the  symplectic blob algebras.  In Section 3 we study the (calibrated) simple modules of these algebras at generic parameters---this provides us with an understanding of the kernel of the projection from
   the two boundary Temperley--Lieb algebra to the symplectic blob algebra
    in terms of the Jucys--Murphy elements of these algebras.  In Section 4 we define the orientifold Temperley--Lieb algebra as a quotient of the Varagnolo--Vasserot orientifold Hecke algebra  
    by relations that are   inspired by the  results of Section 3; 
       we then prove that the symplectic blob algebra factors through the orientifold Temperley--Lieb algebra.  
Section 5 contains the proof of Theorem A: we construct a graded cellular basis for the orientifold Temperley–Lieb algebra and simultaneously show that the   homomorphism of Theorem A is bijective.  
  In Section 6 we provide our conjectural algorithm for computing graded decomposition matrices.

\section{Two-boundary algebras}
\label{sec:two-boundary algebras}
\def\Ri{\cR}

\noindent 
Fix $n \in \ZZ_{>0}$, 
$\Bbbk$  a field of characteristic not equal to 2,  
and  set   $\Ri=\Bbbk(q, q_0, q_n)$.
%
%
In this section, we review the characterization of the two-boundary braid, Hecke, and Temperley--Lieb algebras as studied in \cite{DR25a, DR25b}. We recall that the Coxeter graph of type $ {C}_n$ is given by
\[\TIKZ[scale=1.1]{
	\foreach \x in {0,1, 2, 4,5,6}{
		\node(\x) at (\x,0){};
		}
	\foreach \x in {0,1, 2, 4,5}{		
				\draw (\x,0) circle (2.5pt);}
	\foreach \x in {1, 2}{
		\node[label=above:{$s_{\x}$}] at (\x,0){};
		\node[label=above:{$s_{n-\x}$}] at (6-\x,0){};}
	\node[label=above:{$s_0$}] at (0){};
	\node[label=above:{$\phantom{s_n}$}] at (6){};
	\draw[double distance = 2pt] (0)--(1)  ;
	\draw (1)--(2) (4)--(5);
	\draw[dashed] (2) to (4);
}
\]
and we let  $W( {C} _n)$ denote the corresponding  Weyl group of type  $C _n$;
we further recall that the Coxeter graph of   type $\widehat {C}_n$ is given by 
\[
\TIKZ[scale=1.1]{
	\foreach \x in {0,1, 2, 4,5,6}{
		\draw (\x,0) circle (2.5pt);
		\node(\x) at (\x,0){};
		}
	\foreach \x in {1, 2}{
		\node[label=above:{$s_{\x}$}] at (\x,0){};
		\node[label=above:{$s_{n-\x}$}] at (6-\x,0){};}
	\node[label=above:{$s_0$}] at (0){};
	\node[label=above:{$s_n$}] at (6){};
	\draw[double distance = 2pt] (0)--(1) (5)--(6);
	\draw (1)--(2) (4)--(5);
	\draw[dashed] (2) to (4);
}\]
and we let 
$ W(\widehat{C} _n)$ denote the corresponding affine Weyl group of type  $C _n$.
 We can  identify the Weyl group $W(C_n)$ with the group of signed permutations on $\{\pm1,\dots,\pm n\}$, with $s_0=(-1,1)$ and $s_i=(-i,-(i+1))(i,i+1)$ for $i=1,\dots,n-1$.  

\subsection{The two-boundary Hecke algebra $\HC_n$}
%
%
The    {\sf two-boundary Hecke algebra}, is generated over $\Ri$ by invertible elements $T_0, T_1, \dots, T_n$ with relations 
 \begin{align}
 \begin{split}\label{Hkdefn}
(  T_i-q_{ {i}})(  T_i+q_{ {i}}^{-1})			&=1 					\text{ for }\textcolor{purple}{0}\leq i \leq  n ,
\\
  T_i T_j			&= T_j T_i \text { for }|i-j|>1, 
\\
T_i T_{i+1}T_i 		&= T_i T_{i-1} T_i \text{ for }1<i<n-1, 
\\
 T_0T_1 T_0T_1				&=T_1T_0T_1 T_0, \\
T_nT_{n-1} T_nT_{n-1}				&=T_{n-1}T_nT_{n-1} T_n 
\end{split}
 \end{align} where $q_i = q$ for $i = 1, \dots, n-1$. 
Elements of $\HC_n$ can be represented as linear combinations of braids on a space with two rigid poles.
For $1\leq i \leq n-1$ the $T_i$ generators of $\HC_n$ are identified with the diagrams 
\begin{equation}\label{LRpics}
{\def\TOP{1.5} \def\K{6}
{T}_i=
\TIKZ[scale=.5]{
	\Pole[\K+.85][0,\TOP][\K]
	\Pole[.15][0,\TOP]
	\Over[3,0][4,\TOP]
	\Over[4,0][3,\TOP]
	 \foreach \x in {1,2,5,\K} {
		 \draw[] (\x,0) -- (\x,\TOP);
		 }
	\Caps[.15,\K+.85][0,\TOP][\K]
	\Label[0,\TOP][3][\footnotesize \strut $i$]
	\Label[0,\TOP][4][\footnotesize \strut $i$+1]
}
}
\end{equation}for $i=1, \dots, n-1$, and  the generators $T_0$ and $T_n$ are identified with the 
braid diagrams
$$
{\def\TOP{1.5}\def\K{6}
{T}_n=
\TIKZ[scale=.5]{
\Pole[.15][0,\TOP]
\Over[\K,0][\K+1.3,.5*\TOP]
\Pole[\K+.85][0,.5*\TOP][\K]
\Pole[\K+.85][.5*\TOP,\TOP][\K]
\Over[\K+1.3,.5*\TOP][\K,\TOP]
 \foreach \x in {1,...,5} {
	 \draw (\x,0) -- (\x,\TOP);
	 }
\Caps[.15,\K+.85][0,\TOP][\K]
} 
	\qquad \quad
{T}_0=
\TIKZ[scale=.5]{
	\Pole[\K+.85][0,\TOP][\K]
	\Pole[.15][0,.5*\TOP]
	\Over[1,0][-.3,.5*\TOP]
	\Over[-.3,.5*\TOP][1,\TOP]
	\Pole[.15][.5*\TOP,\TOP]
	 \foreach \x in {2,...,\K} {
	 \draw[] (\x,0) -- (\x,\TOP);
	  } 
\Caps[.15,\K+.85][0,\TOP][\K]
}
}$$where the multiplication of braid diagrams is given by vertical stacking of one diagram on top of another. 
We define 
\begin{equation}\label{eq:T0v}
T_{0^\vee} = T_1^{-1}T_2^{-1} \cdots T_{n-1}^{-1} T_n T_{n-1} \cdots T_1
\;=\; {\def\TOP{2}\def\K{6}
\TIKZ[scale=.5]{
	\Pole[.15][0,\TOP]
	\Over[1,0][\K+1.3,.5*\TOP]
	\Pole[\K+.85][0,\TOP][\K]
	\Over[1,\TOP][\K+1.3,.5*\TOP]
	 \foreach \x in {2,3,...,\K} {
		 \draw[over] (\x,0) -- (\x,\TOP);
		 }
	\Caps[.15,\K+.85][0,\TOP][\K]
}}\;.
\end{equation}
Then the {\sf Jucys--Murphy elements} of $\HC_n$ are defined  by 
\begin{equation}\label{eq:JMs}
X_1 = T_{0^\vee} T_0 \qquad \text{and} \qquad
	X_{i+1} = T_i X_i T_i     
\end{equation}for $i=1, \dots n-1$. 
The elements  $X_1, \dots, X_n$ form a maximal family of commutative elements of $\HC_n$. We  let   $W(C_n)$  act  on $\{X_1^{\pm 1}, \dots, X_n^{\pm 1}\}$  by $w \cdot X_i = X_{w(i)}$ for $w \in W$ and $i = \pm 1, \dots, \pm n$, where we set the convention $X_{-i} = X_i^{-1}$. Then the center of $\HC_n$ is 
\[Z(\HC_n) = \Ri[X_1^{\pm1}, \dots, X_n^{\pm1}]^{W(C_n)},\]
Laurent polynomials in $X_1, \dots, X_n$ that are symmetric under the action of ${W(C_n)}$. 
As we will see in \cref{sec:Calibrated}, we classify much of the representation theory $\HC_n$ (and important quotient algebras) in terms of the action of the Jucys--Murphy elements. 
Of particular interest 
in  \cref{sec:Calibrated} are the {\sf calibrated} representations---the     finite dimensional simple  representations 
 on which $X_1, \dots, X_n$ are simultaneously diagonalizable. 
 So it quickly becomes much more convenient to move to a diagrammatic presentation that highlights the commuting structure of these elements; this is  done by moving the right-hand pole to the left by conjugating by 
\begin{equation}\label{eq:sigma}
\sigma = 
\TIKZ[scale=.5]{
	\draw (-.7,1) .. controls (-.7,.15) .. (0,.15) -- (6,.15) .. controls (7,.15) .. (7,-1);
	\draw (-1,1) .. controls (-1,-.15) .. (0,-.15)-- (6,-.15) .. controls (7-.3,-.15) .. (7-.3,-1);
\Pole[.15][-1,1]
 \foreach \x in {1,...,6} {\draw[style=over] (\x,-1) -- (\x,1);}
\Tops[.15, -.85][1][6]
\Bottoms[.15, 6+.85][-1][6]
}\; .
\end{equation}
In particular, we have that 
\begin{equation}\label{eq:BraidT0vXi}
T_{0^\vee} = {\def\TOP{8}\def\K{5}\def\Left{-.85}\def\Right{.15}
\TIKZ[scale=.4]{
		\Pole[\Left][0,.5*\TOP] 
		\Over[\K,2.5][-1.3,4]
		\Over[-1.3,4][\K,5.5]
		\Pole[\Left][.5*\TOP,\TOP] 
		\Pole[\Right][0,\TOP] 
		\draw[, rounded corners] (1,\TOP) to (1,\TOP-.25) to [bend left=10] (5,.5*\TOP+2) to  (5,.5*\TOP+1.5)
								(1,0) to (1,.25) to [bend right=10] (5,.5*\TOP-2) to  (5,.5*\TOP-1.5);
		\draw[over, rounded corners] (2,\TOP) to (1.75,\TOP-.75) to (4,.5*\TOP+1.5) to  (4,.5*\TOP-1.5) to (2,.75)to  (2,0);
		\draw[over, rounded corners] (3,\TOP) to (3,\TOP-.75) to (1.5,.5*\TOP+2.5) to (3,.5*\TOP+1) to  (3,.5*\TOP-1) 
									 to (1.5,.5*\TOP-2.5)to (3,.75) to (3,0);
		\draw[over, rounded corners] (4,\TOP) to (4,\TOP-1) to (1.25,.5*\TOP+1.75) to (2,.5*\TOP+1)  
										to (2,.5*\TOP-1) to (1.25,.5*\TOP-1.75) to (4,1) to (4,0);
		\draw[over, rounded corners] (5,\TOP) to (5,\TOP-1.25) to (1,.5*\TOP+1) to (1,.5*\TOP-1) to  (5,1.25) to (5,0);
		\Caps[.15,-.85][0,\TOP][\K]
}}  \;\ =\; 
{\def\TOP{3} \def\K{5}
\TIKZ[scale=.4]{
	\Pole[-.85][0,.5*\TOP]
	\Over[1,0][-1.3,.5*\TOP]
	\Over[-1.3,.5*\TOP][1,\TOP]
	\Pole[-.85][.5*\TOP,\TOP]
	\Pole[.15][0,\TOP]
	 \foreach \x in {2,...,\K}  {
		 \draw[] (\x,0) -- (\x,\TOP);
		 }
	\Caps[.15,-.85][0,\TOP][\K]
	}}
	$$and so (by \cref{eq:JMs}) the $X_i $ can be pictured as follows
	$$
{
\def\TOP{2}\def\K{6}
X_i =
\TIKZ[scale=.5]{
		\Pole[-.85][0,1]
		\Pole[.15][0,1]
		 \foreach \x in {1,2} {\draw[] (\x,0) -- (\x,1);}
		\Over[3,0][-1.3,1]
		\Over[-1.3,1][3,2]
		\Pole[-.85][1,2]
		\Pole[.15][1,2]
		\foreach \x in {1,2} { \draw[over] (\x,1) -- (\x,2); }
		\foreach \x in {4,...,\K} {\draw[] (\x,0) -- (\x,\TOP);}
		\Caps[.15,-.85][0,\TOP][\K]
		\Label[0,\TOP][3][{\footnotesize \strut $i$}]
}\ }\end{equation}
for $i=2, \ldots, n$. As in \cite[Remark 2.3]{DR25a}, it is often convenient to replace the generator $T_n$ with the element $T_{0^\vee}$, noting that the latter also satisfies the relations
\begin{equation}
T_{0^\vee} T_i = T_i T_{0^\vee}  \qquad T_1 T_{0^\vee} T_1 T_{0^\vee} = T_{0^\vee} T_1 T_{0^\vee} T_1  \qquad  (T_{0^\vee} - q_n)(T_{0^\vee} + q_n^{-1}) = 0 
\end{equation}for all $i>1$.

\subsection{The two-boundary Temperley--Lieb algebra }
For $k \in \ZZ$ and $x \in \Ri^\times$, define 
\begin{equation}\label{eq:q-analogs}
[k] = \frac{q^k - q^{-k}}{q-q^{-1}} \quad \text{ and } \quad \(x\) = x + x^{-1},
\end{equation}
so that 
$\(q^{k}\) = [2k]/[k]$. 
Define $e_i = T_i - q_i$ (again, where $q_i = q$ for $i=1, \dots, n-1$).\footnote{We have set $a = a_0 = a_n = 1$ from \cite{DR25b}.} Note that the quadratic relation,
$ (T_i -q_i)(T_i + q_i^{-1}) = 0$,  from  \eqref{Hkdefn}
 is equivalent to
\begin{equation}
   e_i^2 = -\(q_i\)e_i.
\end{equation} 
We define the {\sf two-boundary Temperley--Lieb algebra}, $\TLC_n$, to be the   quotient of $\HC_n$ by  the relations 
\begin{equation}\label{eq:smash1}
e_1e_0e_1 = \(q_0q^{-1}\) e_1  \quad 
e_{n-1}e_n e_{n-1} = \(q_nq^{-1}\) e_{n-1} 
\quad 
e_ie_{i+1}e_i =e_i \quad  e_{i+1}e_ie_{i+1} = e_{i+1}   
  \end{equation}
  for  $ i = 1, \dots, n-2$.
We can then identify the elements of $\TLC_n$ with Temperley--Lieb diagrams with two side walls, writing 
\[
e_0 = \TIKZ{
\draw[densely dotted ] (.5*.5,0) rectangle (.5*5+.25,.75);
\foreach \x in {1,2,3,5}{\node[V] (b\x) at (.5*\x, 0) {}; \node[V] (t\x) at (.5*\x, .75){};}
\foreach \y in {1,2}{\node[V]  (l\y) at (.25, \y*.25){}; }
	\draw[bend right=30] (l2) to (t1) (b1) to (l1);
	\foreach \x in {2,3,5}{\draw (t\x)--(b\x);}
	\node at (.5*4,.5*.75) {$\dots$};	
}  \qquad  
e_n=\TIKZ{
\draw[densely dotted ] (.5*.5,0) rectangle (.5*5+.25,.75);
\foreach \x in {1,3,4,5}{\node[V] (b\x) at (.5*\x, 0) {}; \node[V] (t\x) at (.5*\x, .75){};}
\foreach \y in {1,2}{\node[V]  (r\y) at (.5*5+.25, \y*.25){};}
	\draw[bend right=30] (t5) to (r2) (r1) to (b5);
	\foreach \x in {1,3,4}{\draw (t\x)--(b\x);}
	\node at (.5*2,.5*.75) {$\dots$};	
}  \qquad  
e_i=\TIKZ{
\draw[densely dotted ] (.5*.5,0) rectangle (.5*8+.25,.75);
\foreach \x in {1,3,4,5,6,8}{\node[V] (b\x) at (.5*\x, 0) {}; \node[V] (t\x) at (.5*\x, .75){};}
	\draw[bend right=60] (t4) to (t5) (b5) to (b4);
	\foreach \x in {1,3,6,8}{\draw (t\x)--(b\x);}
	\node at (.5*2,.5*.75) {$\dots$};	\node at (.5*7,.5*.75) {$\dots$};
	\node[above] at (t4) {\tiny $i$};\node[below] at (b4) {\tiny $i$};
} 
\]
for $i = 1, \ldots, n-1$. In particular, we can translate between braid and Temperley--Lieb diagrams by local skein-type relations; on generators this can seen as follows
\begin{equation*}
\begin{array}{r@{\ }l@{\qquad}r@{\ }l@{\qquad}r@{\ }l}
T_0 &= e_0+q_0,  \qquad\qquad
	&T_i &= e_i + q, \qquad\qquad
	&T_n &= e_n + q_n,
\\
\TIKZ[scale=.35]{
		\Over[1,1.7][-.5,.5]
		\Pole[.15][-.7,1.7]
			\draw (.3,1.5+.2) arc (0:360:4pt and 3pt);
			\draw (.3,-.5-.2) arc (0:-180:4pt and 3pt);
		\Over[-.5,.5][1,-.7]
}
&= 
\hspace{0.05cm}	\TIKZ[scale=.6]{\Ez[0]}
	+  q_0\ 
	\TIKZ[scale=.35]{
		\Pole[.15][-.7,1.7]
			\draw (.3,1.5+.2) arc (0:360:4pt and 3pt);
			\draw (.3,-.5-.2) arc (0:-180:4pt and 3pt);
		\draw (1,1.7)--(1,-.7);
	}  
&
	\TIKZ[scale=.45]{
		\Cross[1,1.5][2,0]
	}
&= 
\TIKZ[scale=.45]{
		\draw[bend right=100] (1,1.5) to (2,1.5) (2,0) to (1,0);
	}
+ q \ 
	\TIKZ[scale=.45]{
		\draw (1,0)--(1,1.5) (2,0)--(2,1.5);
	} 
&
\TIKZ[xscale=-.35, yscale=.35]{
		\Over[-.5,.5][1,-.7]
		\Pole[.15][-.7,1.7]
			\draw (.3,1.5+.2) arc (0:360:4pt and 3pt);
			\draw (.3,-.5-.2) arc (0:-180:4pt and 3pt);
		\Over[1,1.7][-.5,.5]
	}
&= \hspace{0.015cm}	\TIKZ[scale=.6] {\Ek[0][0]}
+q_n\ 
	\TIKZ[xscale=-.35, yscale=.35]{
		\Pole[.15][-.7,1.7]
			\draw (.3,1.5+.2) arc (0:360:4pt and 3pt);
			\draw (.3,-.5-.2) arc (0:-180:4pt and 3pt);
		\draw (1,1.7)--(1,-.7);
	}
		\end{array}
\end{equation*}
We refer to  \cite[\S 3.3]{DR25b} for some of the key   diagrammatic relations that  follow from \eqref{eq:smash1}.

We now recall the construction of the diagrammatic basis of the two-boundary Temperley--Lieb algebra.  
	Take a rectangle with $n$ marked points on its upper and lower edges and an even number of marked points on both the left and right sides. 
	We draw non-intersecting arcs between pairs of marked points using each marked point once. Horizontal lines connecting the left and right side are permitted.
We say that such a diagram is {\sf  reduced} if no arc has both its end points on the lefthand-side of the rectangle or both its end points on the righthand-side of the rectangle.

\begin{prop}[{\cite[Proposition 3.5]{GN}}]
The set of all reduced diagrams forms a basis of the (infinite dimensional) 
2-boundary Temperley--Lieb algebra.
\end{prop}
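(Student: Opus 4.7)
The plan is to prove spanning and linear independence separately, exploiting that we work over the generic field $\Ri = \Bbbk(q,q_0,q_n)$ where no scalars from skein relations vanish.

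For spanning, first I would check that every product of generators $e_0,e_1,\dots,e_n$ (read as Temperley--Lieb diagrams stacked vertically) can be rewritten as a scalar multiple of a reduced diagram. When two generators stack, the resulting configuration may fail to be reduced in exactly two ways: there may be closed internal loops, or there may be arcs with both endpoints on the same side-wall. Closed interior loops are absorbed using $e_i^2 = -\(q\)e_i$ to produce a scalar factor $-\(q\)$ per loop. Arcs with both endpoints on one side-wall are created precisely by subconfigurations matching the left-hand sides of $e_1 e_0 e_1 = \(q_0 q^{-1}\) e_1$ or $e_{n-1}e_n e_{n-1} = \(q_n q^{-1}\) e_{n-1}$, and so can be absorbed into a scalar factor at the cost of erasing that arc. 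Together with the braid-type relations $e_i e_{i\pm 1}e_i = e_i$, these give a confluent rewriting procedure (by induction on the total number of non-reduced features). Since the algebra is generated by the $e_i$'s (equivalently the $T_i$'s via the skein-type identities given just before the proposition), every element is a linear combination of reduced diagrams.

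For linear independence, the approach is to exhibit a faithful representation that distinguishes reduced diagrams. The natural candidates are the link-state modules: for each admissible boundary datum one takes the vector space spanned by ``half-diagrams'' (non-crossing matchings of the $n$ top points with the two side-walls, compatible with the wall-parity), and lets $\TLC_n$ act by stacking a diagram on top of a half-diagram, again reducing via the same skein rules. Distinct reduced diagrams $d_1 \neq d_2$ can be separated by a suitable choice of half-diagram (glued on top and on the bottom) so that $d_1$ and $d_2$ produce linearly independent outputs in the associated half-diagram pairing; this is a standard finite check using the ``through-strand'' data of the diagrams. Alternatively---and this is the route I would try first---one extracts the result from the diagrammatic basis theorem for the 2-boundary Hecke algebra $\HC_n$ (the spanning direction above, combined with the fact that the map from braids to $\TLC_n$ factors through the Hecke quotient where reduced diagrams correspond to distinguished coset representatives of $W(\widehat C_n)$ modulo the parabolic generated by the reflections through the walls).

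The main obstacle is linear independence. The spanning argument is essentially bookkeeping with the skein relations, but ruling out unexpected linear dependencies requires a genuinely faithful module, and here the infinite-dimensionality is what makes the question delicate: we need a family of half-diagram modules (indexed by the number of through-strands on each wall) rich enough to separate all reduced diagrams. Working over the generic ring $\Ri$ is essential, since it guarantees that the scalars $\(q\),\(q_0 q^{-1}\),\(q_n q^{-1}\)$ appearing in the reduction are nonzero, so no reduced diagram is annihilated during the rewriting.
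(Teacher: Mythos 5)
The paper does not actually prove this proposition: it is imported verbatim from de Gier--Nichols \cite[Proposition 3.5]{GN}, so there is no in-paper argument to compare yours against. Judged on its own terms, the spanning half of your plan is the standard rewriting argument and would go through: closed loops, arcs with both ends on one wall, and the braid-type configurations are exactly the left-hand sides of the defining relations \eqref{eq:smash1} together with $e_i^2=-\(q_i\)e_i$, and an induction on the number of such non-reduced features reduces any word in the generators to a scalar multiple of a reduced diagram, with all scalars nonzero over $\Ri$.

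The genuine gap is in linear independence, and it sits exactly where the infinite-dimensionality lives. Reduced diagrams may contain arbitrarily many wall-to-wall horizontal lines, so the dependency you must rule out is one that trades horizontal lines for scalars---for instance a relation of the form $I_0I_1I_0=c\,I_0$ with $c\in\Ri$. This is precisely the relation imposed in the symplectic blob quotient (\cref{defblob}); in $\TLC_n$ itself one only has $I_0I_1I_0=\varkappa I_0$ with $\varkappa$ central but \emph{not} scalar (see \eqref{eq:I0I1I0=zI0}), so the algebra is ``trying'' to satisfy such a relation and any faithful module must be built to see that it does not. Link-state modules on which wall-to-wall lines are evaluated (or on which the number of through-strands is fixed and the boundary arcs are normalised) will not do this: on each such module $I_0I_1I_0$ and $I_0$ act by proportional operators, so the whole family separates diagrams only up to powers of $\varkappa$. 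To close the argument you must either keep the horizontal lines as genuine data in the module (equivalently, work over $\Ri[\varkappa]$ and track the power of $\varkappa$ produced by the rewriting), or carry out the comparison with a known basis of the corresponding quotient of the affine Hecke algebra of type $\widehat C_n$---which you mention only in passing, and where the reduced diagrams correspond to an explicit infinite family of affine Weyl group elements rather than to a ``standard finite check''. As written, the linear-independence step does not engage with the one feature that distinguishes the two-boundary algebra from the finite-dimensional Temperley--Lieb and blob algebras.
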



  Similarly to \cite[\S 3.2]{GN}, we define the following elements of the two-boundary Temperley--Lieb algebra 
  $$I_0 = \prod_{i=0    }^{ \lfloor n/2 \rfloor}e_{2i}
  \qquad
  I_1 = \prod_{ i=1  }^{ \lceil n/2 \rceil } e_{2i-1}
  $$which can be visualised as follows 
\begin{equation}\label{eq:defI2}
    I_0 =   
	\begin{cases}
	\TIKZ[scale=.5, yscale=.9]{
		\draw[densely dotted ] (.5,0) rectangle (7.5,1.2);
		\foreach \x in {1,2,3,5,6,7}{\node[V] (b\x) at (\x, 0) {}; \node[V] (t\x) at (\x, 1.2){};}
		\foreach \x/\y in {2/3, 5/6}{\draw[bend right=60] (t\x) to (t\y);\draw[bend right=60] (b\y) to (b\x);}
		\foreach \y in {1,2}{\coordinate (l\y) at (.5, \y*.5-.15); \coordinate (r\y) at (7.5, \y*.5-.15); }
		\draw[bend right=30] (l2)node[V]{} to (t1) 	(b1)  to (l1) node[V]{} 	(t7) to (r2) node[V]{}	(r1) node[V]{} to (b7);
		\node at (4,.6){$\cdots$};
	}
	&\text{if $n\in 2\ZZ$,}\\[8pt] 
	\TIKZ[scale=.5, yscale=.9]{
		\draw[densely dotted ] (.5,0) rectangle (8.5,1.2);
		\foreach \x in {1,2,3,5,6,7,8}{\node[V] (b\x) at (\x, 0) {}; \node[V] (t\x) at (\x, 1.2){};}
		\foreach \y in {1,2}{\coordinate (l\y) at (.5, \y*.5-.15); \coordinate (r\y) at (8.5, \y*.5-.15); }
		\foreach \x/\y in {2/3, 5/6,7/8}{\draw[bend right=60] (t\x) to (t\y);\draw[bend right=60] (b\y) to (b\x);}
		\draw[bend right=30] (l2)node[V]{} to (t1) 	(b1)  to (l1) node[V]{} ;
		\node at (4,.6){$\cdots$};
	}
	&\text{if $n\in 1+2\ZZ$.}
\end{cases}
\end{equation}
and
\begin{equation}\label{eq:defI1}
   I_1     =
	\begin{cases}
	\TIKZ[scale=.5, yscale=.9]{
		\draw[densely dotted ] (.5,0) rectangle (7.5,1.2);
		\foreach \x in {1,2,3,4,6,7}{\node[V] (b\x) at (\x, 0) {}; \node[V] (t\x) at (\x, 1.2){};}
		\foreach \x/\y in {1/2, 3/4, 6/7}{\draw[bend right=60] (t\x) to (t\y);\draw[bend right=60] (b\y) to (b\x);}
		\node at (5,.6){$\cdots$};
	}
	&\text{if $n\in 2\ZZ$,}\\[8pt] 
	\TIKZ[scale=.5, yscale=.9]{
		\draw[densely dotted ] (.5,0) rectangle (8.5,1.2);
		\foreach \x in {1,2,3,4,6,7,8}{\node[V] (b\x) at (\x, 0) {}; \node[V] (t\x) at (\x, 1.2){};}
		\foreach \y in {1,2}{\coordinate (l\y) at (.5, \y*.5-.15); \coordinate (r\y) at (8.5, \y*.5-.15); }
		\foreach \x/\y in {1/2, 3/4, 6/7}{\draw[bend right=60] (t\x) to (t\y);\draw[bend right=60] (b\y) to (b\x);}
		\draw[bend right=30] (t8) to (r2) node[V]{}	(r1) node[V]{} to (b8);
		\node at (5,.6){$\cdots$};
	}
	&\text{if $n\in 1+2\ZZ$,}
	\end{cases}
\end{equation}
and we note that the elements $I_0$ and $I_1$ are quasi-idempotents.  Taking products of these elements we obtain diagrams with horizontal strands across the entire width of the diagram, as follows 
\begin{equation*}\label{eq:I121}
    I_1 I_0 I_1 = \begin{cases}
	\TIKZ[scale=.4, yscale=.9]{
		\draw[densely dotted ] (.5,0) rectangle (7.5,2);
		\foreach \x in {1,2,3,4,6,7}{\node[V] (b\x) at (\x, 0) {}; \node[V] (t\x) at (\x, 2){};}
		\foreach \y in {1,2,3,4}{\coordinate (l\y) at (.5, \y*.4){}; \coordinate(r\y) at (7.5, \y*.4){}; }
		\foreach \x/\y in {1/2, 3/4, 6/7}{\draw[bend right=60] (t\x) to (t\y);\draw[bend right=60] (b\y) to (b\x);}
		\draw (r3) node[V] {}--(l3) node[V] {} (r2) node[V] {} --(l2)node[V] {};
		\node at (5,.2){$\cdots$};	\node at (5,1.8){$\cdots$};
	}
	&\text{if $n\in 2\ZZ$,}
	\\[8pt] 
	\TIKZ[scale=.4, yscale=.9]{
		\draw[densely dotted ] (.5,0) rectangle (8.5,2);
		\foreach \x in {1,2,3,4,6,7,8}{\node[V] (b\x) at (\x, 0) {}; \node[V] (t\x) at (\x, 2){};}
		\foreach \y in {1,2,3,4}{\coordinate (l\y) at (.5, \y*.4){}; \coordinate(r\y) at (8.5, \y*.4){}; }
		\foreach \x/\y in {1/2, 3/4, 6/7}{\draw[bend right=60] (t\x) to (t\y);\draw[bend right=60] (b\y) to (b\x);}
		\draw[bend right=30] (t8) to (r4) node[V]{}	(r1) node[V]{} to (b8);
		\draw (r3) node[V] {}--(l3) node[V] {} (r2) node[V] {} --(l2)node[V] {};
		\node at (5,.2){$\cdots$};	\node at (5,1.8){$\cdots$};
	}&\text{if $n\in 1+2\ZZ$,}
	\end{cases}
\end{equation*}
and 
\begin{equation*} 
    I_0 I_1 I_0 = \begin{cases}
	\TIKZ[scale=.4, yscale=.9]{
		\draw[densely dotted ] (.5,0) rectangle (7.5,2);
		\foreach \x in {1,2,3,5,6,7}{\node[V] (b\x) at (\x, 0) {}; \node[V] (t\x) at (\x, 2){};}
		\foreach \x/\y in {2/3, 5/6}{\draw[bend right=60] (t\x) to (t\y);\draw[bend right=60] (b\y) to (b\x);}
		\foreach \y in {1,2,3,4}{\coordinate (l\y) at (.5, \y*.4); \coordinate (r\y) at (7.5, \y*.4); }
		\draw[bend right=30] (l4)node[V]{} to (t1) 	(b1)  to (l1) node[V]{} 	(t7) to (r4) node[V]{}	(r1) node[V]{} to (b7);
		\draw (r3) node[V] {}--(l3) node[V] {} (r2) node[V] {} --(l2)node[V] {};
		\node at (4,.2){$\cdots$};	\node at (4,1.8){$\cdots$};
	}
	&\text{if $n\in 2\ZZ$,}\\[8pt] 
	\TIKZ[scale=.4, yscale=.9]{
		\draw[densely dotted ] (.5,0) rectangle (8.5,2);
		\foreach \x in {1,2,3,5,6,7,8}{\node[V] (b\x) at (\x, 0) {}; \node[V] (t\x) at (\x, 2){};}
		\foreach \y in {1,2,3,4}{\coordinate (l\y) at (.5, \y*.4); \coordinate (r\y) at (8.5, \y*.4); }
		\foreach \x/\y in {2/3, 5/6,7/8}{\draw[bend right=60] (t\x) to (t\y);\draw[bend right=60] (b\y) to (b\x);}
		\draw[bend right=30] (l4)node[V]{} to (t1) 	(b1)  to (l1) node[V]{} ;
		\draw (r3) node[V] {}--(l3) node[V] {} (r2) node[V] {} --(l2)node[V] {};
		\node at (4,.2){$\cdots$};	\node at (4,1.8){$\cdots$};
	}
	&\text{if $n\in 1+2\ZZ$.}
	\end{cases}
\end{equation*}
%
%
%
 Analogous to working with our two poles positioned to the left of our braid diagrams, we can replace $e_n$ with 
\begin{equation}
e_{0^\vee} = T_{0^\vee} - q_n = 
(T_1^{-1} \cdots T_{n-1}^{-1}) e_n
(T_{n-1} \cdots T_1) , 
\end{equation}
(compare with  \eqref{eq:T0v}). 
Diagrammatically identify 
\[e_0^{\vee} = \TIKZ{
\draw[dotted] (.5*.5,0) rectangle (.5*5+.25,.75);
\foreach \x in {1,2,3,5}{\node[V] (b\x) at (.5*\x, 0) {}; \node[V] (t\x) at (.5*\x, .75){};}
\foreach \y in {1,2}{\node[cV]  (l\y) at (.25, \y*.25){}; }
	\draw[bend right=30] (l2) to (t1) (b1) to (l1);
	\foreach \x in {2,3,5}{\draw (t\x)--(b\x);}
	\node at (.5*4,.5*.75) {$\dots$};	
}\;.\]
Whilst  $e_0$ and $e_{0^\vee}$ do not commute, since $T_0(T_1 T_{0^{\vee}} T_1^{-1}) = (T_1 T_{0^{\vee}} T_1^{-1})T_0$, we do have that
$$e_0 (T_1 e_{0^{\vee}} T_1^{-1})  =  (T_1 e_{0^{\vee}} T_1^{-1})e_0 ,$$which can be pictured diagrammatically as follows 
\begin{equation}\label{eq:e0-e0check}
\begin{array}{ccc}
\TIKZ[scale=.5]{
\draw[dotted] (.5,0) rectangle (5+.5,4);
\foreach \x in {1,2,3,5}{\node[V] (b\x) at (\x, 0) {}; \node[V] (t\x) at (\x, 4){};}
\foreach \y in {1,2}{\node[V]  (c\y) at (.5, 4-\y*.33){}; }
\foreach \y in {1,2}{\node[cV]  (d\y) at (.5, 2-\y*.33){}; }
	\draw[bend right=50] (c1) to (1,4) (1,3) to (c2);
	\Cross[1,3][2,2] 
	\Cross[1,0][2,1]
	\draw (t2) to (2,3) (2,2) to (2,1);
	\draw[bend right=50] (d1) to (1,2) (1,1) to (d2);
	\foreach \x in {3,5}{\draw (t\x)--(b\x);}
	\node at (4,2) {$\dots$};	
}
&=&
\TIKZ[scale=.5]{
\draw[dotted] (.5,0) rectangle (5+.5,4);
\foreach \x in {1,2,3,5}{\node[V] (b\x) at (\x, 0) {}; \node[V] (t\x) at (\x, 4){};}
\foreach \y in {1,2}{\node[V]  (c\y) at (.5, 1-\y*.33){}; }
\foreach \y in {1,2}{\node[cV]  (d\y) at (.5, 3-\y*.33){}; }
	\draw[bend right=50] (c1) to (1,1) (1,0) to (c2);
	\Cross[2,3][1,4] 
	\Cross[2,2][1,1]
	\draw (2,3) to (2,2) (2,1) to (2,0);
	\draw[bend right=50] (d1) to (1,3) (1,2) to (d2);
	\foreach \x in {3,5}{\draw (t\x)--(b\x);}
	\node at (4,2) {$\dots$};	
}
\end{array}
\end{equation}

\begin{lem}\label{lem:Icheck}~
\begin{enumerate}[1.]
\item For $n$ even, we let 
$
\hat{I}_{0}  = e_0 e_2 \cdots e_{n-2} $ and 
$I_1^{\vee}  = T_1e_{0^{\vee}}T_1^{-1} e_3 \cdots e_{n-1} 
$. We have that 
\[I_0 =  \frac{-1}{q_0+q_0^{-1}}  \hat{I}_{0} I_{1}^{\vee} \hat{I}_0.\]

\item For $n$ odd, we let 
$
\hat{I}_{1} = e_1 e_3 \cdots e_{n-2}$
and $I_0^{\vee}  = e_{0^{\vee}}e_2 \cdots e_{n-1}$.
We have that 
\[I_1 = \hat{I}_{1} I_{0}^{\vee} \hat{I}_1.\]
\end{enumerate}
\end{lem}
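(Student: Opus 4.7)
The plan is to verify both identities by direct computation in the two-boundary Temperley--Lieb algebra. In part (1), the first step is to collapse the two $e_0$ factors appearing on either side of $I_1^\vee$. Since $e_0$ commutes with $e_i$ for $i \geq 2$ (disjoint supports) and with $T_1 e_{0^\vee} T_1^{-1}$ by \eqref{eq:e0-e0check}, the rightmost $e_0$ of $\hat I_0 I_1^\vee \hat I_0$ can be slid leftward past the entire expression to meet the leftmost $e_0$. Applying $e_0^2 = -(q_0 + q_0^{-1}) e_0$ produces the scalar $-(q_0+q_0^{-1})$, which cancels precisely with the prefactor $-1/(q_0+q_0^{-1})$. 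What remains is
\[
e_0 \cdot (e_2 e_4 \cdots e_{n-2}) \cdot T_1 e_{0^\vee} T_1^{-1} \cdot (e_3 e_5 \cdots e_{n-1}) \cdot (e_2 e_4 \cdots e_{n-2}).
\]

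The second step is to transport the right-pole attachment from position $2$ (in $T_1 e_{0^\vee} T_1^{-1}$) to position $n$ along the interlocking sandwich of even- and odd-indexed caps. This is carried out by repeated application of a transport identity of the form $e_{2k} \cdot F_{2k} \cdot e_{2k+1} e_{2k} = e_{2k} \cdot F_{2k+2}$, where $F_j$ denotes the element of the two-boundary algebra that attaches position $j$ to the right pole; this identity is a consequence of the Hecke braid relation $T_i T_{i+1} e_i = e_{i+1} T_i T_{i+1}$ together with the TL reductions $e_i^2 = -(q+q^{-1})e_i$ and $e_i e_{i\pm 1} e_i = e_i$. A sample verification (for $n=4$) reduces to $e_2 T_3^{-1} e_4 T_3 T_2 e_3 e_2 = -q^{-1} e_2 e_4$ using $T_3 T_2 e_3 = q e_2 e_3$ and $e_2 T_3^{-1} e_2 = -q^{-2} e_2$, with the scalars combining to give $e_2 e_4$ exactly. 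After $(n-2)/2$ such transport steps, the right-pole attachment reaches position $n$, producing $e_n$, and we recover $I_0 = e_0 e_2 e_4 \cdots e_{n-2} e_n$.

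Part (2) proceeds along the same lines but is simpler: $\hat I_1 = e_1 e_3 \cdots e_{n-2}$ contains no $e_0$, so there is no scalar collapse, and the right-pole attachment starts at position $1$ (from $e_{0^\vee}$ itself, rather than its $T_1$-conjugate) and is transported to position $n$ along the chain $e_2 e_4 \cdots e_{n-1}$ sandwiched between the two copies of $\hat I_1$. The transport identity now increments the attachment position starting at $1$, and after $(n-1)/2$ steps reaches position $n$, giving the desired $I_1 = e_1 e_3 \cdots e_{n-2} e_n$.

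The main obstacle is establishing the transport identity and verifying that the cap-cup sandwich collapses cleanly at every step. This is most efficiently handled by induction on $n$: the base cases $n=2$ (for part (1)) and $n=3$ (for part (2)) reduce to short direct computations using the TL relations of \eqref{eq:smash1} together with the boundary identity $e_1 e_{0^\vee} e_1 = \langle q_n q^{-1}\rangle e_1$ (obtained from $e_{n-1} e_n e_{n-1} = \langle q_n q^{-1}\rangle e_{n-1}$ by $\sigma$-conjugation); the inductive step follows from a single application of the transport identity, with $e_0$ (respectively, the absence of $e_0$) playing no role in the inductive step other than providing the initial scalar (respectively, not providing one).
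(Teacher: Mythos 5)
Your proposal is correct, and it tackles the core step by a genuinely different route from the paper. Both you and the paper rely on the same two structural ingredients: the commutativity $e_0(T_1 e_{0^\vee}T_1^{-1}) = (T_1 e_{0^\vee}T_1^{-1})e_0$ from~\eqref{eq:e0-e0check} together with $e_0^2 = -(q_0+q_0^{-1})e_0$, which handles the $e_0$-collapse and the scalar prefactor; you place this at the start, the paper places it at the end, but it is the same step read in opposite directions. The divergence is in the remaining identity
\[ e_0 e_2\cdots e_{n-2}\, e_n \;=\; (e_0 e_2\cdots e_{n-2})\,(T_1 e_{0^\vee}T_1^{-1}\, e_3 e_5\cdots e_{n-1})\,(e_2 e_4\cdots e_{n-2}). \]
The paper expands $e_n = (T_{n-1}\cdots T_1)\, e_{0^\vee}\, (T_1^{-1}\cdots T_{n-1}^{-1})$ and proves this equality by a single planar isotopy of the two-boundary diagram (Equation~\eqref{eq:ppff1}). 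You instead propose an iterated algebraic ``transport'' $e_{2k}F_{2k}e_{2k+1}e_{2k} = e_{2k}F_{2k+2}$, where $F_j = (T_{j-1}\cdots T_1) e_{0^\vee}(T_1^{-1}\cdots T_{j-1}^{-1})$ attaches strand $j$ to the right pole, using the Hecke braid relation $T_{i+1}T_i e_{i+1}=e_i T_{i+1}T_i$ together with $e_i^2 = -(q+q^{-1})e_i$ and $e_i e_{i\pm1}e_i = e_i$. Your $n=4$ sample is correct (it verifies $e_2 F_2 e_3 e_2 = e_2 e_4$, albeit via the slightly opaque reparametrisation $T_3^{-1}e_4 T_3 T_2 = T_2 F_2$), and the iteration does go through once one organises the commuting factors carefully: after the $k$-th transport one obtains $e_2 e_4 \cdots e_{2k} F_{2k+2}$ and $F_{2k+2}$ still commutes with the remaining $e_j$ for $j\ge 2k+4$, so the next transport can be applied. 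What your write-up leaves implicit — and what deserves a line or two — is exactly this bookkeeping: $F_{2k+2}$ does \emph{not} commute with $e_2,\ldots,e_{2k}$, so one must confirm that the already-processed $e_{2j}$ ($j\le k$) sit harmlessly to the left and never need to pass $F_{2k+2}$. With that spelled out, your argument is a clean algebraic replacement for the paper's isotopy; the trade-off is that the paper's picture is one-shot while yours requires establishing the transport lemma once and then a short induction, but your version is arguably easier to check symbol-by-symbol without drawing diagrams.
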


\begin{proof}
When $n$ is even, we have that 
\begin{align}\label{ppff2}
I_0 &= e_0 e_2 \dots e_{n-2} e_n  
	 = e_0 e_2 \dots e_{n-2}  
(T_{n-1} \cdots T_1) e_{0^{\vee}} (T_1^{-1} \cdots T_{n-1}^{-1}) 	 
\end{align}
where the righthand-side of \eqref{ppff2} can be pictured as follows 
\begin{align}\label{eq:ppff1}
 	\TIKZ[scale=.5, yscale=.9]{
 		\draw[dotted] (.5,0) rectangle (7.5,8);
 		\foreach \x in {1,2,3,5,6,7}{\coordinate (b\x) at (\x, 0) {}; \coordinate (t\x) at (\x, 8){};}
 		\foreach \x/\y in {2/3, 5/6}{\draw[bend right=60] (\x,8) to (\y,8);}
 		\foreach \y in {1,2}{\coordinate (c\y) at (.5, 8-\y*.5); 
 				\coordinate (d\y) at (.5, 4-\y*.5); }
 		\draw[rounded corners] (d1)node[cV]{} 
 			to node[pos=.6, fill=white, sloped, inner sep=1pt]{\tiny$\cdots$}  (7,7) to (t7);
 		\draw[rounded corners] (d2)node[cV]{} 
 			to node[pos=.6, fill=white, sloped, inner sep=1pt]{\tiny$\cdots$}  (7,1) to (b7);
 		\draw[rounded corners, over]
 			(b5) to[bend right=20] (5,7) to[bend left=30] (6,7) 
 				to[bend left=20] (6,0);
 		\draw[rounded corners, over]
 			(b2) to[bend right=20]  (2,7) to[bend left=30] (3,7) 
 				to[bend left=20] (3,0);
 		\draw[bend right=30] (c1)node[V]{} to (1,8);
 		\draw[bend right=30, over]	(1,0)  to (c2);\node[V] at (c2){};
 		\node at (4,.2){\tiny$\cdots$};
 		\node at (4,8-.2){\tiny$\cdots$};
 		\foreach \x in {1,2,3,5,6,7}{\node[V] at (t\x){}; \node[V] at (b\x){};} 
 	}
 \;	=\;
 	\TIKZ[scale=.5, yscale=.9]{
 		\draw[dotted] (.5,0) rectangle (7.5,5);
 		\foreach \x in {1,2,3,5,6,7}{\coordinate (b\x) at (\x, 0) {}; \coordinate (t\x) at (\x, 5){};}
 		\foreach \x/\y in {2/3, 5/6}{
 			\draw[bend right=60] (\x,5) to (\y,5);
 			\draw[bend right=60] (\y,4) to (\x,4);
 			\draw[bend right=60] (\x+1,3) to (\y+1,3);
 			\draw[bend right=60] (\y+1,2) to (\x+1,2);
 			\draw[bend right=60] (\x,1) to (\y,1);
 			\draw[bend right=60] (\y,0) to (\x,0);}
 		\foreach \y in {1,2}{\coordinate (c\y) at (.5, 5-\y*.33); 
 				\coordinate (d\y) at (.5, 3-\y*.33); }
 		\draw[bend left] (d1)node[cV]{} to [bend right] (1,3);
 		\draw[bend left] (d2)node[cV]{} to [bend left] (1,2);
 		\draw[bend left] (c1)node[V]{} to [bend right] (1,5);
 		\draw[bend left] (c2)node[V]{} to [bend left] (1,4); 
 		\Cross[2,3][1,4] \Cross[1,1][2,2]
 		\draw (1,0) to (1,1) (2,3) to (2,2) (3,4) to (3,3) (3,2) to (3,1)
 			(6,1) to (6,2) (6,3) to (6,4) (7,3) to (7,5) (7,0) to (7,2);
 		\node at (4,.2){\tiny$\cdots$};
 		\node at (4,5-.2){\tiny$\cdots$};
 		\draw[white] (4,3) to node[sloped,black]{\tiny$\cdots$} (5,4);
 		\draw[white] (4,2) to node[sloped,black]{\tiny$\cdots$} (5,1);
 		\foreach \x in {1,2,3,5,6,7}{\node[V] at (t\x){}; \node[V] at (b\x){};} 
 	}
	\end{align}
	and the equality follows simply by isotopy.  Now the righthand-side of \eqref{eq:ppff1} is equal to  
\begin{align*}
&	 (e_0 e_2 \cdots e_{n-2}) (T_1 e_{0^{\vee}} T_1^{-1} e_3 e_5 \cdots e_{n-1})
		(e_2 \cdots e_{n-2})\\
	=&\tfrac{-1}{q_0 + q_0^{-1}} (e_0^2 e_2 \cdots e_{n-2}) (T_1 e_{0^{\vee}} T_1^{-1} e_3 e_5 \cdots e_{n-1})
		(e_2 \cdots e_{n-2})\\
	=&\tfrac{-1}{q_0 + q_0^{-1}} (e_0 e_2 \cdots e_{n-2}) (T_1 e_{0^{\vee}} T_1^{-1} e_3 e_5 \cdots e_{n-1})
		(e_0 e_2 \cdots e_{n-2})     \\ 
	=& \tfrac{-1}{q_0 + q_0^{-1}} \hat{I}_0 I_1^\vee \hat{I}_0,
\end{align*}where the penultimate equality follows by \eqref{eq:e0-e0check}.
Thus the result follows for $n$ even;
 the calculation for $n$ is odd follows similarly.
\end{proof}

\subsection{The symplectic blob algebra} 
We let  $Z$ denote the central element $Z = X_1 + \cdots + X_n + X_1^{-1} + \cdots + X_n^{-1}  \in Z(\HC_n)$. As   in \cite[Corollary \ 3.3]{DR25b}, we have that 
\begin{equation} 
I_1 I_0 I_1 = \varkappa I_1 \quad  \quad I_0 I_1 I_0 = \varkappa I_0, 
\end{equation}
where 
\begin{equation} \label{eq:I0I1I0=zI0} \varkappa = \begin{cases} 
\frac{1}{[n]} Z - \(q_0 q_n q^{-1}\) & \text{if $n\in 2\ZZ$, and}\\
\frac{1}{[n]} Z - \(q_0 q_n\) & \text{if $n\in 1+2\ZZ$.}
\end{cases}
\end{equation}
\begin{defn}[{\cite[Definition 3.6]{GN}}]\label{defblob}
Specialising the central element $\varkappa \mapsto  \kappa \in \Bbbk(q,q_0,q_n)$, we define the {\sf symplectic blob algebra}, $\BC_n(\kappa)$,  to be the quotient of $\TLC_n$ by the relations 
\begin{equation}
I_0 I_1 I_0 = \kappa I_0 \qquad  \qquad I_1 I_0 I_1 = \kappa I_1.
\end{equation}
\end{defn}
We emphasise that any element $\varkappa \in Z(\TLC_n)$
 acts as a scalar on every   finite-dimensional simple $\TLC_n$-module.  
 Therefore in order to understand all  finite-dimensional  simple $\TLC_n$-modules, it is enough to understand the  finite-dimensional  simple modules of  $\BC_n(\kappa)$ for all $\kappa$.

\begin{rmk}
In   \cite{MR2354870}, Green--Martin--Parker 
   define a diagrammatic algebra ``the symplectic blob algebra'', denoted 
  $B_n(\delta,\delta_L,\delta_R,\kappa_L,\kappa_R, \kappa_{LR})$. 
    For invertible parameters 
$$\begin{array}{cccc}
\delta=-(q+q^{-1})
&
\quad\delta_L=-(\param+\param^{-1})\quad
&
\quad\delta_R=-(\paramt+\paramt^{-1})		 \quad
\\
\quad \kappa_L = \param q^{-1} +  q \param ^{-1}\quad
&
\kappa_R=\paramt q^{-1} +  q \paramt^{-1}
&
\kappa_{LR}= \kappa . 
\end{array}
$$
It is proven that $B_n(\delta,\delta_L,\delta_R,\kappa_L,\kappa_R, \kappa_{LR})$ is isomorphic to 
  $\BC_n(\kappa)$ 
in  \cite[Theorem 3.4]{MR2928127}.
 
 \end{rmk}

\section{Calibrated representations   symplectic blob algebras }
\label{sec:Calibrated}

In this section we will consider the algebras from 
\cref{sec:two-boundary algebras} 
 solely over a generic field  $\Ri=
\Bbbk(q, q_0, q_n)$. 
 For all of these algebras, the   calibrated  modules can be defined to be 
  the finite-dimensional and simple modules on which the Jucys--Murphy elements 
  $X_1$, $X_2$, \dots, $X_n$ are simultaneously diagonalizable. From now on, we will use the parameters
\begin{equation}
\alpha_1=q_0q_n \qquad \text{and} \qquad 
\alpha_2=-q_0q_n^{-1}. 
\label{eq:alphas-defn}
\end{equation}

\subsection{Combinatorics of calibrated $\TLC_n$-modules. } All of the material of this subsection is a review of results from  
  \cite{DR25a} and  \cite[Theorem \ 4.3]{DR25b}. 
We define a {\sf residue} to be an $n$-tuple $\gamma=(\gamma_1,\gamma_2 ,\dots  \gamma_n) \in (\mathcal{R}^\times)^n$.  
The group $W(C_n)$   acts on residues by 
\[w \cdot (\gamma_1, \dots, \gamma_n) = (\gamma_{w^{-1}(1)}, \dots, \gamma_{w^{-1}(n)}),
\quad \text{writing } \gamma_{-i} = \gamma_i^{-1}.\] 
We let $\mathcal{R}_\gamma$     denote the 1-dimensional representation of 
$\mathcal{R}[X_1^{\pm1},X_2^{\pm1},\dots , X^{\pm1}_n]$ upon which $X_i$ acts as 
  $\gamma_i \in \mathcal{R}$.  
  The {\sf principal series module} corresponding to $\gamma$ is defined to be the module
  $$M(\gamma) = {\rm Ind}^{H_n}_{\mathcal{R}[X_1^{\pm1},X_2^{\pm1},\dots , X^{\pm1}_n]}(\mathcal{R}_\gamma)$$and this module has dimension $|W(C_n)|=2^nn!$.  We note that if $\gamma $ and $\gamma'$ are in the same 
  $W(C_n)$-orbit then $M(\gamma)\cong M(\gamma')$. 

For generic parameters, the classification of the calibrated $\TLC_n$-modules amounts to: first 
classifying the calibrated $H_n$-modules; 
  and then classifying which of these summands factors through the quotient onto $\TLC_n$. 
The first is a large part of the story in \cite{DR25a}, and is described both in terms of \emph{skew local regions} (elements of the reflection group $W$ that have certain behaviour on $\gamma$), and, in nice circumstances, by decorated box arrangements and corresponding generalizations of tableaux. The latter is described in  \cite[Theorem \ 4.3]{DR25b}. 
We now recall the required background material that  we need from  \cite{DR25a} and  \cite[Theorem \ 4.3]{DR25b}.

First, the principal series modules (up to isomorphism) 
for which a  calibrated submodule   survives  under the  quotient of \eqref{eq:smash1} are those of the form 
\begin{equation}\label{eq:gamma}
\gamma = (\gamma_1, \gamma_2, \gamma_3, \cdots, \gamma_n) 
=(\gamma_1, \gamma_1q^2, \gamma_1q^4,\dots , \gamma_1q^{2n-2}).
\end{equation}
Further, the $\HC_n$-summands  of $M(\gamma) $   that survive 
 can be encapsulated in terms of   specific box arrangement combinatorics, which we     now recall.  
%
%
To a residue $\gamma$ as in \eqref{eq:gamma}, 
 we associate a 2-row (Young) diagram consisting of  $2n$ boxes given by 
\[\TIKZ[scale=.9]{
\draw (1-.5,0) rectangle (6-.5,1);
\draw (-1+.5,0) rectangle (-6+.5,-1);
\foreach \x in {2,3,5}{\draw (\x-.5,0) to (\x-.5,1) (-\x+.5,0) to (-\x+.5,-1);}
\foreach \x/\y in {1/\gamma_1, 2/\gamma_2, 5/\gamma_n}{
	\draw[carmine] (\x-.5,1) to +(-.25, .25) node[above] {\small $\y$};}
\foreach \x/\y in {-1/\gamma_1^{-1}, -2/\gamma_2^{-1}, -5/\gamma_n^{-1}}{
	\draw[carmine] (\x+.5,-1) to +(.25, -.25) node[below right] {\small $\y$};}
\draw[densely dotted] (-1,1) to (1,-1);
\node at (4-.5,.5) {$\cdots$};
\node at (-4+.5,-.5) {$\cdots$};
\foreach \x/\y in {1/\Box_1, 2/\Box_2, 5/\Box_n}{
	\node at (\x,.5) {\scriptsize $\y$};}
\foreach \x/\y in {-1/\Box_{\sm 1}, -2/\Box_{\sm 2}, -5/\Box_{\sm n}}{
	\node at (\x,-.5) {\scriptsize $\y$};
	}
}
\]
letting $\Box_i$ be the box of residue   $\gamma_i$. 
We will  record  whether a 
 residue   is equal to one of our special points, that is 
$\gamma_i \in  \{\alpha_1^{\pm 1}, \alpha_2^{\pm 1}\}$,   
 by placing a bead to the northwest of $\Box_i$ and a bead to the southeast of $\Box_{-i}$. We call this picture the  {\sf (two row) shape} associated to $\gamma$.  For example, when $n=2$, there are nine such shapes, pictured in \cref{shapes-2}. 
 
 \begin{figure}[ht!]
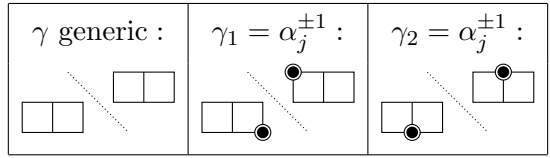

\[
{\def\arraystretch{1.5}
\begin{array}{|c|c|c|}\hline
\gamma\text{ generic}: & 
		\gamma_1 = \alpha_j^{\pm 1}: &
		\gamma_2 = \alpha_j^{\pm 1}: 
\\[3pt]
	\TIKZ[scale=.4]{ 
	\draw (1-.5,0) rectangle (3-.5,1);
	\draw (-1+.5,0) rectangle (-3+.5,-1);
	\draw (2-.5,0) to (2-.5,1) (-2+.5,0) to (-2+.5,-1);
	\draw[densely dotted] (-1,1) to (1,-1);
	} 
& 
	\TIKZ[scale=.4]{
	\draw (1-.5,0) rectangle (3-.5,1);
	\draw (-1+.5,0) rectangle (-3+.5,-1);
	\draw (2-.5,0) to (2-.5,1) (-2+.5,0) to (-2+.5,-1);
	\node[M] at (1-.5,1){};
	\node[M] at (-1+.5,-1){};
	\draw[densely dotted] (-1,1) to (1,-1);
	} 
&
	\TIKZ[scale=.4]{
	\draw (1-.5,0) rectangle (3-.5,1);
	\draw (-1+.5,0) rectangle (-3+.5,-1);
	\draw (2-.5,0) to (2-.5,1) (-2+.5,0) to (-2+.5,-1);
	\node[M] at (2-.5,1){};
	\node[M] at (-2+.5,-1){};
	\draw[densely dotted] (-1,1) to (1,-1);
	}
\\[10pt]\hline
\end{array}}
\]
\caption{The nine possible two row shapes for $n=2$ (where $9=1+2^2+2^2$ for the choices of $j=1,2$).}
\label{shapes-2}
\end{figure}

\begin{rmk}If one compares this combinatorial model directly to that of \cite{DR25a, DR25b}, some notable simplifications may become apparent. 
  In particular, there are additional arrangements of the marker \,\TIKZ{\node[M] at (0,0){};} needed in \cite[\S4]{DR25b} that are omitted here. We are able to do this due to the genericity conditions on parameters $q$, $\alpha_1$, and $\alpha_2$, driven by focusing on cases where $B_n(\kappa)$ is 
semisimple. Specifically, the conditions on these parameters imply (1) the rows of any relevant shape will not overlap, and (2) any given shape will not intersect with both the residues $\alpha_1$ and $\alpha_2$. Hence, for any marked shape, we can necessarily choose $\gamma_1$ so that the corresponding row has its marker placed to the NW of a box. 
\end{rmk}

\begin{defn} \label{TABLEAU-DEFN}
A {\sf tableau}   of a (two row) shape associated to $\gamma$ is a bijective function 
\[\cF: \{-n, \dots, - 1, 1, \dots n\} \to \{-n, \dots, - 1, 1, \dots n\},\] 
thought of as filling $\Box_i$ with $\cF(i)$. A tableau is {\sf standard} if it satisfies the following:
\begin{enumerate}[{\qquad Symmetry}]
\item[Symmetry:] $\cF(i) = - \cF({-i})$ for all $-n\leq i \leq n $.
\item[Adjacency:] $\cF(i) < \cF({i+1})$ for all $1\leq i \leq n $. 
\item [Beads:] If $\Box_i$ for $i\geq 1$ is to the right of  a bead, then $\cF(i)>0$.
\end{enumerate}
\end{defn}

\begin{eg}For example, let $\gamma=(	\alpha_1 q^{-2}	,	\alpha_1 )$ whose associated Young diagram is as follows 
$$\TIKZ[scale=.4]{
\draw (1-.5,0) rectangle (3-.5,1);
\draw (-1+.5,0) rectangle (-3+.5,-1);
\draw (2-.5,0) to (2-.5,1) (-2+.5,0) to (-2+.5,-1);
\node[M] at (2-.5,1){};
\node[M] at (-2+.5,-1){};
\draw[densely dotted] (-1,1) to (1,-1);
}$$where we note that the bead lies on the northwest corner of the box whose residue is the special point $\alpha_1$. 
The standard tableaux of shape $\gamma$ are as follows 
$$
\TIKZ[scale=.5]{
\draw (1-.5,0) rectangle (3-.5,1);
\draw (-1+.5,0) rectangle (-3+.5,-1);
\draw (2-.5,0) to (2-.5,1) (-2+.5,0) to (-2+.5,-1);
\node[M] at (2-.5,1){};
\node[M] at (-2+.5,-1){};
\draw[densely dotted] (-1,1) to (1,-1);
\node at (1,.5) {$1$};	\node at (2,.5) {$2$};
\node at (-1,-.5) {$\sm 1$};	\node at (-2,-.5) {$\sm 2$};
} \qquad \qquad
\TIKZ[scale=.5]{
\draw (1-.5,0) rectangle (3-.5,1);
\draw (-1+.5,0) rectangle (-3+.5,-1);
\draw (2-.5,0) to (2-.5,1) (-2+.5,0) to (-2+.5,-1);
\node[M] at (2-.5,1){};
\node[M] at (-2+.5,-1){};
\draw[densely dotted] (-1,1) to (1,-1);
\node at (1,.5) {$\sm 1$};	\node at (2,.5) {$2$};
\node at (-1,-.5) {$1$};	\node at (-2,-.5) {$\sm 2$};
}  \qquad  \qquad
\TIKZ[scale=.5]{
\draw (1-.5,0) rectangle (3-.5,1);
\draw (-1+.5,0) rectangle (-3+.5,-1);
\draw (2-.5,0) to (2-.5,1) (-2+.5,0) to (-2+.5,-1);
\node[M] at (2-.5,1){};
\node[M] at (-2+.5,-1){};
\draw[densely dotted] (-1,1) to (1,-1);
\node at (1,.5) {$\sm 2$};	\node at (2,.5) {$1$};
\node at (-1,-.5) {$2$};	\node at (-2,-.5) {$\sm 1$};
}
$$
Because of the rotational symmetry of both the shapes and the standard  tableaux, and because $\{q^{2\ell} \gamma_1 ~|~ \ell = 0, \dots, n-1\}$ are distinct from $\{q^{-2\ell} \gamma_1^{-1} ~|~ \ell = -1, 0, \dots, n-1\}$, it is sufficient to just work with the half shape placed at $\gamma$. For example,
\begin{equation}\label{TABLEAU-DEFN2}
\TIKZ[scale=.5]{
\draw (1-.5,0) rectangle (3-.5,1);
\draw (-1+.5,0) rectangle (-3+.5,-1);
\draw (2-.5,0) to (2-.5,1) (-2+.5,0) to (-2+.5,-1);
\node[M] at (2-.5,1){};
\node[M] at (-2+.5,-1){};
\draw[densely dotted] (-1,1) to (1,-1);
\node at (1,.5) {$\sm 1$};	\node at (2,.5) {$2$};
\node at (-1,-.5) {$1$};	\node at (-2,-.5) {$\sm 2$};
} 
\quad \longleftrightarrow \quad 
\TIKZ[scale=.5]{
\draw (1-.5,0) rectangle (3-.5,1);
\draw (2-.5,0) to (2-.5,1) ;
\node[M] at (2-.5,1){};
\node at (1,.5) {$\sm 1$};	\node at (2,.5) {$2$};
}
\end{equation}
\end{eg}

For a given tableau $\cF$, the box filled with the entry $i$ has residue  $\gamma_{\cF^{-1}(i)}$.  
For brevity, we set 
\begin{equation}
\gamma^\cF_i := \gamma_{\cF^{-1}(i)} \qquad \gamma^\cF_{-i} := (\gamma^\cF_i)^{-1}.
\end{equation} 
The action of $W(C_n)$ on residues 
 can be recorded  on  tableaux in the natural fashion, that is  
 $w\cdot\cF$ for $w\in W(C_n)$  
 is the tableau determined by 
 $(w\cdot\cF)(i)=w\cdot(\cF(i))$   for $-n\leq i \leq n$.

\begin{thm}[{\cite[Theorem \ 3.3]{DR25a}, \cite[Theorem \ 4.1]{DR25b}}]\label{thm:calibratedTL}
Calibrated representations of $\TLC_n$ are indexed by the elements $\gamma$ as in \eqref{eq:gamma} such that $\gamma_i\neq\pm1$ for all $i=1,\dots,n$. The calibrated  module $\Cal(\gamma)$ has basis 
\[\{v_\cF ~|~ \cF\text{ is a standard  tableau of shape $\gamma$}\},\] 
with action given by 
\begin{align*}
X_i v_{\cF} &= \gamma^\cF_i v_\cF&&\text{for }i=1, \dots, n;\\
T_i v_{\cF} &= [T_i]_{\cF} v_{\cF} 
	+ \sqrt{-([T_i]_{\cF}-q)([T_i]_{\cF}+q^{-1})}\ v_{s_i \cF} 
			&&\text{for } i=1, \dots, n-1;  \\
T_0 v_{\cF} &= [T_0]_{\cF} v_{\cF} 
	+ \sqrt{-([T_0]_{\cF}-q_0)([T_0]_{\cF}+q_0^{-1})}\ v_{s_0 \cF};
\end{align*}
where
\begin{align*}
[T_i]_{\cF}  = \frac{q-q^{-1}}{1-\gamma^\cF_i \gamma^\cF_{-(i+1)}} 
\qquad\qquad [T_0]_{\cF}  = \frac{(q_0-q_0^{-1}) + (q_n - q_n^{-1})\gamma^\cF_{-1}}{1-(\gamma^\cF_{-1})^2} 
\end{align*}
for $1\leq i \leq n$.
\end{thm}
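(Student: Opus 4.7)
The plan is to build the modules $\Cal(\gamma)$ in two stages: first classify the calibrated modules for the two-boundary Hecke algebra $\HC_n$, and then determine which descend to the Temperley--Lieb quotient $\TLC_n$ of \eqref{eq:smash1}. Both halves are due to \cite{DR25a,DR25b}, so this is really a road-map through those sources.

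On the Hecke side, any calibrated $\HC_n$-module decomposes into joint eigenspaces for $X_1,\dots,X_n$, so has a basis indexed by tableaux $\cF$ whose joint eigenvalues $(\gamma^\cF_1,\dots,\gamma^\cF_n)$ lie in a single $W(C_n)$-orbit. The action of $T_i$ and of $T_0$ is then forced by the affine Hecke intertwiner formulas of type $C$: the diagonal coefficients must be $[T_i]_\cF$ and $[T_0]_\cF$ as stated, and the off-diagonal square-root factor is determined by the quadratic relation. These intertwiners are well-defined on a given weight space exactly when the denominators $1-\gamma^\cF_i\gamma^\cF_{-(i+1)}$ (respectively $1-(\gamma^\cF_{-1})^2$) are nonzero, which is the content of the \emph{adjacency} condition of \cref{TABLEAU-DEFN}; the \emph{symmetry} condition $\cF(i)=-\cF(-i)$ encodes the type $C$ relation $X_{-i}=X_i^{-1}$ at the level of tableaux.

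Descending to $\TLC_n$ amounts to imposing \eqref{eq:smash1}. Each interior relation $e_ie_{i+1}e_i=e_i$ forces $\gamma_{i+1}/\gamma_i=q^2$ on the surviving weight vectors, producing the arithmetic-progression shape \eqref{eq:gamma}; the hypothesis $\gamma_i\neq\pm 1$ rules out the degenerate weights at which $T_i$ would fail to be semisimple. The boundary relations $e_1e_0e_1=\(q_0q^{-1}\)e_1$ and $e_{n-1}e_ne_{n-1}=\(q_nq^{-1}\)e_{n-1}$ single out the special points $\alpha_1^{\pm 1},\alpha_2^{\pm 1}$ of \eqref{eq:alphas-defn}, and a direct computation shows that $e_0$ acts nontrivially on $v_\cF$ only when $\cF(1)>0$ in the bead-adjacent case---this is exactly the \emph{bead} condition of \cref{TABLEAU-DEFN}. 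Closure of the span of $\{v_\cF : \cF\text{ standard of shape }\gamma\}$ under the stated action is then a combinatorial check: whenever the off-diagonal coefficient of $T_i$ is nonzero on $v_\cF$ the tableau $s_i\cF$ is again standard, and whenever $s_i\cF$ fails standardness the square-root factor vanishes by construction, so no escape from the spanning set occurs.

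The main obstacle is verifying the boundary Temperley--Lieb relation $e_1e_0e_1=\(q_0q^{-1}\)e_1$ on $\Cal(\gamma)$. Unlike the interior relations this one mixes the $T_0$ and $T_1$ formulas and requires both special points $\alpha_1^{\pm 1}$ and $\alpha_2^{\pm 1}$ to interact with the leading residue $\gamma_1$; one must show that when $\gamma_1\in\{\alpha_1^{\pm 1},\alpha_2^{\pm 1}\}$ the triple composition collapses to the scalar $\(q_0q^{-1}\)$ times $e_1$, while for generic $\gamma_1$ it forces the relevant vectors to vanish. This is the calculation carried out in \cite[\S 4]{DR25b}, and explains why the bead and adjacency conditions of \cref{TABLEAU-DEFN} take precisely the form they do.
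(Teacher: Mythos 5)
This theorem is recalled verbatim from \cite{DR25a,DR25b} and not reproved in the present paper, so your ``proof'' is necessarily a road-map through those references; with that understood, the two-stage architecture you describe --- calibrated $\HC_n$-modules via type-$C$ intertwiner formulas, then a check of which survive the quotient \eqref{eq:smash1} --- is the correct one, and your identification of the boundary relation as the place where $\alpha_1^{\pm1},\alpha_2^{\pm1}$ enter is consistent with the sources.

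Where you go astray is in the explanation of the \emph{adjacency} condition. You claim that the intertwiners being well-defined, i.e.\ the denominators $1-\gamma^\cF_i\gamma^\cF_{-(i+1)}$ and $1-(\gamma^\cF_{-1})^2$ being nonzero, ``is the content of the adjacency condition.'' It is not. Over $\Bbbk(q,q_0,q_n)$ with $\gamma_j=\gamma_1 q^{2(j-1)}$, the quantity $\gamma^\cF_i\gamma^\cF_{-(i+1)}=\gamma_{\cF^{-1}(i)}/\gamma_{\cF^{-1}(i+1)}$ never equals $1$ for \emph{any} tableau $\cF$, standard or not, because $\cF^{-1}(i)\neq\cF^{-1}(i+1)$ and the residues are pairwise distinct; and $1-(\gamma^\cF_{-1})^2\neq0$ is precisely the stated hypothesis $\gamma_i\neq\pm1$ --- which, incidentally, protects the $T_0$-formula, not the $T_i$'s as you write. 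What the adjacency condition actually does is cut out a fundamental domain (a skew local region in the terminology of \cite{DR25a}) inside the $W(C_n)$-orbit, and the reason the $\TLC_n$-action closes on the span of the standard tableaux hinges not on a denominator but on the \emph{numerator} of the off-diagonal term: $\sqrt{-([T_i]_\cF-q)([T_i]_\cF+q^{-1})}$ vanishes exactly when $\gamma^\cF_{i+1}/\gamma^\cF_i\in\{q^2,q^{-2}\}$, which by the arithmetic-progression shape is exactly when $s_i\cF$ leaves the set of standard tableaux --- the paper states this in the remark immediately following the theorem. You have conflated a genericity hypothesis (the denominators are automatically fine) with the combinatorial closure argument (the off-diagonal numerator vanishes on the boundary of the local region); these are two distinct ideas and the proof in \cite{DR25a,DR25b} keeps them separate.
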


One might be concerned that $s_i \cdot  \cF$ is not necessarily standard, and hence $v_{s_i \stt}$ may not be an element of the basis of $\Cal(\gamma)$; however, the coefficient of $v_{s_i \stt}$ in each of the formulas above is 0 exactly when $s_i \cdot \stt$ is not standard.

Denoting $Q_0 = q_0 - q_0^{-1}$ and $Q_n = q_n - q_n^{-1}$, it is a straightforward set of calculations to find the useful identities
\begin{align*}
[T_0]_{\cF} - q_0 &= \gamma^\cF_{-1} \left(
	\frac{(q_0 \gamma^\cF_{-1} -q_0^{-1}\gamma^\cF_{1}) + Q_n}{1-(\gamma^\cF_{-1})^2}\right)\qquad 
[T_0]_{\cF} + q_0^{-1} &= \gamma^\cF_{-1} \left(
	\frac{(q_0 \gamma^\cF_{1} -q_0^{-1}\gamma^\cF_{-1}) + Q_n}{1-(\gamma^\cF_{-1})^2}\right).\\
\end{align*}
So (assuming, as we have, that $\gamma^\cF_{1} \ne 0$), we have that 
\begin{itemize} 
\item[$(i)$] $
[T_0]_{\cF} - q_0  = 0 $ if and only if 
$	\gamma^\cF_{1}  = \pm q_0 q_n^{\pm 1} = \alpha_1 \text{ or } \alpha_2$; 
\item[$(ii)$]
$
[T_0]_{\cF} + q_0^{-1}  = 0$  if and only if 
$	\gamma^\cF_{1}  = \pm q_0^{-1} q_n^{\mp 1}= \alpha_1^{-1} \text{ or } \alpha_2^{-1}$. 
\end{itemize}
 In particular, these are precisely the conditions 
  for $\stt^{-1}(1)$ to be a marked box. 

We can also take a moment now to get our hands on the action of $T_{0^\vee} = X_1 T_0^{-1} = X_1(T_0 - Q_0)$. To that end, it is straightforward to compute 
\begin{align}
&[T_0]_{\cF} - Q_0 = \gamma^\cF_{-1} \left(
	\frac{Q_n + Q_0\gamma^\cF_{-1} }{1-(\gamma^\cF_{-1})^2}\right) 
	\label{eq:T0-Q}\\
&([T_0]_{\cF} - q_0)([T_0]_{\cF} + q_0^{-1})
	= \frac{(\gamma^\cF_{-1})^2}{1-(\gamma^\cF_{-1})^2}
		\left(Q_0^2 + Q_k^2 - (\gamma^\cF_{1} - \gamma^\cF_{-1})^2 + Q_0Q_k(\gamma^\cF_{1} + \gamma^\cF_{-1})\right).  \label{eq:(T0-q)(T_0+q)}
\end{align}
This implies that 
\begin{align*}
T_0^{-1} v_{\cF} &= ([T_0]_{\cF} - Q_0) v_{\cF} 
	+ \sqrt{-([T_0]_{\cF}-q_0)([T_0]_{\cF}+q_0^{-1})}\ v_{s_0 \cF},
	\end{align*}
and hence 
\begin{align}
T_{0^\vee}v_{\cF} &= X_1 T_0^{-1} v_{\cF} \nonumber\\
&= X_1([T_0]_{\cF} - Q_0) v_{\cF} 
	+ X_1\sqrt{-([T_0]_{\cF}-q_0)([T_0]_{\cF}+q_0^{-1})}\ v_{s_0 \cF}\nonumber\\
&= [T_{0^\vee}]_{\cF} v_{\cF} 
	+ {\gamma^\cF_{-1}\sqrt{-([T_{0^\vee}]_{\cF}-q_n)([T_{0^\vee}]_{\cF}+q_n^{-1})}}\ v_{s_0 \cF},
	\end{align}
where we can explicitly calculate the coefficient 
\begin{equation}
 [T_{0^\vee}]_{\cF} = 
 	\frac{Q_n + Q_0\gamma^\cF_{-1}}{1-(\gamma^\cF_{-1})^2}.
\end{equation}
The last step is a combination of \eqref{eq:T0-Q}, together with 
\[([T_0]_{\cF}-q_0)([T_0]_{\cF}+q_0^{-1})=([T_{0^\vee}]_{\cF}-q_n)([T_{0^\vee}]_{\cF}+q_n^{-1}),\] 
due to the symmetry under $q_0 \leftrightarrow q_n$ in the righthand side of \eqref{eq:(T0-q)(T_0+q)} (or one can simply expand both and compare). As before, we have 
\begin{itemize}
\item[$(i)$] $[T_{0^\vee}]_{\cF} - q_n  = 0 $  if and only if 
$	\gamma^\cF_{1} = \pm q_0^{\pm 1} q_n = \alpha_1 \text{ or } \alpha_2^{-1}$;

\item[$(ii)$]
$ [T_{0^\vee}]_{\cF} + q_n^{-1}  = 0$  if and only if 
$	\gamma^\cF_{1}  = \pm q_0^{\mp 1} q_n^{- 1} = \alpha_1^{-1} \text{ or } \alpha_2$.  
\end{itemize}

\subsection{Calibrated  $\BC_n(\kappa)$-modules}\label{ytieuowrytoiueytiouewr}
Our goal in this section is to classify which calibrated $\TLC_n$-modules further factor through the quotient onto the blob algebra $\BC_n(\kappa) $, see Definition \ref{defblob}. 
The modules that survive the quotient
 $\BC_n(\kappa)$ are those where
  either
 $\varkappa$    acts as  $\kappa \in \mathcal{R}$, 
or $I_0$ and $I_1$ act by $0$. In particular, we want to classify those $\Cal(\gamma)$ for which 
\begin{enumerate}
\item[$(i)$] the element $\varkappa$ from \eqref{eq:I0I1I0=zI0} acts by the fixed $\kappa$; or 
\item[$(ii)$] at least one of $e_0, e_2, \dots$ acts by 0 and at least one of $e_1, e_3, \dots$ acts by 0.
\end{enumerate} 
By \cite[Proposition \ 4.4]{DR25b}, the first happens exactly when 
\begin{equation}\label{eq:good gamma}
\kappa = \(\gamma_1 q^{n-1}\)- \begin{cases} 
 \(\alpha_1 q^{-1}\) & \text{when $n$ is even, and}\\
\(\alpha_1\) & \text{when $n$ is odd.}
\end{cases}
\end{equation}
We recall that we work in a generic setting, that is over the field $\Bbbk(q, q_0, q_n)$ and we assume moreover that $\kappa$ is as in \eqref{eq:good gamma} for some $\gamma_1$ such that $\gamma_1\notin \pm q^{\ZZ}$ and $\gamma_1\notin \alpha_i^{\pm1}q^{2\ZZ}$. For the remainder of this section, we explore the second case: classifying those calibrated modules on which $I_0$ and $I_1$ act by $0$, and prove the following proposition.

%
%

\begin{prop}\label{prop:calibrated B modules}
Let $\gamma = (\gamma_1, \dots, \gamma_n)$ with $\gamma_i = q^{2(i-1)}\gamma_1$, and consider the $\TLC_n$-module $\Cal(\gamma)$. Then $\Cal(\gamma)$ is also a calibrated $B_n(\kappa)$-module precisely when one of the following circumstances occurs.
\begin{enumerate}[(I)]

\item    $\gamma_1$ is related to $\kappa$ by \eqref{eq:good gamma},  in which case $N(\gamma)$ is $2^n$-dimensional; or

\item There is  some $i \le (n+1)/2$ for which $\gamma_i \in \{\alpha_1^{\pm 1}, \alpha_2^{\pm 1}\}$, with some additional restrictions for small $i$ as follows:
\begin{itemize}
\item If $i =  (n+1)/2$, then $\gamma_i = \alpha_1$.
\item If $i =  {n}/{2}$, then $\gamma_i \ne \alpha_1^{-1}$.
\end{itemize}
\end{enumerate}
Moreover this provides a complete set of non-isomorphic simple $B_n(\kappa)$-modules.
\end{prop}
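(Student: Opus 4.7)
\emph{Plan.} The strategy is to exploit the dichotomy forced by quotienting $\TLC_n$ by the defining relations of $\BC_n(\kappa)$. Since $\varkappa$ is central in $\TLC_n$, it acts on the simple module $\Cal(\gamma)$ by a scalar $\kappa_\gamma$, so the relations $I_0I_1I_0=\kappa I_0$ and $I_1I_0I_1=\kappa I_1$ translate to $(\kappa_\gamma-\kappa)I_0 = 0 = (\kappa_\gamma-\kappa)I_1$. Hence either $\kappa_\gamma = \kappa$ (case I) or $I_0 = I_1 = 0$ on $\Cal(\gamma)$ (case II); the two cases are mutually exclusive and exhaustive. Case (I) is then immediate from \cite[Proposition~4.4]{DR25b}, which yields the formula \eqref{eq:good gamma}; the dimension $2^n$ is the number of standard tableaux of a generic (unmarked) two-row shape, which is counted by freely choosing, for each $k\in\{1,\dots,n\}$, whether $k$ or $-k$ sits in the top row, adjacency then determining the rest.

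For case (II), the first step is to translate $I_0 = I_1 = 0$ into combinatorics of shapes and tableaux using \cref{thm:calibratedTL}. The key facts (immediate from the action formulas and the identities for $[T_0]_\cF, [T_{0^\vee}]_\cF$) are: (a) for $1 \le j \le n-1$, $e_j v_\cF = 0$ iff $\gamma^\cF_j/\gamma^\cF_{j+1} = q^{-2}$, which under the genericity of $q,\alpha_1,\alpha_2$ is equivalent to entries $j$ and $j{+}1$ of $\cF$ occupying consecutive boxes of the same row of the shape; (b) $e_0v_\cF = 0$ iff $\gamma_1^\cF \in \{\alpha_1,\alpha_2\}$, and $e_{0^\vee}v_\cF = 0$ iff $\gamma_1^\cF \in \{\alpha_1,\alpha_2^{-1}\}$, i.e.\ entry $1$ of $\cF$ fills a marked box with the appropriate bead orientation. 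The second step is to apply \cref{lem:Icheck} to rewrite $I_0$ (for $n$ even) and $I_1$ (for $n$ odd) as $\hat{I}_\bullet I_\bullet^\vee \hat{I}_\bullet$; combined with the pairwise commutation of the individual $e_{2k}$ or $e_{2k-1}$ factors in each $I_\bullet$, this reduces ``$I_\bullet v_\cF = 0$ for every $\cF$'' to the cleaner statement that some individual $e_{2k}$ (respectively $e_{2k-1}$, or the $e_{0^\vee}$-type factor) annihilates the entire basis of $\Cal(\gamma)$.

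The combinatorial heart is then to show that such simultaneous annihilators of both even and odd parity exist precisely when there is a bead at some $\Box_i$ with $i \le (n+1)/2$, subject to the stated restrictions at the two extremal positions. The forward direction is a cascade: a bead at $\Box_i$ pins entries $1,\dots,i$ into a rigid initial segment of every standard tableau (by the bead and adjacency conditions), forcing consecutive pairs $(j,j{+}1)$ into adjacent boxes of the same row for a parity-controlled window of $j$'s, so that $e_j$ acts as $0$ uniformly. The reverse direction proceeds by explicit tableau construction: if no bead sits in the required range, one exhibits, for each parity, a standard tableau on which every candidate $e_j$ has nontrivial off-diagonal action. The main obstacle is the delicate boundary analysis at $i=n/2$ and $i=(n+1)/2$, where the asymmetric vanishing conditions for $e_0$ (i.e.\ $\{\alpha_1,\alpha_2\}$) versus $e_{0^\vee}$ (i.e.\ $\{\alpha_1,\alpha_2^{-1}\}$) interact nontrivially with the parity of $n$ to single out exactly the forbidden choices of $\alpha_j^{\pm1}$. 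Non-isomorphism of the resulting simples is automatic from the distinctness of $W(C_n)$-orbits of residues, and completeness of the list follows from the generic semisimplicity of $\BC_n(\kappa)$ together with the $\TLC_n$-classification of \cref{thm:calibratedTL}.
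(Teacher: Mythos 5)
Your overall architecture matches the paper's: the dichotomy ``either $\varkappa$ acts by $\kappa$ or $I_0=I_1=0$'', the reduction of case (I) to \cite[Proposition 4.4]{DR25b}, the translation of $e_jv_\cF=0$ into the adjacency/bead combinatorics of \eqref{eq:e-zeros}, and the invocation of \cref{lem:Icheck} for the boundary positions are all the right moves. However, there is a genuine gap at exactly the point you flag as ``the main obstacle'': the analysis at $i=(n+1)/2$ and $i=n/2$ is not carried out, and the intermediate reduction you propose is false precisely there. You claim that commutativity of the factors of $I_\bullet$ reduces ``$I_\bullet v_\cF=0$ for all $\cF$'' to a single factor annihilating the basis. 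That reduction is valid (factor by factor, tableau by tableau) only in the regime $i<n/2$, where a pigeonhole argument produces, for each standard $\cF$, some adjacent pair of the right parity. At $i=(n+1)/2$ (resp.\ $i=n/2$) there is a distinguished tableau $\cF_\star$ and a whole orbit $S_\odd\cF_\star$ (resp.\ $S_\even\cF_\star$) on which \emph{no} factor of the relevant parity acts by zero, so no rearrangement of commuting factors can conclude. This is why the paper must expand $I_1=\hat{I}_1 I_0^\vee\hat{I}_1$ (resp.\ $I_0=\tfrac{-1}{\(q_0\)}\hat{I}_0 I_1^\vee\hat{I}_0$) term by term, show that $S_\odd$ and $S_\even$ act freely on the orbit of $\cF_\star$ with all transition coefficients $c_w,d_w$ nonzero, and then observe that only the identity element of $S_\odd S_\even$ stabilises $\cF_\star$, so the coefficient of $v_{\cF_\star}$ in $I_1 v_\cF$ survives unless the $e_{0^\vee}$-type factor vanishes. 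That last vanishing condition ($\gamma_i=\alpha_1$ or $\alpha_2^{-1}$ for $e_{0^\vee}$, versus $\alpha_1$ or $\alpha_2$ for $e_0$) is what produces the asymmetric exclusions $\gamma_i=\alpha_1$ at $i=(n+1)/2$ and $\gamma_i\neq\alpha_1^{-1}$ at $i=n/2$; without the orbit computation you have no mechanism to derive these.

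A secondary, smaller issue: in the converse direction you propose to ``exhibit a standard tableau on which every candidate $e_j$ has nontrivial off-diagonal action''. Nonvanishing of each individual $e_jv_\cF$ does not by itself give $I_\bullet v_\cF\neq 0$, since the successive images need not avoid the kernels of the later factors; again the paper resolves this by tracking the coefficient of a single distinguished basis vector through the product. Your dimension count in case (I) and the completeness/non-isomorphism arguments (via residue sequences and the semisimple classification) are fine.
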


Let us assume for the remainder of this section that we are not in case (I), namely 
 that 
$\varkappa$ does not act on $N(\gamma)$ by $\kappa\in \Bbbk$.
  As we will see in the following lemma, this necessarily narrows our search to those shapes marked by a bead; in particular, they will necessarily be marked on one of the first $n/2$ boxes. To prove this, recall that  $e_0 = T_0 - q_0$, $e_{0^\vee} = T_{0^\vee} - q_n$,  and $e_i = T_i - q$ for $i = 1, \dots, n-1$. So we can see from \cref{thm:calibratedTL} (and the calculations following that theorem) that 
\begin{align}
e_0 v_\cF &= 0 &\text{ exactly when } &&& 
	\gamma^\cF_{1}\in \{\alpha_1, \alpha_2\}, \nonumber\\
e_{0^\vee} v_\cF &= 0 &\text{ exactly when } &&& 
	\gamma^\cF_{1}\in \{\alpha_1, \alpha_2^{-1}\},
	\label{eq:e-zeros}\\
e_i v_\cF &= 0 &\text{ exactly when } &&& 
	\gamma^\cF_{i}(\gamma^\cF_{i+1})^{-1} = q^2.\nonumber
\end{align}

\begin{lem}\label{lem:bad shapes}
If both $I_0$ and $I_1$ act on $N(\gamma)$ by $0$, then $\gamma_i\in  \{\alpha_1^{\pm 1}, \alpha_2^{\pm 1}\}$ for some $i\le (n+1)/2$.
\end{lem}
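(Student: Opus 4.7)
The plan is to prove the contrapositive of the lemma: assuming no $\gamma_i$ with $i\le \lfloor(n+1)/2\rfloor$ lies in $\{\alpha_1^{\pm 1},\alpha_2^{\pm 1}\}$, I will exhibit a standard tableau $\cF^*$ on which either $I_0$ or $I_1$ acts nontrivially. I would construct $\cF^*$ by flipping the signs on all odd indices $S^-=\{1,3,5,\ldots\}\cap\{1,\ldots,n\}$ and sorting the resulting signed entries in increasing order; its $|S^-|=\lceil n/2\rceil=\lfloor(n+1)/2\rfloor$ negative entries then occupy positions $1,\ldots,\lfloor(n+1)/2\rfloor$ of the tableau, so the bead condition of \cref{TABLEAU-DEFN} makes $\cF^*$ standard precisely under our assumption.

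Next I would compute the residues. Since the odd-indexed values of $\cF^*$ sit in its bottom row while the even-indexed values sit in the top row, every consecutive pair $(\gamma^{\cF^*}_i,\gamma^{\cF^*}_{i+1})$ mixes one residue of the form $\gamma_k^{-1}$ with one of the form $\gamma_{k'}$, and hence the ratio $\gamma^{\cF^*}_i/\gamma^{\cF^*}_{i+1}$ has the form $q^{a}\gamma_1^{\pm 2}$ for some even integer $a$. Under the generic assumption $\gamma_1\notin \pm q^{\mathbb{Z}}$ no such value equals $q^{-2}$, so by \cref{thm:calibratedTL} the diagonal coefficient $[T_i]_{\cF^*}-q$ is nonzero for every $1\le i\le n-1$. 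Moreover $\gamma^{\cF^*}_1=\gamma^{-1}_{\lfloor(n+1)/2\rfloor}$ lies outside $\{\alpha_1,\alpha_2\}$ by hypothesis, so $[T_0]_{\cF^*}-q_0\ne 0$ by the criterion stated immediately after \cref{thm:calibratedTL}.

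The final step uses commutativity. The generators appearing in $I_1$ (respectively $I_0$) have indices differing by at least $2$, hence commute pairwise, and their associated simple reflections act on disjoint position-pairs; consequently the diagonal coefficient of each $e_j$ is unchanged by the swaps performed by its commuting partners, and the coefficient of $v_{\cF^*}$ in $I_1v_{\cF^*}$ (respectively $I_0v_{\cF^*}$) is simply the product of the individual diagonal coefficients of the factors. For $n$ even, applied to $I_1=e_1e_3\cdots e_{n-1}$ (built from internal generators only), this gives $I_1 v_{\cF^*}\ne 0$; for $n$ odd, applied to $I_0=e_0e_2\cdots e_{n-1}$ (which avoids the boundary generator $e_n$) and combined with the nonzeroness of $[T_0]_{\cF^*}-q_0$, it gives $I_0 v_{\cF^*}\ne 0$. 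Either way $I_0$ or $I_1$ acts nontrivially on $N(\gamma)$, establishing the contrapositive.

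The hard part is the combinatorial bookkeeping needed to describe $(\cF^*)^{-1}(i)$ explicitly and to confirm that each odd-indexed value sits in the bottom row and each even-indexed value in the top row of $\cF^*$; once the resulting form $q^{a}\gamma_1^{\pm 2}$ of the residue ratios is in hand, the commutation-based diagonal coefficient formula and the overall conclusion are routine.
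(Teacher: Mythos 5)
Your proof follows the paper's strategy essentially verbatim: exhibit an alternating standard tableau $\cF^*$ (for $n$ odd yours coincides with the paper's; for $n$ even you flip the odd rather than the even indices, but both choices yield $n/2$ negatives in positions $1,\dots,n/2$ and the same residue-ratio analysis) and observe that the diagonal coefficient of each factor of $I_1$ (resp.\ $I_0$) on $v_{\cF^*}$ is nonzero, which forces $I_1 v_{\cF^*}\neq 0$ (resp.\ $I_0 v_{\cF^*}\neq 0$) by the commutativity of the factors. The extra details you supply — the explicit verification that the residue ratios $\gamma^{\cF^*}_i/\gamma^{\cF^*}_{i+1}$ have the form $\gamma_1^{\pm 2}q^{\mathrm{even}}$ and hence avoid $q^{-2}$ when $\gamma_1\notin \pm q^{\ZZ}$, and the careful product-of-diagonal-coefficients computation using commuting disjoint swaps — are correct and merely make explicit what the paper leaves tacit (and your $q^{-2}$ is the right value; the paper's \eqref{eq:e-zeros} has a sign typo there, harmlessly, since both $q^{\pm 2}$ are excluded).
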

\begin{proof}
Consider the alternating  tableaux 
\begin{align*}
\TIKZ[scale=.5]{
\node[left] at (-5, .5){\strut $\cF  = $};
\draw (-5,0) rectangle (5.2,1);
\foreach \x in {-4, -2, -1, 0, 1, 2, 4}{\draw (\x,0) to (\x,1);}
\foreach \c [count=\x from -2] in {\sm 3,  \sm 1 ,2, 4}
	{\node at (\x+.5, .5) {\footnotesize $\c$};}
\node at (4.6, .5) {\footnotesize $n \sm 1$};
\node at (-4.5, .5) {\footnotesize $\sm n $};
\node at (-3,.5) {$\cdots$};
\node at (3,.5) {$\cdots$};
\draw[carmine] (0,1) to +(-.5, .5) node[above] {\footnotesize $\gamma_{\frac{n+1}{2}+1}$};
\draw[black!70, thin] (-.5,0) to +(0, -.5) node[below, inner sep=2pt] {\tiny (center)};
}
\TIKZ[scale=.5]{
\node[left] at (-5, .5){\strut \qquad or \qquad $\cF  = $};
\draw (-5,0) rectangle (5.2,1);
\foreach \x in {-4, -2, -1, 0, 1, 2, 4}{\draw (\x,0) to (\x,1);}
\foreach \c [count=\x from -2] in {\sm 4, \sm 2 ,1, 3}
	{\node at (\x+.5, .5) {\footnotesize $\c$};}
\node at (-4.5, .5) {\footnotesize $ \sm n$};
\node at (4.6, .5) {\footnotesize $  n\sm1  $};
\node at (-3,.5) {$\cdots$};
\node at (3,.5) {$\cdots$};
\draw[carmine] (0,1) to +(-.5, .5) node[above] {\footnotesize $\gamma_{\frac{n}{2}+1}$};
\draw[black!70, thin] (0,0) to +(0, -.5) node[below, inner sep=2pt] {\tiny (center)};
}
\end{align*}for $n$ odd or even, respectively. 
If $\gamma_i \notin \{\alpha_1^{\pm 1}, \alpha_2^{\pm 1}\}$ for all $i\le (n+1)/2$, then this  tableau is standard; and $e_j v_{\cF} \ne 0$ for all $j = 1, \dots n-1$. So, of $n$ is even, then 
\[I_1 = e_1 e_3 \dots e_{n-1} \quad \text{does not act by $0$ on $v_{\cF}$}.\]
If $n$ is odd, then since 
\[\gamma_1^\cF = \gamma_{\frac{n+1}{2}}^{-1} \notin \{\alpha_1^{\pm 1}, \alpha_2^{\pm 1}\},\]
we have $e_0v_{\cF} \ne 0$. So $I_0 = e_0 e_2 \cdots e_{n-1}$  does not act by $0$.
\end{proof}

Given the exceptional behaviour for $n=1$ and $n=2$, we now take a moment to consider the small rank cases. 
\begin{eg}
For $n=1$, we have $I_0 = e_0$ and $I_1 = e_1 =  e_{0^\vee}$. Hence, by \eqref{eq:e-zeros}, we are limited to the one-dimensional modules coming from marked shapes:  
\[\TIKZ[scale=.5]{
\node[left] at (0, .5) {$\Cal(\alpha) = \CC v_\cF$ { \quad where \quad } $\cF=$\strut};
\draw (0,0) rectangle (1,1); 
\draw (0,1) node[M]{} to[densely dotted] +(-.5,.5) node[above left, inner sep = 1pt]{\small$\alpha$};
\node at (.5, .5) {$1$};
} \]
for $\alpha \in \{ \alpha_1^{\pm 1}, \alpha_2^{\pm 1}\}$. Moreover   
 $I_0 v_\cF = 0$ if and only if  $\alpha = \alpha_1$ or $\alpha_2$; and 
 $I_1v_\cF = 0$ if and only if   $\alpha = \alpha_1$ or $\alpha_2^{-1}$. 
Hence, there is exactly one $\TLC_1$-module annihilated by both $I_0$ and $I_1$, namely: $\Delta(\alpha_1)$.
Similarly for $n=2$, $I_0 = e_0e_{2}$ and $I_1 = e_1$. But the only $\Cal(\gamma)$ annihilated by $e_1$ is 
\[\TIKZ[scale=.5]{
\node[left, inner sep = 2pt] at (0, .5) 
	{$\Cal((\alpha, q^2\alpha)) = \CC v_\cF$, \quad {where} \quad $\cF = $};
\draw (0,0) rectangle (2,1) (1,0) to (1,1); 
\node[M] at (0,1){};
\node at (.5, .5) {$1$}; \node at (1.5, .5) {$2$}; 
}. \]
For example, in $\Cal((q^{-2} \alpha, \alpha))$, the  tableau 
\[
\TIKZ[scale=.5]{
\node[left, inner sep = 2pt] at (0, .5) 
	{$\cF = $};
\draw (0,0) rectangle (2,1) (1,0) to (1,1); 
\node[M] at (1,1){};
\node at (.5, .5) {$\sm 1$}; \node at (1.5, .5) {$2$};
}\]
is standard, and $I_1 v_{\cF} \ne 0$. For the same reason, $\Cal(\gamma)$ is not anihilated by $I_1$ if $\gamma$ is unmarked.

\end{eg}

We are now ready to prove \cref{prop:calibrated B modules}. 
\begin{proof}[Proof of \cref{prop:calibrated B modules}]
Part (I) follows from \cite[Proposition  4.4]{DR25b}, so consider part (II). As we saw in \cref{lem:bad shapes}, we can restrict to those $\gamma$ where $\gamma_i \in \{\alpha_1^{\pm 1}, \alpha_2^{\pm 1}\}$ for some $i \leq (n+1)/2$.

\smallskip\noindent
{\bf Case: $i< n/2$}.    Here, for any standard  tableau $\cF$ of shape $\gamma$, enough of boxes $1, \dots, n$ are filled with a positive values to ensure that $\cF$ satisfies \emph{both} of the following:
\begin{enumerate}[\quad {$\bullet$}]
\item There is at least one pair $(2j, 2j+1)$ for which $\cF^{-1}(2j+1) = \cF^{-1}(2j) +1$ (the boxes filled with $2j$ and $2j+1$ are on adjacent diagonals), so that $e_{2j} v_\cF = 0$. Hence $\Cal(\gamma)$ is  annihilated by $I_0$.
\item There is at least one pair $(2j-1, 2j)$ for which $\cF^{-1}(2j) = \cF^{-1}(2j-1) +1$ (the boxes filled with $2j-1$ and $2j$ are on adjacent diagonals), in which case $e_{2j-1} v_\cF = 0$.  So $\Cal(\gamma)$ is also annihilated by $I_1$.
\end{enumerate}

\smallskip\noindent
{\bf  Case: $n$ odd and $i=(n+1)/2$}.   
  Again, a pigeonhole argument can be applied to show $I_0 v_\cF = I_1 v_{\cF} = 0$ for almost standard  tableaux $\cF$ as above. The exceptions for $I_0$ are those  standard  tableaux for which 
\[e_j v_{\cF} \ne 0 \quad  \text{ for all $j = 2, 4, \dots, n-1$,}\]
all of which satisfy $\cF^{-1}(1) = i$ (the box on diagonal $\gamma_i$ is filled with 1). 
(See \cref{fig:special-shape-tabuleaux-5}
 for the four such examples when $n=5$.)\ \  On each of these,  it follows from \eqref{eq:e-zeros} that $e_0 v_\cF = 0$ (and hence $I_0 v_\cF = 0$) exactly when $\gamma_i = \alpha_1$ or $\alpha_2$.

To show that we also have $I_1 \Cal(\gamma) = 0$ if and only if $\gamma_i = \alpha_1$, we will have to work a bit harder: the action of $e_n$ is not so straightforward on the basis $\{v_{\cF} ~|~ \cF \text{ standard }\}$. First, observe that there is a unique standard  tableau $\cF^*$ for which $e_j v_{\cF^*} \ne 0$ for \emph{all}  $j = 1, 2, \dots, n-1$: 
\[\begin{array}{c}
\cF_\star^{-1}: 2i - 1 \mapsto i+(n-1)/2 \quad \text{and} \quad 2i \mapsto i - (n+1)/2.\\[8pt]
\end{array}\]
pictured as follows.
\[
\TIKZ[scale=.55, font=\small]{
\draw (-5.6,0) rectangle (5,1);
\foreach \x in {-4, -2, -1, 0, 1, 2, 4}{\draw (\x,0) to (\x,1);}
\foreach \c [count=\x from -2] in {\sm 4, \sm 2 ,1, 3}
	{\node at (\x+.5, .5) {$\c$};}
\node at (4.5, .5) {$n$};
\node at (-4.8, .5) {\scriptsize $\sm n\! +\! 1$};
\node at (-3,.5) {$\cdots$};
\node at (3,.5) {$\cdots$};
\draw[carmine] (0,1) to +(-.5, .5) node[above] {\footnotesize $\gamma_{(n+1)/2}$};
\node[M] at (0,1){};
\draw[black!70, thin] (.5,0) to +(0, -.5) node[below, inner sep=2pt] {\tiny (center)};
}
\]
Using \cref{lem:Icheck}, we will build the action of $I_1 = \hat{I}_{1} I_{0}^{\vee} \hat{I}_1$ on $v_{\cF^*}$ one term at a time. Again, one can follow along with the example $n=5$ in \cref{fig:special-shape-tabuleaux-5}.
For $n$ odd, let $S_\odd$ and $S_\even$ be the subgroups of $W$ given by 
\[S_\odd = \< s_1, s_3, \dots, s_{n-2}\>
	\quad \text{ and } \quad 
	S_\even  = 
		\< s_2, s_4, \dots, s_{n-1}\>.\]
The set of  tableaux that are not annihilated by $\hat{I}_1  = e_1 e_3 \cdots e_{n-2}$ is  as follows 
\begin{align*}
F_\odd &= \{ \text{standard tableaux $\cF$ of shape $\gamma$}~|~ \cF^{-1}(2i) \ne \cF^{-1}(2i-1) +1 \text{ for all }i =1, \dots, (n-1)/2\}\\
	&= S_\odd \cF_\star.
\end{align*}
In fact, $S_\odd$ acts freely on this orbit: the condition $\cF^{-1}(2i) \ne \cF^{-1}(2i-1) +1$ is exactly what ensures $s_{2i-1}\cF$ is standard. Hence, for any $\cF \in F_\odd$, 
\[\hat{I}_1 v_{\cF } 
=\textstyle  \sum_{w \in S_\odd} c_w v_{w  \cF_\star},   \]
for some  $ c_w \ne 0 \text{ for all }w \in  S_\odd$.
But again, there is only one standard  tableau $\cF_\star$ for which there are \emph{no} pairs $(i, i+1)$ on adjacent diagonals. 
Hence, for all $w \in S_\odd - \{ {\rm id}\}$,  we have 
$e_{j} v_{w\cF_\star} = 0$ for all $j = 2, 4, \dots, n-1$. So for any $\cF \in F_\odd$, letting 
$c = c_{\rm id}$ be the   coefficient of $v_{\cF_\star}$ in $\hat{I}_1 
v_{ {\cF }}$, we have 
\begin{align*}
e_2 e_4 \cdots e_{n-1} \hat{I}_1 v_{\cF} 
 = e_2 e_4 \cdots e_{n-1} c v_{\cF } 
\textstyle 	 =  \sum_{w \in S_\even} c d_w v_{w{\cF_\star}},  
	\end{align*}
for some  $ d_w \ne 0 \text{ for all }w \in  S_\even$. 
The second equality above follows from similar reasoning as above for the odd permutations: every  tableau in  $S_\even \cF_\star$ is standard, and $S_\even$ acts freely on this orbit.

Now, since $w(1) = 1$ for all $w \in S_\even$, it follows from \eqref{eq:e-zeros} that $e_{0^{\vee}} v_{w\cF_\star} = 0$ exactly when $\gamma_{i} = \alpha_1$ (we already assumed $\gamma_i = \alpha_1$ or $\alpha_2$ when working with $I_0$ above). 
If so, then for all $\cF \in F_\odd$,
\[I_1 v_{\cF} = \hat{I}_1 
 {I_{0}^\vee \hat{I}_1 v_{\cF}} =  \hat{I}_1 ( {I_{0}^\vee \hat{I}_1 v_{\cF}} ) =\hat{I}_1 \times 0 
 =0,\]and hence $I_1 \Cal(\gamma) = 0$. Otherwise, if $\gamma_i \ne \alpha_1$, then by considering the coefficient of $v_{\cF_\star}$ in the final expansion of the action by $\hat{I}_1$, we can see that $I_1 v_{\cF_\star} \ne 0$ (the only element of the set $S_\odd S_\even$ that stabilizes $\cF_\star$  is $1$). Hence, $I_0 \Cal(\gamma) = I_1 \Cal(\gamma) = 0$ if and only if $\gamma = \alpha_1$.

\medskip

\paragraph{\bf Case: $n$ even and $i=\frac{n}{2}$.}\ This case is very similar to the last case: we handle the exceptional standard  tableaux by considering the orbit of a particularly special  tableau under certain actions. But now that $e_0$ and $e_n$ are both factors in $I_0$, the conditions on $\gamma_i$ become inclusive rather than exclusive. 

The same pigeonhole arguments as above can now be used to show $I_1 v_\cF = 0$ for all standard $\cF$; and that $I_0  v_\cF = 0$ for most standard $\cF$.  And again, those exceptions where $e_j v_\cF \ne 0$ for all $j = 2, 4, \dots, n-2$ all satisfy $\cF^{-1}(1) = 1$. If $\gamma_i = \alpha_1$ or $\alpha_2$, then we're done: $e_0  v_\cF = 0$ on all such  tableaux, and hence $I_0 v_\cF = 0$. Otherwise, we will proceed by studying the action of $I_0 = \frac{1}{\(q_0\)}\hat{I}_{0} I_{1}^{\vee} \hat{I}_0$ one term at a time. See \cref{fig:special-shape-tabuleaux-6} for the example where $n=6$.

\begin{figure}[ht!]
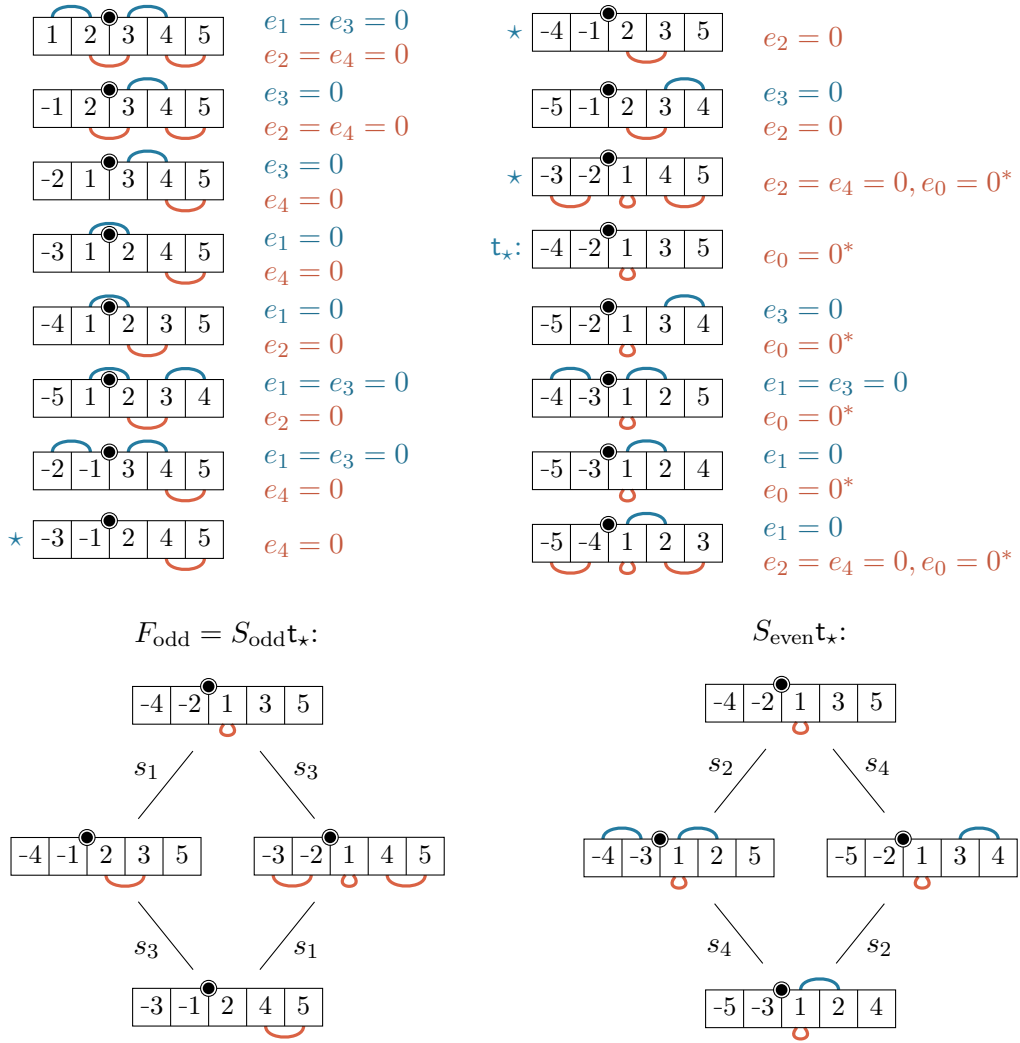

  \[
 \begin{array}{rl@{\qquad}rl}
 \TIKZ[scale=.5]{\FIVE{1,2,3,4,5}{2/3,4/5}{1/2,3/4}}
 &
 \begin{array}{l} \color{teal!90!black}
 	e_1=e_3 = 0\\ \color{copperred!90!black}
 	e_2 = e_4 = 0\end{array}
 & \TIKZ[scale=.5]{\FIVE{\sm 4,\sm 1,2,3,5}{3/4}{} \node[teal, left] at (0,.5){$\star$};}
 &
 \begin{array}{l}  \color{copperred!90!black}
 	 e_2 = 0 \end{array}\\
 \TIKZ[scale=.5]{\FIVE{\sm 1,2,3,4,5}{2/3,4/5}{3/4}}
 &
 \begin{array}{l} \color{teal!90!black}
 	 e_3 = 0\\ \color{copperred!90!black}
 	 e_2 = e_4 = 0\end{array}
 &\TIKZ[scale=.5]{\FIVE{\sm 5,\sm 1,2,3,4}{3/4}{4/5}}
 &
 \begin{array}{l} \color{teal!90!black}
 	 e_3 = 0\\ \color{copperred!90!black}
 	 e_2 = 0\end{array}\\
 \TIKZ[scale=.5]{\FIVE{\sm 2,1,3,4,5}{4/5}{3/4}}
 &
 \begin{array}{l} \color{teal!90!black}
 	 e_3= 0\\ \color{copperred!90!black}
 	 e_4 = 0\end{array}
 &\TIKZ[scale=.5]{\FIVE{\sm 3,\sm 2,1,4,5}{1/2,3/3,4/5}{} \node[teal, left] at (0,.5){$\star$};}
 &
 \begin{array}{l}  \color{copperred!90!black}
 	e_2 = e_4 = 0, e_0 = 0^* \end{array}\\
 \TIKZ[scale=.5]{\FIVE{\sm 3,1,2,4,5}{4/5}{2/3}}
 &
 \begin{array}{l} \color{teal!90!black}
 	 e_1= 0\\ \color{copperred!90!black}
 	 e_4= 0\end{array}
 &\TIKZ[scale=.5]{\FIVE{\sm 4,\sm 2,1,3,5}{3/3}{} \node[teal, left] at (0,.5){$\cF_\star$:};}
 &
 \begin{array}{l}  \color{copperred!90!black}
 	 e_0= 0^*\end{array}\\
 \TIKZ[scale=.5]{\FIVE{\sm 4,1,2,3,5}{3/4}{2/3}}
 &
 \begin{array}{l} \color{teal!90!black}
 	 e_1 = 0\\ \color{copperred!90!black}
 	  e_2 = 0\end{array}
 &\TIKZ[scale=.5]{\FIVE{\sm 5,\sm 2,1,3,4}{3/3}{4/5}}
 &
 \begin{array}{l}  \color{teal!90!black}
 	 e_3 = 0\\ \color{copperred!90!black}
 	 e_0 = 0^*\end{array}\\
 \TIKZ[scale=.5]{\FIVE{\sm 5,1,2,3,4}{3/4}{2/3,4/5}}
 &
 \begin{array}{l} \color{teal!90!black}
 	 e_1 = e_3 = 0\\ \color{copperred!90!black}
 	 e_2 = 0\end{array}
 &\TIKZ[scale=.5]{\FIVE{\sm 4,\sm 3,1,2,5}{3/3}{1/2,3/4}}
 &
 \begin{array}{l} \color{teal!90!black}
 	e_1 = e_3 = 0\\ \color{copperred!90!black}
 	e_0 = 0^*\end{array}\\
 \TIKZ[scale=.5]{\FIVE{\sm 2,\sm 1,3,4,5}{4/5}{1/2,3/4}}
 &
 \begin{array}{l} \color{teal!90!black}
 	e_1 = e_3  = 0\\ \color{copperred!90!black}
 	e_4 = 0\end{array}
 &\TIKZ[scale=.5]{\FIVE{\sm 5,\sm 3,1,2,4}{3/3}{3/4}}
 &
 \begin{array}{l} \color{teal!90!black}
 	e_1 = 0\\ \color{copperred!90!black}
 	 e_0 = 0^*\end{array}\\
 \TIKZ[scale=.5]{\FIVE{\sm 3,\sm 1,2,4,5}{4/5}{} \node[teal, left] at (0,.5){$\star$};}
 &
 \begin{array}{l}  \color{copperred!90!black}
 	e_4 = 0\end{array}
 &\TIKZ[scale=.5]{\FIVE{\sm 5,\sm 4,1,2,3}{1/2, 3/3, 4/5}{3/4}}
 &
 \begin{array}{l} \color{teal!90!black}
 	e_1 = 0 \\  \color{copperred!90!black}
 	e_2 = e_4 = 0, e_0 = 0^*\end{array}
 \end{array}
 \]

 \[\TIKZ[xscale=.8]{
 \node at (0, 5) {$F_\odd = S_\odd \cF_\star$:};
 \node (0) at (0, 4) {$\TIKZ[scale=.5]{\FIVE{\sm 4,\sm 2,1,3,5}{3/3}{}
 }$};
 \node (1) at (-2, 2) {$\TIKZ[scale=.5]{\FIVE{\sm 4,\sm 1,2,3,5}{3/4}{}}$};
 \node (3) at (2, 2) {$\TIKZ[scale=.5]{\FIVE{\sm 3,\sm 2,1,4,5}{1/2,3/3,4/5}{}}$};
 \node (13) at (0, 0) {$\TIKZ[scale=.5]{\FIVE{\sm 3,\sm 1,2,4,5}{4/5}{}}$};
 \draw (0) to node[above left, inner sep=2pt]{$s_1$} (1);
 \draw (0) to node[above right, inner sep=2pt]{$s_3$} (3);
 \draw (1) to node[below left, inner sep=2pt]{$s_3$} (13);
 \draw (3) to node[below right, inner sep=2pt]{$s_1$} (13);
 }\qquad\qquad
 \TIKZ[xscale=.8]{
 \node at (0, 5) {$S_\even \cF_\star$:};
 \node (0) at (0, 4) {$\TIKZ[scale=.5]{\FIVE{\sm 4,\sm 2,1,3,5}{3/3}{} 
 	}$};
 \node (2) at (-2, 2) {$\TIKZ[scale=.5]{\FIVE{\sm 4,\sm 3,1,2,5}{3/3}{3/4,1/2}}$};
 \node (4) at (2, 2) {$\TIKZ[scale=.5]{\FIVE{\sm 5,\sm 2,1,3,4}{3/3}{4/5}}$};
 \node (24) at (0, 0) {$\TIKZ[scale=.5]{\FIVE{\sm 5,\sm 3,1,2,4}{3/3}{3/4}}$};
 \draw (0) to node[above left, inner sep=2pt]{$s_2$} (2);
 \draw (0) to node[above right, inner sep=2pt]{$s_4$} (4);
 \draw (2) to node[below left, inner sep=2pt]{$s_4$} (24);
 \draw (4) to node[below right, inner sep=2pt]{$s_2$} (24);
 }\]
\caption{Example for the proof of \cref{prop:calibrated B modules} when $n=5$ and $i= \frac{5+1}{2} = 3$. Pictured below are 
the standard  tableaux $\cF$ of shape $\gamma = (\beta q^{-4}, \beta q^{-2}, \beta , \beta q^{2},  \beta q^{4})$. They are marked by which of $e_0, e_1, \dots, e_4$ act by $0$ on the corresponding weight vector $v_\cF$ in $\Cal(\gamma)$ (where $e_0$ acts by $0$ if and only if $\beta \in\{ \alpha_1,\alpha_2\}$). The  tableau marked $\cF_\star$ is the unique  tableau for which $e_j v_{\cF_\star} \ne 0$ for all $j = 1,\dots, 4$. The other  tableaux marked by $\star$ are the remaining of the elements of $F_\odd$, those whose corresponding weight vectors are not annihilated by any odd $e_{2j-1}$. Below that list, find the orbits of $\cF_\star$ under the action of  $S_\odd=  \<s_1,s_3\>$ and of  $S_\even = \<s_2,s_4\>$.}\label{fig:special-shape-tabuleaux-5}

\end{figure}

Let $\cF_\star$ be the unique standard  tableau of shape $\gamma$ satisfying $e_{j}v_{\cF_\star} \ne 0$ for all $j = 2, 3, \dots, n-1$:
\[\begin{matrix}
\begin{array}{rc@{\ }c@{\ }cl}
	\cF_\star^{-1}\colon 
		& 1 &\mapsto& \frac{n}{2}, &\\[3pt]
		& 2j &\mapsto& j + \frac{n}{2} &\quad \text{ for } j=1, \dots, \frac{n}{2}, \text{ and }\\[3pt]
		& 2j + 1 &\mapsto& j - \frac{n}{2} &\quad \text{ for } j=1, \dots, \frac{n}{2}-1.
		\end{array}\\[10pt]
 \end{matrix}\]
(there is no standard  tableau where $e_1, \dots, e_{n-1}$ all act by 0 in this case) which we picture as follows.
\[\TIKZ[scale=.55, font = \small]{
\draw (-6.6,0) rectangle (6,1);
\foreach \x in {-5, -3, -2, -1, 0, 1, 2, 3,5}{\draw (\x,0) to (\x,1);}
\foreach \c [count=\x from -3] in {\sm 5,\sm 3, 1,2,4,6}
	{\node at (\x+.5, .5) {$\c$};}
\node at (5.5, .5) {$n$};
\node at (-5.8, .5) {\footnotesize $\sm n \! + \! 1$};
\node at (-4,.5) {$\cdots$};
\node at (4,.5) {$\cdots$};
\draw[carmine] (-1,1) to +(-.5, .5) node[above] {\footnotesize $\gamma_{\frac{n}{2}}$};
\node[M] at (-1,1){};
\draw[black!70, thin] (0,0) to +(0, -.5) node[below, inner sep=2pt] {\tiny (center)};
}
\]

\begin{figure}[ht!]
  \[
 \begin{array}{rl@{\qquad}rl}
 \TIKZ[scale=.5]{\SIX{1,2,3,4,5,6}{2/3,4/5}{1/2,3/4,5/6}}
 &
 \begin{array}{l} \color{teal!90!black}
 	 e_1 = e_3 = e_5 = 0\\ \color{copperred!90!black}
 	 e_2 = e_4 = 0\end{array}
 &\TIKZ[scale=.5]{\SIX{\sm 6,\sm 1,2,3,4,5}{5/6,3/4}{4/5}}
 &
 \begin{array}{l} \color{teal!90!black}
 	e_3  = 0\\ \color{copperred!90!black}
 	e_2 = e_4  = 0\end{array}\\
 \TIKZ[scale=.5]{\SIX{\sm 1,2,3,4,5,6}{2/3,4/5}{5/6,3/4}}
 &
 \begin{array}{l} \color{teal!90!black}
 	e_3 = e_5 = 0\\ \color{copperred!90!black}
 	e_2 = e_4 = 0\end{array}
 &\TIKZ[scale=.5]{\SIX{\sm 3,\sm 2,1,4,5,6}{3/3,4/5,1/2}{5/6}}
 &
 \begin{array}{l} \color{teal!90!black}
 	e_5  = 0\\ \color{copperred!90!black}
 	e_2 = e_4 = 0\end{array}\\
 \TIKZ[scale=.5]{\SIX{\sm 2,1,3,4,5,6}{4/5}{5/6,3/4}}
 &
 \begin{array}{l} \color{teal!90!black}
 	e_3 = e_5 = 0\\ \color{copperred!90!black}
 	e_4 = 0\end{array}
 &\TIKZ[scale=.5]{\SIX{\sm 4,\sm 2,1,2,5,6}{3/3}{5/6} \node[copperred, left] at (0,.5){$\star$};}
 &
 \begin{array}{l} \color{teal!90!black}
 	e_5  = 0 \end{array}\\
 \TIKZ[scale=.5]{\SIX{\sm 3,1,2,4,5,6}{4/5}{5/6, 2/3}}
 &
 \begin{array}{l} \color{teal!90!black}
 	 e_1 = e_5 = 0\\ \color{copperred!90!black}
 	 e_4 = 0\end{array}
 &\TIKZ[scale=.5]{\SIX{\sm 5,\sm 2,1,3,4,6}{3/3}{4/5} \node[copperred, left] at (0,.5){$\star$};}
 &
 \begin{array}{l} \color{teal!90!black}
 	e_3 = 0 \end{array}\\
 \TIKZ[scale=.5]{\SIX{\sm 4,1,2,3,5,6}{3/4}{5/6,2/3}}
 &
 \begin{array}{l} \color{teal!90!black}
 	e_1 = e_5 = 0\\ \color{copperred!90!black}
 	e_2 = 0\end{array}
 &\TIKZ[scale=.5]{\SIX{\sm 6,\sm 2,1,3,4,5}{3/3,5/6}{4/5}}
 &
 \begin{array}{l} \color{teal!90!black}
 	e_3 = 0\\ \color{copperred!90!black}
 	e_4 = 0\end{array}\\
 \TIKZ[scale=.5]{\SIX{\sm 5,1,2,3,4,6}{3/4}{4/5,2/3}}
 &
 \begin{array}{l} \color{teal!90!black}
 	e_1 = e_3 = 0\\ \color{copperred!90!black}
 	e_2 = 0\end{array}
 &\TIKZ[scale=.5]{\SIX{\sm 4,\sm 3,1,2,5,6}{3/3}{1/2,3/4,5/6} \node[copperred, left] at (0,.5){$\star$};}
 &
 \begin{array}{l} \color{teal!90!black}
 	e_1 = e_3 = e_5 = 0 \end{array}\\
 \TIKZ[scale=.5]{\SIX{\sm 6,1,2,3,4,5}{3/4,5/6}{4/5,2/3}}
 &
 \begin{array}{l} \color{teal!90!black}
 	e_1 = e_3 = 0\\ \color{copperred!90!black}
 	e_2 = e_4 = 0\end{array}
 &\TIKZ[scale=.5]{\SIX{\sm 5,\sm 3,1,2,4,6}{3/3}{3/4} \node[copperred, left] at (0,.5){$\cF_\star$:};}
 &
 \begin{array}{l} \color{teal!90!black}
 	e_1 = 0 \end{array}\\
 \TIKZ[scale=.5]{\SIX{\sm 2,\sm 1,3,4,5,6}{4/5}{5/6,3/4,1/2}}
 &
 \begin{array}{l} \color{teal!90!black}
 	e_1 = e_3 = e_5 = 0\\ \color{copperred!90!black}
 	e_4 = 0\end{array}
 &\TIKZ[scale=.5]{\SIX{\sm 6,\sm 3,1,2,4,5}{3/3, 5/6}{3/4}}
 &
 \begin{array}{l} \color{teal!90!black}
 	e_1  = 0\\ \color{copperred!90!black}
 	e_4  = 0\end{array}\\
 \TIKZ[scale=.5]{\SIX{\sm 3,\sm 1,2,4,5,6}{4/5}{5/6}}
 &
 \begin{array}{l} \color{teal!90!black}
 	e_5 = 0\\ \color{copperred!90!black}
 	e_4 = 0\end{array}
 &\TIKZ[scale=.5]{\SIX{\sm 5,\sm 4,1,2,3,6}{3/3, 4/5,1/2}{3/4}}
 &
 \begin{array}{l} \color{teal!90!black}
 	e_1 = 0\\ \color{copperred!90!black}
 	e_2 = e_4 = 0\end{array}\\
 \TIKZ[scale=.5]{\SIX{\sm 4,\sm 1,2,3,5,6}{3/4}{5/6}}
 &
 \begin{array}{l} \color{teal!90!black}
 	e_5  = 0\\ \color{copperred!90!black}
 	e_2 = 0\end{array}
 &\TIKZ[scale=.5]{\SIX{\sm 6,\sm 4,1,2,3,5}{3/3, 4/5}{3/4}}
 &
 \begin{array}{l} \color{teal!90!black}
 	e_1 = 0\\ \color{copperred!90!black}
 	e_2 = 0\end{array}\\
 \TIKZ[scale=.5]{\SIX{\sm 5,\sm 1,2,3,4,6}{3/4}{4/5}}
 &
 \begin{array}{l} \color{teal!90!black}
 	e_3 = 0\\ \color{copperred!90!black}
 	e_2 = 0\end{array}
 &\TIKZ[scale=.5]{\SIX{\sm 6,\sm 5,1,2,3,4}{3/3, 4/5}{3/4,1/2,5/6}}
 &
 \begin{array}{l} \color{teal!90!black}
 	e_1 = e_3 = e_5 = 0\\ \color{copperred!90!black}
 	e_2 = 0\end{array}
 \end{array}\]

 \[\TIKZ{
 \node at (0, 5) {$S_\odd \cF_\star$:};
 \node (0) at (0, 4) {$\TIKZ[scale=.5]{\SIX{\sm 5, \sm 3, 1,2,4,6}{3/3}{3/4}
 	  }$};
 \node (3) at (-2, 2) {$\TIKZ[scale=.5]{\SIX{\sm 5,\sm 4,1,2,3,6}{1/2,3/3,4/5}{3/4}}$};
 \node (5) at (2, 2) {$\TIKZ[scale=.5]{\SIX{\sm 6, \sm 3,1,2,4,5}{3/3, 5/6}{3/4}}$};
 \node (35) at (0, 0) {$\TIKZ[scale=.5]{\SIX{\sm 6,\sm 4,1,2,3,5}{3/3, 4/5}{3/4}}$};
 \draw (0) to node[above left, inner sep=2pt]{$s_3$} (3);
 \draw (0) to node[above right, inner sep=2pt]{$s_5$} (5);
 \draw (3) to node[below left, inner sep=2pt]{$s_5$} (35);
 \draw (5) to node[below right, inner sep=2pt]{$s_3$} (35);
 }\qquad\qquad
 \TIKZ{
 \node at (0, 5) {$F_\even = S_\even \cF_\star$:};
 \node (0) at (0, 4) {$\TIKZ[scale=.5]{\SIX{\sm 5,\sm 3,1,2,4,6}{3/3}{3/4} 
  }$};
 \node (2) at (-2, 2) {$\TIKZ[scale=.5]{\SIX{\sm 5,\sm 2,1,3,4,6}{3/3}{4/5}}$};
 \node (4) at (2, 2) {$\TIKZ[scale=.5]{\SIX{\sm 4,\sm 3,1,2,5,6}{3/3}{3/4,5/6,1/2}}$};
 \node (24) at (0, 0) {$\TIKZ[scale=.5]{\SIX{\sm 4,\sm 2,1,3,5,6}{3/3}{5/6}}$};
 \draw (0) to node[above left, inner sep=2pt]{$s_2$} (2);
 \draw (0) to node[above right, inner sep=2pt]{$s_4$} (4);
 \draw (2) to node[below left, inner sep=2pt]{$s_4$} (24);
 \draw (4) to node[below right, inner sep=2pt]{$s_2$} (24);
 }\]
\caption{Example for the proof of \cref{prop:calibrated B modules} when $n=6$ and $i= \frac{6}{2} = 3$. Pictured below are 
the standard  tableaux $\cF$ of shape $\gamma = (\beta q^{-4}, \beta q^{-2}, \beta , \beta q^{2},  \beta q^{4}, \beta q ^6)$. They are marked by which of $e_0, e_1, \dots, e_5$ act by $0$ on the corresponding weight vector $v_\cF$ in $\Cal(\gamma)$ (where $e_0$ acts by $0$ if and only if $\beta \in\{ \alpha_1,\alpha_2\}$). The  tableau marked $\cF_\star$ is the unique tableau for which $e_j v_{\cF_\star} \ne 0$ for all $j  = 2,3,\dots, 5$. The other tableaux  marked by $\star$ are the remaining of the elements of $F_\even$, those whose corresponding weight vectors are not annihilated by any even $e_{2j}$. Below that list, find the orbits of $\cF_\star$ under the action of  $S_\odd=  \<s_3,s_5\>$ and of  $S_\even = \<s_2,s_4\>$.
}\label{fig:special-shape-tabuleaux-6}

\end{figure}

For $n$ even, define 
\[S_\odd =\<s_3, s_5, \dots, s_{n-1}\>
	\quad \text{ and } \quad 
	S_\even  = 
		\<s_2, s_4, \dots, s_{n-2}\> \]
(notice $s_1 \notin S_\odd$).    We again have that  the set of  tableaux that are not annihilated by $\hat{I}_0 = e_0 e_2 \cdots e_{n-2}$ is exactly the orbit
\begin{align*}
F_\even&= \left\{ \text{ standard  tableaux $\cF$ of shape $\gamma$}~\left|~ \cF^{-1}(2j+1) \ne \cF^{-1}(2j) +1 \text{ for all }j =1, \dots, \tfrac{n}{2}-1\right.\right\}\\
	&= S_\even \cF_\star;
\end{align*}
and $S_\even$ acts freely on this orbit. Further, if $\gamma_i \ne \alpha_1$ or $\alpha_2$, then $e_0$ acts by a (non-zero) scalar on each $v_\cF$ for $\cF \in F_\even$. 
Hence, for any $\cF \in F_\even$, 
\[\textstyle \hat{I}_0 v_\cF = \sum_{w \in S_\even} c_w v_{w \cdot \cF},   \]
for some $c_w \ne 0 \text{ for all }w \in  S_\even$.
And for all $\cF \in F_\even - \{\cF_\star\}$,  we have $e_{j} v_{\cF} = 0$ for all $j = 3, 5, \dots, n-1$. So for $\cF \in F_\even$, letting $c = c_{\rm id}$ be the (non-zero) coefficient of $v_{\cF_\star}$ in $\hat{I}_0 v_\cF$, we have 
\begin{align*}
e_3 e_5 \cdots e_{n-1} \hat{I}_0 v_{\cF} 
 = e_3 e_5 \cdots e_{n-1} c v_{\cF_\star}  
	\textstyle  =  \sum_{w \in S_\odd} c d_w v_{w{\cF_\star}},   
	\end{align*}
for some   $d_w \ne 0 \text{ for all }w \in S_\odd.$

We have $w(1) = 1$ and $w(2) = 2$ for all $w \in  S_\odd$, and hence $\gamma^{w{\cF_\star}}_1 = \gamma_i$ and $\gamma^{w{\cF_\star}}_2 = q^2\gamma_i$. So neither  $s_0w\cF_\star$ nor $s_1w\cF_\star$ is standard. By \cref{thm:calibratedTL}, we can then conclude that 
\begin{align*}
T_1 e_{0^{\vee}} T_1^{-1} v_{w\cF_\star} 
 = [T_1]_{w\cF_\star} \left( [T_{0^\vee}]_{w\cF_\star}  -q_n\right)[T_1]_{w\cF_\star}^{-1} v_{w\cF_\star} 
 = \left( [T_{0^\vee}]_{\cF_\star}  -q_n\right)v_{w\cF_\star},
\end{align*}
which is $0$ exactly when $\gamma_i = \alpha_1$ or $\alpha_2^{-1}$ (by \eqref{eq:e-zeros}). If so, then  
\[I_0 v_{\cF} = 
\tfrac{1}{\(q_0\)}\hat{I}_0 
{I_{1}^\vee \hat{I}_0 v_{\cF}} 
=\tfrac{1}{\(q_0\)}\hat{I}_0 
({I_{1}^\vee \hat{I}_0 v_{\cF}} )
=\tfrac{1}{\(q_0\)}\hat{I}_0 
\times 0 
 =0,\]
and hence $I_0 \Cal(\lambda_2) = 0$. Otherwise, the coefficient of $v_{\cF_\star}$ is nonzero in the final expansion of the action by $\hat{I}_0$ (the only element of the set $S_\even S_\odd$ that stabilizes $\cF_\star$  is $1$). 
 Thus $I_0$ annihilates $N(\gamma)$ if and only if $\gamma_i = \alpha_2^{\pm 1}$ or $\alpha_1$.

 \smallskip
 \noindent{\bf Complete list of non-isomorphic simples.} Finally, our indexing set of calibrated simple modules can easily be seen to be in bijection with 
 \cite[Theorem 8.13]{MR2354870}. They are also easily seen to be pairwise non-isomorphic by looking at the eigenvalues of the Jucys--Murphy elements $X_i$, that is by comparing sequences of residues.
\end{proof}

It will be convenient to shift $\gamma_1$ satisfying \eqref{eq:good gamma} by a power of $q$ as follows: 
we set $\vartheta= \gamma_1 q^n$ if $n$ is even and 
$\vartheta= \gamma_1 q^{n-1}$ if $n$ is odd.  
Thus from now on we will use in addition to $q$ the three parameters $\alpha_1$, $\alpha_2$, and $\extra$ related to $q_0$,$q_n$, and $\kappa$ by
\begin{equation}\label{parametersfinal}
\alpha_1=q_0q_n, \quad 
\alpha_2=-q_0q_n^{-1}, \quad 
\kappa = \begin{cases}
    \(\extra q^{-1}\)-\(\alpha_1 q^{-1}\) 
        & n\in 2\ZZ\\
    \(\extra\)-\(\alpha_1\) 
    & n\in 1+2\ZZ
\end{cases}
\end{equation}
(see \eqref{eq:alphas-defn} and \eqref{eq:good gamma}).

\subsection{Shapes and standard tableaux}
For the remainder of the paper, we will fix the following notation
$$ 
\Lambda_0 =   \{(0,\extra) \}
\qquad
\Lambda_1 =   \{(0,\extra),(1,\alpha_1) \}
\qquad
\Lambda_2 =   \{(0,\extra),(2,\alpha_1),(2,\alpha_2),(2,\alpha_2^{-1}) \}
$$and $\Lambda_n = \Lambda_{n-2}\cup \{(n,\beta) \mid \beta \in \{\alpha_1^{\pm1},\alpha_2^{\pm1}\}\}$ for $n>2$. We call elements of $\Lambda_n$ \emph{shapes of size $n$}.
We place a partial ordering on $\Lambda_n$ as follows: $(k_1,\beta_1) < (k_2,\beta_2)$ for $(k_i,\beta_i)\in \Lambda_n$ if and only if $k_1<k_2$.  
\Cref{prop:calibrated B modules} states  that $\Lambda_n$ provides  an indexing set of the simple modules 
of the symplectic blob algebra with generic parameters, and moreover that a basis of this algebra is indexed by  pairs of standard tableaux of shape $(k,\beta)\in \Lambda_n$  (by \cref{thm:calibratedTL} and  Artin--Wedderburn theory).

Note that $\Lambda_n$ consists of shapes $( k,\beta)$ for $0< k \leq n$ with the same parity of $n$ and with some restrictions on $\beta$ when
 $k$ is small, together with the special shape $(0, \extra )$. 
We will identify a shape $\lambda=(k,\beta)$ with a row of $n$ boxes with a bead on the top-left corner of the $(\lfloor \tfrac{n-k}{2}\rfloor +1)$th box (from left to right) indexed by $\beta\in\{\alpha_1^{\pm1},\alpha_2^{\pm1}, \extra\}$.

\begin{eg}
 For $n=4$ the shapes of 
 $(4,\alpha_1)$,  $(4,\alpha_2)$,  $(4,\alpha_1^{-1})$,  $(4,\alpha_2^{-1})$        are as follows 
$$
\begin{minipage}{3.25cm}\begin{tikzpicture}[scale=0.6]
 \draw[thick,](0,0) --++(0:1) coordinate (x1)
  --++(0:1) coordinate (x2)
   --++(0:1) coordinate (x3)
    --++(0:1) coordinate (x4)
   
                                                                            --++(-90:1)--++(180:4)--++(90:1);
 \foreach \i in {1,2,3,4}
 {
 \draw[thick ](x\i)--++(-90:1);
 \path(x\i)--++(-90:0.5)--++(180:0.5) coordinate (y\i);
 }
%

 \path(x1)--++(180:1) coordinate (M);
 \path(M)--++(90:0.6)--++(180:0.3) node {$\phantom{_x}\alpha_1^{\phantom{-1}}$};
  \node[M] at (M){};

\path(0,0)--++(0:4/2) coordinate (x35); 
 
 \draw[   thick,densely dotted](x35) --++(90:0.65);
\path(x35)--++(-90:1) coordinate (x35);
\draw[   thick,,densely dotted](x35) --++(-90:.65);

\end{tikzpicture}\end{minipage}
\qquad
\begin{minipage}{3.25cm}\begin{tikzpicture}[scale=0.6]
 \draw[thick,](0,0) --++(0:1) coordinate (x1)
  --++(0:1) coordinate (x2)
   --++(0:1) coordinate (x3)
    --++(0:1) coordinate (x4)
   
                                                                            --++(-90:1)--++(180:4)--++(90:1);
 \foreach \i in {1,2,3,4}
 {
 \draw[thick ](x\i)--++(-90:1);
 \path(x\i)--++(-90:0.5)--++(180:0.5) coordinate (y\i);
 }
%

 \path(x1)--++(180:1) coordinate (M);
 \path(M)--++(90:0.6)--++(180:0.3) node {$\phantom{_x}\alpha_2^{\phantom{-1}}$};
  \node[M] at (M){};

\path(0,0)--++(0:4/2) coordinate (x35); 
 
 \draw[   thick,densely dotted](x35) --++(90:0.65);
\path(x35)--++(-90:1) coordinate (x35);
\draw[   thick,,densely dotted](x35) --++(-90:.65);

\end{tikzpicture}\end{minipage}
\qquad
\begin{minipage}{3.25cm}\begin{tikzpicture}[scale=0.6]
 \draw[thick,](0,0) --++(0:1) coordinate (x1)
  --++(0:1) coordinate (x2)
   --++(0:1) coordinate (x3)
    --++(0:1) coordinate (x4)
   
                                                                            --++(-90:1)--++(180:4)--++(90:1);
 \foreach \i in {1,2,3,4}
 {
 \draw[thick ](x\i)--++(-90:1);
 \path(x\i)--++(-90:0.5)--++(180:0.5) coordinate (y\i);
 }
%

 \path(x1)--++(180:1) coordinate (M);
 \path(M)--++(90:0.6)--++(180:0.3) node {$\phantom{_x}\alpha_1^{-1}$};
  \node[M] at (M){};

\path(0,0)--++(0:4/2) coordinate (x35); 
 
 \draw[   thick,densely dotted](x35) --++(90:0.65);
\path(x35)--++(-90:1) coordinate (x35);
\draw[   thick,,densely dotted](x35) --++(-90:.65);

\end{tikzpicture}\end{minipage}\qquad
\begin{minipage}{3.25cm}\begin{tikzpicture}[scale=0.6]
 \draw[thick,](0,0) --++(0:1) coordinate (x1)
  --++(0:1) coordinate (x2)
   --++(0:1) coordinate (x3)
    --++(0:1) coordinate (x4)
   
                                                                            --++(-90:1)--++(180:4)--++(90:1);
 \foreach \i in {1,2,3,4}
 {
 \draw[thick ](x\i)--++(-90:1);
 \path(x\i)--++(-90:0.5)--++(180:0.5) coordinate (y\i);
 }
%

 \path(x1)--++(180:1) coordinate (M);
 \path(M)--++(90:0.6)--++(180:0.3) node {$\phantom{_x}\alpha_2^{-1}$};
  \node[M] at (M){};

\path(0,0)--++(0:4/2) coordinate (x35); 
 
 \draw[   thick,densely dotted](x35) --++(90:0.65);
\path(x35)--++(-90:1) coordinate (x35);
\draw[   thick,,densely dotted](x35) --++(-90:.65);

\end{tikzpicture}\end{minipage}
$$
and the shapes coming from $\Lambda_{2}$ are 
 $(2,\alpha_1),(2,\alpha_2),(2,\alpha_2^{-1})$ and $(0,\extra)$ pictured as follows 
$$
 \begin{minipage}{3.25cm}\begin{tikzpicture}[scale=0.6]
 \draw[thick,](0,0) --++(0:1) coordinate (x1)
  --++(0:1) coordinate (x2)
   --++(0:1) coordinate (x3)
    --++(0:1) coordinate (x4)
   
                                                                            --++(-90:1)--++(180:4)--++(90:1);
 \foreach \i in {1,2,3,4}
 {
 \draw[thick ](x\i)--++(-90:1);
 \path(x\i)--++(-90:0.5)--++(180:0.5) coordinate (y\i);
 }
%

 \path(x2)--++(180:1) coordinate (M);
 \path(M)--++(90:0.6)--++(180:0.3) node {$\phantom{_x}\alpha_1^{\phantom{-1}}$};
  \node[M] at (M){};

\path(0,0)--++(0:4/2) coordinate (x35); 
 
 \draw[   thick,densely dotted](x35) --++(90:0.65);
\path(x35)--++(-90:1) coordinate (x35);
\draw[   thick,,densely dotted](x35) --++(-90:.65);

\end{tikzpicture}\end{minipage}
\qquad
\begin{minipage}{3.25cm}\begin{tikzpicture}[scale=0.6]
 \draw[thick,](0,0) --++(0:1) coordinate (x1)
  --++(0:1) coordinate (x2)
   --++(0:1) coordinate (x3)
    --++(0:1) coordinate (x4)
   
                                                                            --++(-90:1)--++(180:4)--++(90:1);
 \foreach \i in {1,2,3,4}
 {
 \draw[thick ](x\i)--++(-90:1);
 \path(x\i)--++(-90:0.5)--++(180:0.5) coordinate (y\i);
 }
%

 \path(x2)--++(180:1) coordinate (M);
 \path(M)--++(90:0.6)--++(180:0.3) node {$\phantom{_x}\alpha_2^{\phantom{-1}}$};
  \node[M] at (M){};

\path(0,0)--++(0:4/2) coordinate (x35); 
 
 \draw[   thick,densely dotted](x35) --++(90:0.65);
\path(x35)--++(-90:1) coordinate (x35);
\draw[   thick,,densely dotted](x35) --++(-90:.65);

\end{tikzpicture}\end{minipage}
\qquad
\begin{minipage}{3.25cm}\begin{tikzpicture}[scale=0.6]
 \draw[thick,](0,0) --++(0:1) coordinate (x1)
  --++(0:1) coordinate (x2)
   --++(0:1) coordinate (x3)
    --++(0:1) coordinate (x4)
   
                                                                            --++(-90:1)--++(180:4)--++(90:1);
 \foreach \i in {1,2,3,4}
 {
 \draw[thick ](x\i)--++(-90:1);
 \path(x\i)--++(-90:0.5)--++(180:0.5) coordinate (y\i);
 }
%

 \path(x2)--++(180:1) coordinate (M);
 \path(M)--++(90:0.6)--++(180:0.3) node {$\phantom{_x}\alpha_2^{{-1}}$};
  \node[M] at (M){};

\path(0,0)--++(0:4/2) coordinate (x35); 
 
 \draw[   thick,densely dotted](x35) --++(90:0.65);
\path(x35)--++(-90:1) coordinate (x35);
\draw[   thick,,densely dotted](x35) --++(-90:.65);

\end{tikzpicture}\end{minipage}
\qquad
\begin{minipage}{3.25cm}\begin{tikzpicture}[scale=0.6]
 \draw[thick,](0,0) --++(0:1) coordinate (x1)
  --++(0:1) coordinate (x2)
   --++(0:1) coordinate (x3)
    --++(0:1) coordinate (x4)
   
                                                                            --++(-90:1)--++(180:4)--++(90:1);
 \foreach \i in {1,2,3,4}
 {
 \draw[thick ](x\i)--++(-90:1);
 \path(x\i)--++(-90:0.5)--++(180:0.5) coordinate (y\i);
 }
%

 \path(x3)--++(180:1) coordinate (M);
 \path(M)--++(90:0.6)--++(180:0.3) node {$ \vartheta  $};
 \fill(M) circle (5pt);

\path(0,0)--++(0:4/2) coordinate (x35); 
 
 \draw[   thick,densely dotted](x35) --++(90:0.65);
\path(x35)--++(-90:1) coordinate (x35);
\draw[   thick,,densely dotted](x35) --++(-90:.65);

\end{tikzpicture}\end{minipage}$$

\end{eg}

We recall that a {\sf tableau} $\SSTT$ of shape $\lambda$ is a filling of the boxes by integers from $\{\pm1,\dots,\pm n\}$ which contains exactly one entry equal to $i$ or $-i$ for each $i\in \{1,\ldots ,n\}$.  The Weyl group $W(C_n)$ acts on the set of all tableaux of a given shape via its action on the entries in the boxes. We say that the tableau  $\SSTT$ is {\sf standard} if the entries are strictly increasing from left to right and, moreover, if $\lambda \neq (0,\extra)$, only the boxes to the left of the bead can (but do not have to) contain negative entries. We write $\Shape(\SSTT)=\lambda\in \Lambda_n$.  We refer to \cref{hereisatab} for an example.
For $n\in \ZZ_{\geq 0}$ we denote:
\[\Std_n(\lambda)=\{\text{standard tableaux $\SSTT$ with $\Shape(\SSTT)=\lambda$}\}
\]
and we set $\Std_n=\cup_{\lambda\in\Lambda_n}\Std_n(\lambda)$.

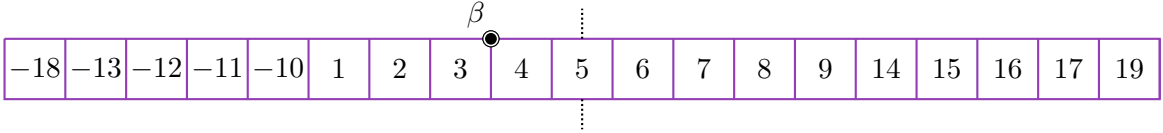
\begin{figure}[ht!]
 $$
 \begin{tikzpicture}[scale=0.8]
 \draw[thick,path2color](0,0) --++(0:1) coordinate (x1)
  --++(0:1) coordinate (x2)
   --++(0:1) coordinate (x3)
    --++(0:1) coordinate (x4)
     --++(0:1) coordinate (x5)
      --++(0:1) coordinate (x6)
       --++(0:1) coordinate (x7)
        --++(0:1) coordinate (x8)
               --++(0:1) coordinate (x9)
        --++(0:1) coordinate (x10)
               --++(0:1) coordinate (x11)
                      --++(0:1) coordinate (x12)
                             --++(0:1) coordinate (x13)
                                    --++(0:1) coordinate (x14)
                                           --++(0:1) coordinate (x15)
                                                  --++(0:1) coordinate (x16)
                                                         --++(0:1) coordinate (x17)
                                                                --++(0:1) coordinate (x18)
                                                                       --++(0:1) coordinate (x19)              
                                                                           --++(-90:1)--++(180:19)--++(90:1);
 \foreach \i in {1,2,3,4,5,6,7,8,...,19}
 {
 \draw[thick,path2color](x\i)--++(-90:1);
 \path(x\i)--++(-90:0.5)--++(180:0.5) coordinate (y\i);
 }

 \path(y8) node {$3$};
 \path(y7) node {$2$};
 \path(y6) node {$1$};
 \path(y5) node {$-10$};
 \path(y4) node {$-11$};
 \path(y3) node {$-12$};
 \path(y2) node {$-13$};
 \path(y1) node {$-18$};

 \path(y9) node {$4$};
 \path(y10) node {$5$};
 \path(y11) node {$6$};
 \path(y12) node {$7$};
 \path(y13) node {$8$};
 \path(y14) node {$9$};
 \path(y15) node {$14$};

 \path(y16) node {$15$};
 \path(y17) node {$16$};
 \path(y18) node {$17$};
 \path(y19) node {$19$};

 \fill(x8) circle (4pt); 
 \path(x8)--++(90:0.4)--++(180:0.25) node {$\beta$};
 \node[M] at (x8){};

\path(0,0)--++(0:19/2) coordinate (x35); 
 
 \draw[   thick,densely dotted](x35) --++(90:0.5);
\path(x35)--++(-90:1) coordinate (x35);
\draw[   thick,,densely dotted](x35) --++(-90:.5);

  \end{tikzpicture}
 $$ 
\caption{A standard tableau of shape $(k,\beta)$ for $k=3$
and $n=19$.}
\label{hereisatab}
\end{figure}

%
%
%
%

\color{black}

\section{The  orientifold  quiver Temperley--Lieb algebra}

In this section, we define our main algebraic object, the orientifold Temperley--Lieb algebra. We define it from the orientifold quiver Hecke algebra and make the connections with the symplectic blob algebra using the results on calibrated representations from the preceding section.
%
From now on, we specialize our parameters in a field $\Bbbk$  of \textbf{characteristic not equal to 2}   
 subject to the following assumptions.

  \begin{assump}
 We let $q,\alpha_1 ,\alpha_2,  \extra \in \Bbbk^\times$  
 and $q^{2e}=1$ for some minimal $e>2$ (we allow $e=\infty$).
 We further require that  $\alpha_i^{\pm1},\alpha_i^{\pm1}q^{\pm 2}$
 are   12 distinct points not equal to $\pm1$ and that 
   $\extra \not \in \{\pm1,\alpha_i^{\pm1},  \pm q, \pm q^{2}\}$. 
 
    \end{assump}

%

\subsection{The orientifold quiver Hecke algebra}

The type $C$ Weyl group $W_n:=W(C_n)=\langle s_k \mid 0\leq k < n\rangle $ acts as  signed permutations on
 $(\Bbbk^\times)^n$ as follows. The generator $s_0$ inverts the first component and the generator $s_k$ for  $k>0$, acts by transposing the $k$th and $(k+1)$th components. 
 We consider the following subset of  $\Bbbk^\times$:
\[I=\{\alpha_1^{\pm 1}q^{2l}\}_{l\in\mathbb{Z}}\cup\{\alpha_2^{\pm 1}q^{2l}\}_{l\in\mathbb{Z}}\cup \{\extra^{\pm 1}q^{2l}\}_{l\in\mathbb{Z}}.\]
 By Proposition \ref{prop:calibrated B modules}  the eigenvalues of all the Jucys--Murphy elements $X_1,\dots,X_n$ of the symplectic blob algebra are elements of  $I$.
 We now recall  the definition of the graded algebras of interest in this paper.

\begin{defn}[\cite{MR4085039} and \cite{MR4666131}]   \label{definition:vv algebra}
 {We define $\mathscr{H}_n  (\alpha_1,\alpha_2, \vartheta )= \oplus _\Lambda \mathscr{H}_n  ^{\Lambda}(\alpha_1,\alpha_2, \vartheta )$, where for $\Lambda$ a $W(C_n)$-orbit in $I^n$, the algebra $\mathscr{H}_n  ^{\Lambda}(\alpha_1,\alpha_2, \vartheta )$ is the associative $\Bbbk$-algebra generated by elements}
\begin{equation*}
\{ \psi_a \}_{0\leq a\leq n-1} \cup \{ y_j \}_{1\leq j\leq n} \cup \{ \idemp \}_{\underline{i} \in \Lambda}
\end{equation*}
subject to the following defining relations. We have the commutation relations 
\begin{align}
\textstyle  \sum_{\underline{i} \in \Lambda} \idmep & = 1,   \quad\! \idmep e_{\underline{j}} = \delta_{\ti,\underline{j}}\idmep  
 , \quad\!
y_ry_s=y_sy_r ,   \quad\! y_{r}\idmep=\idmep y_{r} , \quad\!
\psi_a\psi_b =\psi_b\psi_a 
 \quad\! \psi_0y_t = y_t \psi_0
  \label{Rel:V2}
\end{align}
providing 
$0\leq a \leq b-2$ and $t>1$. For $0\leq a<n$ and $1\leq b < n$  {and $\ti=(i_1,\dots,i_n)\in\Lambda$} we have 
 \begin{align}
\psi_{a}\idmep &= e_{s_a(\ti)}\psi_a
 \label{Rel:V3}
\\ 
 \label{Rel:V4}
(\psi_b y_j - y_{s_b(j)}\psi_b)\idmep&=
\begin{cases}
			-\idmep &     j=b,\ i_b=i_{b+1} \\
			\idmep &    j=b+1,\ i_b=i_{b+1} \\
			0 \qquad\qquad\qquad\quad &  \textrm{otherwise}
\end{cases} 
 \\
 \psi_b^2\idmep &= \begin{cases}
0 \qquad\qquad\qquad\quad					&  i_b=i_{b+1} \\
\idmep 			& 	 \text{$i_{b+1}\notin\{q^2i_{b},i_b, q^{-2}i_{b}\}$} \\
(y_{b+1} - y_b)\idmep &   i_{b+1}=q^2i_{b}   \\
(y_b - y_{b+1})\idmep &  
\textrm{$i_{b+1}=q^{-2}i_{b} $\,,} 
 \\
 \end{cases}
 \label{Rel:V6}
 \\
 \label{Rel:V5}
(\psi_{b}\psi_{b+1}\psi_{b} - \psi_{b+1}\psi_{b}\psi_{b+1})\idemp &
=  
\begin{cases}
\idmep								 &  i_b=i_{b+2}=\textcolor{black}{q^{-2}}i_{b+1} 				\\
-\idmep 								&  i_b=i_{b+2}=\textcolor{black}{q^{2}}i_{b+1} 			  \\
0 \qquad\qquad\qquad\quad									&   \text{otherwise} 				
\end{cases}
%
%
%
%
%
%
%
%
\\
(\psi_0 y_1 + y_1\psi_0)\idmep&=
\begin{cases}
			0 \qquad\qquad\qquad\quad & i_1^{-1}\neq i_1 	\\
			2\idmep &   i_1^{-1}= i_1 
\end{cases} 
 \label{Rel:V8}
 \\
 \psi_0^2\idemp&=\left\lbrace
\begin{array}{ll}
0 \qquad\qquad\qquad\quad &   i_1^{-1}=i_1\,, \\
\idmep &  
   i_1\neq i_1^{-1} \text{ and } 
   {i_1^{\pm1} \not \in\{\alpha_1,\alpha_2\}}
\\
y_1\idmep  & 
{i_1 \in\{\alpha_1,\alpha_2\}}
 \\
-y_1\idmep  
&  
{i_1^{-1}  \in\{\alpha_1,\alpha_2\}}
 \end{array}
\right. \label{Rel:V10} 
\\
(\psi_0 \psi_1\psi_0 \psi_1 -  \psi_1\psi_0 \psi_1\psi_0 )\idmep&= 
\begin{cases}
2\psi_0\idmep  
& 
 q^2i_1  =  i_2 \in \{\pm1\}
   \\
-2\psi_0\idmep & 
\textrm{$   q^{-2}i_1 =i_2 \in \{\pm1\} 		$}  \\ 
 -\psi_1\idmep & 
  		 {  i_2^{-1}= i_1 \in\{\alpha_1,\alpha_2\}}\\ 
\psi_1\idmep &   
 {i_2= i_1^{-1} \in\{\alpha_1,\alpha_2\}}\\
0 \qquad\qquad\qquad\quad& \textrm{otherwise.}
\end{cases}
\label{Rel:V11}
\end{align}
 
 \end{defn}
 
 We denote ${}^\star$ the involutive anti-isomorphism sending each generator to itself (the defining relations are easily checked to be preserved by ${}^\star$).

\begin{thm}[\cite{MR4085039} and \cite{MR4666131}]  
\label{Loic-iso}  
Let $m\,:\, I\to \mathbb{Z}_{\geq0}$ with finite support. The cyclotomic quotient of the algebra  $\mathscr{H}_n  (\alpha_1,\alpha_2 , \vartheta)$ by the relations $y_1^{m(i_1)}e_\ti=0$ for all $\ti\in I^n$ is isomorphic to the cyclotomic quotient of the  2-boundary Hecke algebra $H_n$ by the relation $\prod_{i\in I}(X_1-i)^{m(i)}=0$. 
\end{thm}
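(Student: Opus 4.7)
The plan is to follow the Brundan--Kleshchev strategy for isomorphisms of cyclotomic quiver Hecke algebras, in the orientifold/type $C$ adaptation, constructing mutually inverse homomorphisms between the two cyclotomic algebras by exploiting the block decomposition afforded by the simultaneous generalised eigenspaces of the Jucys--Murphy elements $X_1,\dots,X_n$.

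First, on the Hecke side, the cyclotomic relation $\prod_{i\in I}(X_1-i)^{m(i)}=0$ forces $\operatorname{Spec}(X_1)$ to lie in the finite set $I_0=\{i\in I\mid m(i)>0\}$; since $Z(H_n)$ contains all $W(C_n)$-symmetric Laurent polynomials in $X_1^{\pm 1},\dots,X_n^{\pm 1}$, the same holds (up to the $W(C_n)$-action) for each $X_a$. Hence the cyclotomic quotient $H_n^m$ decomposes as $\bigoplus_{\ti} e_\ti H_n^m$, where $\ti=(i_1,\dots,i_n)$ ranges over the relevant subset of $I^n$ and $e_\ti$ is the central idempotent cutting out the simultaneous generalised eigenspace on which $(X_a-i_a)$ is nilpotent for each $a$. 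This mirrors the idempotent decomposition $1=\sum_\ti e_\ti$ built into \cref{definition:vv algebra}.

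I would then define a morphism $\Phi$ from $\mathscr{H}_n^m(\alpha_1,\alpha_2,\extra)$ to $H_n^m$ by sending idempotents to idempotents and by
\[
y_a e_\ti\;\longmapsto\;(1-X_a i_a^{-1})e_\ti,
\]
so that $y_a$ acts nilpotently of order at most $m(i_a)$. The image of $\psi_a$ for $1\leq a\leq n-1$ is an expression of the shape $(T_a+P_a(\ti))\,Q_a(\ti)\,e_\ti$, with $P_a(\ti)$, $Q_a(\ti)$ power series in $y_a,y_{a+1}$ chosen as in the classical Brundan--Kleshchev formula so that the quadratic relation \cref{Rel:V6} and the braid relation \cref{Rel:V5} are forced. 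The new ingredient is the image of $\psi_0$, which I take to be an analogous twist of $T_0$ (or equivalently of $T_{0^\vee}$) by a rational function of $y_1$ absorbing the boundary parameters $q_0,q_n$. Summing $y_1^{m(i_1)}e_\ti=0$ over $\ti$ then reproduces $\prod_{i\in I}(X_1-i)^{m(i)}=0$ up to an invertible scalar, so the cyclotomic relations match.

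The principal obstacle is verifying the defining relations \cref{Rel:V2}--\cref{Rel:V11}. The linear relations and the interior braid relations reduce to the familiar type $A$ calculation inside each triple $\psi_a,y_a,y_{a+1}$. The delicate ones are the boundary relations: the squared relation \cref{Rel:V10}, whose right-hand side distinguishes between the generic case, the case $i_1=i_1^{-1}$, and the two cases $i_1^{\pm 1}\in\{\alpha_1,\alpha_2\}$; and the mixed braid \cref{Rel:V11} on $\psi_0,\psi_1$, which has five distinct right-hand sides tied to the residues $i_1,i_2$. In each case the rational scalar used in $\Phi(\psi_0)$ must be tuned so that the Hecke-side computation of $\psi_0^2 e_\ti$, respectively of $(\psi_0\psi_1\psi_0\psi_1-\psi_1\psi_0\psi_1\psi_0)e_\ti$, produces literally $\pm y_1 e_\ti$, $2e_\ti$ or $\pm\psi_0 e_\ti$ as demanded. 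The specific parameters $\alpha_1=q_0q_n$ and $\alpha_2=-q_0q_n^{-1}$ enter precisely here, as the roots of the quadratic governing the $T_0$--$T_{0^\vee}$ interaction; this is what makes these shifts the correct choice. Once $\Phi$ is established, an inverse $\Psi$ is defined by $X_a\mapsto \sum_\ti i_a(1-y_a)e_\ti$ together with analogous rational expressions for $T_a$ and $T_0$ in terms of the $\psi$'s and $y$'s, and that $\Phi,\Psi$ are mutually inverse is then formal.
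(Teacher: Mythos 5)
The paper does not give its own proof of this theorem: it cites it from Poulain d'Andecy--Walker [MR4085039] and Appel--Prze\'zdziecki [MR4666131]. Your blind proposal faithfully reconstructs the Brundan--Kleshchev-style argument carried out in those references: decompose by simultaneous generalised eigenspaces of $X_1,\dots,X_n$ (forced by the cyclotomic relation together with the centrality of $W(C_n)$-symmetric Laurent polynomials), define $\Phi$ on generators by $y_a e_\ti\mapsto(1-i_a^{-1}X_a)e_\ti$ and the usual power-series twists of $T_a$ for $\psi_a$ with $1\leq a<n$, and then add the new boundary ingredient $\Phi(\psi_0)$ tuned against the quadratics for $T_0$ and $T_{0^\vee}$ so that \eqref{Rel:V8}--\eqref{Rel:V11} hold case-by-case, with $\alpha_1=q_0q_n$ and $\alpha_2=-q_0q_n^{-1}$ entering precisely as the possible eigenvalues of $X_1=T_{0^\vee}T_0$; your observation that $y_1^{m(i_1)}e_\ti=0$ is equivalent to $\prod_{i\in I}(X_1-i)^{m(i)}=0$ (because $X_1-i$ is invertible on $e_\ti$ for $i\neq i_1$) is also correct. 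This matches the cited proofs; there is no gap.
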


%
%
%
%

\begin{thm}[\cite{MR4085039,MR4250039}]
The  algebra  $\mathscr{H}_n  ^{\Lambda}$ has a $\Z$-grading   given as follows,
$$\begin{array}{ccccc}
{\rm  deg}(\idmep)=0 \qquad  
& {\rm  deg}(\psi_b \idmep)	 =\left\{
\begin{array}{l l l}	
  1
 & \quad \textrm{ if  } i_{b+1}= q^{\pm2} i_b \\  
-2 & \quad \textrm{ if  } 	i_b = i_{b+1}\,,\\
 {0}  &\quad \textcolor{black}{\textrm{\ otherwise.}}	
\end{array}\right .
\\ 
 {\rm  deg}(y_j \idmep)=2 \qquad 
& {\rm  deg}(\psi_0\idmep)  = 
\begin{cases}	
-2 &  i_1^{-1} = i_1				\\	
 {\delta_{i_1, \alpha_1 }+\delta_{i_1, 	 \alpha_1 ^{-1}	}+\delta_{i_1 ,  \alpha_2 }+\delta_{i_1, \alpha_2^{-1} }}& \textrm{otherwise}
 \end{cases}
 \\
\end{array}.$$ 
  \end{thm}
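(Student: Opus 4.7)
The plan is to prove this by the universal property of the presentation: we assign the stated degrees to the generators and verify that each defining relation of Definition~\ref{definition:vv algebra} becomes a homogeneous identity (both sides of the same degree, or both sides zero); by universality this will then extend uniquely to a $\Z$-grading on the quotient algebra $\mathscr{H}_n^{\Lambda}$.

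Most of the verification is immediate. The commutation relations \eqref{Rel:V2} preserve degrees since each $e_\ti$ has degree $0$ and the degree function is additive on products of distant generators. Relation \eqref{Rel:V3} reduces to the observation that $\deg(\psi_a e_\ti)$ is invariant under the action of $s_a$ on $\ti$: for $a\geq 1$ the degree depends only on the symmetric condition $i_{a+1}\in\{q^{\pm 2} i_a, i_a\}$; for $a=0$ it depends only on $i_1$ via conditions ($i_1=i_1^{-1}$, or $i_1^{\pm 1}\in\{\alpha_1,\alpha_2\}$) that are manifestly symmetric under $i_1\leftrightarrow i_1^{-1}$. The mixed relations \eqref{Rel:V4} and \eqref{Rel:V8} vanish generically, and in the non-generic subcases each summand on the LHS has degree $\deg(\psi)+2=0$, matching the degree of the RHS $\pm e_\ti$ or $2e_\ti$. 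The quadratic relations \eqref{Rel:V6} and \eqref{Rel:V10} split into branches; in each branch, the degree $2\deg(\psi)$ of $\psi^2 e_\ti$ agrees with the degree of the prescribed RHS (which is $0$, $e_\ti$, $(y_{b+1}-y_b)e_\ti$, or $\pm y_1 e_\ti$ as specified).

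The principal, and only nontrivial, task is the pair of braid relations \eqref{Rel:V5} and \eqref{Rel:V11}. For each, I would iteratively track how the residue sequence $\ti$ evolves under each simple reflection and sum the degrees of the $\psi$-factors on both sides of the bracket; one then confirms the two sums agree, and when the RHS is nonzero, that they equal the degree of the RHS generator. For \eqref{Rel:V5}, in the exceptional case $i_b=i_{b+2}=q^{\mp 2}i_{b+1}$, one sees that both $\psi_b\psi_{b+1}\psi_b e_\ti$ and $\psi_{b+1}\psi_b\psi_{b+1} e_\ti$ have total degree $1+(-2)+1=0=\deg(\pm e_\ti)$, while in the generic cases both products have matching degree by a symmetric bookkeeping of adjacent pairs. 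For the type-$C$ braid \eqref{Rel:V11} the four $\psi$-factors on each side must be tracked: the first two cases $q^{\pm 2}i_1=i_2\in\{\pm 1\}$ give matching total degree $2=\deg(\pm 2\psi_0 e_\ti)$, whereas the cases $i_2^{\pm 1}=i_1\in\{\alpha_1,\alpha_2\}$ yield totals $0=\deg(\pm\psi_1 e_\ti)$, produced by combining one $q^{\pm 2}$-adjacency contribution with one special-point contribution at position $1$. The only real obstacle is the bookkeeping for this type-$C$ braid, where the boundary degree rule interacts non-trivially with the evolution of the residue sequence through four reflections, together with the mild subtlety that the degree of $\psi_0$ depends on whether $i_1$ meets a special value in $\{\pm 1, \alpha_1^{\pm 1}, \alpha_2^{\pm 1}\}$. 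Once this finite case analysis is completed, every defining relation is homogeneous and the grading exists.
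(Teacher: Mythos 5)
The paper gives no proof of this theorem; it is imported verbatim from the cited references \cite{MR4085039,MR4250039}, so there is no internal argument to compare against. Your approach --- assigning the stated degrees to the homogeneous generators $e_\ti$, $y_je_\ti$, $\psi_ae_\ti$ and checking that every defining relation of Definition~\ref{definition:vv algebra} is homogeneous, with the type-$C$ braid relation \eqref{Rel:V11} as the only delicate case --- is exactly the standard argument used in those references, and the degree bookkeeping you sketch (e.g.\ $1-2+1+d_0=d_0$ for the case $q^{\pm2}i_1=i_2\in\{\pm1\}$, and $0+1-2+1=0=\deg(\psi_1e_\ti)$ for $i_2^{-1}=i_1\in\{\alpha_1,\alpha_2\}$) checks out under the paper's standing assumptions.
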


\subsection{More tableaux combinatorics}
We denote by $\SSTT_{\lambda}$ the following specific  element of $\Std_n(\lambda)$: $1$ is in the box with the bead, then we put $-2,-4,-6,...$ to its left until the leftmost box, and the remaining integers, with positive signs, in increasing order in the remaining boxes.  Examples are depicted in \cref{hereisatab2,ttableau}.

\begin{figure}[ht!]
 $$
 \begin{tikzpicture}[scale=0.8]
 \draw[thick,path1color](0,0) --++(0:1) coordinate (x1)
  --++(0:1) coordinate (x2)
   --++(0:1) coordinate (x3)
    --++(0:1) coordinate (x4)
     --++(0:1) coordinate (x5)
      --++(0:1) coordinate (x6)
       --++(0:1) coordinate (x7)
        --++(0:1) coordinate (x8)
               --++(0:1) coordinate (x9)
        --++(0:1) coordinate (x10)
               --++(0:1) coordinate (x11)
                      --++(0:1) coordinate (x12)
                             --++(0:1) coordinate (x13)
                                    --++(0:1) coordinate (x14)
                                           --++(0:1) coordinate (x15)
                                                  --++(0:1) coordinate (x16)
                                                         --++(0:1) coordinate (x17)
                                                                --++(0:1) coordinate (x18)
                                                                       --++(0:1) coordinate (x19)              
                                                                            --++(-90:1)--++(180:19)--++(90:1);
 \foreach \i in {1,2,3,4,5,6,7,8,...,19}
 {
 \draw[thick,path1color](x\i)--++(-90:1);
 \path(x\i)--++(-90:0.5)--++(180:0.5) coordinate (y\i);
 }

 \path(y8) node {$-2$};
 \path(y7) node {$-4$};
 \path(y6) node {$-6$};
 \path(y5) node {$-8$};
 \path(y4) node {$-10$};
 \path(y3) node {$-12$};
 \path(y2) node {$-14$};
 \path(y1) node {$-16$};

 \path(y9) node {$1$};
 \path(y10) node {$3$};
 \path(y11) node {$5$};
 \path(y12) node {$7$};
 \path(y13) node {$9$};
 \path(y14) node {$11$};
 \path(y15) node {$13$};
 \path(y16) node {$15$};

 \path(y17) node {$17$};
 \path(y18) node {$18$};
 \path(y19) node {$19$};

 \fill(x8) circle (4pt); 
 \path(x8)--++(90:0.4)--++(180:0.25) node {$\beta$};
  \node[M] at (x8){};

\path(0,0)--++(0:19/2) coordinate (x35); 
 
 \draw[   thick,densely dotted](x35) --++(90:0.5);
\path(x35)--++(-90:1) coordinate (x35);
\draw[   thick,,densely dotted](x35) --++(-90:.5);

\end{tikzpicture}
 $$
\caption{The tableau $\stt_{(k,\beta)}$ for $k=3$ and $n=19$. 
We have that $\res(\stt_{(k,\beta)})=(\beta,\,\beta^{-1}q^{2},\,\beta q^2,\, \beta^{-1}q^{4} ,\, \beta q^{4},\,\beta^{-1}q^{6},\,\beta q ^6,\,\dots)$.}
\label{hereisatab2}

\end{figure}
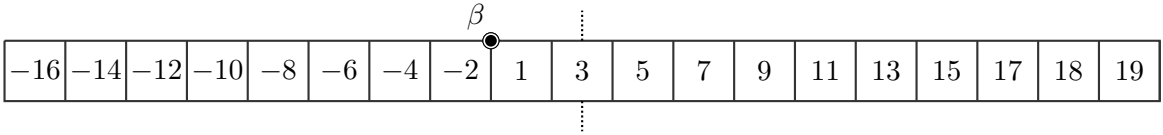

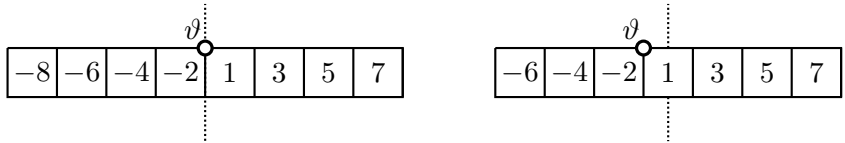
\begin{figure}[ht!]

 $$
\begin{tikzpicture}[scale=0.65]
\draw[thick](0,0) --++(0:1) coordinate (x1)
 --++(0:1) coordinate (x2)
  --++(0:1) coordinate (x3)
   --++(0:1) coordinate (x4)
    --++(0:1) coordinate (x5)
     --++(0:1) coordinate (x6)
      --++(0:1) coordinate (x7)
       --++(0:1) coordinate (x8)
--++(-90:1)--++(180:8)--++(90:1);
\foreach \i in {1,2,3,4,5,6,7,8}
{
\draw[thick](x\i)--++(-90:1);
\path(x\i)--++(-90:0.5)--++(180:0.5) coordinate (y\i);
}

\path(y1) node {$-8$};
\path(y2) node {$-6$};
\path(y3) node {$-4$};
\path(y4) node {$-2$};
\path(y5) node {$1$};
\path(y6) node {$3$};
\path(y7) node {$5$};
\path(y8) node {$7$};

\draw[   thick,densely dotted](x4) --++(90:0.9);
\draw[   thick,,densely dotted](x4) --++(-90:1.9);
\draw[ very thick,fill=white](x4) circle (4pt); 
\path(x4)--++(90:0.4)--++(180:0.25) node {$\extra$};
 \end{tikzpicture}
\qquad
\quad
\begin{tikzpicture}[scale=0.65]
\draw[thick](0,0) --++(0:1) coordinate (x1)
 --++(0:1) coordinate (x2)
  --++(0:1) coordinate (x3)
   --++(0:1) coordinate (x4)
    --++(0:1) coordinate (x5)
     --++(0:1) coordinate (x6)
      --++(0:1) coordinate (x7)
--++(-90:1)--++(180:7)--++(90:1);
\foreach \i in {1,2,3,4,5,6,7}
{
\draw[thick](x\i)--++(-90:1);
\path(x\i)--++(-90:0.5)--++(180:0.5) coordinate (y\i);
}

\path	(x3)--++(0:0.5) coordinate 	(x35);

\path(y1) node {$-6$};
\path(y2) node {$-4$};
\path(y3) node {$-2$};
\path(y4) node {$1$};
\path(y5) node {$3$};
\path(y6) node {$5$};
\path(y7) node {$7$};

\draw[   thick,densely dotted](x35) --++(90:0.9);
\path(x35)--++(-90:1) coordinate (x35);
\draw[   thick,,densely dotted](x35) --++(-90:.9);
\draw[ very thick,fill=white](x3) circle (4pt); 
\path(x3)--++(90:0.4)--++(180:0.25) node {$\extra$};
 \end{tikzpicture}
$$

\caption{The tableau $\stt_{(0, \extra )}$ for $n=8$ and $n=7$ respectively.}
\label{ttableau}
\end{figure}

For any standard tableau $\SSTT\in \Std_n(\la)$ we let $w_\SSTT\in W(C_n)$ be defined by $\SSTT = w_\SSTT (\SSTT_{\la})$.

We define the residue sequence of a tableau as follows. First assign a content to each of the boxes in $\lambda$ by setting that the content of the box with the bead on it is equal to $\beta$, and then the contents are multiplied by $q^2$ for each step to the right and by $q^{-2}$ for each step to the left.
Now,  let $\SSTT\in\Std_n$ and $i\in\{1,\dots,n\}$. If $+i$ appears in $\SSTT$,   we define $\res_i(\SSTT)$ to be the content of the box containing $+i$. Otherwise, we define $\res_i(\SSTT)$ to be the inverse of the content of the box containing $-i$. Finally, we set:
\[\res(\SSTT)=(\res_1(\SSTT),\dots,\res_n(\SSTT))\in I^n \]
see \cref{hereisatab2} for an example. 
The action of the Weyl group $W(C_n)$ on tableaux translates via residue sequences as an action on $I^n$, where the generator $s_0$ inverts the first coordinate and the generators $s_j$ ($1\leq j\leq n-1$) swap the $j$th and $(j+1)$th coordinates. 

The results of the preceding section on the generic representation theory show that the common spectrum of the Jucys--Murphy elements of the symplectic blob algebra is given by the residue sequences of standard tableaux, namely,
\[{\rm Spec}(X_1,\dots,X_n)=\{\res(\SSTT)\,\mid \  \SSTT\in\Std_n\}.\]
For the shape $(0,\extra)$, this requires a short verification using (\ref{eq:good gamma}) to check that (\ref{parametersfinal}) is indeed the correct relation between $\extra$ and $\kappa$.
The definition of the tableaux $\SSTT_{\lambda}$ is justified by the following property they satisfy. Note that it implies in particular that $\res(\SSTT_{\lambda})=\res(\SSTT_{\lambda'})$ if and only if $\lambda=\lambda'$.
\begin{prop}\label{propdominant1}
For all $\lambda\in\Lambda_n$, if $\res(\SSTT_{\lambda})=\res(\stu)$ for $\SSTT_{\lambda}\neq\stu\in\Std_n$ then $\Shape(\stu)<\lambda$.    
\end{prop}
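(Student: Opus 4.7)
My plan is to prove the stronger statement that $\SSTT_\lambda$ is the \emph{unique} standard tableau in $\Std_n$ carrying residue sequence $\res(\SSTT_\lambda)$; the proposition then follows vacuously. Let $\stu\in\Std_n(\mu)$ with $\mu=(k',\beta')$ and $\lambda=(k,\beta)$ satisfy $\res(\stu)=\res(\SSTT_\lambda)$, and recall that for any standard tableau $\res_i$ equals the content of the box containing $+i$ (when $+i$ appears) or the inverse of the content of the box containing $-i$ (otherwise).

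First I constrain $\beta'$. The multiset $\{\res_i(\stu),\,\res_i(\stu)^{-1}\}_{i=1}^n$ coincides with $\{c_\Box,\,c_\Box^{-1}\}_{\Box\in\mu}$, so the contents of $\mu$ lie (up to inversion) in the same $q^{2\ZZ}$-orbit as the residues of $\SSTT_\lambda$. By the standing genericity on $\alpha_1,\alpha_2,\vartheta$, the three orbits $\alpha_1^{\pm 1}q^{2\ZZ}$, $\alpha_2^{\pm 1}q^{2\ZZ}$, $\vartheta^{\pm 1}q^{2\ZZ}$ are pairwise disjoint in the relevant range, forcing $\beta'\in\{\beta,\beta^{-1}\}$ (with $\beta'=\vartheta$ only when $\lambda=(0,\vartheta)$). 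Next I apply the constraint $\res_1(\SSTT_\lambda)=\beta$: this demands $\pm 1$ sit in a box of content $\beta^{\pm 1}$, which in both candidate shapes $(k',\beta)$ and $(k',\beta^{-1})$ is the beaded box. Standardness, which forbids negative entries in (or to the right of) the bead whenever $\mu\neq(0,\vartheta)$, then rules out the case $\beta'=\beta^{-1}$, and places $+1$ in the bead of $\mu$; in the degenerate case $\lambda=(0,\vartheta)$ the same conclusion is reached by noting that $(0,\vartheta)$ contains no box of content $\vartheta^{-1}$.

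With $\mu=(k',\beta)$ and the location of $+1$ fixed, I iterate through the remaining residues: each $\beta q^{2j}$ with $j\geq 0$ matches the unique box of content $\beta q^{2j}$, filled with a positive entry, and each $\beta^{-1}q^{2j}$ with $j\geq 1$ matches the unique box of content $\beta q^{-2j}$, filled with a negative entry. The multiset of forced contents, read off from the explicit sequence in \cref{hereisatab2}, is exactly the multiset of contents of $\lambda=(k,\beta)$. Comparing minimal and maximal exponents pins down $k'=k$, and the reconstruction of positions and signs gives $\stu=\SSTT_\lambda$, so no competing $\stu$ exists.

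The main potential obstacle is the genericity bookkeeping in the second step: one must verify that no spurious coincidence of the form $\beta^{\pm 1}q^{2j}=\beta^{\mp 1}q^{2m}$ (equivalently $\beta^2\in q^{2\ZZ}$) creates an alternative placement of some $\pm i$, and analogously that the orbit of $\vartheta^{\pm 1}$ does not accidentally meet that of $\alpha_i^{\pm 1}$. Both reduce to elementary consequences of the standing $12$-distinct-points condition and the excluded values of $\vartheta$, handled separately in each of the subcases $\beta\in\{\alpha_1^{\pm 1},\alpha_2^{\pm 1},\vartheta\}$; once these are dispatched, the remaining verification is a direct reading of the caption to \cref{hereisatab2}.
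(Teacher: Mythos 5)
Your strategy is to prove that $\SSTT_\lambda$ is the \emph{unique} standard tableau with residue sequence $\res(\SSTT_\lambda)$. That stronger statement is false, and the gap is fatal: the proposition deliberately allows other $\stu$ with the same residue, constraining only their shape. A concrete counterexample: take $n=2$, $\lambda=(2,\alpha_1)$, and choose $\vartheta=\alpha_1 q^2$ (permitted by the standing assumptions, which only exclude $\vartheta\in\{\pm1,\alpha_i^{\pm1},\pm q,\pm q^2\}$). The shape $(0,\vartheta)$ then has boxes of contents $\alpha_1$ and $\alpha_1 q^2$, and the standard tableau $\stu$ of shape $(0,\vartheta)$ with entries $(1,2)$ has $\res(\stu)=(\alpha_1,\alpha_1 q^2)=\res(\SSTT_{(2,\alpha_1)})$, while $\stu\neq\SSTT_{(2,\alpha_1)}$. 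The proposition is satisfied since $(0,\vartheta)<(2,\alpha_1)$, but your uniqueness claim fails.

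There are also two unsupported steps feeding into that conclusion. First, you assert that the orbits $\alpha_1^{\pm1}q^{2\ZZ}$, $\alpha_2^{\pm1}q^{2\ZZ}$, $\vartheta^{\pm1}q^{2\ZZ}$ are pairwise disjoint: the standing assumptions impose no such separation (the paper's running examples even take all three in the same orbit, e.g.\ $\alpha_1=q^4$, $\alpha_2=q^8$), so the deduction $\beta'\in\{\beta,\beta^{-1}\}$ does not follow. Second, the claim that a box of content $\beta^{\pm1}$ must be the beaded box fails when $q^{2e}=1$ with $2e\leq n$, since contents repeat with period $e$ and a shape may contain several boxes of the same content. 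The paper's proof avoids all of this by inducting on $n$ and removing the largest entry (or two largest entries), comparing shapes rather than attempting a global reconstruction of $\stu$; that inductive peel-off is the mechanism that converts residue coincidences into a strict drop in shape, which is exactly what your argument is missing.
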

\begin{proof}
It is slightly more convenient to prove the statement for an extended set of shapes $\tilde{\Lambda}_n$ obtained by allowing back the forbidden shapes $(1,\beta)$ with $\beta\neq\alpha_1$ if $n$ is odd and the forbidden shape $(2,\alpha_1^{-1})$ if $n$ is even.

As a preliminary, we claim that for $\lambda=(1,\beta)$ and for $\stu\in\Std_n(\lambda)$ we have $\res(\stu)=\res(\SSTT_{\lambda})$ only if $\stu=\SSTT_{\lambda}$. Indeed, for $\stu$, there are two options for the position for $n$: with a $+$ sign at the right end or with a $-$ sign at the left end. The two residues are $\beta q^{n-1}$ and $\beta^{-1}q^{n-1}$. They are different since $\alpha_i^{\pm1}$ are all different, and from $\res(\stu)=\res(\SSTT_{\lambda})$, we get that $n$ is the entry of the rightmost box, as in $\SSTT_{\lambda}$. Similarly, for $n-1$, there are two options leading to residues $\beta q^{n-3}$ and $\beta^{-1}q^{n-1}$. These are different since $\alpha_i^{\pm1}$ and $q^2\alpha_i^{\pm1}$ are all different. Therefore $n-1$ in $\stu$ has to be with a minus sign at the left end as in $\SSTT_{\lambda}$. 
Reproducing this reasoning for all $i< n-1$, we get that $\stu=\SSTT_{\lambda}$.

Now let us check the proposition for $n=1$ and $n=2$. For $n=1$, this is an immediate verification using the fact  that $\alpha_i^{\pm1}$ and $\extra$ are all different. For $n=2$ and $\lambda=(2,\beta)$, again this follows from $\alpha_i^{\pm1}$ being all distinct. For $\lambda=(0,\extra)$, we have $\res(\SSTT_{\lambda})=(\extra,\extra^{-1}q^2)$ and it must be different from:
\[(\beta,\beta q^2)\ \ \text{for all }\beta\in\{\alpha_i^{\pm1}\}\,,\ \ \ (\extra q^{-2},\extra)\,,\ \ (\extra^{-1}q^2,\extra)\,,\ \ (\extra^{-1},\extra^{-1}q^2).\]
The result then follows from $\extra^2\not \in\{ 1,q^2\}$ and $q^2\neq1$.

Then let $n\geq 3$ and $\lambda\in\Lambda_n$ and assume that $\res(\SSTT_{\lambda})=\res(\stu)$ with $\Shape(\stu)\nless\lambda$.    

Let $\lambda=(k,\beta)\in \Lambda_n$ with $k\neq 0$ so that $\Shape(\stu)=(c,\beta')$ with $c\geq k$. First assume that $k\neq1$. The letter $n$ is at the right end of $\SSTT_{\lambda}$ and removing it we have that $\SSTT_{\lambda}{\downarrow}_{\leq n-1}$ is $\SSTT_{\lambda'}$ with $\lambda'=(k-1,\beta)$. Now we note that removing $n$ in $\stu$, we have that $\stu{\downarrow}_{\leq n-1}$ is a standard tableau of shape either $(c-1,\beta')$ or $(c+1,\beta')$. In any case its shape is $\nless \lambda'$ and this is impossible by induction hypothesis.

If $k=1$, we remove both $n$ and $n-1$ from $\SSTT_{\lambda}$ and we have that $\SSTT_{\lambda}{\downarrow}_{\leq n-2}=\SSTT_{\lambda'}$ with $\lambda'=(k,\beta)\in\tilde{\Lambda}_{n-2}$. Now removing both $n$ and $n-1$ from $\stu$, we always get a standard tableau of size $n-2$ unless $\stu$ was of shape $(1,\beta')$ and both $n-1$ and $n$ were at the right end of $\stu$. Outside of this case, we conclude as before that this is impossible by induction hypothesis. In the remaining case, the condition $\res_n(\SSTT_{\lambda})=\res_n(\stu)$ implies that $\beta'=\beta$, and we conclude using the preliminary result from the beginning of the proof.

Finally let $\lambda=(0,\extra)$. Again we remove $n$ and $n-1$ and we have $\SSTT_{\lambda}{\downarrow}_{\leq n-2}=\SSTT_{\lambda'}$ with $\lambda'=(0,\extra)\in\Lambda_{n-2}$. We also remove $n$ and $n-1$ in $\stu$ and as before the only case where the induction hypothesis is not immediately applicable is when $\stu$ is of shape $(1,\beta')$ and both $n-1$ and $n$ are at the right end of $\stu$. In this case ($n$ is odd) we would have from  $\res_n(\SSTT_{\lambda})=\res_n(\stu)$ that $\extra q^{n-1}=\beta'q^{n-1}$ which is impossible.
\end{proof}

It turns out that the standard tableaux $\SSTT_{\lambda}$ satisfy another uniqueness property. We recall that the generator $s_0$ of the Weyl group replaces the first entry of a sequence by its inverse.
\begin{prop}\label{propdominant2}
For all $\lambda\in\Lambda_n$ with $\lambda\neq(0,\extra)$, if $s_0(\res(\SSTT_{\lambda}))=\res(\stu)$ for $\stu\in\Std_n$ then $\Shape(\stu)<\lambda$.    
\end{prop}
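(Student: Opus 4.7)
The plan is to mirror the inductive structure of \cref{propdominant1}, extending the statement as in that proof to the larger family $\tilde{\Lambda}_n$. The key observation is that $s_0$ only alters the first coordinate of the residue sequence, so any $\stu\in\Std_n$ satisfying $\res(\stu)=s_0(\res(\SSTT_{\lambda}))$ has $\res_i(\stu)=\res_i(\SSTT_{\lambda})$ for all $i\geq 2$; in particular the entry of magnitude $n$ sits in $\stu$, exactly as in $\SSTT_{\lambda}$, either as $+n$ at position $n$ or as $-n$ at position $1$. The base cases $n=1,2$ are handled by direct enumeration of the small number of admissible shapes: the hypotheses $q^2\neq 1$, the $12$-points distinctness of $\alpha_i^{\pm 1},\alpha_i^{\pm 1}q^{\pm 2}$, and $\extra\notin\{\pm 1,\alpha_i^{\pm 1},\pm q,\pm q^2\}$ rule out every would-be match of residue sequences.

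For the inductive step write $\lambda=(k,\beta)$ with $k\geq 1$ and let $\Shape(\stu)=(c,\beta'')$. When $k\geq 2$ I would remove the entry $\pm n$ from $\stu$. If $c\in\{0,1\}$ then $c<k$ is immediate; if $c\geq 2$, the truncation $\stu{\downarrow}_{\leq n-1}$ is a standard tableau of shape $(c\pm 1,\beta'')\in\tilde{\Lambda}_{n-1}$ whose residue sequence coincides with $s_0(\res(\SSTT_{\lambda'}))$ for $\lambda'=(k-1,\beta)\in\tilde{\Lambda}_{n-1}\setminus\{(0,\extra)\}$. The induction hypothesis yields $\Shape(\stu{\downarrow}_{\leq n-1})<\lambda'$, and translating this back through the relation $c=c'\pm 1$ between the two shapes gives $c<k$, as required.

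The delicate case is $k=1$: here $n$ is odd, $\lambda=(1,\beta)$, and the only shape strictly below $\lambda$ is $(0,\extra)$. I would remove both $n$ and $n-1$ and note that $\SSTT_{\lambda}{\downarrow}_{\leq n-2}=\SSTT_{(1,\beta)}\in\tilde{\Lambda}_{n-2}$. Whenever $\stu{\downarrow}_{\leq n-2}$ is itself a standard tableau with shape in $\tilde{\Lambda}_{n-2}$, the induction forces $\Shape(\stu{\downarrow}_{\leq n-2})=(0,\extra)$, hence the bead of $\stu$ is labelled by $\extra$; since within $\tilde{\Lambda}_n$ the only shape with bead-label $\extra$ is $(0,\extra)$, we conclude $\Shape(\stu)=(0,\extra)<\lambda$, as desired.

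The main obstacle is the exceptional configuration where the truncation fails to be a shape in $\tilde{\Lambda}_{n-2}$: as in the proof of \cref{propdominant1}, this occurs precisely when $\Shape(\stu)=(1,\beta')$ with both $n-1$ and $n$ occupying the two rightmost boxes of $\stu$. Comparing $\res_n(\stu)=\beta q^{n-1}$ against the content of the rightmost box of $(1,\beta')$ forces $\beta'=\beta$, and hence $\Shape(\stu)=\lambda$. I would then replay the forcing argument from the preliminary of \cref{propdominant1}: working through $i=n,n-1,\dots,2$ in turn, the $12$-points hypothesis (in particular $\beta^2\notin\{1,q^{\pm 2}\}$ together with the distinctness of the $\alpha_i^{\pm 1}q^{\pm 2}$ from the $\alpha_j^{\pm 1}$) ensures that the two potential positions of $\pm i$ in a tableau of shape $(1,\beta)$ are distinguished by the value of $\res_i$, so each entry of $\stu$ is forced to coincide with that of $\SSTT_{\lambda}$. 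This leaves $\pm 1$ only the bead box as a candidate, so $\res_1(\stu)=\beta^{-1}\neq\beta$ would require $-1$ to occupy the bead box, violating the standardness rule; hence the exceptional case contributes no $\stu$, and the proposition is proved.
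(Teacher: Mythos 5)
Your proof plan follows the paper's strategy quite closely: mirror the induction of Proposition \ref{propdominant1}, work with the extended set $\tilde{\Lambda}_n$, observe that $s_0$ affects only the first coordinate so the truncated residue sequences agree, replay the forcing ``preliminary'' for $\lambda=(1,\beta)$, and resolve the $k=1$ step by a size-two reduction. The exceptional-configuration argument you give for $\Shape(\stu)=(1,\beta')$ is exactly what the paper does.

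There is, however, a genuine gap: you treat only $n=1,2$ as base cases, whereas the paper also treats $n=3$ separately, and your inductive scheme really does need it. The reason is that the paper's extension $\tilde{\Lambda}_n$ is only taken for $n>1$ (this is essential: for $\lambda'=(1,\beta)$ with $\beta\neq\alpha_1$ at size $1$, the $s_0$-twisted statement is actually false, e.g.\ $s_0(\res(\SSTT_{(1,\alpha_1^{-1})}))=(\alpha_1)=\res(\SSTT_{(1,\alpha_1)})$). Consequently, when you reach $k=1$ at $n=3$ and remove $n,n-1$, your claim that $(1,\beta)\in\tilde{\Lambda}_{n-2}$ fails for $n-2=1$ and $\beta\neq\alpha_1$, so the inductive hypothesis cannot be invoked. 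This is precisely why the paper inserts a direct residue check at $n=3$: it verifies that $(\beta,\beta q^2,\beta q^4)$ cannot equal $(\beta'^{-1},\beta'q^2,\beta'q^4)$ or $(\beta'^{-1},\beta'^{-1}q^2,\beta'q^2)$, the last ruled out by $\beta^2\neq q^{\pm 2}$ from the $12$-points assumption. Your exceptional-case analysis ($\Shape(\stu)=(1,\beta')$ with $n-1,n$ at the right end) does not subsume this, because the problematic $\stu$ at $n=3$ can have the entry $-n$ at the left, in which case the truncation is a perfectly good size-one tableau and the argument silently applies an inductive hypothesis that is unavailable. A secondary slip: you condition the inductive step on $\stu{\downarrow}_{\leq n-2}$ having shape in $\tilde{\Lambda}_{n-2}$, but what the hypothesis actually requires is that $\lambda'=(1,\beta)$ lie in $\tilde{\Lambda}_{n-2}$; conflating the two masks exactly the failure at $n=3$.
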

\begin{proof}
We use the same approach by induction as used  in the proof of \cref{propdominant1} with a very small modification. This time we extend the set of shapes in $\tilde{\Lambda}_n$ as before, but only for $n>1$.

First if $\lambda=(1,\beta)$, the same proof repeats to show that if $\res(\stu)=s_0\cdot\res(\SSTT_{\lambda})$ with $\stu$ of shape $\lambda$, then it must be that $\stu$ and $\SSTT_{\lambda}$ coincide for all entries from $2$ to $n$. 
Since replacing  $1$  with a  $-1$ in $\SSTT_{\lambda}$ we  {\em do not} obtain a standard tableau, we deduce that such a $\stu$ cannot exist.

The induction step works exactly the same, but we have to be a bit more careful about the small values of $n$ since there is one more possibility to fall outside of the sets $\tilde{\Lambda}_{n-2}$ when reducing from $n$ to $n-2$ in the induction step. This is taken care of below.

For $n=1$, the only shape is $\lambda=(1,\alpha_1)$ and we have $\res(\SSTT_{\lambda})=(\alpha_1)$ and applying $s_0$ results into $(\alpha_1^{-1})$ which is different, using our standing assumption on $\alpha_1$. Thus $s_0(\res(\SSTT_{\lambda}))$ cannot be the residue sequence of a standard tableau.

For $n=2$, the only shapes are $(2,\beta)$ and for them the verification is immediate.

Finally, we also need to deal with $n=3$ and $\stu$ of shape $(3,\beta)$ since this is where an induction step of size $2$ would possibly lead us outside $\Lambda_1$. The residue sequence of $\stu$ is $(\beta,\beta q^2,\beta q^4)$ and we need to be sure that it is different from all $s_0(\res(\SSTT_{\lambda}))$ with $\lambda\in\tilde{\Lambda}_3$. This amounts to checking that it is different from
\[(\beta'^{-1},\beta'q^2,\beta'q^4)\ \ \ \text{and}\ \ \ (\beta'^{-1},\beta'^{-1}q^2,\beta'q^2).\]
This is obviously true for the first one since $\beta\neq\beta^{-1}$. An equality with the second one leads to $\beta'=\beta^{-1}$ and $\beta=q^2\beta^{-1}$ which is excluded from our standing assumptions on $\alpha_1,\alpha_2$.
\end{proof}

\subsection{The orientifold quiver Temperley--Lieb algebra} With our combinatorics in place, we are now ready to define the orientifold quiver Temperley--Lieb algebra.  
This can be seen as a natural generalisation of the ideas of \cite{PR13}.

\begin{defn}\label{whatisTL}
We define the orientifold quiver Temperley--Lieb algebra, ${\rm TL}_n(\alpha_1,\alpha_2, \extra )$, to be the quotient of $\mathscr{H}_n  (\alpha_1,\alpha_2 , \vartheta )$ by the relations
\begin{align}\label{fkdjhgdlsjhgdjlskhgsdfjklhgjkdflh}
\idemp= 0   \text{ for $\ti \neq \res(\SSTT)$ 				for some   $\SSTT\in \Std_n$		}
\qquad\quad
y_1 e_{\res(\stt_{(0,\extra)})}= 0 
\end{align}
where $\stt_{(0,\extra)}$ is the distinguished standard tableau of shape $(0,\extra$) (see \cref{ttableau}).
\end{defn}
  
We are ready to prove the first half of the isomorphism theorem relating ${\rm TL}_n(\alpha_1,\alpha_2, \extra )$ to the symplectic blob algebra $B_n(\kappa)$.

\begin{prop} \label{quotientthm}
 There is a surjective homomorphism 
 from $\mathscr{H}_n(\alpha_1,\alpha_2, \vartheta)$ to the symplectic blob algebra $B_n(\kappa)$
 and this homomorphism factors through the orientifold quiver Temperley--Lieb algebra ${\rm TL}_n(\alpha_1,\alpha_2, \extra )$. 

\end{prop}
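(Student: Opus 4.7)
The plan has three main stages.

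To construct the surjection $\mathscr{H}_n(\alpha_1,\alpha_2,\vartheta)\twoheadrightarrow B_n(\kappa)$: the algebra $B_n(\kappa)$ is a finite-dimensional quotient of the 2-boundary Hecke algebra $H_n$, and by \cref{prop:calibrated B modules} the spectrum of $X_1$ on $B_n(\kappa)$ over the generic fraction field is contained in $I$. Semicontinuity of the characteristic polynomial of $X_1$ extends this to our specialised setting, so there is a finitely-supported $m\colon I\to \Z_{\geq 0}$ such that $\prod_{i\in I}(X_1-i)^{m(i)}=0$ holds on $B_n(\kappa)$. Thus $B_n(\kappa)$ is a quotient of the cyclotomic quotient of $H_n$ by this relation, and \cref{Loic-iso} identifies that cyclotomic quotient with $\mathscr{H}_n/\langle y_1^{m(i_1)}e_{\ti}\rangle$. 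Composition with the canonical map $\mathscr{H}_n\twoheadrightarrow \mathscr{H}_n/\langle y_1^{m(i_1)}e_{\ti}\rangle$ then yields the required surjection.

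To factor this through ${\rm TL}_n$, I would verify the two families of defining relations of \cref{whatisTL} hold under the map. The image of $e_{\ti}$ is the projection onto the simultaneous generalised $\ti$-eigenspace of $(X_1,\dots,X_n)$ on $B_n(\kappa)$. By \cref{prop:calibrated B modules}, the simultaneous eigenvalues occurring on simple $B_n(\kappa)$-modules over the generic field form the set $\{\res(\stt)\mid \stt\in\Std_n\}$; the block decomposition of $B_n(\kappa)$ along $W(C_n)$-orbits of residue sequences transports this information to the specialised setting, so $e_{\ti}=0$ whenever $\ti\neq\res(\stt)$ for all $\stt\in\Std_n$.

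The remaining relation $y_1 e_{\res(\stt_{(0,\vartheta)})}=0$ translates to $(X_1-\vartheta)\,e_{\res(\stt_{(0,\vartheta)})}=0$ in $B_n(\kappa)$. By \cref{propdominant1}, applied to the minimum element $(0,\vartheta)$ of $\Lambda_n$, the tableau $\stt_{(0,\vartheta)}$ is the unique standard tableau with residue sequence $\res(\stt_{(0,\vartheta)})$, so the only composition factor of the corresponding generalised eigenspace is the calibrated simple associated to $(0,\vartheta)$, on which $X_1$ acts as the scalar $\vartheta$ (since $\res_1(\stt_{(0,\vartheta)})=\vartheta$). To promote this from the simple level to the whole algebra, I would arrange $m(\vartheta)=1$ in the first stage, so that the cyclotomic relation itself forces $(X_1-\vartheta)$ to annihilate the $\vartheta$-generalised eigenspace in the cyclotomic quotient. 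The central obstacle is justifying this choice in the non-semisimple setting: one must rule out a Jordan block for $X_1$ at eigenvalue $\vartheta$ contributing to $e_{\res(\stt_{(0,\vartheta)})}$. The decisive inputs are the minimality of $(0,\vartheta)$ in $\Lambda_n$ together with \cref{propdominant1}: no simple labelled by a strictly smaller shape exists to contribute, so the whole generalised eigenspace is built only from copies of the calibrated simple on which $X_1$ acts literally as $\vartheta$.
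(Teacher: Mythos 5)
Your first two stages coincide with the paper's argument: the surjection is obtained exactly as you describe, from the finite-order characteristic equation of $X_1$ on $B_n(\kappa)$ with roots in $I$ together with \cref{Loic-iso}, and the vanishing of $e_{\ti}$ off the residue sequences of standard tableaux follows from the calibrated-representation analysis. The gap is in your treatment of $y_1e_{\res(\stt_{(0,\extra)})}=0$, in two places. First, ``arranging $m(\vartheta)=1$'' is not a free choice: the map $m$ must be such that $\prod_i(X_1-i)^{m(i)}$ actually vanishes on $B_n(\kappa)$, and taking $m(\vartheta)=1$ would force $X_1-\vartheta$ to act semisimply on \emph{every} weight space $e_{\ti}$ with $i_1=\vartheta$, not only on $e_{\res(\stt_{(0,\extra)})}$. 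Since $\vartheta$ may lie in $\alpha_jq^{2\ZZ}$ (as in \cref{aneasypeasyeg}), there are standard tableaux of shapes other than $(0,\extra)$ whose first residue is $\vartheta$; the relation $y_1e_{\ti}=0$ on those weight spaces is genuinely stronger than the single relation imposed in \cref{whatisTL}, nothing in your argument establishes it, and in general it fails (the $y$'s are nonzero elements of positive degree and are used nontrivially throughout Section 5, e.g.\ in the proof of \cref{ladderssss}). So the cyclotomic quotient you propose need not surject onto $B_n(\kappa)$ at all.

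Second, your resolution of the Jordan-block problem does not resolve it: knowing that every composition factor of the generalised eigenspace is a copy of the calibrated simple on which $X_1$ acts by $\vartheta$ is entirely compatible with $X_1-\vartheta$ acting nilpotently but nonzero, via a self-extension. The paper closes this with a dimension count rather than a composition-series argument. By \cref{propdominant1} and the minimality of $(0,\extra)$ in $\Lambda_n$, the tableau $\stt_{(0,\extra)}$ is the \emph{unique} standard tableau with its residue sequence, so the corresponding weight space is one-dimensional; since $y_1e_{\ti}$ is the nilpotent part of $X_1$ there, a nilpotent operator on a one-dimensional space is zero. You have assembled exactly the right inputs (\cref{propdominant1} and minimality), but the step from ``unique tableau'' to ``$y_1e_{\ti}=0$'' must go through the one-dimensionality of the weight space, not through its composition factors.
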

\begin{proof}
Since $B_n(\kappa)$ is a finite-dimensional quotient of the two-boundary Hecke algebra $H_n$, the Jucys--Murphy element $X_1$ automatically satisfies in $B_n(\kappa)$ a characteristic equation of finite order. Moreover, the study of calibrated representations of $B_n(\kappa)$ in the generic semisimple case (Section \ref{sec:Calibrated}) shows that its eigenvalues all lie in the set 
\[I=\{\alpha_1^{\pm 1}q^{2l}\}_{l\in\mathbb{Z}}\cup\{\alpha_2^{\pm 1}q^{2l}\}_{l\in\mathbb{Z}}\cup \{\extra^{\pm 1}q^{2l}\}_{l\in\mathbb{Z}}.\]
Recall that in the isomorphism from \cref{Loic-iso}, the idempotents $e_\ti$ correspond to projectors on the common generalised eigenspaces for the Jucys--Murphy elements $X_1,\dots,X_n$. The sequence $\ti=(i_1,\dots,i_n)$ corresponds to the sequence of eigenvalues of $X_1,\dots,X_n$. Therefore, \cref{Loic-iso} applies for some map $m\ :\ I\to \mathbb{Z}_{\geq 0}$ with finite support. Ignoring the precise form of $m$, we still get a surjective morphism
\[\Theta\ :\ \mathscr{H}_n  (\alpha_1,\alpha_2 , \vartheta)\ \to\ B_n(\kappa),\]
and we must argue that the relations of \eqref{fkdjhgdlsjhgdjlskhgsdfjklhgjkdflh} are in the kernel of this morphism. The relation $\idemp= 0$ for $\ti \neq \res(\SSTT)$ for some   $\SSTT\in \Std_n$	is clearly satisfied in $B_n(\kappa)$ from the study of calibrated representations, since the set $\{\res(\SSTT) \mid  \SSTT\in\Std_n\}$ gives all possible sequences of common eigenvalues of $X_1,\dots,X_n$. 

We now consider the relation $y_1 e_{\res(\stt_{(0,\extra)})}=0$. We first recall that in the isomorphism from \cref{Loic-iso}, the element $y_je_\ti$ is the nilpotent part of the Jucys--Murphy element $X_j$ in the common eigenspace corresponding to $\ti \in I^n$. In particular it has to be $0$ if the eigenspace is of dimension 1. Now Proposition \ref{propdominant1} implies that the residue sequence of the standard tableau $\stt_{(0,\extra)}$ never appears as a residue sequence of another standard tableau in $\Std_n$, since the shape $(0,\extra)$ is at the bottom of the order. This means that the common eigenspace corresponding to the residue sequence $\res(\stt_{(0,\extra)})$ is of dimension 1, and therefore the relation $y_j e_{\res(\stt_{(0,\extra)})}=0$ for all $j=1,\dots,n$ is satisfied in $B_n(\kappa)$.
 \end{proof}

  \section{Cellular basis of the  orientifold  quiver Temperley--Lieb algebra }\label{sec-cellbasis}

 
The main result of \cref{ytieuowrytoiueytiouewr} was that the set of shapes $\Lambda_n$ provides  an indexing set of the simple modules of the symplectic blob algebra with generic parameters (and a construction of these simple $B_n(\kappa)$-modules).  In this section, we will show   {\em for arbitrary parameters} satisfying the standing assumption (see the preceding section or the introduction) that $\Lambda_n$ provides the poset
of an integral graded  cellular structure on the orientifold quiver Temperley-Lieb algebra ${\rm TL}_n(\alpha_1,\alpha_2,\extra)$. This will also complete the second half of the proof of the isomorphism theorem between the symplectic blob algebra $B_n(\kappa)$ and  ${\rm TL}_n(\alpha_1,\alpha_2,\extra)$.

\subsection{Orientifold paths}
 
We will use many times the following basic property of standard tableaux: after having put the first $k$ entries in a shape in order to make a standard tableau, we have at most two choices for the entry $k+1$ (one to the right with a $+$ sign and one to the left with a $-$ sign). This will allow to recast the orientifold tableaux as paths in a 2-dimensional Euclidean space. As we have already seen, the residue sequence of standard tableaux plays an essential role when studying the orientifold quiver Temperley-Lieb algebra (or the symplectic blob algebra). We will therefore choose our embedding of tableaux in the Euclidian space in a way which allows us to easily identify the residue classes of standard tableaux.

Define $e$ to be the minimal positive integer, if it exists, such that $q^{2e} = 1$. If no such integer exists, we set $e=\infty$. By our standing assumption, we have $e>2$.

 For $\beta \in \{\alpha_1^{\pm1},\alpha_2^{\pm1},\extra^{\pm1}\}$ 
    we  write $\beta = q^{\Bee}$, where $\Bee$ is just a formal symbol if $\beta\notin q^{\ZZ}$. We also assume that $0<\Bee<2e$ if $e<\infty$ and $\beta\in q^\ZZ$. We define a lattice $
 \mathcal{L}_\beta$ by  
 $$
 \mathcal{L}_\beta=(\Bee,0)+\ZZ\eps_1+\ZZ\eps_2=\{(\Bee + m-n,m+n) \mid m,n \in \ZZ \}, 
 $$
where $\eps_1:=(1,1)$ and $\eps_2:=(-1,1)$. We will draw this lattice in $\mathbb{R}^2$ by placing a vertex $(\Bee,0)$ labelled by $\beta$, and by drawing from this vertex the edges corresponding to $\eps_1$-steps and $\eps_2$-steps. Note that our $y$-axis is oriented downwards, so that $\eps_1$ is a SE step and $\eps_2$ is a SW step. We will only need and only show the vertices with non-negative $y$-coordinate.

Note that we have points with $x$-coordinate $0$ in the lattice $\mathcal{L}_{\beta}$ if and only if $b\in \ZZ$ (that is, $\beta\in q^{\ZZ}$). In this case, we draw a  hyperplane through the points with $x=0$. Also, in this case 
   $\mathcal{L}_{\beta} =\mathcal{L}_{\beta^{-1}}$ and we merge these two labelled lattices by placing both marked points $\beta$ and $\beta^{-1}$ on the same lattice. Namely, the vertex $(-b,0)$ of $\mathcal{L}_{\beta}$ is labelled by $\beta^{-1}$.

 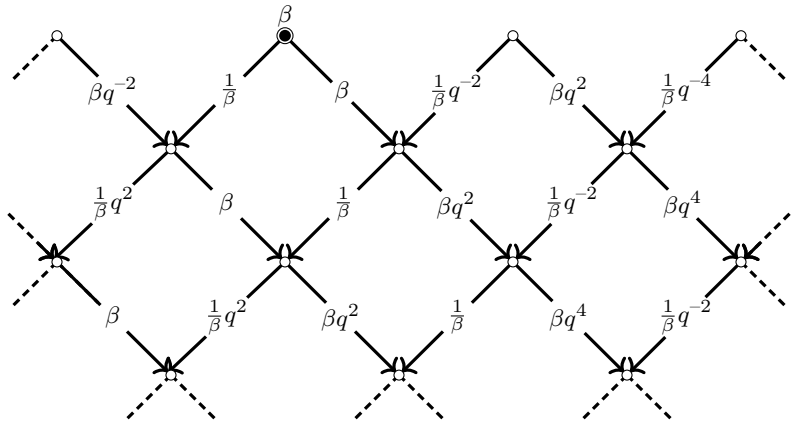
\begin{figure}[ht!] 

$$\scalefont{0.85}
\begin{tikzpicture}[scale=1.5]
 \node (0,0.2) [above] {$\beta$};
\path(0,0)--(1,-1) coordinate (Y11) coordinate  [midway] (X);
\draw[very thick, ->](0,0)--(0.975,-0.975) ;
\fill[white](X) circle (0.125cm);
\draw(X) node {$\beta$};

\path(Y11)--++(1,-1) coordinate (Y22)  coordinate [midway] (X);
\draw[very thick, ->](Y11)--++(0.975,-0.975);
\fill[white](X) circle (0.18cm);
\draw(X) node {$\beta q^2$};

\path(Y11)--++(1,1) coordinate (Y20)  coordinate [midway] (X);
\draw[very thick, ->](Y20)--++(-1+.025,-1+0.025); 
\fill[white](X) circle (0.26cm);
\draw(X) node {$\tfrac{1}{\beta}q^{\text{--}2}  $};

\path(Y20)--++(-1,-1) coordinate (Y02)  ;
\draw[very thick, ->](Y02)--++(-1+.025,-1+0.025) coordinate [midway] (X); 
\fill[white](X) circle (0.18cm);
\draw(X) node {$\tfrac{1}{\beta}   $};

\draw[very thick, ->](2,0)-- (3-0.025,-1+0.025) coordinate [midway] (X); 
\fill[white](X) circle (0.18cm);
\draw(X) node {$ {\beta} q^2  $};

\draw[very thick, ->](3,-1)-- (4-0.025,-2+0.025) coordinate [midway] (X); 
\fill[white](X) circle (0.18cm);
\draw(X) node {$ {\beta} q^4  $};

\draw[very thick, ->](4,0)-- (3+0.025,-1+0.025) coordinate [midway] (X); 
\fill[white](X) circle (0.25cm);
\draw(X) node {$ \tfrac{1}{\beta} q^{\text{--}4}  $};

\draw[very thick, ->](3,-1)-- (2+0.025,-2+0.025) coordinate [midway] (X); 
\fill[white](X) circle (0.25cm);
\draw(X) node {$ \tfrac{1}{\beta} q^{\text{--}2}  $};

\draw[very thick, ->](2,-2)-- (1+0.025,-3+0.025) coordinate [midway] (X); 
\fill[white](X) circle (0.18cm);
\draw(X) node {$ \tfrac{1}{\beta}     $};

\draw[very thick, ->](0,0)-- (-1+0.025,-1+0.025) coordinate [midway] (X); 
\fill[white](X) circle (0.18cm);
\draw(X) node {$ \tfrac{1}{\beta}     $};

\draw[very thick, ->](2,-2)-- (3-0.025,-3+0.025) coordinate [midway] (X); 
\fill[white](X) circle (0.2cm);
\draw(X) node {$ {\beta} q^{4}  $};

\draw[very thick, ->](4,-2)-- (3+0.025,-3+0.025) coordinate [midway] (X); 
\fill[white](X) circle (0.25cm);
\draw(X) node {$ \tfrac{1}{\beta} q^{\text{--}2}  $};

\draw[very thick, ->](-2,0)-- (-1-0.025,-1+0.025) coordinate [midway] (X); 
\fill[white](X) circle (0.18cm);
\draw(X) node {$ {\beta}q^{\text{--}2}     $};

\draw[very thick, ->](-2+1,0-1)-- (1-1-0.025,-1-1+0.025) coordinate [midway] (X); 
\fill[white](X) circle (0.18cm);
\draw(X) node {$ {\beta}    $};

\draw[very thick, ->](-2+1-1,0-1-1)-- (1-1-0.025-1,-1-1+0.025-1) coordinate [midway] (X); 
\fill[white](X) circle (0.18cm);
\draw(X) node {$ {\beta}     $};

\draw[very thick, ->](-2+1+1,0-1-1)-- (1+1-1-0.025,-2-1+0.025) coordinate [midway] (X); 
\fill[white](X) circle (0.18cm);
\draw(X) node {$ {\beta}q^{2}     $};

\draw[very thick, ->](-2+1,0-1)-- (-1-1-0.025,-1-1+0.025) coordinate [midway] (X); 
\fill[white](X) circle (0.25cm);
\draw(X) node {$\tfrac{1} {\beta}q  ^2    $};

\draw[very thick, ->](-2+1+1,0-1-1)-- (-1-1-0.025+1,-1-1-1+0.025) coordinate [midway] (X); 
\fill[white](X) circle (0.25cm);
\draw(X) node {$\tfrac{1} {\beta}q  ^2    $};

\draw[very thick,densely dashed,<-] 
 (-2-0.025,-2+0.025)--++(135:0.57);
 \draw[very thick,densely dashed,<-] 
(4+0.025,-2+0.025)--++(45:0.57);

\draw[very thick, ,<-] 
 (-2-0.025,-2+0.025)--++(135:0.2);
 \draw[very thick, ,<-] 
(4+0.025,-2+0.025)--++(45:0.2);

\draw[very thick,densely dashed] (-2,0)--++(-135:0.57)		(-2,-2)--++(-135:0.57)
(-2,-2)--++(-45:0.57)	
(-1,-3)--++(-135:0.57)
(-1,-3)--++(-45:0.57)
( 1,-3)--++(-135:0.57)
(1,-3)--++(-45:0.57)
(3,-3)--++(-135:0.57)
(3,-3)--++(-45:0.57)
(4,-2)--++(-45:0.57)
(4,-0)--++(-45:0.57);
;

\draw[fill=white](3,-3) circle (1.25pt);
\draw[fill=white](3,-1) circle (1.25pt);
\draw[fill=white](4,0) circle (1.25pt);
\draw[fill=white](4,-2) circle (1.25pt);
\draw[fill=white](-2,0) circle (1.25pt);
\draw[fill=white](-2,-2) circle (1.25pt);
\draw[fill=white](-1,-3) circle (1.25pt);
\draw[fill=white](-1,-1) circle (1.25pt);
\draw[fill=white](0,-2) circle (1.25pt);
\draw[fill=white](1,-3) circle (1.25pt);
\draw[fill=white](Y20) circle (1.25pt);
\draw[fill=white](Y11) circle (1.25pt);
\draw[fill=white](Y22) circle (1.25pt);
\node[M] at (0,0){};

\end{tikzpicture}
$$

\caption{ The residues of the steps in the lattice $\mathcal{L}_\beta$. }
\label{figresiduelattice}\end{figure}

Now, independently of whether $\beta^{-1}$ is or is not on the same lattice than $\beta$, if we have another $\beta'\in\{\alpha_1^{\pm1},\alpha_2^{\pm1},\extra^{\pm1}\}$ such that $\beta' \in \beta q^{2\ZZ}$ then 
we have that $\mathcal{L}_{\beta}=\mathcal{L}_{\beta'}$ 
and we merge these two labelled lattices by placing both marked points $\beta$ and $\beta'$ on the same lattice. Namely if $\beta'=q^{2c}\beta$ then $\beta'$ labels the vertex $(b+2c,0)$ in $\mathcal{L}_\beta$.

At the end, our lattice $\mathcal{L}_{\beta}$ may have as many as 6 different types of markings depending on how many special points among $\{\alpha_1^{\pm1},\alpha_2^{\pm1},\extra^{\pm1}\}$ are in the same $q^{2\ZZ}$-orbits. It may or may not have a hyperplane depending if the special point $\beta$ is in $q^{\ZZ}$ or not.

Suppose that $q^{2e}=1$ for some $e>0$. Then in every lattice $\mathcal{L}_{\beta}$, we also label by $\beta$ the horizontal translates of the marked point $(b,0)$ by multiples of $2e$. Therefore all points $(b+2re,0)$, $r\in\ZZ$, are marked by $\beta$. We do that for every type of marked points.

Suppose furthermore that we have a first hyperplane at $x=0$ in our lattice $\mathcal{L}_{\beta}$. Then we draw parallel hyperplanes at $x = re$ for all $r\in \mathbb{Z}$.


To each step $+\eps_1$ or $+\eps_2$ in the lattice $\mathcal{L}_\beta$, we associate a residue $i\in I$. This is defined by first associating the residue $\beta$ to the step $(b-j,j)\xrightarrow{+\eps_1} (b-j+1,j+1)$ (for all $j\geq 0$) and the residue $\beta^{-1}$ to the step $(b+j,j)\xrightarrow{+\eps_2} (b+j-1,j+1)$ (for all $j\geq 0$). We then extend to every step in the lattice by setting 
$$\res ((x+1,y+1)\xrightarrow{+\eps_1} (x+2,y+2)) = q^2 \res((x,y)\xrightarrow{+\eps_1}(x+1,y+1))$$
and 
$$\res ((x-1,y+1)\xrightarrow{+\eps_2} (x-2,y+2)) = q^2 \res((x,y)\xrightarrow{+\eps_2}(x-1,y+1)).$$
for every $(x,y)\in \mathcal{L}_\beta$. This is illustrated in \cref{figresiduelattice,fighyperplaneresidues}.


\begin{figure}[ht!]
$$ \begin{minipage}{13.3cm}
  \scalefont{0.85}
\begin{tikzpicture}[scale=0.45]

\draw[line width = 2.7, densely dotted] (-4-7,1)--(-4-7,-15);
\draw[line width = 2.7, densely dotted] (-4,1)--(-4,-15);
\draw[line width = 2.7, densely dotted] (-4+7,1)--(-4+7,-15);
\draw[line width = 2.7, densely dotted] (-4+7+7,1)--(-4+7+7,-15);

 \foreach \i  in {-16,-14,...,12}
 {
 \draw[very thick,fill=white](\i,0) circle (4pt);
 \draw[very thick,fill=white](\i,-14) circle (4pt);
 }

 \foreach \i  in {0,-2,...,-14}
 {
 \draw[fill=white](-16,\i) circle (4pt);
 \draw[fill=white](12,\i) circle (4pt);
 }

\draw(-8,0) node [above] {$\beta^{-1}$};
\draw(0,0)node [above] {$\beta$};
\draw(-14,0)node [above] {$\beta$};

 \node[M] at (0,0){};
  \node[M] at (-8,0){};
  \node[M] at (-8-6,0){};
  \node[M] at (6,0){};

\clip(-16,0) rectangle (12,-14);

 \node (0,0.2) [above] {$\beta$};

\foreach \j in {-12,...,-1,0,1,2,...,12}
{
 \foreach \i  in {-2,-1,0,1,2}
 {
\draw[line width=3, red, ](0+14*\i-\j ,-\j+0)--(1 +14*\i-\j ,-\j+-1 ); 
\draw[line width=3, violet, ](1+14*\i-\j,-\j+-1)--(2 +14*\i-\j,-\j+-2 ); 
\draw[line width=3, orange, ](2+14*\i-\j,-\j+-2)--(3 +14*\i-\j,-\j+-3); 
\draw[line width=3, green, ](3+14*\i-\j,-\j+-3)--(4 +14*\i-\j,-\j+-4); 
\draw[line width=3, pink, ](4+14*\i-\j,-\j+-4)--(5 +14*\i-\j,-\j+-5); 
\draw[line width=3, gray ](5+14*\i-\j,-\j+-5)--(6 +14*\i-\j,-\j+-6); 
\draw[line width=3, cyan ](6+14*\i -\j,-\j+-6)--(7 +14*\i-\j,-\j+-7); 
\draw[line width=3, red, ](7+14*\i -\j,-\j+-7)--(8 +14*\i -\j,-\j+-8 ); 
}
}

\foreach \j in {-12,...,-1,0,1,2,...,12}
{
 \foreach \i  in {-7,...,-2,-1,0,1,2,...,12}
 {
\draw[line width=3, green, ](0+14*\i  +\j , 0-\j )--++(-135:1.4142135 ); 
\draw[line width=3, pink, ](-1+14*\i  +\j , -1-\j )--++(-135:1.4142135 ); 
 \draw[line width=3, gray, ](-2+14*\i  +\j , -2-\j )--++(-135:1.4142135 ); 
\draw[line width=3, cyan, ](-3+14*\i  +\j , -3-\j )--++(-135:1.4142135 ); 
\draw[line width=3, red, ](-4+14*\i  +\j , -4-\j )--++(-135:1.4142135 ); 
\draw[line width=3, violet ](-5+14*\i  +\j , -5-\j )--++(-135:1.4142135 ); 
\draw[line width=3, orange, ](-6+14*\i  +\j , -6-\j )--++(-135:1.4142135 ); 
\draw[line width=3, green, ](-7+14*\i  +\j , -7-\j )--++(-135:1.4142135 ); 
}
}

%

\foreach \j in {16,14,...,-2,-4,-6,-8,-10,-12,-14}
{
 \foreach \i  in {-26,-24,...,28,30}
 {
 \draw[very thick,fill=white](\i,\j) circle (4pt);
 }
}

\foreach \j in {16,14,...,-2,-4,-6,-8,-10,-12,-14}
{
 \foreach \i  in {-26,-24,...,28,30}
 {
 \draw[very thick,fill=white](\i+1,\j+1) circle (4pt);
 }
}

 \foreach \i  in {-16,-14,...,12}
 {
 \draw[very thick,fill=white](\i,0) circle (4pt);
 \draw[very thick,fill=white](\i,-14) circle (4pt);
 }

 \foreach \i  in {0,-2,...,-14}
 {
 \draw[fill=white](-16,\i) circle (4pt);
 \draw[fill=white](12,\i) circle (4pt);
 }

 \node[M] at (0,0){};
 \node[M] at (0,0){};
  \node[M] at (-8,0){};
  \node[M] at (-8-6,0){};
  \node[M] at (6,0){};

\end{tikzpicture}\end{minipage}
 \begin{minipage}{2cm}
 \begin{tikzpicture} 
 \draw[very thick] (0,0) circle (1cm);
 \draw[very thick, <->](0,-1.4)--(0,1.4);
  \draw[very thick, <->]( -1.35,0)--(1.35,0);
  \draw[thick, fill= cyan](51.428:1) circle (3pt);
    \draw[thick, fill= red](51.428*2:1) circle (3pt);
        \draw[thick, fill= violet](51.428*3:1) circle (3pt);
    \draw[thick, fill= orange](51.428*4:1) circle (3pt);     
    \draw[thick, fill= green](51.428*5:1) circle (3pt);      
    \draw[thick, fill= pink](51.428*6:1) circle (3pt);     
        \draw[thick, fill= gray](0:1) circle (3pt);
 \end{tikzpicture}\end{minipage}$$

\caption{ On the left we picture the residues of the steps in the lattice $\mathcal{L}_\beta$ for $e=7$ and $\beta=q^4$. The colours of the 7 distinct powers of $q^2$ are given on the right.}
\label{fighyperplaneresidues}
\end{figure}
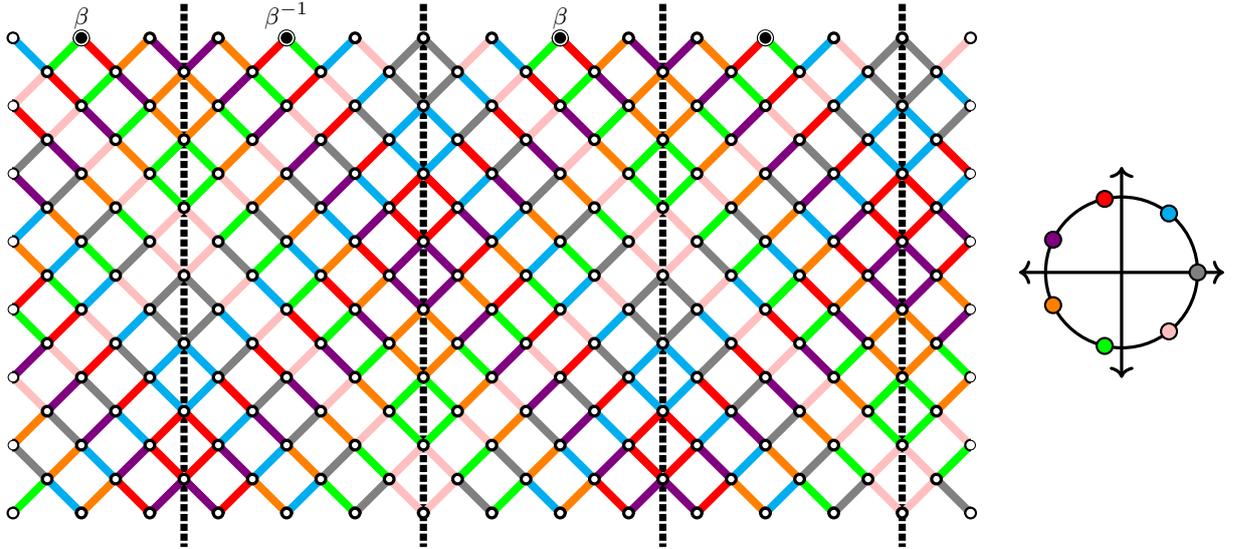

Note that if $\mathcal{L}_\beta = \mathcal{L}_{\beta'}$ then the residues are consistently defined. Note further that reflections through the hyperplanes $x=0$ and $x=re$ for $r\in \mathbb{Z}$, if present, preserve the residues. Moreover, translation by $(2re,0)$ for any $r\in \ZZ$ (when $e<\infty$) also preserves the residues. This is illustrated in \cref{fighyperplaneresidues}

We are now ready to embed our standard tableaux into the lattices $\mathcal{L}_\beta$ for $\beta \in \{ \alpha_1^{\pm 1}, \alpha_2^{\pm}, \extra^{\pm}\}$.
Let $0\leq k \leq n$ with $k \equiv n$ modulo 2 when $k\neq 0$ and let $\stt \in \Std_n(k,\beta)$.  We define $m(\stt) = \lfloor \frac{n-k}{2}\rfloor - \# \text{negative entries in $\stt$}$. (Note that $m(\stt)$ can be negative when $\stt\in \Std_n(0,\extra)$). We depict $\stt$ as a path in $\mathcal{L}_\beta$ starting at the vertex $(b-1-2m(\stt),1)$ as follows: 
we read the entries of $\stt$ in increasing modulus, and we  take a step $+\eps_1:=(1,-1)$ if the entry if positive and a step $+\eps_2:=(-1,1)$ if the entry is negative. 
We  also use the following notation  $\eps_{\overline{1}}=\eps_2 $ and $\eps_{\overline{2}}=\eps_1 $.
We note that all paths of shape $(k,\beta)$ will end at the same point. The end point of these paths is given by $(\Bee-1+k, n+1)$ in all cases except when $k= 0$ when $n$ odd when the paths end at $(\Bee, n+1)$. With this defintion, the residue sequence of the tableau is precisely the residue sequence obtained by taking the residues of each step in the path on $\mathcal{L}_\beta$. 
Examples are depicted in \cref{thisistheexampleblow5hghghghgh} and \cref{figspecialmodules}

\begin{figure}[ht!]
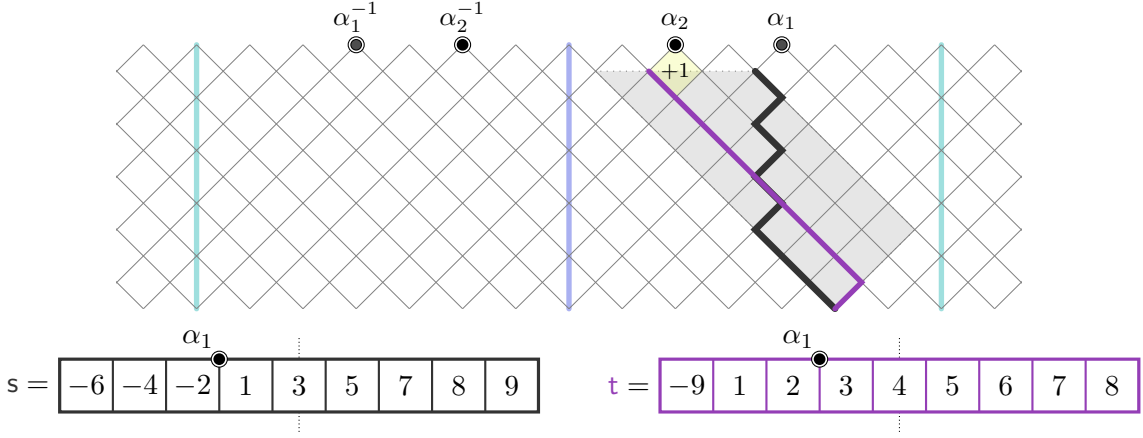

\[
\def\pE{14}
\def\pN{9}
\def\pA{8}
\def\pB{3}
\TIKZ[yscale=-1, scale=.35, font=\scriptsize]{
\pgfmathsetmacro{\pEParity}{Mod(\pE,2)}
\pgfmathsetmacro{\pNParity}{Mod(\pN,2)}
\coordinate (max-start) at (\pA-1, 0);
\path (max-start) to ++(\pN-2*\pB, \pN) coordinate (end);
\path (end) to ++(-\pN, -\pN) coordinate (min-start);
\path (end) to ++(\pB, -\pB) coordinate (p-corner);
\draw[pathsregion] (max-start) to (min-start) to (end) to (p-corner) to (max-start); 
\coordinate (beta2) at (4, -1); \coordinate (beta2inv) at (-4, -1);
\coordinate (beta1) at (8, -1); \coordinate (beta1inv) at (-8, -1);
\filldraw[plusone] (beta2) \TILE;
	\path (beta2) to ++(0,1) node {$+1$};
\draw[tauplane] (0,-1) to +(0,\pN+1);
\draw[sigmaplane] (-\pE,-1) to +(0,\pN+1);
\draw[sigmaplane] (\pE,-1) to +(0,\pN+1);
\begin{scope}
\clip (-17, -1) rectangle (17, \pN);
	\foreach \x in {-20, ..., 20}{
		\draw[gridstyle] (2*\x,-1) to +(\pN+1,\pN+1);
		\draw[gridstyle] (2*\x,-1) to +(-\pN-1,\pN+1);
	}
\end{scope}
\foreach \x in {beta1, beta1inv}
	{\node[M1] at (\x){};}
\foreach \x in {beta2, beta2inv}
	{\node[M2] at (\x){};}
\node[above] at (beta1) {\small \strut $\alpha_1$};
	\node[above] at (beta2) {\small \strut $\alpha_2$};
\node[above] at (beta1inv) {\small \strut $\alpha_1^{-1}$};
	\node[above] at (beta2inv) {\small \strut $\alpha_2^{-1}$};
\draw[path1] (max-start) 
	\foreach \x in {1, ..., 3}{ to ++(1,1) to ++(-1,1)} to (end);
\draw[path2] (3,0) to ++(8,8) to ++(-1,1);
}
\]
\[
\TIKZ[scale=0.7]{
\node[left] at (0, 0){${\color{path1color}\sts} =$};
\draw[densely dotted](.5*9, .9) --++(0,-2*.9);
\draw[very thick, path1color, fill=white] (0,-.5) rectangle (9,.5);
\foreach \x [count = \c from 1] in {-6, -4, -2, 1, 3, 5, 7, 8, 9}{
\node at (\c-.5, 0) {$\x$};
\ifnum \c = 9
	\breakforeach
	\fi
\draw[path1color, thick] (\c, .5) to +(0,-1);
} 
\node[M, label = {100, inner sep=1.5pt}:{$\alpha_1$}] at (3, .5){};
}
\qquad 
\TIKZ[scale=0.7]{
\node[left] at (0, 0){${\color{path2color}\stt} =$};
\draw[densely dotted](.5*9, .9) --++(0,-2*.9);
\draw[very thick, path2color, fill=white] (0,-.5) rectangle (9,.5);
\foreach \x [count = \c from 1] in {-9, 1, 2, ..., 8}{
\node at (\c-.5, 0) {$\x$};
\ifnum \c = 9
	\breakforeach
	\fi
\draw[path2color, thick] (\c, .5) to +(0,-1);
} 
\node[M, label = {100, inner sep=1.5pt}:{$\alpha_1$}] at (3, .5){};
}
\]
\caption{Here  $e=14$ and $\alpha_1=q^8$ and $\alpha_2=q^4$.
We depict two tableaux $\color{path1color}\sts$ and $\color{path2color}\stt$ in $\Std_9(3,\alpha_1)$ and their corresponding paths.
The first ({\color{path1color}black}) path  has degree 0 and the second ({\color{path2color}purple}) path has degree 1.
We note that the grey shading around these paths allows us to uniquely associate them to tableau by telling us where the relevant special point is. The shape of these tableaux is $(3,\alpha_1)$.
We have that $x_{\color{path1color}\sts} (0)=x_{\color{path1color}\sts} (2)=
x_{\color{path1color}\sts} (4)=x_{\color{path1color}\sts} (6)=7$	 and 
$x_{\color{path1color}\sts} (8)	=9$.
}
 \label{thisistheexampleblow5hghghghgh}
 \end{figure}

\begin{rmk}\label{shape?}
 
When we  draw  any   path $\stt$  in $\mathcal{L}_\beta$ representing a tableau in $\Std_n(k, \beta)$ we depict this in a greyed-region $\mathbb{T}_{(k,\beta)}$ that records the shape $(k, \beta)$ of the tableau.  
We do this by shading the  convex hull of  the set of  all  paths 
$\sts \in \Std_n(k,\beta)$. Except when $\beta=\extra$ (where it is a triangle), this shaded area always has the shape of a trapezoid. The width of the trapezoid is directly related to $k$, it grows as $k$ becomes smaller (in fact the width is equal to $\frac{n-k}{2}$). When $k\neq 0$, the top right corner of the region $\mathbb{T}_{(k,\beta)}$ for $\beta = q^b$ is the vertex $(b-1,1)$. See \cref{thisistheexampleblow5hghghghgh} and \cref{figspecialmodules} for examples.
 \end{rmk}

 It will be convenient for us to consider other paths on the lattices $\mathcal{L}_\beta$, not only those corresponding to standard tableaux. All our paths will start at some vertex $(c,1)$ in our lattices, have length $n$ and only take steps of the form $+\eps_1$ or $+\eps_2$. For a path $\SSTP$ on $\mathcal{L}_\beta$, we let $x_\SSTQ(i)$ denote the $x$-coordinate of the path after $0\leq i \leq n$ steps.  

Once we fix its starting point, a path $\SSTP$ is completely determined by its sequence of steps $\SSTP=(\varepsilon_{p(1)},\ldots,\varepsilon_{p(n)})$ with $p(i)=1$ or $2$. Given such a path with starting point $(c,1)$ in the lattice $\mathcal{L}_{\beta}$, we denote by $-\SSTP$  
the path $(\varepsilon_{ \overline{p(1)}},\ldots,\varepsilon_{\overline{ p(n)}})$ with starting point $(-c,1)$ in the lattice $\mathcal{L}_{\beta^{-1}}$. Note that in the case where $\mathcal{L}_{\beta}=\mathcal{L}_{\beta^{-1}}$ and therefore contains the hyperplane $x=0$, the path $ - \SSTP $ is the full reflection of $\SSTP$ through this hyperplane. Otherwise, $- \SSTP $ lives in a different lattice than $\SSTP$.
Assume now that $q^{2e}=1$ for some minimal $e>0$. Then for a path $\SSTP$ and for $r\in\mathbb{Z}$, we define $\rho_{2re}(\SSTP)$ to be the horizontal translates of $\SSTP$ by $2re$.

 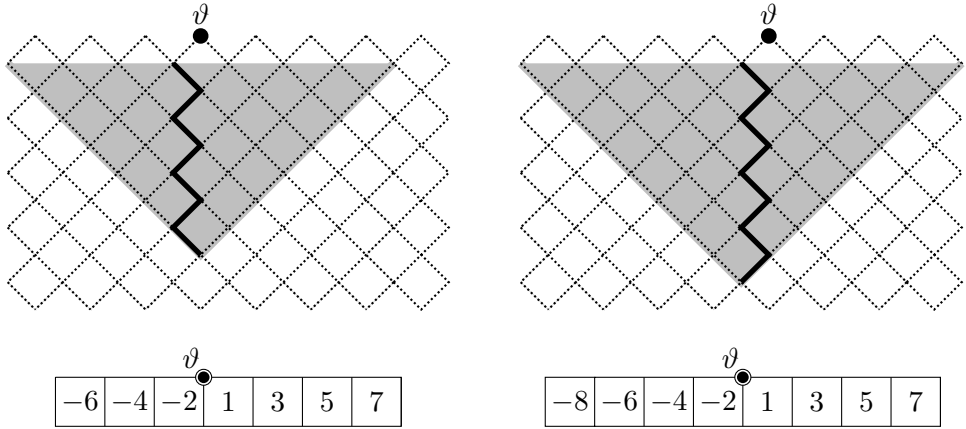
\begin{figure}[ht!]
$$ \begin{minipage}{6cm}
 $$
  \begin{tikzpicture} [scale=1.28251]

 \begin{scope}

 \draw[gray!30](0,0) coordinate (top00)
 --++(-45:0.4)  --++(45:0.4) coordinate (top1)
  --++(-45:0.4)  --++(45:0.4) coordinate (top2)
   --++(-45:0.4)  --++(45:0.4) coordinate (top3)
    --++(-45:0.4)  --++(45:0.4) coordinate (top4)
     --++(-45:0.4)  --++(45:0.4)coordinate  (top5)
  --++(-45:0.4)   coordinate(right1) --++(-135:0.4) 
  --++(-45:0.4)  coordinate (right2)--++(-135:0.4)  
  --++(-45:0.4)   coordinate (right3)--++(-135:0.4) 
  --++(-45:0.4) coordinate (right4) --++(-135:0.4)  
  --++(-45:0.4)  coordinate (right5)
  --++(-135:0.4)  
coordinate (bottom5)    --++(135:0.4)    
  --++(-135:0.4)  
coordinate (bottom4)    --++(135:0.4)    
  --++(-135:0.4)  
coordinate (bottom3)    --++(135:0.4)    
  --++(-135:0.4)  
coordinate (bottom2)    --++(135:0.4)    
  --++(-135:0.4)  
coordinate (bottom1)    --++(135:0.4)    
  --++(-135:0.4)  
coordinate (bottom0)    --++(135:0.4)    
  --++(-135:0.4)  
coordinate (bottomm1)    
--++(135:0.4)   
coordinate (left5)    --++(45:0.4)     
--++(135:0.4)   
coordinate (left4)    --++(45:0.4)     
--++(135:0.4)   
coordinate (left3)    --++(45:0.4)     
--++(135:0.4)   
coordinate (left2)    --++(45:0.4)     
 --++(135:0.4)   
coordinate (left1)    --++(45:0.4)     
coordinate (topm1)
--++(-45:0.4)    --++(45:0.4)      ;
 
  \path(top00)--++(-135:0.8)--++(135:0.4) 
  --++(-45:0.8*2)--++(45:0.4*4)   --++(-45:0.8*2)--++(45:0.4*4) 
   --++(-135:0.4)--++(135:0.4) 
  coordinate (PP);
 \draw[gray!50,fill=gray!50,line width =3] (PP)--++(-135:0.4*7) --++(135:0.4*7);

 \path(top3)    --++(-45:0.2)  --++(45:0.2) coordinate (top45);
  \path(bottom3)    --++(-45:0.2)  --++(45:0.2) coordinate (bottom45);

 	\draw[ thick,,densely dotted](left5)--++(-45:0.4);
 	\draw[ thick,,densely dotted](left4)--(bottom0); 
 	\draw[ thick,,densely dotted](left3)--(bottom1); 
 	\draw[ thick,,densely dotted](left2)--(bottom2);  
 	\draw[ thick,,densely dotted](left1)--(bottom3); 
	\draw[ thick,,densely dotted](topm1)--(bottom4); 
	\draw[ thick,,densely dotted](top00)--(bottom5); 
	\draw[ thick,,densely dotted](top1)--(right5); 
	\draw[ thick,,densely dotted](top2)--(right4); 
		\draw[ thick,,densely dotted](top3)--(right3); 
	\draw[ thick,,densely dotted](top4)--(right2); 
		\draw[ thick,,densely dotted](top5)--(right1);

		\draw[ thick,,densely dotted](topm1)--++(-135:0.4); 
		\draw[ thick,,densely dotted](top00)--++(-135:0.4*3); 
		\draw[ thick,,densely dotted](top1)--++(-135:0.4*5);
		\draw[ thick,,densely dotted](top2)--++(-135:0.4*7);
		\draw[ thick,,densely dotted](top3)--++(-135:0.4*9);
\draw[ thick,,densely dotted](top4)--++(-135:0.4*10);
\draw[ thick,,densely dotted](top5)--++(-135:0.4*10);
\draw[ thick,,densely dotted](right1)--++(-135:0.4*9);
\draw[ thick,,densely dotted](right2)--++(-135:0.4*7);
\draw[ thick,,densely dotted](right3)--++(-135:0.4*5);
\draw[ thick,,densely dotted](right4)--++(-135:0.4*3);
\draw[ thick,,densely dotted](right5)--++(-135:0.4*1);

\draw[ thick,,densely dotted](right5)--++(-45:0.4*1)
--++(45:0.4*1)
--++(135:0.4*2)   --++(45:0.4*2)   --++(135:0.4*2)  --++(45:0.4*2)   --++(135:0.4*1)--++(-135:0.4*1)
(right5)--++(45:0.4*2)--++(135:0.4*2)    --++(45:0.4*2)   --++(135:0.4*2)
;

\path(top00)--++(-45:0.4)--++(-45:0.4)--++(45:0.4)  coordinate (here);
\path(top00)--++(-45:0.8)--++(45:0.8) coordinate (top00);

\draw[line width =2,fill] (top00) circle (1.5pt) node [above] {$\vartheta$} ;

\draw[line width =2] (here) 
--++(-45:0.4*1)
--++(-135:0.4*1)
--++(-45:0.4*1)
--++(-135:0.4*1)
--++(-45:0.4*1)
--++(-135:0.4*1)
   --++(-45:0.4*1)
 coordinate (here2);

\end{scope}

 \end{tikzpicture}
 $$
  $$
\begin{tikzpicture}[scale=0.65]
\draw(0,0) --++(0:1) coordinate (x1)
 --++(0:1) coordinate (x2)
  --++(0:1) coordinate (x3)
   --++(0:1) coordinate (x4)
    --++(0:1) coordinate (x5)
     --++(0:1) coordinate (x6)
      --++(0:1) coordinate (x7)
--++(-90:1)--++(180:7)--++(90:1);
\foreach \i in {1,2,3,4,5,6,7}
{
\draw(x\i)--++(-90:1);
\path(x\i)--++(-90:0.5)--++(180:0.5) coordinate (y\i);
}

\path(y1) node {$-6$};
\path(y2) node {$-4$};
\path(y3) node {$-2$};
\path(y4) node {$1$};
\path(y5) node {$3$};
\path(y6) node {$5$};
\path(y7) node {$7$};

\fill(x3) circle (4pt); 
\path(x3)--++(90:0.4)--++(180:0.25) node {$\vartheta$};
\node[M] at (x3){};
 \end{tikzpicture}
$$
\end{minipage}
%
%
%
%
%
%
\qquad
 \begin{minipage}{6cm}
 $$
  \begin{tikzpicture} [scale=1.28251]

 \begin{scope}

 \draw[gray!30](0,0) coordinate (top00)
 --++(-45:0.4)  --++(45:0.4) coordinate (top1)
  --++(-45:0.4)  --++(45:0.4) coordinate (top2)
   --++(-45:0.4)  --++(45:0.4) coordinate (top3)
    --++(-45:0.4)  --++(45:0.4) coordinate (top4)
     --++(-45:0.4)  --++(45:0.4)coordinate  (top5)
  --++(-45:0.4)   coordinate(right1) --++(-135:0.4) 
  --++(-45:0.4)  coordinate (right2)--++(-135:0.4)  
  --++(-45:0.4)   coordinate (right3)--++(-135:0.4) 
  --++(-45:0.4) coordinate (right4) --++(-135:0.4)  
  --++(-45:0.4)  coordinate (right5)
  --++(-135:0.4)  
coordinate (bottom5)    --++(135:0.4)    
  --++(-135:0.4)  
coordinate (bottom4)    --++(135:0.4)    
  --++(-135:0.4)  
coordinate (bottom3)    --++(135:0.4)    
  --++(-135:0.4)  
coordinate (bottom2)    --++(135:0.4)    
  --++(-135:0.4)  
coordinate (bottom1)    --++(135:0.4)    
  --++(-135:0.4)  
coordinate (bottom0)    --++(135:0.4)    
  --++(-135:0.4)  
coordinate (bottomm1)    
--++(135:0.4)   
coordinate (left5)    --++(45:0.4)     
--++(135:0.4)   
coordinate (left4)    --++(45:0.4)     
--++(135:0.4)   
coordinate (left3)    --++(45:0.4)     
--++(135:0.4)   
coordinate (left2)    --++(45:0.4)     
 --++(135:0.4)   
coordinate (left1)    --++(45:0.4)     
coordinate (topm1)
--++(-45:0.4)    --++(45:0.4)      ;
 
  \path(top00)--++(-135:0.8)--++(135:0.4) 
  --++(-45:0.8*2)--++(45:0.4*4)   --++(-45:0.8*2)--++(45:0.4*4) 
  coordinate (PP);
 \draw[gray!50,fill=gray!50,line width =3] (PP)--++(-135:0.4*8) --++(135:0.4*8);

 \path(top3)    --++(-45:0.2)  --++(45:0.2) coordinate (top45);
  \path(bottom3)    --++(-45:0.2)  --++(45:0.2) coordinate (bottom45);

 	\draw[ thick,,densely dotted](left5)--++(-45:0.4);
 	\draw[ thick,,densely dotted](left4)--(bottom0); 
 	\draw[ thick,,densely dotted](left3)--(bottom1); 
 	\draw[ thick,,densely dotted](left2)--(bottom2);  
 	\draw[ thick,,densely dotted](left1)--(bottom3); 
	\draw[ thick,,densely dotted](topm1)--(bottom4); 
	\draw[ thick,,densely dotted](top00)--(bottom5); 
	\draw[ thick,,densely dotted](top1)--(right5); 
	\draw[ thick,,densely dotted](top2)--(right4); 
		\draw[ thick,,densely dotted](top3)--(right3); 
	\draw[ thick,,densely dotted](top4)--(right2); 
		\draw[ thick,,densely dotted](top5)--(right1);

		\draw[ thick,,densely dotted](topm1)--++(-135:0.4); 
		\draw[ thick,,densely dotted](top00)--++(-135:0.4*3); 
		\draw[ thick,,densely dotted](top1)--++(-135:0.4*5);
		\draw[ thick,,densely dotted](top2)--++(-135:0.4*7);
		\draw[ thick,,densely dotted](top3)--++(-135:0.4*9);
\draw[ thick,,densely dotted](top4)--++(-135:0.4*10);
\draw[ thick,,densely dotted](top5)--++(-135:0.4*10);
\draw[ thick,,densely dotted](right1)--++(-135:0.4*9);
\draw[ thick,,densely dotted](right2)--++(-135:0.4*7);
\draw[ thick,,densely dotted](right3)--++(-135:0.4*5);
\draw[ thick,,densely dotted](right4)--++(-135:0.4*3);
\draw[ thick,,densely dotted](right5)--++(-135:0.4*1);

\draw[ thick,,densely dotted](right5)--++(-45:0.4*1)
--++(45:0.4*1)
--++(135:0.4*2)   --++(45:0.4*2)   --++(135:0.4*2)  --++(45:0.4*2)   --++(135:0.4*1)--++(-135:0.4*1)
(right5)--++(45:0.4*2)--++(135:0.4*2)    --++(45:0.4*2)   --++(135:0.4*2)
;

\path(top00)--++(-45:0.4)--++(-45:0.4)--++(45:0.4)  coordinate (here);
\path(top00)--++(-45:1.2)--++(45:1.2) coordinate (top00);

\draw[line width =2,fill] (top00) circle (1.5pt) node [above] {$\vartheta$} ;
 
 \path(here)--++(-45:0.4)--++(45:0.4) coordinate (here);
\draw[line width =2] (here) 
--++(-45:0.4*1)
--++(-135:0.4*1)
--++(-45:0.4*1)
--++(-135:0.4*1)
--++(-45:0.4*1)
--++(-135:0.4*1)
   --++(-45:0.4*1)--++(-135:0.4*1)
 coordinate (here2);

\end{scope}

 \end{tikzpicture}
 $$
  $$
\begin{tikzpicture}[scale=0.65]
\draw(0,0) --++(0:1) coordinate (x1)
 --++(0:1) coordinate (x2)
  --++(0:1) coordinate (x3)
   --++(0:1) coordinate (x4)
    --++(0:1) coordinate (x5)
     --++(0:1) coordinate (x6)
      --++(0:1) coordinate (x7)
--++(-90:1)--++(180:8)--++(90:1)
--++(0:1)
--++(-90:1);
\foreach \i in {1,2,3,4,5,6,7}
{
\draw(x\i)--++(-90:1);
\path(x\i)--++(-90:0.5)--++(180:0.5) coordinate (y\i);
}
\path(y1)--++(180:1) node {$-8$};

\path(y1) node {$-6$};
\path(y2) node {$-4$};
\path(y3) node {$-2$};
\path(y4) node {$1$};
\path(y5) node {$3$};
\path(y6) node {$5$};
\path(y7) node {$7$};

\fill(x3) circle (4pt); 
\path(x3)--++(90:0.4)--++(180:0.25) node {$\vartheta$};
\node[M] at (x3){};
 \end{tikzpicture}
$$
\end{minipage}$$

 \caption{The shaded region given by all paths corresponding to standard tableaux of shape $(0,\extra)$ for $n$ even and $n$ odd.}\label{figspecialmodules} \end{figure}

Moreover, if hyperplanes are present, then for 
 $1\leq i \leq n$ 
we   write 
$\csigma \cdot _i \SSTP=\SSTQ$ if  $x_i(\SSTP)=re$
 for some $r\in \ZZ$ and 
$$
 \varepsilon_{q(t)}=\left\{\begin{array}{ll}
\varepsilon_{p(t)}& \ \text{for}\ 1\leq t\leq i\\
 \varepsilon_{ \overline{p(t)}}& \ \text{for}\ i+1\leq t\leq n .\end{array}\right.$$ 
In other words the paths $\SSTP$ and $\SSTQ$ agree up to some point $x_i(\SSTP)=x_i(\SSTQ)$ for $i\geq 1$ 
which lies on 
some hyperplane after which the rest of the path $\SSTQ$ is obtained from the rest of the path $\SSTP$ by reflection through this hyperplane. 

We write $\SSTP\sim\SSTQ$ if the two paths are obtained one from another by a finite sequence of operations of $\sigma\cdot_i$, of translations $\rho_{2re}$ and of the operation $\SSTP\mapsto-\SSTP$.

The following lemma follows directly from the definitions.
 
\begin{lem}\label{res=sim}
Let $\SSTS,\SSTT \in \Std_n$ denote two standard tableaux.  
Then we have $\SSTS \sim \SSTT$ if and only if $\res(\SSTS)=\res(\SSTT)$ 
\end{lem}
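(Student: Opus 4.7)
The plan is to prove both directions by tracking how residues on the lattices $\mathcal{L}_\beta$ transform under each of the three generators of $\sim$.  The key fact, derived by a direct induction from the base-case rule together with the extension relation $\res(\text{step at }(x+1,y+1)) = q^2\res(\text{step at }(x,y))$, is that an $\varepsilon_1$-step starting at $(x,y)$ has residue $q^{x+y}$ and an $\varepsilon_2$-step starting at $(x,y)$ has residue $q^{y-x}$ (valid in any lattice $\mathcal{L}_\beta$, independent of the offset $\beta = q^b$).

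For the forward direction $\SSTS \sim \SSTT \Rightarrow \res(\SSTS) = \res(\SSTT)$, I would verify residue-invariance of each generator using the formula above.  Translation $\rho_{2re}$ shifts $x$ by $2re$ and uses $q^{2e}=1$.  The hyperplane reflection $\sigma\cdot_i$ exchanges $\varepsilon_1$- and $\varepsilon_2$-steps at mirror-symmetric positions across $x=re$, and the formula gives equal residues.  The negation $\SSTP \mapsto -\SSTP$ sends an $\varepsilon_1$-step at $(x,y) \in \mathcal{L}_\beta$ to an $\varepsilon_2$-step at $(-x,y) \in \mathcal{L}_{\beta^{-1}}$, and $q^{y-(-x)} = q^{x+y}$ matches.

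For the reverse direction, suppose $\res(\SSTS) = \res(\SSTT) = (r_1,\ldots,r_n)$.  Write $r_1 = q^t$ and let $(c_S,1)$ and $(c_T,1)$ be the two starting points.  Since the first step has residue $q^{c+1}$ or $q^{1-c}$ according as it is $\varepsilon_1$ or $\varepsilon_2$, the starting $x$-coordinate $c$ is constrained to the class $\{t-1,1-t\}$ modulo $2e\ZZ$.  Applying $\rho_{2re}$ and, if necessary, the negation $-$ to $\SSTT$, the two paths can be brought to share the same starting point in the same lattice.  From there I would proceed by induction on the step index $i$: at each step $r_i$ determines the step direction uniquely, except when the path lies on a hyperplane $x=re$ (precisely when $\varepsilon_1$ and $\varepsilon_2$ give the same residue), and any disagreement at such a step is repaired by a single application of $\sigma\cdot_i$.

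The hard part will be the initial normalization of starting points: different shapes $(k,\beta) \in \Lambda_n$ a priori give rise to paths in different lattices, and one must coordinate the lattice choice (via $-$, which toggles between $\mathcal{L}_\beta$ and $\mathcal{L}_{\beta^{-1}}$), the sign of $c$ (also via $-$), and the value of $c$ modulo $2e$ (via $\rho_{2re}$) simultaneously.  Once this normalization is carried out, the step-by-step induction is routine, since hyperplanes are precisely the loci of ambiguity in step direction and $\sigma\cdot_i$ is built to resolve exactly that ambiguity.
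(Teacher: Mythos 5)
The paper dispenses with this lemma as following ``directly from the definitions'' and gives no proof, so there is no canonical argument to compare against; your route is the natural one. Your key formula is correct: an $\eps_1$-step with source vertex $(x,y)$ has residue $q^{x+y}$ and an $\eps_2$-step with source $(x,y)$ has residue $q^{y-x}$ (reading $q^x$ formally when $x\in \Bee+\ZZ$ with $\Bee$ a formal symbol), and a short check shows this is consistent across merged lattices and across $\mathcal{L}_\beta\leftrightarrow\mathcal{L}_{\beta^{-1}}$. Invariance of the residue sequence under each of $\rho_{2re}$, $\csigma\cdot_i$ and $\SSTP\mapsto-\SSTP$ is then immediate from this formula, so the forward implication is complete.

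For the converse there is a small but genuine gap in the inductive step. You claim a disagreement at a hyperplane step is repaired by $\csigma\cdot_i$, but $\csigma\cdot_i$ by definition preserves steps $1,\dots,i$ and reflects only steps $i+1,\dots,n$. The residue of the $i$-th step of a path is $q^{x_{i-1}+i}$ or $q^{i-x_{i-1}}$ (its source sits at height $y=i$), so the ambiguity is governed by whether $x_{i-1}\in e\ZZ$, and the operator that repairs a divergence first occurring at step $i$ is $\csigma\cdot_{i-1}$, not $\csigma\cdot_{i}$. This off-by-one is harmless for $i\geq 2$, but at $i=1$ it asks for $\csigma\cdot_{0}$, which is not among the defining moves (the paper only introduces $\csigma\cdot_i$ for $1\leq i\leq n$). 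The case where the common starting point $(c,1)$ lies on a hyperplane, so that the first step direction is already ambiguous, does occur, and then your induction has no base. The fix is available but must be said: when $c=re$, the composite $\rho_{2re}\circ(-)$ is a legitimate $\sim$-move that fixes the starting point $(c,1)$ while flipping every step, including the first; equivalently, you should fold step-one agreement into the initial normalization (when $c$ is on a hyperplane there are two normalizations of $\SSTT$ landing at $(c,1)$, differing by this composite, and one of them matches $\SSTS$ at step one). Without this, the induction has a hole exactly where it starts.
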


\begin{rmk} It will be useful to visualise  a standard tableau of shape $(0,\extra)$ both in the lattice $\mathcal{L}_{\extra}$ as described above but also in the lattice $\mathcal{L}_{\extra^{-1}}$ simply by mapping each $\SSTT\in \mathcal{L}_{\extra}$ to $-\SSTT\in \mathcal{L}_{\extra^{-1}}$
\end{rmk}

It is important to realise that if we draw a  path $\sts$ in $\mathcal{L}_\beta$   without 
contextualising it within a grey shaded region (as in  \cref{shape?}), 
 this path could correspond to several different tableaux; 
 for example both of the following tableaux
\[
\TIKZ[scale=0.7]{
\node[left] at (0, 0){${\color{path2color}\stt} =$};
\draw[densely dotted](.5*9, .9) --++(0,-2*.9);
\draw[very thick, path2color, fill=white] (0,-.5) rectangle (9,.5);
\foreach \x [count = \c from 1] in {-9, 1, 2, ..., 8}{
\node at (\c-.5, 0) {$\x$};
\ifnum \c = 9
	\breakforeach
	\fi
\draw[path2color, thick] (\c, .5) to +(0,-1);
} 
\node[M, label = {100, inner sep=1.5pt}:{$\alpha_1$}] at (3, .5){};
}
\qquad 
\TIKZ[scale=0.7]{
\node[left] at (0, 0){${\color{path2color}\stt'} =$};
\draw[densely dotted](.5*9, .9) --++(0,-2*.9);
\draw[very thick, path2color, fill=white] (0,-.5) rectangle (9,.5);
\foreach \x [count = \c from 1] in {-9, 1, 2, ..., 8}{
\node at (\c-.5, 0) {$\x$};
\ifnum \c = 9
	\breakforeach
	\fi
\draw[path2color, thick] (\c, .5) to +(0,-1);
} 
\node[M, label = {100, inner sep=1.5pt}:{$\alpha_2$}] at (1, .5){};
}
\]
correspond to the purple path in
  \cref{thisistheexampleblow5hghghghgh} but with different grey regions (the grey region corresponding to $(3,\alpha_1)$ is depicted in   \cref{thisistheexampleblow5hghghghgh},
  we leave it as an exercise for the reader to draw this path within 
   the grey region corresponding to $(7,\alpha_2)$).
We would like to know the maximal   $(k,\beta)\in \Lambda_n$ 
such that a given path can be depicted in the corresponding grey region (in the example above, the maximal such element is $(7,\alpha_2)$).  
Assume that $\sts$ is a path    in $\mathcal{L}_\beta$
 with $x_{\sts}(0)\leq x_\sts(n)$.
(If this is not the case, consider the path $-\sts$ in $\mathcal{L}_{\beta^{-1}}$
 with $x_{-\sts}(0)\leq x_{-\sts}(n)$.) 
We start by finding the first marked point to the right of the starting point of $\sts$, that is we consider the minimal integer $m_\sts\geq 0 $ such that 
$q^{2m_\sts+x_{\sts}(0)+1}\in \{\alpha_i^{\pm1}\}$, {\em if it exists}. 
If $$\la=(	
x_\sts(n) - x_{\sts}(0) - 2m_\sts	,q^{2m_\sts+x_{\sts}(0)+1})\in \Lambda_n$$then we define the {\sf maximal shape} of $\sts$ to be ${\sf max}(\sts)=\la \in \Lambda_n$.
If no such $\la\in \Lambda_n$ exists, then we define 
${\sf max}(\sts)=(0,\vartheta)\in \Lambda_n$ if 
$q^{x_\sts(n)}=\vartheta$ when $n$ is odd and $q^{x_\sts(n)+1}=\vartheta$ when $n$ is even. Finally if it is not the case, then the path $\sts$ does not correspond to any standard tableau and we leave ${\sf max}(\sts)$ undefined in this case.

\subsection{Permutations from orientifold paths} In this subsection, we will define an element $\psi_\stt$ for each $\stt\in \Std_n(k,\beta)$ using the lattices $\mathcal{L}_\beta$. 
The region,  $\mathbb T_{(k,\beta)}$, containing all paths $\SSTS \in \Std_n(k,\beta)$ is tiled by $(1\times 1)$-unit tiles (we include the tiles in the top row, which are only half shaded); 
  we say that two of these tiles are {\sf neighbouring} if they meet at an edge. 
The path $\SSTT_{(k,\beta)}$ lies within the region $\mathbb T_{(k,\beta)}$  and divides this region into a lefthand-side 
$\mathbb L_{(k,\beta)}$   
 and a righthand-side $ \mathbb R_{(k,\beta)}$,  by definition.  
 We define an {\sf admissible tiling} $\mathbb T$ to be any collection of tiles  in $\mathbb{T}_{(k,\beta)}$ which does not contain a pair of neighbouring tiles $T, T'$ such that 
 $T \in \mathbb L_{(k,\beta)}$ and  $T' \in \mathbb R_{(k,\beta)}$. 
 We define the length of an admissible tiling $ \mathbb T$ to be the number of tiles it contains.  
 We place an ordering on the tiles within  	$\mathbb{L}_{(k,\beta)}$	(resp. $\mathbb{R}_{(k,\beta)}$	) as follows: 
\begin{itemize} 
\item  Given two neighbouring  tiles $T, T'\in \mathbb L_{(k,\beta)}$,  we write $T	\leq_L T'$  
if the $x$-coordinate of the centroid of  $T'$  is strictly less than that of $T$.  
\item  Given two  neighbouring tiles $T, T'\in \mathbb R_{(k,\beta)}$,  we write $T	\leq_R T'$ if the $x$-coordinate of the centroid of  $T'$  is strictly greater than  that of $T$.  
\end{itemize}
Note that we say that an $x$-coordinate of the form $b+m$ is strictly greater than an $x$-coordinate of the form $b+m'$ when $m>m'$ (even if $b$ is just a formal symbol).

We extend $\leq _L$ to a partial order on $  \mathbb L_{(k,\beta)}$ 
(and $\leq _R$ to a partial order on $  \mathbb R_{(k,\beta)}$)   by transitivity.  
Let $\mathbb T$ be an admissible tiling and $T$ be a tile in $\mathbb T$. 
We say that   $T \in  \mathbb L_{(k,\beta)}\cap \mathbb T$  (respectively 
$T \in  \mathbb R_{(k,\beta)}\cap \mathbb T$) is {\sf supported} in $\mathbb{T}$ if every tile less than $T$ in the ordering $\leq _L$ (respectively $\leq _R$) belongs to~$\mathbb T$.
 
%
%

 \begin{defn}
   We say that  an admissible tiling  $ \mathbb T$  
  is {\sf reduced} if every tile in  $ \mathbb T$ is   supported.  We define the length $\ell (\mathbb{T})$ of a reduced admissible tiling $\mathbb{T}$ to be the number of tiles in $\mathbb{T}$.
 \end{defn}

\begin{prop}\label{inductive1}
We have a bijection between paths $\stt \in \Std_n(k,\beta)$ and admissible reduced tilings 
$\mathbb T\subseteq \mathbb{T}_{(k,\beta)}$. 
\end{prop}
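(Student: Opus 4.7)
The plan is to establish the bijection by sending a standard tableau $\stt$ to the set of unit tiles enclosed between the path of $\stt$ and the reference path $\SSTT_{(k,\beta)}$ inside $\mathbb{T}_{(k,\beta)}$, and inversely by sending a reduced admissible tiling to the unique path obtained by deforming $\SSTT_{(k,\beta)}$ around the given tiles. Concretely, given $\stt\in\Std_n(k,\beta)$, I set
\[
\mathbb{T}(\stt) := \big\{\, T\subset\mathbb{T}_{(k,\beta)} \,:\, T \text{ lies in the symmetric difference of the regions left of $\stt$ and left of $\SSTT_{(k,\beta)}$} \,\big\},
\]
where both regions are taken within $\mathbb{T}_{(k,\beta)}$ and the top edge of $\mathbb{T}_{(k,\beta)}$ is included as part of the boundary (this handles the case where $\stt$ and $\SSTT_{(k,\beta)}$ start at different vertices on the top edge). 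This naturally splits $\mathbb{T}(\stt)$ into a subset of $\mathbb{L}_{(k,\beta)}$ (where $\stt$ passes to the left of $\SSTT_{(k,\beta)}$) and a subset of $\mathbb{R}_{(k,\beta)}$ (where $\stt$ passes to the right).

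To verify admissibility, suppose $T\in\mathbb{T}(\stt)\cap\mathbb{L}_{(k,\beta)}$ and $T'\in\mathbb{T}(\stt)\cap\mathbb{R}_{(k,\beta)}$ share an edge. That common edge lies on $\SSTT_{(k,\beta)}$ itself (which by definition separates $\mathbb{L}$ from $\mathbb{R}$), forcing $\stt$ to pass simultaneously to the left of $T$ and to the right of $T'$—impossible for a monotone lattice path. For reducedness, if $T\in\mathbb{T}(\stt)\cap\mathbb{L}_{(k,\beta)}$ and $T''\leq_L T$, then $T''$ sits between $T$ and $\SSTT_{(k,\beta)}$ on the $\mathbb{L}$ side; since $T$ is enclosed between $\stt$ (on its left) and $\SSTT_{(k,\beta)}$ (on its right), the tile $T''$ is also enclosed, hence $T''\in\mathbb{T}(\stt)$. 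The analogous statement on the $\mathbb{R}$ side is symmetric.

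For the inverse, given a reduced admissible tiling $\mathbb{T}$, I construct $\stt_{\mathbb{T}}$ by starting from $\SSTT_{(k,\beta)}$ and locally ``pushing'' the path around each tile of $\mathbb{T}$: for $T\in\mathbb{T}\cap\mathbb{L}$ replace the two edges of $\SSTT_{(k,\beta)}$ bordering $T$ (on the right) by the two edges on the left, and symmetrically for $T\in\mathbb{T}\cap\mathbb{R}$. The reduced condition guarantees that one can peel tiles off layer by layer, each push being well-defined because the tile immediately adjacent to $\SSTT_{(k,\beta)}$ (the smallest element of $\leq_L$ or $\leq_R$ in the relevant chain) must already be in $\mathbb{T}$ before further tiles are added; admissibility rules out that $\mathbb{L}$-pushes and $\mathbb{R}$-pushes interfere along $\SSTT_{(k,\beta)}$. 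The resulting boundary is a monotone lattice path contained in $\mathbb{T}_{(k,\beta)}$, which by the very definition of that region corresponds to a standard tableau of shape $(k,\beta)$.

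The two constructions are inverse by inspection: the symmetric-difference region between the deformed path $\stt_{\mathbb{T}}$ and $\SSTT_{(k,\beta)}$ is exactly $\mathbb{T}$, and conversely, the path obtained by peeling off $\mathbb{T}(\stt)$ from $\SSTT_{(k,\beta)}$ reconstructs $\stt$. The only mildly subtle point—and the main obstacle in the write-up—is making the ``push'' operation fully local and unambiguous; this is where admissibility (preventing conflicting pushes on opposite sides of $\SSTT_{(k,\beta)}$) and reducedness (forcing the pushes to occur in an order compatible with $\leq_L$ and $\leq_R$, so that at each intermediate step one has a genuine lattice path) are both essential. Once these two hypotheses are in place the verification is a direct combinatorial check.
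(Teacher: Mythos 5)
Your proposal is correct and takes essentially the same approach as the paper: the paper's proof simply tiles the region between $\stt$ and $\stt_{(k,\beta)}$ in one direction and traces the boundary path of the tiling in the other, leaving the verification that these are mutually inverse to the reader. You have merely filled in the details (admissibility, reducedness, and the well-definedness of the peeling/pushing procedure) that the paper omits.
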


\begin{proof}
Given a path $\stt \in \Std_n(k,\beta)$ we can tile the region lying strictly between 
$\stt \in \Std_n(k,\beta)$ and $\stt_{(k,\beta)}  \in \Std_n(k,\beta)$ to obtain an admissible reduced tiling. 
Conversely, given an admissible reduced tiling $\mathbb T$, we can associate a path $\stt$ by  
 drawing the south-east/south-westerly path that traces 
out the edge of $\mathbb T$.  
We invite the reader to verify that these maps are mutual inverses of one another.
\end{proof}

\begin{defn}\label{Soergeldegreee}
For each $\stt\in \Std_n(k,\beta)$, we denote by $\mathbb{T}_\stt$ the corresponding admissible reduced tiling. 
 We define the degree of a  tile $T$ in $\mathbb{T}_\stt$ to be 
 \begin{itemize}
\item $+1$ if $T$ is in the top row and has  a marked point labelled by $\alpha_i^{\pm1}$;
\item $+1$ $T$ is not in the top row and a single vertex of $T$ lies on a hyperplane;
\item $-2$ if a hyperplane bisects $T$;
\item $0$ otherwise.
 \end{itemize}
    We define the degree of the path $\SSTT\in\Std_n(k,\beta)$ to be the sum over the degrees of all tiles in $\mathbb{T}_\stt$.  See 
    \cref{thisistheexampleblow5hghghghgh,thisistheexampleblow5,newlabel1} for  examples.
    \end{defn}
Note that the tiles having a marked point labelled by $\extra$ are of degree $0$.

\begin{figure}[ht!]
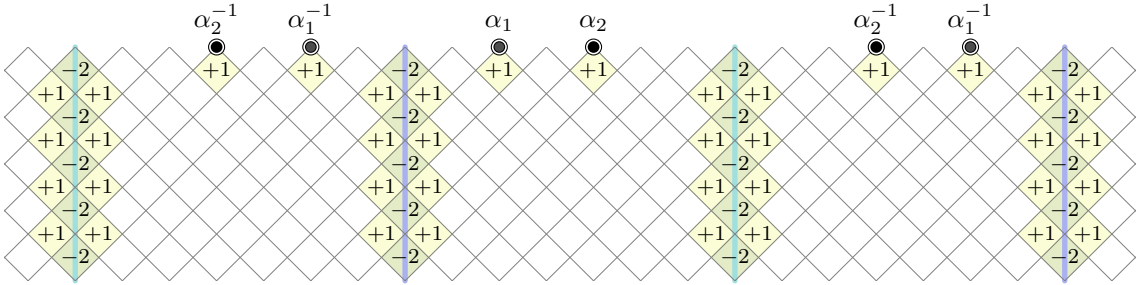

\[
\def\pE{14}
\def\pN{9}
\TIKZ[yscale=-1, scale=.31, font=\scriptsize]{
\pgfmathsetmacro{\pEParity}{Mod(\pE,2)}
\coordinate (beta1) at (4, -1);
	\coordinate (beta1inv1) at (-4, -1);
	\coordinate (beta1inv2) at (-4+2*\pE, -1);
\coordinate (beta2) at (8, -1);
	\coordinate (beta2inv1) at (-8, -1);
	\coordinate (beta2inv2) at (-8+2*\pE, -1);
\foreach \x in {beta1, beta1inv1, beta1inv2, beta2, beta2inv1, beta2inv2}
	{\filldraw[plusone] (\x) \TILE;
	\path (\x) to ++(0,1) node {$+1$};}
\foreach \x in {-1, ..., 2}{
	\foreach \y in {-1, 1,...,5}{
		\filldraw[minustwo] (\pE*\x, \y) \TILE;
		\foreach \side in {-1,1}{
		\filldraw[plusone] (\pE*\x+\side, \y+1) \TILE;}
	}
	\filldraw[minustwo] (\pE*\x, 7) \TILE;
	}
\draw[tauplane] (0,-1) to +(0,\pN+1);
\draw[tauplane] (2*\pE,-1) to +(0,\pN+1);
\draw[sigmaplane] (-\pE,-1) to +(0,\pN+1);
\draw[sigmaplane] (\pE,-1) to +(0,\pN+1);
\foreach \x in {-1, ..., 2}{
	\foreach \y in {-1, 1,...,5}{
		\node at (\pE*\x, \y+1) {$-2$};
		\foreach \side in {1,-1}{\node at (\pE*\x+\side, \y+2)  { $+1$};}
	}
	\node at (\pE*\x, 8) {$-2$};}
\begin{scope}
\clip (-17, -1) rectangle (31, 9);
	\foreach \x in {-27, ..., 41}{
		\draw[gridstyle] (2*\x,-1) to +(11,11);
		\draw[gridstyle] (2*\x,-1) to +(-11,11);
	}
\end{scope}
\foreach \x in {beta1, beta1inv1, beta1inv2}
	{\node[M1] at (\x){};}
\foreach \x in {beta2, beta2inv1, beta2inv2}
	{\node[M2] at (\x){};}
\node[above] at (beta1) {\small \strut $\alpha_1$};
	\node[above] at (beta2) {\small \strut $\alpha_2$};
\node[above] at (beta1inv1) {\small \strut $\alpha_1^{-1}$};
	\node[above] at (beta2inv1) {\small \strut $\alpha_2^{-1}$};
\node[above] at (beta1inv2) {\small \strut $\alpha_1^{-1}$};
	\node[above] at (beta2inv2) {\small \strut $\alpha_2^{-1}$};
}
\]
 
 \caption{The degrees of tiles for $e=14$ and $\alpha_1=q^4$ and $\alpha_2=q^8$.
 }
 \label{thisistheexampleblow5}
 \end{figure}

\begin{figure}[ht!]
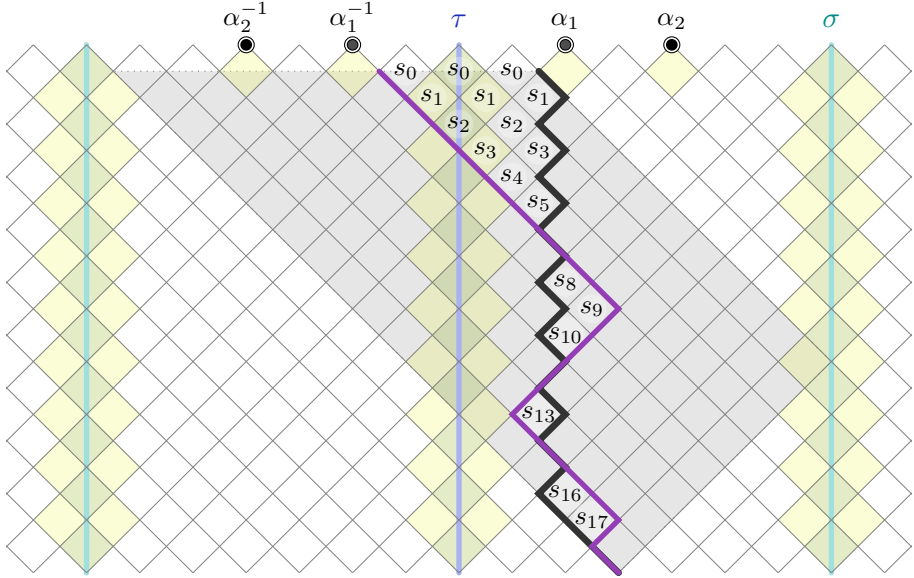

 \[
\def\pE{14}
\def\pN{19}
\def\pA{4}
\def\pB{17}
\TIKZ[yscale=-1, scale=.35, font=\scriptsize]{
\pgfmathsetmacro{\pEParity}{Mod(\pE,2)}
\pgfmathsetmacro{\pNParity}{Mod(\pN,2)}
\coordinate (max-start) at (\pA-1, 0);
\path (max-start) to ++(\pN-\pB+\pNParity, \pN) coordinate (end);
\path (end) to ++(-\pN, -\pN) coordinate (min-start);
\path (end) to ++(.5*\pB-.5*\pNParity, -.5*\pB+.5*\pNParity) coordinate (p-corner);
\draw[pathsregion] (max-start) to (min-start) to (end) to (p-corner) to (max-start); 
\coordinate (beta1) at (4, -1); \coordinate (beta1inv) at (-4, -1);
\coordinate (beta2) at (8, -1); \coordinate (beta2inv) at (-8, -1);
\foreach \x in {beta1, beta1inv, beta2, beta2inv}
	{\filldraw[plusone] (\x) \TILE;}
\foreach \x in {-1,0,1}{
	\foreach \y in {-1, 1,...,15}{
		\filldraw[minustwo] (\pE*\x, \y) \TILE;
		\foreach \side in {-1,1}{
		\filldraw[plusone] (\pE*\x+\side, \y+1) \TILE;}
	}
	\filldraw[minustwo] (\pE*\x, 17) \TILE;
	}
\draw[tauplane] (0,-1) to +(0,\pN+1);
\draw[sigmaplane] (-\pE,-1) to +(0,\pN+1);
\draw[sigmaplane] (\pE,-1) to +(0,\pN+1);
\begin{scope}
\clip (-17, -1) rectangle (17, \pN);
	\foreach \x in {-20, ..., 20}{
		\draw[gridstyle] (2*\x,-1) to +(\pN+1,\pN+1);
		\draw[gridstyle] (2*\x,-1) to +(-\pN-1,\pN+1);
	}
\end{scope}
\foreach \x in {beta1, beta1inv}
	{\node[M1] at (\x){};}
\foreach \x in {beta2, beta2inv}
	{\node[M2] at (\x){};}
\node[above] at (beta1) {\small \strut $\alpha_1$};
	\node[above] at (beta2) {\small \strut $\alpha_2$};
\node[above] at (beta1inv) {\small \strut $\alpha_1^{-1}$};
	\node[above] at (beta2inv) {\small \strut $\alpha_2^{-1}$};
\node[above, taucolor] at (0,-1) {\strut \normalsize $\tau$};
\node[above, sigmacolor] at (\pE, -1) {\strut \normalsize $\sigma$};
\begin{scope}[font= \small]
\foreach \y/\n in {0/3, 1/3, 2/2, 3/2, 4/1, 5/1}
	{\pgfmathsetmacro{\yParity}{Mod(\y,2)}
	\foreach \x in {1, ..., \n}{\node[tilemarkers] at (\pA-2*\x+\yParity, \y) {$s_{\y}$}; }}
\foreach \y/\x in {8/0, 9/1, 10/0, 13/-1, 16/0, 17/1}
	{\node[tilemarkers] at (\pA+\x, \y) {$s_{\y}$};}
\end{scope}
\draw[path1] (max-start) 
	\foreach \x in {1, ..., 8}{ to ++(1,1) to ++(-1,1)} to (end);
\draw[path2] (-3,0) to ++(9,9) to ++(-4,4) to ++(4,4) to ++(-1,1) to ++(1,1);
}
\]
 
 \caption{We depict two paths ${\color{path1color}\stt_{(3,\beta)}}$  and ${\color{path2color}\sts} \in \Std_{19}(3,\beta)$ with 
 $\beta= \alpha_1=q^4$ and $\alpha_2=q^8$ and $e=14$.  
 The  permutation $w_\stt$  can be read off of the diagram (see a particular word for this permutations below). The path  $\sts$ has degree 1; to see this note that there are 2 degree $-2$ tiles and 3 degree $+1$ tiles lying between $\sts$ and  $\stt_{(3,\beta)}$.  
 }
 \label{newlabel1}
 \end{figure}

\begin{defn}
Let $\mathbb T$ be an admissible reduced  tiling containing $\ell$ tiles , 
we define a {\sf tiling tableau} to be a map $\tau :\mathbb T \to \{1,2,\dots,\ell\}$ such that 
$\tau (T) < \tau (T')$ for any pair of tiles $T,T'$ satisfying $T<_LT'$ or $T<_RT'$. 
\end{defn}
 
 The $y$-coordinate of the top vertex of a  tile $T \in \mathbb T_{(k,\beta)}$  is  equal to some $y\in \ZZ_{\geq 0}$ by construction which we call the height of the tile $T$; 
 we define the content of the tile $T$ to be the corresponding reflection $s_y \in W(C_n)$.  
Let $\mathbb T$ be an admissible tiling containing $\ell$ tiles.
Given a tiling tableau $\tau: \mathbb T \to \{1,\dots,\ell\}$  we define an associated word 
$w_\tau$ to be the ordered product of the $s_y$ for $y \in \ZZ_{\geq0}$ given by reading the contents of the tiles of 
$ \mathbb T$ in the order specified by $\tau$.

\begin{prop}\label{useful!!!}
 Given $\SSTT \in \Std_n(k,\beta) $
  we have that the reduced expressions of $w_\SSTT$ are given by the  
words $w_\tau$ coming from the set of all tiling tableaux of $\mathbb{T}_\stt$. 
\end{prop}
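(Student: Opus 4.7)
The plan is to prove the bijection by induction on $|\mathbb{T}_\stt|$, with the base case $\stt = \stt_\la$ corresponding to the empty tiling and the identity word. The main technical input is the \emph{tile/reflection correspondence}: a tile $T \in \mathbb{T}_\stt$ with top vertex at $y$-coordinate $y > 0$ encodes an L-shape between two consecutive steps of $\stt$ at heights $y$ and $y+1$, and flipping the tile corresponds to applying $s_y$, swapping the entries $y$ and $y+1$ in the tableau; a tile in the top row encodes the first step of $\stt$ and corresponds to $s_0$. The admissibility condition on $\mathbb{T}_\stt$ (no pair of neighbouring tiles on opposite sides of $\stt_\la$) is what ensures every intermediate path produced by such flips is again a standard tableau, while the supported/reduced condition prevents tiles from being "blocked" from outside.

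For the forward direction, given a tiling tableau $\tau\colon \mathbb{T}_\stt \to \{1,\dots,\ell\}$, I would process tiles in \emph{reverse} order of $\tau$. At each stage the tile labelled $i$ is, by the order-preserving property of $\tau$, maximal in the ordering on the remaining sub-tiling, hence lies on the outward boundary of the current path and can be flipped to produce a standard tableau whose tiling has one fewer tile. Reading the contents $s_{y_1}, \dots, s_{y_\ell}$ in the original order of $\tau$ then gives a word that carries $\stt_\la$ to $\stt$; since at each flip the length strictly increases (no cancellation is possible, because the tile was genuinely added), this word is reduced and $|\mathbb{T}_\stt| = \ell(w_\stt)$.

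For the converse, given any reduced expression $w_\stt = s_{y_1} \cdots s_{y_\ell}$, I would track the path evolution: starting from $\stt_\la$, each $s_{y_i}$ must correspond to flipping a tile (otherwise the expression would shorten via a braid relation, contradicting reducedness). This defines a labelling $\tau$ of the tiles of $\mathbb{T}_\stt$ by $\{1,\dots,\ell\}$, and I would verify that the geometric definitions of $\leq_L$ and $\leq_R$ force $\tau$ to be order-preserving: a tile $T$ can only be flipped after every tile strictly below it in $\leq_L$ (or $\leq_R$) has been flipped, since otherwise $T$ would be in the interior of the current tiling and unavailable.

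The main obstacle, and where I would spend most care, is the verification at the hyperplanes and at the top row, where the reflection $s_0$ behaves asymmetrically from the $s_y$ with $y > 0$: the equivalence $\sim$ on paths via $\sigma \cdot_i$, $\rho_{2re}$, and $\stt \mapsto -\stt$ must be tracked consistently with tile flipping, and one must confirm that reflections $s_0$ applied to paths touching a hyperplane correspond precisely to flipping tiles in the appropriate row (and that this does not violate admissibility or supportedness). Once this case analysis is carried out carefully, the two directions above combine to establish the bijection together with the equality $|\mathbb{T}_\stt| = \ell(w_\stt)$, giving the claimed characterisation of reduced expressions of $w_\stt$.
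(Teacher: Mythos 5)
Your argument is essentially the paper's: the paper's proof of this proposition is simply ``inductive application of \cref{inductive1}'' (the bijection between standard tableaux and admissible reduced tilings), which is precisely the add/remove-one-tile induction you carry out, and your two directions together give the claimed description of the reduced expressions of $w_\SSTT$. The one paragraph I would adjust is the ``main obstacle'': the equivalence $\sim$ on paths via $\csigma\cdot_i$, $\rho_{2re}$, and $\SSTP\mapsto-\SSTP$ concerns residue sequences (\cref{res=sim}) and plays no role here --- the content assignment $T\mapsto s_y$ depends only on the height of a tile, not on hyperplanes or marked points, so the only asymmetry you actually need to handle is the one you already identify, namely that a top-row tile has content $s_0$ (a sign flip of the first step) rather than a transposition of two consecutive steps.
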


\begin{proof}
This follows by inductive application of \cref{inductive1}.
 \end{proof}  
 
 We define the length of the tableau $\stt\in \Std_n(k,\beta)$ by $\ell(\stt) =\ell(w_\stt) = \ell(\mathbb{T}_\stt)$. 
   It will be useful for us to choose one specific tiling tableau $\tau (\stt)$ for the admissible reduced tiling $\mathbb{T}_\stt$ corresponding to each $\stt\in \Std_n(k,\beta)$. This is equivalent to choosing one specific reduced expression for $w_\stt$.

\begin{defn} Let $\stt\in \Std_n(k,\beta)$. 
We define $\tau(\stt)$ denote the  tiling tableau in which we first fill $\mathbb L_{(k,\beta)}\cap \mathbb T_\stt$ by successively  adding the tile of minimal  content at each step
and we then fill $\mathbb R_{(k,\beta)}\cap \mathbb T_\stt$ by 
successively  adding the tile of maximal  content at each step. See \cref{newlabel1} for an example. 
\end{defn}

\begin{figure}[ht!]
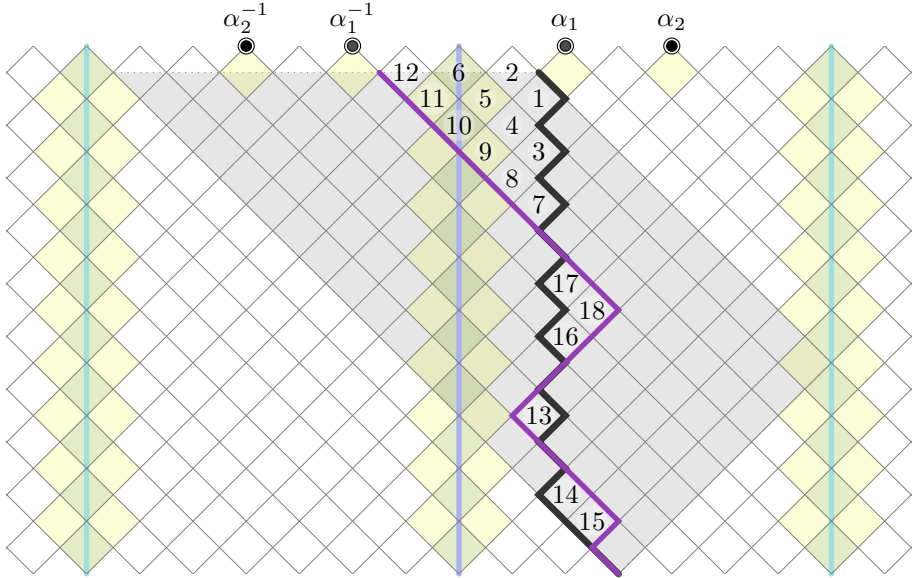


\[
\def\pE{14}
\def\pN{19}
\def\pA{4}
\def\pB{17}
\TIKZ[yscale=-1, scale=.35, font=\scriptsize]{
\pgfmathsetmacro{\pEParity}{Mod(\pE,2)}
\pgfmathsetmacro{\pNParity}{Mod(\pN,2)}
\coordinate (max-start) at (\pA-1, 0);
\path (max-start) to ++(\pN-\pB+\pNParity, \pN) coordinate (end);
\path (end) to ++(-\pN, -\pN) coordinate (min-start);
\path (end) to ++(.5*\pB-.5*\pNParity, -.5*\pB+.5*\pNParity) coordinate (p-corner);
\draw[pathsregion] (max-start) to (min-start) to (end) to (p-corner) to (max-start); 
\coordinate (beta1) at (4, -1); \coordinate (beta1inv) at (-4, -1);
\coordinate (beta2) at (8, -1); \coordinate (beta2inv) at (-8, -1);
\foreach \x in {beta1, beta1inv, beta2, beta2inv}
	{\filldraw[plusone] (\x) \TILE;}
\foreach \x in {-1,0,1}{
	\foreach \y in {-1, 1,...,15}{
		\filldraw[minustwo] (\pE*\x, \y) \TILE;
		\foreach \side in {-1,1}{
		\filldraw[plusone] (\pE*\x+\side, \y+1) \TILE;}
	}
	\filldraw[minustwo] (\pE*\x, 17) \TILE;
	}
\draw[tauplane] (0,-1) to +(0,\pN+1);
\draw[sigmaplane] (-\pE,-1) to +(0,\pN+1);
\draw[sigmaplane] (\pE,-1) to +(0,\pN+1);
\begin{scope}
\clip (-17, -1) rectangle (17, \pN);
	\foreach \x in {-20, ..., 20}{
		\draw[gridstyle] (2*\x,-1) to +(\pN+1,\pN+1);
		\draw[gridstyle] (2*\x,-1) to +(-\pN-1,\pN+1);
	}
\end{scope}
\foreach \x in {beta1, beta1inv}
	{\node[M1] at (\x){};}
\foreach \x in {beta2, beta2inv}
	{\node[M2] at (\x){};}
\node[above] at (beta1) {\small \strut $\alpha_1$};
	\node[above] at (beta2) {\small \strut $\alpha_2$};
\node[above] at (beta1inv) {\small \strut $\alpha_1^{-1}$};
	\node[above] at (beta2inv) {\small \strut $\alpha_2^{-1}$};
\begin{scope}[font= \small]
\foreach \x/\y [count=\c from 1] in {-1/1, -2/0, -1/3, -2/2, -3/1, -4/0, -1/5, -2/4, -3/3, -4/2, -5/1, -6/0, -1/13, 0/16, 1/17, 0/10, 0/8, 1/9}
	{\node[tilemarkers] at (\pA+\x, \y) {$\c$}; }
\end{scope}
\draw[path1] (max-start) 
	\foreach \x in {1, ..., 8}{ to ++(1,1) to ++(-1,1)} to (end);
\draw[path2] (-3,0) to ++(9,9) to ++(-4,4) to ++(4,4) to ++(-1,1) to ++(1,1);
}
\]
 
 \caption{ We depict the tiling tableau  $\tau(\sts)$ for 
   $\sts \in \Std_{19}{(8,\beta)}$   with 
 $\beta= \alpha_1=q^4$ and $\alpha_2=q^8$.  
 }
 \end{figure}

\begin{eg}
 For    ${\color{violet} \sts }\in \Std_{19}(8,\beta)$ as in \cref{newlabel1} the reduced word for 
 $w_{\color{violet}\sts}$ corresponding to the   tiling tableau  $\tau(\sts)$  is as follows:
$$w_{\color{violet}\sts}=  (s_9s_8) (s_{10})(s_{17}s_{16})s_{13}(s_0s_1s_2s_3s_4 s_5)(s_0s_1s_2s_3)(s_0 s_1).
 $$
  
\end{eg}

 Given $\SSTT \in \Std_n(b,\beta) $ and some reduced expression
  $w_\stt = s_{i_1}s_{i_2}\dots s_{i_\ell}$ we define 
  $\psi_\stt:= \psi_{i_1}\psi_{i_2}\dots \psi_{i_\ell}\in {\rm TL}_n(\alpha_1, \alpha_2, \extra)$.  We note that $\psi_\stt$ is well-defined (independently of this choice of reduced expression) by the commuting relations and \cref{useful!!!}.

\begin{prop}
 Given $\stt \in \Std_n(k,\beta) $ 
 we have that 
 $\deg(\psi_\stt e_{\res(\stt_{(k,\beta)})})=\deg(\stt)$. 
\end{prop}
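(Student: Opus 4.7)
The strategy is to exploit the $\Z$-grading on $\mathscr{H}_n$: we decompose $\psi_\stt e_{\res(\stt_{(k,\beta)})}$ as a product of homogeneous factors indexed by the tiles of $\mathbb{T}_\stt$, and then match each factor's degree with the corresponding tile degree. Fix the reduced expression $w_\stt=s_{i_1}s_{i_2}\cdots s_{i_\ell}$ coming from the tiling tableau $\tau(\stt)$, and let $T_r$ denote the tile with $\tau(T_r)=r$. Repeatedly applying \eqref{Rel:V3} to push idempotents to the right, one writes
\[
\psi_\stt e_{\res(\stt_{(k,\beta)})}\;=\;\prod_{r=1}^{\ell}\bigl(\psi_{i_r}e_{\res^{(r)}}\bigr),\qquad \res^{(r)}:=s_{i_{r+1}}\cdots s_{i_\ell}\cdot\res(\stt_{(k,\beta)}),
\]
where each factor is homogeneous by Definition \ref{definition:vv algebra}. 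Additivity of the grading then reduces the problem to the tile-by-tile identity $\deg(\psi_{i_r}e_{\res^{(r)}})=\deg(T_r)$, whose sum over $r$ gives the claimed equality (the base case $\ell=0$, where $\stt=\stt_{(k,\beta)}$ and both sides vanish, is immediate).

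The key observation for computing $\res^{(r)}$ is that residues in $\mathcal{L}_\beta$ depend only on the lattice position of the edge: an $\eps_1$-step at $(x,y)$ has residue $\beta q^{x+y-b}$, and an $\eps_2$-step has residue $\beta^{-1}q^{y-x+b}$. Since the reflections $s_{i_\ell},\dots,s_{i_{r+1}}$ merely permute entries of the residue sequence, and since $\tau(\stt)$ fills both $\mathbb{L}_{(k,\beta)}\cap\mathbb{T}_\stt$ and $\mathbb{R}_{(k,\beta)}\cap\mathbb{T}_\stt$ outward from $\stt_{(k,\beta)}$, the $i_r$th and $(i_r+1)$th entries of $\res^{(r)}$ are precisely the residues of the two edges adjacent to $T_r$ on the side facing $\stt_{(k,\beta)}$. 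For $i_r\geq 1$ with $T_r$ having top vertex $(x_0,y_0)$, a direct calculation gives this pair a ratio of $q^{\mp 2x_0}$ when $\beta\in q^{\ZZ}$ (using $\beta=q^b$), while for $\beta\notin q^{\ZZ}$ the ratio lies outside $q^{\ZZ}$. The standing assumption on $q,\alpha_1,\alpha_2,\extra$ then forces this ratio to equal $1$ exactly when a hyperplane bisects $T_r$, to equal $q^{\pm 2}$ exactly when a single vertex of $T_r$ lies on a hyperplane, and otherwise to land outside $\{1,q^{\pm 2}\}$; the rule for $\deg(\psi_{i_r}\idmep)$ in Definition \ref{definition:vv algebra} then produces exactly $-2$, $+1$, $0$ in these three cases, matching the tile degrees of Definition \ref{Soergeldegreee}. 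For $i_r=0$ (top-row tile with top vertex $(x_0,0)$), the first residue is $r_1=\beta q^{x_0-b}$, which lies in $\{\alpha_1^{\pm 1},\alpha_2^{\pm 1}\}$ exactly when $(x_0,0)$ is marked by such an $\alpha_i^{\pm 1}$; hence $\deg(\psi_0\idmep)=+1$ in that case and $0$ otherwise (in particular when the marker is $\extra^{\pm 1}$ or absent), again matching the tile degree.

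The main obstacle is the bookkeeping in the preceding paragraph: rigorously verifying that the entries of $\res^{(r)}$ at positions $i_r,i_r+1$ coincide with the claimed lattice residues of the edges adjacent to $T_r$. The cleanest route is induction on $\ell-r$. Because $\tau(\stt)$ is constructed so that the tile of $\tau$-value $r$ is supported by all tiles of smaller $\tau$-value, the tiles $T_{r+1},\dots,T_\ell$ all sit on the outer side of $T_r$ (relative to $\stt_{(k,\beta)}$); hence the reflections $s_{i_{r+1}},\dots,s_{i_\ell}$ applied earlier in the expansion act on residue positions disjoint from $\{i_r,i_r+1\}$, up to the residue-preserving symmetries $\rho_{2re}$ and $\sigma\cdot_{(-)}$ of \cref{res=sim}, and so $(\res^{(r)})_{i_r}$ and $(\res^{(r)})_{i_r+1}$ can be read off directly from the position of $T_r$ in $\mathcal{L}_\beta$.
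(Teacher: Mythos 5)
Your proof takes the same approach as the paper's: factor $\psi_\stt e_{\res(\stt_{(k,\beta)})}$ into homogeneous pieces indexed by the tiles of $\mathbb T_\stt$ and match the degree of each $\psi_j e(\ti)$ factor to the corresponding tile degree via the hyperplane/marked-point geometry. The paper states this correspondence tersely; your version supplies the bookkeeping the paper leaves implicit, and is correct up to one cosmetic slip: the paper's example following Proposition~\ref{useful!!!} shows that the $r$th factor of the reduced word $w_\stt$ read from $\tau(\stt)$ corresponds to the tile of $\tau$-value $\ell+1-r$ (not $r$), so the tile adjacent to the intermediate residue sequence $\res^{(r)}$ is $T_{\ell+1-r}$ rather than $T_r$ --- a relabelling that does not affect the total degree.
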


\begin{proof}
Recall that the residue sequence of a tableau $\stt\in \Std_n(k,\beta)$ can be read from the steps of the associated path on $\mathcal{L}_\beta$. Each tile  in $\mathbb T_\stt$ of height $j \in \ZZ_{\geq0}$ corresponds to a generator $\psi_j$ in the 
element $\psi_\stt e_{\res(\stt_{(k,\beta)})}$.   
The tiles of height $j\in \ZZ_{>0}$ bisected by hyperplanes correspond to 
$\psi_je(\ti)$ with $i_j=i_{j+1}$ and have degree $-2$ as required.  
The tiles of height $0$ bisected by hyperplanes correspond to 
$\psi_0e(\ti)$, with $i_1\in\{\pm1\}$  and have degree $-2$ as required.  
The tiles of height $k\in \ZZ_{>0}$  touching a  hyperplane  correspond to $\psi_ke(\ti)$ with $i_{k+1}=i_kq^{\pm2}$ and have degree $+1$ as required.  
The tiles of height $0$  touching a marked point different from $\extra$ correspond to $\psi_0e(\ti)$ with $i_1=\alpha_i^{\pm1}$ and have degree $+1$ as required. All other tiles correspond to generators of degree 0, and have degree $0$ as required.
\end{proof}

 \subsection{Ladder tableaux and idempotent ideals} From now on, we will almost always write $e_\stt$ instead of $e_{\res(\stt)}$ for standard tableaux $\stt$.
 For $0\leq m \leq n$ we set 
 $$\textstyle 
 e_{\leq m}= 
\sum_{0\leq k \leq m}
\sum _{(k,\beta)\in \Lambda_n} e_{\stt_{(k,\beta)}} 
\qquad 
e_{< m}= 
\sum_{0\leq k < m}
\sum _{(k,\beta)\in \Lambda_n} e_{\stt_{(k,\beta)}} 
$$and we define 
\begin{align*}\textstyle
{\rm TL}_n(\alpha_1,\alpha_2, \extra ) ^{\leq m}
&=
{\rm TL}_n(\alpha_1,\alpha_2, \extra )  
e_{\leq m}
{\rm TL}_n(\alpha_1,\alpha_2, \extra )
\\
{\rm TL}_n(\alpha_1,\alpha_2, \extra ) ^{< m}
&=
{\rm TL}_n(\alpha_1,\alpha_2, \extra )  
e_{< m}
{\rm TL}_n(\alpha_1,\alpha_2, \extra ). 
\end{align*}

\begin{defn}

We say that a tableau $\sts \in \Std_n(k,\beta) $  is a {\sf ladder tableau}  if ${\sf max}(\sts) = (k,\beta)$ and 
$$x_\sts(n)-x_\sts(0) = \max\{ x_\stt (n) - x_\stt(0) \, : \, \stt\in \Std_n \, \,\text{with} \,\, \res(\stt) = \res(\sts)\}.$$ 
 
\end{defn}

\begin{figure}[ht!]
\[
\def\pN{33} 
\def\pB{15} 
\def\pA{3} 
\def\pE{7} 
\def\pWidth{25} 
\TIKZ[yscale=-1, scale=.25]{
\pgfmathsetmacro{\pC}{ceil(.5*\pN - .5*\pB)}
\pgfmathsetmacro{\pK}{floor(\pWidth/\pE)}
\pgfmathsetmacro{\pShift}{ceil(\pK/2)}
\pgfmathsetmacro{\pEParity}{Mod(\pE,2)}
\pgfmathsetmacro{\pNParity}{Mod(\pN,2)}
\pgfmathsetmacro{\pLeft}{ceil((\pA - \pB-1)/\pE)-1}
\coordinate (max-start) at (\pA, 0);
\path (max-start) to ++(\pN-\pB+\pNParity, \pN) coordinate (end);
\path (end) to ++(-\pN, -\pN) coordinate (min-start);
\path (end) to ++(.5*\pB-.5*\pNParity, -.5*\pB+.5*\pNParity) coordinate (p-corner);
\filldraw [pathsregion] (max-start) to (min-start) to (end) to (p-corner); 
\begin{scope}
\clip (\pA - \pB,-1.5) rectangle (\pA + \pN -.5*\pB + .5*\pNParity+1,\pN);
\foreach \x in {0,..., \pK}{
\coordinate (beta-marker\x) at (-2*\pShift*\pE + \pA+1  + \x*2*\pE, -1);
\coordinate (betainv-marker\x) at (2*\pShift*\pE- \pA -1 - \x*2*\pE, -1);
	\filldraw[plusone] (beta-marker\x) \TILE;
	\filldraw[plusone] (betainv-marker\x) \TILE;
}
\foreach \x in {-\pK, ..., \pK}{
	\foreach \y in {-1, 1,...,\pN}{
		\filldraw[minustwo] (2*\pE*\x, \y) \TILE;
		\filldraw[plusone] 
			(2*\pE*\x+1, \y+1) \TILE;
		\filldraw[plusone] 
			(2*\pE*\x-1, \y+1) \TILE;
		\filldraw[minustwo] (2*\pE*\x+\pE, \y+\pEParity) \TILE;
		\filldraw[plusone] 
			(2*\pE*\x+\pE+1, \y+\pEParity+1) \TILE;
		\filldraw[plusone] 
			(2*\pE*\x+\pE-1, \y+\pEParity+1) \TILE;
	}}
\foreach \x in {-\pK, ..., \pK}{
		\draw[tauplane] (\x*2*\pE,-1) to +(0,\pN+1);
		\draw[sigmaplane] (\x*2*\pE+\pE,-1) to +(0,\pN+1);
		}
\pgfmathsetmacro{\pGrids}{max(\pN, \pWidth)}
\foreach \x in {-\pGrids, ..., \pGrids}{
	\draw[gridstyle] (2*\x,-1) to +(-\pN-2,\pN+2);
	\draw[gridstyle] (2*\x,-1) to +(\pN+2,\pN+2);
}
\foreach \x in {0,..., \pK}{
	\node[M] at (beta-marker\x){};
	\node[M] at (betainv-marker\x){};
}
\end{scope}
\node[above] at (\pA+1, -1) {\small \strut $\alpha_1$};
\node[above] at (-\pA-1, -1) {\small \strut $\alpha_1^{-1}$};
\draw[path1] (max-start) 
\foreach \x in {1, ..., 7}{ to ++(1,1) to ++(-1,1)} to (end);
\draw[path2] (-\pA, 0) 
	to ++(-1,1) 
	to ++(4,4)
	to ++(-1,1) 
	to ++(13,13) 
	to ++(-1,1) 
	to ++(11,11)
	to ++(-1,1)
	to ++(1,1);
 }
\]
\caption{Here $e=7$ and $\alpha_1= q^4$ with $\alpha_2 \not \in \alpha_1q^{2\ZZ}$. 
We depict two    ladder tableaux   $ {\color{path1color}\stt_{(15,\alpha_1)}}, {\color{path2color}\sts} \in \Std_{33}(15,\alpha_1)\in \Lambda_{33}$. 
} 
\label{heresaladder}
\end{figure}

\begin{eg}
Let $\beta= \alpha_1=q^4$ and $\alpha_2=q^8$. 
Both  $\stt_{(k,\beta)}$  and $\sts$ depicted in 
  \cref{newlabel1} are examples of ladder tableaux of  shape $(3,\beta) \in \Lambda_{19}$. In fact it is easy to see that $\stt_{(k,\beta)}$ is a ladder tableau for all $(k,\beta)\in \Lambda_n$. Two more examples are depicted in \cref{heresaladder}. The tableaux given in \cref{fignotladder} are not ladder tableaux. This can be seen by observing that ${\sf max}(s) \neq (15,\alpha_1)$ and $x_\stt(33)-x_\stt(0)$ is not maximal in its residue class. 
  Note that ladder tableaux are not unique in their residue classes. See \cref{figladdernotunique} for eight different ladder tableaux in the same residue class.  
\end{eg}

\begin{figure}[ht!]
\[
\def\pN{33} 
\def\pB{15} 
\def\pA{3} 
\def\pE{7} 
\def\pWidth{25} 
\TIKZ[yscale=-1, scale=.25]{
\pgfmathsetmacro{\pC}{ceil(.5*\pN - .5*\pB)}
\pgfmathsetmacro{\pK}{floor(\pWidth/\pE)}
\pgfmathsetmacro{\pShift}{ceil(\pK/2)}
\pgfmathsetmacro{\pEParity}{Mod(\pE,2)}
\pgfmathsetmacro{\pNParity}{Mod(\pN,2)}
\pgfmathsetmacro{\pLeft}{ceil((\pA - \pB-1)/\pE)-1}
\coordinate (max-start) at (\pA, 0);
\path (max-start) to ++(\pN-\pB+\pNParity, \pN) coordinate (end);
\path (end) to ++(-\pN, -\pN) coordinate (min-start);
\path (end) to ++(.5*\pB-.5*\pNParity, -.5*\pB+.5*\pNParity) coordinate (p-corner);
\filldraw [pathsregion] (max-start) to (min-start) to (end) to (p-corner); 
\begin{scope}
\clip (\pA - \pB,-1.5) rectangle (\pA + \pN -.5*\pB + .5*\pNParity+1,\pN);
\foreach \x in {0,..., \pK}{
\coordinate (beta-marker\x) at (-2*\pShift*\pE + \pA+1  + \x*2*\pE, -1);
\coordinate (betainv-marker\x) at (2*\pShift*\pE- \pA -1 - \x*2*\pE, -1);
	\filldraw[plusone] (beta-marker\x) \TILE;
	\filldraw[plusone] (betainv-marker\x) \TILE;
}
\foreach \x in {-\pK, ..., \pK}{
	\foreach \y in {-1, 1,...,\pN}{
		\filldraw[minustwo] (2*\pE*\x, \y) \TILE;
		\filldraw[plusone] 
			(2*\pE*\x+1, \y+1) \TILE;
		\filldraw[plusone] 
			(2*\pE*\x-1, \y+1) \TILE;
		\filldraw[minustwo] (2*\pE*\x+\pE, \y+\pEParity) \TILE;
		\filldraw[plusone] 
			(2*\pE*\x+\pE+1, \y+\pEParity+1) \TILE;
		\filldraw[plusone] 
			(2*\pE*\x+\pE-1, \y+\pEParity+1) \TILE;
	}}
\foreach \x in {-\pK, ..., \pK}{
		\draw[tauplane] (\x*2*\pE,-1) to +(0,\pN+1);
		\draw[sigmaplane] (\x*2*\pE+\pE,-1) to +(0,\pN+1);
		}
\pgfmathsetmacro{\pGrids}{max(\pN, \pWidth)}
\foreach \x in {-\pGrids, ..., \pGrids}{
	\draw[gridstyle] (2*\x,-1) to +(-\pN-2,\pN+2);
	\draw[gridstyle] (2*\x,-1) to +(\pN+2,\pN+2);
}
\foreach \x in {0,..., \pK}{
	\node[M] at (beta-marker\x){};
	\node[M] at (betainv-marker\x){};
}
\end{scope}
\node[above] at (\pA+1, -1) {\small \strut $\alpha_1$};
\node[above] at (-\pA-1, -1) {\small \strut $\alpha_1^{-1}$};
%
\draw[path1,cyan, line width =3] (3, 0) --(7,4)--(0,11)--(end); 
\draw[path2] (3, 0) to (0,3)--(8,11)--(7,12)--(25,30)--(end); 
\draw[path1,line width=1.5] (-5, 0) to (-6,1)--(0,7)--(-1,8)--(15+7,24+7)--(14+7,25+7)--(end);
 }
\]
\caption{Here $e=7$ and $\alpha_1= q^4$ with $\alpha_2 \not \in \alpha_1q^{2\ZZ}$. 
Here we depict examples of tableaux $\color{black}\sts$, $\color{violet}\stt$,  
$\color{cyan}\stu$    which are {\em not} ladder tableaux. 
} \label{fignotladder}
\end{figure}

%

\begin{figure}[ht!]
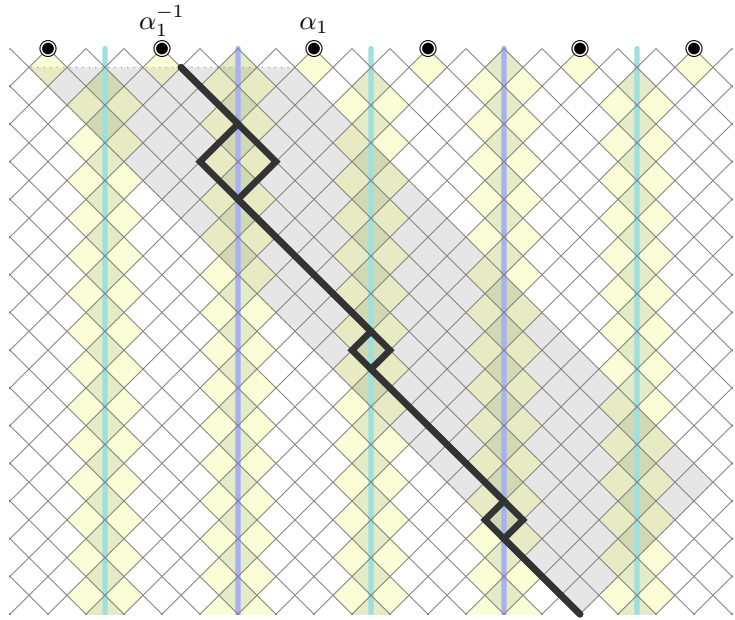

\[
\def\pN{29} 
\def\pB{15} 
\def\pA{3} 
\def\pE{7} 
\def\pWidth{25} 
\TIKZ[yscale=-1, scale=.25]{
\pgfmathsetmacro{\pC}{ceil(.5*\pN - .5*\pB)}
\pgfmathsetmacro{\pK}{floor(\pWidth/\pE)}
\pgfmathsetmacro{\pShift}{ceil(\pK/2)}
\pgfmathsetmacro{\pEParity}{Mod(\pE,2)}
\pgfmathsetmacro{\pNParity}{Mod(\pN,2)}
\pgfmathsetmacro{\pLeft}{ceil((\pA - \pB-1)/\pE)-1}
\coordinate (max-start) at (\pA, 0);
\path (max-start) to ++(\pN-\pB+\pNParity, \pN) coordinate (end);
\path (end) to ++(-\pN, -\pN) coordinate (min-start);
\path (end) to ++(.5*\pB-.5*\pNParity, -.5*\pB+.5*\pNParity) coordinate (p-corner);
\filldraw [pathsregion] (max-start) to (min-start) to (end) to (p-corner); 
\begin{scope}
\clip (\pA - \pB,-1.5) rectangle (\pA + \pN -.5*\pB + .5*\pNParity+1,\pN);
\foreach \x in {0,..., \pK}{
\coordinate (beta-marker\x) at (-2*\pShift*\pE + \pA+1  + \x*2*\pE, -1);
\coordinate (betainv-marker\x) at (2*\pShift*\pE- \pA -1 - \x*2*\pE, -1);
	\filldraw[plusone] (beta-marker\x) \TILE;
	\filldraw[plusone] (betainv-marker\x) \TILE;
}
\foreach \x in {-\pK, ..., \pK}{
	\foreach \y in {-1, 1,...,\pN}{
		\filldraw[minustwo] (2*\pE*\x, \y) \TILE;
		\filldraw[plusone] 
			(2*\pE*\x+1, \y+1) \TILE;
		\filldraw[plusone] 
			(2*\pE*\x-1, \y+1) \TILE;
		\filldraw[minustwo] (2*\pE*\x+\pE, \y+\pEParity) \TILE;
		\filldraw[plusone] 
			(2*\pE*\x+\pE+1, \y+\pEParity+1) \TILE;
		\filldraw[plusone] 
			(2*\pE*\x+\pE-1, \y+\pEParity+1) \TILE;
	}}
\foreach \x in {-\pK, ..., \pK}{
		\draw[tauplane] (\x*2*\pE,-1) to +(0,\pN+1);
		\draw[sigmaplane] (\x*2*\pE+\pE,-1) to +(0,\pN+1);
		}
\pgfmathsetmacro{\pGrids}{max(\pN, \pWidth)}
\foreach \x in {-\pGrids, ..., \pGrids}{
	\draw[gridstyle] (2*\x,-1) to +(-\pN-2,\pN+2);
	\draw[gridstyle] (2*\x,-1) to +(\pN+2,\pN+2);
}
\foreach \x in {0,..., \pK}{
	\node[M] at (beta-marker\x){};
	\node[M] at (betainv-marker\x){};
}
\end{scope}
\node[above] at (\pA+1, -1) {\small \strut $\alpha_1$};
\node[above] at (-\pA-1, -1) {\small \strut $\alpha_1^{-1}$};
%
\draw[path1,] (-3, 0) -- (0,3)
--(2,5)--(0,7)
--(7,14)
--(8,15)--(7,16)
--(14,23)
--(15,24)
--(14,25)
--(end)
; 
\draw[path1,] (-3, 0) -- (0,3)
--(-2,5)--(0,7)
--(7,14)
--(6,15)--(7,16)
--(14,23)
--(13,24)
--(14,25)
--(end)
; 
 }
\]
\caption{Here $e=7$ and $\alpha_1= q^4$ with $\alpha_2 \not \in \alpha_1q^{2\ZZ}$. 
We depict eight different ladder tableaux in the same residue class.}\label{figladdernotunique}
\end{figure}

\begin{prop}\label{ladderssss}
For a ladder tableau $\stt \in \Std_n (k,\beta)$ we have that
 $e_\stt \in {\rm TL}_n(\alpha_1,\alpha_2, \extra )^{\leq k}$.

\end{prop}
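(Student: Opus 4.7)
The strategy is to factor $e_\stt$ through $e_{\stt_{(k,\beta)}}$ using the cornerstone identity
\[
\psi_\stt\, e_{\stt_{(k,\beta)}} = e_\stt\, \psi_\stt,
\]
which follows from iterating the commutation relation \eqref{Rel:V3} along any reduced expression for $w_\stt$, together with the compatibility of the $W(C_n)$-action on tableaux and residue sequences: since $\stt=w_\stt(\stt_{(k,\beta)})$, we have $\res(\stt)=w_\stt\cdot\res(\stt_{(k,\beta)})$. Applying the anti-involution ${}^\star$ yields the partner identity $\psi_\stt^\star e_\stt=e_{\stt_{(k,\beta)}}\psi_\stt^\star$. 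Multiplying the two gives
\[
\psi_\stt\, e_{\stt_{(k,\beta)}}\, \psi_\stt^\star \;=\; e_\stt\,\psi_\stt\psi_\stt^\star\, e_\stt,
\]
and the left-hand side lies in ${\rm TL}_n\cdot e_{\stt_{(k,\beta)}}\cdot {\rm TL}_n\subseteq{\rm TL}_n^{\leq k}$. It would therefore suffice to show that $e_\stt\psi_\stt\psi_\stt^\star e_\stt$ is a unit (in the local subalgebra $e_\stt\cdot{\rm TL}_n\cdot e_\stt$) multiple of $e_\stt$, so that $e_\stt$ itself can be recovered as an element of ${\rm TL}_n^{\leq k}$.

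To evaluate $\psi_\stt\psi_\stt^\star e_\stt$, I would expand using the reduced expression encoded by the tiling $\mathbb{T}_\stt$ and the choice $\tau(\stt)$, then reduce each factor $\psi_a^2$ from the innermost outward using the quadratic relations \eqref{Rel:V6} and \eqref{Rel:V10}. Each reduction returns $e_\ti$ (generic case), a signed difference of $y$'s times $e_\ti$ (degree $+1$ tiles, arising when neighboring residues differ by $q^{\pm 2}$), $\pm y_1 e_\ti$ (marked top-row tiles), or $0$ (degree $-2$ tiles, where $i_a=i_{a+1}$). The ladder conditions—${\sf max}(\stt)=(k,\beta)$ together with the maximality of $x_\stt(n)-x_\stt(0)$ in the residue class—are precisely what prevents a collision-type reduction from annihilating the product: any hyperplane-bisecting tile along $\tau(\stt)$ that would produce $i_a=i_{a+1}$ could be ``unfolded'' by the reflection $\csigma\cdot_i$ to yield a path of strictly larger horizontal displacement in the same $\sim$-class, contradicting the ladder hypothesis.

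The principal obstacle is the careful bookkeeping of the $y$-contributions from the degree $+1$ tiles. After all $\psi_a^2$ reductions, one obtains an expression $P(y)\cdot e_\stt$ for some polynomial $P$ in the $y$-generators; the claim requires $P(y)e_\stt$ to be a unit in $e_\stt\cdot{\rm TL}_n\cdot e_\stt$. Here the quotient relation $y_1 e_{\stt_{(0,\extra)}}=0$ from \eqref{fkdjhgdlsjhgdjlskhgsdfjklhgjkdflh} and its consequences (propagated through the commutation relations) must be invoked to pin down the nilpotency of the $y_j$'s on the relevant idempotents, particularly when $\stt$ is close to the shape $(0,\extra)$ at the bottom of the poset. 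The cleanest organisation is likely by induction on the tile-count $\ell(\stt)=|\mathbb{T}_\stt|$: the base case $\stt=\stt_{(k,\beta)}$ is trivial since $\mathbb{T}_\stt=\emptyset$, and the inductive step removes a single extremal tile (the last tile in the order $\tau(\stt)$), reducing to a ladder tableau with a strictly smaller tiling and potentially smaller maximal shape, feeding the induction.
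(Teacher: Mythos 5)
There is a genuine gap, and it sits at the heart of your argument. Your strategy reduces everything to the claim that $e_\stt\psi_\stt\psi_\stt^{\star}e_\stt=\psi_\stt e_{\stt_{(k,\beta)}}\psi_\stt^{\star}$ is a unit in $e_\stt\,{\rm TL}_n(\alpha_1,\alpha_2,\extra)\,e_\stt$. This is false whenever $\deg(\stt)\neq 0$: the element $\psi_\stt e_{\stt_{(k,\beta)}}\psi_\stt^{\star}$ is homogeneous of degree $2\deg(\stt)$, and in a finite-dimensional graded algebra no homogeneous element of nonzero degree is invertible (an inverse would force nonzero components in degrees $2m\deg(\stt)$ for all $m$). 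Ladder tableaux of nonzero degree do exist --- the purple path $\sts$ of \cref{newlabel1} is a ladder tableau of shape $(3,\beta)$ and degree $1$ --- so your key claim fails already in the paper's running example. Relatedly, you misidentify what the ladder hypothesis buys: maximality of ${\sf max}(\sts)$ rules out tiles of $\mathbb{T}_\sts$ touching a marked point $\alpha_i^{\pm1}$, but it does \emph{not} rule out hyperplane-bisecting tiles of degree $-2$ (again see \cref{heresaladder}); your ``unfolding'' argument would only produce a path in the same residue class of larger width, which is not a contradiction since the reflected path need not be standard of the same shape. Consequently the reduction of the innermost $\psi_h^2$ really does hit the case $\psi_h^2e=0$, and your product collapses rather than yielding a unit.

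The paper's proof avoids this by never trying to factor $e_\stt$ through $e_{\stt_{(k,\beta)}}$ in one global step. It inducts on $\ell(\mathbb{T}_\sts)$, peeling off the last tile of $\tau(\sts)$, and in the degree $-2$ case replaces the doomed identity $e_\sts=\psi_h^2e_\sts$ with
$e_\sts=(y_{h+1}\psi_hy_{h+1}\psi_h-\psi_hy_{h+1}\psi_hy_h)e_\sts$, reducing to showing $\psi_hy_{h+1}\psi_he_\sts\in{\rm TL}_n(\alpha_1,\alpha_2,\extra)^{\leq k}$; the latter is rewritten via $y_{h+1}e_\sts=y_{h+2}e_\sts-\psi_{h+1}^2e_\sts$ so that it factors through the idempotent $e_\stu$ of a \emph{different} standard tableau $\stu$ of the same shape with a strictly smaller tiling, to which the inductive hypothesis applies (with a parallel argument via \eqref{Rel:V8} when $h=0$, and a separate braid-relation computation for the degree $+1$ tiles). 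If you want to salvage your approach, you would need to replace ``unit multiple'' by this kind of $y$-conjugation identity tile by tile; as written, the proposal does not prove the statement.
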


\begin{proof}
We will choose the unique  ladder tableau $\sts\in \Std_n(k,\beta)$ in the residue class  $\stt$ for which $\mathbb T_\sts$ is maximal. In \cref{figladdernotunique}, this is the tableau which takes the left side before crossing $\stt_{(k,\beta)}$ and the right sides afterwards.
We proceed by induction on $\ell(\mathbb T_\sts)$ with the $\ell(\mathbb T_\sts)=0$ case being trivial 
(since $\sts=\stt_{(k,\beta)}$ in this case).   
By definition of our ladder tableaux  we have that $\square \in \mathbb T_\sts$
implies that $\square$ does not touch a marked point labelled by $\alpha_i^{\pm1}$, since it would contradict the 
maximality of ${\sf max}(\sts)\in \Lambda_n$. 
We consider the tile $\square $  containing the maximal entry, $\ell$, of the tiling tableau $\tau(\sts)$.
The tile $\square $ has height $h\in \ZZ_{\geq0}$ say, and   has degree 
$-2$, $0$, or $+1$ and these form the cases of the proof.

\smallskip\noindent\textbf{Case 1:} If $\square $ is of degree zero, 
then $  \mathbb T_\sts =   \mathbb T_\sts' \cup  \square$ with $\sts' \in \Std_n(k,\beta)$ a ladder tableau.
By construction $\psi_h^2e_\sts=e_\sts$  since the tile is of degree 0 and therefore
 $$
 e_\sts  =  \psi_h^2e_\sts=\psi_h e_{\sts'}  \psi_h \in {\rm TL}_n(\alpha_1,\alpha_2, \extra )^{\leq k}
 $$by induction.

 \smallskip\noindent
 \textbf{Case 2:} If $\square $ is a tile of degree $-2$, then 
  $\square  $ lies on a hyperplane. We first assume that $h>0$. In this case the $h$th and $(h+1)$th entries of $e_\sts$ are equal.  
In which case we let $\square'$ be such that $\tau_{\sts}(\square')=(\ell-1)$ 
and we let $\stu $ be such  that 
$\mathbb T_{\sts} =\mathbb T_{\stu} \cup \square'\cup \square$. 
If  $\square \in \mathbb L_{(k,\beta)}\cap \mathbb T_\sts$ (respectively  
$\square \in \mathbb R_{(k,\beta)}\cap \mathbb T_\sts$)
then $\square'$ has height $h+1\in \ZZ_{\geq0}$ (respectively  $h-1\in \ZZ_{\geq0}$).
We consider the former case as the latter is identical. An example is the tile $\square$ of height $6$ in \cref{heresaladder} on the hyperplane in an inner corner of the path ${\color{violet}\sts}$. The tile $\square'$ is SE of the tile $\square$.
  We have that 
  $\res_a(\sts) = \res_a(\stu)$  for $a\neq h,h+1,h+2 $ 
  and 
  $$(\res_h(\sts), \res_{h+1}(\sts), \res_{h+2}(\sts)) = (i,i,iq^2), \quad (\res_h(\stu), \res_{h+1}(\stu), \res_{h+2}(\stu)) = (i,iq^2, i).$$ This is illustrated in \cref{figcase2}.

\begin{figure}[ht!]
$$
\begin{tikzpicture}[scale=0.5]\scalefont{0.9}


\draw[very thick,densely dotted,sigmaplane] (0,1.8)--(0,-2.7*2);

\draw[very thick,fill=white](0,0)--++(-45:2) coordinate (X)--++(-135:2)--++(135:2)--++(45:2); 

\path(0,0) --++(-45:1) --++(-135:1) node {$\ell$};
\draw[very thick,fill=white](X)--++(-45:2)  --++(-135:2)--++(135:2)--++(45:2); 
\path(X) --++(-45:1) --++(-135:1) node {$\ell\text{--}1$};

\draw[very thick,magenta,snake it,->](0,0)--++(-135:1.975) coordinate [midway] (here);
\path[magenta](here) --++(135:0.5)--++(90:0.1) node {$i$};

\path(-135:2)--++(-45:0.025) coordinate (X);

\draw[very thick,magenta,snake it,->](X)--++(-45:1.95) coordinate [midway] (here);
\path[magenta](here) --++(-135:0.5)  node {$i$};

\path(-135:2)--++(-45:2.0125) coordinate (X);

\draw[very thick,magenta,snake it,->](X)--++(-45:1.975) coordinate [midway] (here);
\path[magenta](here) --++(-135:0.6)  node {$iq^{2}$};

\draw[very thick,cyan,snake it,->](0,0)--++(-45:1.975) coordinate [midway] (here);
\path[cyan](here) --++(45:0.5)--++(90:0.1) node {$i$};

\path(-45:2)--++(-45:0.025) coordinate (X);

\draw[very thick,cyan,snake it,->](X)--++(-45:1.95) coordinate [midway] (here);
\path[cyan](here) --++(45:0.67)  node {$iq^{2}$};

\path(-45:4)--++(-135:.0125) coordinate (X);

\draw[very thick,cyan,snake it,->](X)--++(-135:1.975) coordinate [midway] (here);
\path[cyan](here) --++(-45:0.5)  node {$ i$};

\end{tikzpicture}
\qquad
\begin{tikzpicture}[scale=0.5]\scalefont{0.9}


\draw[very thick,densely dotted,sigmaplane] (0,1.8)--(0,-2.7*2);

\draw[very thick,fill=white](0,0)--++(-45:2) coordinate (X)--++(-135:2)--++(135:2)--++(45:2); 

\path(0,0) --++(-45:1) --++(-135:1) node {$\ell$};
\draw[very thick,fill=white](X)--++(-45:2)  --++(-135:2)--++(135:2)--++(45:2); 
\path(X) --++(-45:1) --++(-135:1) node {$\ell\text{--}1$};

%

\path(-135:2)--++(-45:0.025) coordinate (X);

\draw[very thick,magenta,snake it,->](X)--++(-45:1.95) coordinate [midway] (here);
\path[magenta](here) --++(-135:0.5)  node {$\epsilon$};

\path(-135:2)--++(-45:2.0125) coordinate (X);

\draw[very thick,magenta,snake it,->](X)--++(-45:1.975) coordinate [midway] (here);
\path[magenta](here) --++(-135:0.6)  node {$\epsilon q^{2}$};

%

\path(-45:2)--++(-45:0.025) coordinate (X);

\draw[very thick,cyan,snake it,->](X)--++(-45:1.95) coordinate [midway] (here);
\path[cyan](here) --++(45:0.67)  node {$\epsilon q^{2}$};

\path(-45:4)--++(-135:.0125) coordinate (X);

\draw[very thick,cyan,snake it,->](X)--++(-135:1.975) coordinate [midway] (here);
\path[cyan](here) --++(-45:0.5)  node {$ \epsilon$};

\end{tikzpicture}
\qquad
\begin{tikzpicture}[scale=0.5]\scalefont{0.9}


\draw[very thick,densely dotted,sigmaplane] (0,1.8)--(0,-2.7*2);

\draw[very thick,fill=white](0,0)--++(135:2)  --++(-135:2)--++(-45:2);

\path(0,0) --++(-135:1) --++(135:1) node {$\ell\text{--}2$};

\draw[very thick,fill=white](0,0)--++(-45:2) coordinate (X)--++(-135:2)--++(135:2)--++(45:2); 

\path(0,0) --++(-45:1) --++(-135:1) node {$\ell\text{--}1$};
\draw[very thick,fill=white](X)--++(-45:2)  --++(-135:2)--++(135:2)--++(45:2); 
\path(X) --++(-45:1) --++(-135:1) node {$\ell$};

\draw[very thick,magenta,snake it,->](135:2)--++(-135:1.975) coordinate [midway] (here);
\path[magenta](here) --++(135:0.5)--++(90:0.1) node {$i$};

\path(-135:2)--++(-45:0.025)--++(135:2) coordinate (X);

\draw[very thick,magenta,snake it,->](X)--++(-45:1.95) coordinate [midway] (here);
\path[magenta](here) --++(-135:0.6)--++(180:0.15)  node {$iq^{\text{--}2}$};

\path(-135:2)--++(-45:0.025) coordinate (X);

\draw[very thick,magenta,snake it,->](X)--++(-45:1.95) coordinate [midway] (here);
\path[magenta](here) --++(-135:0.5)  node {$i$};

\path(-135:2)--++(-45:2.0125) coordinate (X);

\draw[very thick,magenta,snake it,->](X)--++(-45:1.975) coordinate [midway] (here);
\path[magenta](here) --++(-135:0.6)  node {$iq^{2}$};

\draw[very thick,cyan,snake it,->](135:2)--++(-45:1.975) coordinate [midway] (here);
\path[cyan](here) --++(45:0.6)--++(0:0.29) node {$iq^{\text{--}2}$};

\draw[very thick,cyan,snake it,->](0,0)--++(-45:1.975) coordinate [midway] (here);
\path[cyan](here) --++(45:0.5)--++(90:0.1) node {$i$};

\path(-45:2)--++(-45:0.025) coordinate (X);

\draw[very thick,cyan,snake it,->](X)--++(-45:1.95) coordinate [midway] (here);
\path[cyan](here) --++(45:0.67)  node {$iq^{2}$};

\path(-45:4)--++(-135:.0125) coordinate (X);

\draw[very thick,cyan,snake it,->](X)--++(-135:1.975) coordinate [midway] (here);
\path[cyan](here) --++(-45:0.5)  node {$ i$};

\end{tikzpicture}$$
\caption{The cases from the proof of \cref{ladderssss}.} 
\label{figcase2}
\end{figure}

%

 We will show that 
$$e_\sts \in  {\rm TL}_n(\alpha_1,\alpha_2, \extra ) e_\stu  {\rm TL}_n(\alpha_1,\alpha_2, \extra )$$
then use induction to deduce the result. 
First observe that
 \begin{align*}
  (y_{h+1}\psi_{h}y_{h+1}\psi_h - \psi_{h}y_{h+1}\psi_h y_h)e_\sts
  &= (y_{h+1}(\psi_{h}y_{h+1})\psi_h - \psi_{h}(y_{h+1}\psi_h) y_h)e_\sts \\
  &= (y_{h+1}(y_h\psi_h +1)\psi_h - \psi_{h}(\psi_hy_h+1) y_h)e_\sts \\
  &= (y_{h+1}\psi_h - \psi_{h} y_h)e_\sts \\
  &= e_\sts
 \end{align*}
 where the second and the fourth equality follows from (4.4) , and the third equality follows from (4.5), noting that $\res_h(\sts) = \res_{h+1}(\sts)$.
 Thus it will suffice to show that 
  $$\psi_hy_{h+1}\psi_h e_\sts
  \in  {\rm TL}_n(\alpha_1,\alpha_2, \extra )^{\leq k}.
  $$
Using (4.5) and noting that $\res_{h+1}(\sts)=i$ and $\res_{h+2}(\sts) = iq^2$ we have
$$y_{h+1}e_\sts = y_{h+2}e_\sts -\psi^2_{h+1}e_\sts.$$
So we get
  \begin{align*}
  \psi_hy_{h+1}\psi_h e_\sts &= (\psi_h y_{h+2}\psi_h - \psi_h \psi_{h+1}^2 \psi_h)e_\sts \\
  &= (\psi_h^2y_{h+2} - \psi_h \psi_{h+1}^2 \psi_{h})e_\sts \\
  &= - \psi_h \psi_{h+1}^2 \psi_h e_\sts\\
  &= - \psi_h \psi_{h+1}e_\stu \psi_{h+1}\psi_h
   \in {\rm TL}_n(\alpha_1,\alpha_2, \extra ) e_\stu  {\rm TL}_n(\alpha_1,\alpha_2, \extra )\subseteq 
 {\rm TL}_n(\alpha_1,\alpha_2, \extra )^{\leq k}
  \end{align*}
    where the second equality follows from the commutation relations (4.2), and the third from (4.5) (noting that $\res_h(\sts) = \res_{h+1}(\sts)$) and the final line follows from (4.3) and induction. 

 We now consider the subcase with  $h=0$. We continue with our assumption 
that  $\square $ is an $s_0$-tile of degree $-2$ lying 
  on a hyperplane. It means that the first entry of  $e_\sts$  is equal to $\pm 1$ in which case we let $\square'$ be such that $\tau_{\sts}(\square')=(\ell-1)$ 
and we let $\stu $ be such  that 
$\mathbb T_{\sts} =\mathbb T_{\stu} \cup \square'\cup \square$. 
In this case we have that    
   $\res_a(\sts) = \res_a(\stu)$  for $a\neq 1,2$ 
  and 
  $$(\res_1(\sts), \res_{2}(\sts)) = (\epsilon, \epsilon q^2), \qquad (\res_1(\stu), \res_{1}(\stu)) = (\epsilon q^2, \epsilon)$$
  for $\epsilon \in \{1, -1\}$.  This is illustrated in \cref{figcase2}.
We have that
  $$e_\sts =\tfrac{1}{4}(
  y_1 
  \psi_0
  y_1
  \psi_0+   
  \psi_0
  y_1
  \psi_0y_1 ) e_\sts=
  \tfrac{1}{4} y_1 (
  \psi_0
  y_1
  \psi_0) e_\sts +   
 \tfrac{1}{4}( \psi_0
  y_1
  \psi_0 ) y_1    e_\sts
  $$ by two applications of \eqref{Rel:V8} (and selective bracketing).  Thus, it is enough to show that 
  $$\psi_0y_1\psi_0e_\sts \in {\rm TL}_n(\alpha_1,\alpha_2, \extra ) e_\stu  {\rm TL}_n(\alpha_1,\alpha_2, \extra ).$$%
 Now, we have
 \begin{align*}
 \psi_0 y_1 \psi_0 e_\sts &=    
 \psi_0 (y_2-\psi_1^2)\psi_0  e_\sts \\
 &= (\psi_0^2y_2 - \psi_0\psi_1^2\psi_0)e_\sts \\
 &= -\psi_0 \psi_1^2  \psi_0 e_\sts \\
 &= -\psi_0\psi_1 e_\stu \psi_1 \psi_0 e_\sts \in  {\rm TL}_n(\alpha_1,\alpha_2, \extra )^{\leq k}
 \end{align*} 
where the first equality follows from (4.5), the second from (4.2), the third from (4.8) and the last by (4.3).

\smallskip\noindent\textbf{Case 3:} If $\square $ is a tile of degree $1$, then, as $\sts$ is a ladder tableau with $\mathbb{T}_\sts$ maximal, we have 
  either $\square \in \mathbb L_{(k,\beta)}\cap \mathbb T_\sts $  and 
  the rightmost vertex of $\square $ lies on a hyperplane;
  or  $\square \in \mathbb R_{(k,\beta)}\cap \mathbb T_\sts $  and 
  the leftmost vertex of $\square $ lies on a hyperplane. Note that $\square$ is not in the top row since we have no tile in the top row of $\mathbb{T}_\sts$ of degree 1 (no marked point), again since $\sts$ is a ladder tableau.  
  We consider the latter case as the former is identical. 
  In this case we let $\square'$ be such that $\tau_{\sts}(\square')=\ell-1$ 
  and $\square''$ be such that $\tau_{\sts}(\square'')=\ell-2 $.  We    let $\stu$ be such  that 
$\mathbb T_{\sts} =\mathbb T_{\stu} \cup \square''\cup \square'\cup \square$. 
We have that 
 $\res_a(\sts)=\res_a(\stu) $  for all $a\neq h-2,h-1,h,h+1$ 
and 
\begin{align*}
(\res_{h-2}(\sts), \res_{h-1}(\sts),\res_{h}(\sts),\res_{h+1}(\sts))				&= (q^{-2}i, i, q^2i, i),
\\
 (\res_{h-2}(\stu), \res_{h-1}(\stu),\res_{h}(\stu),\res_{h+1}(\stu))				&= (i, q^{-2}i, i, q^2i).
 \end{align*}
This is illustrated in \cref{figcase2}.
%
%
%
%
%
%
We claim that 
\begin{align}\label{subinme}
e_\sts=  \psi_{h} \psi_{h-1} \psi_{h-2} 
 e_\stu \psi_{h-2} \psi_{h-1} \psi_{h} 
 \end{align}
 and we note that the proof will follow by induction once we verify the claim since $\stu$ is a ladder tableau with $\mathbb{T}_\stu$ maximal in its residue class. First observe that
 \begin{align}\nonumber 
  \psi_3 \psi_2 \psi_1 
 e(i, q^{-2}i,	i,	iq^2		) \psi_1 \psi_2 \psi_3
 &= \psi_3 \psi_2 \psi_1^2 e(q^{-2}i, i, i, q^2i)\psi_2 \psi_3 \\
 \nonumber 
&=  \psi_3 \psi_2   
(y_2-y_1) e(  q^{-2}i, i,	i,	iq^2 		)  \psi_2 \psi_3
\\
\nonumber 
&
 =
  \psi_3 \psi_2   
 y_2  e(  q^{-2}i, i,	i,	iq^2 		)  \psi_2 \psi_3
\\
\nonumber 
&
 = -\psi_3 e(q^{-2}i, i, i, q^2i) \psi_2 \psi_3 \\
 \nonumber 
 &= \psi_3 \psi_2 \psi_3 e(q^{-2}i , i , q^2 i , i) \\
 \label{skjghskljdhgkjldhkjgdshgkjdshfgjkdfhgkjdsfghkdshgfjkdshfgjksfhdgkjhgdksjlghsd}
 &=
(- \psi_2 \psi_3    \psi_2 +1)
  e(  q^{-2}i,i,  iq^2 ,i		)   
 \end{align}
 where the first and the fifth follows from (4.3), the second follows from (4.5), the third and the fourth follow from (4.4) and (4.5) and the last one from (4.6).
 Thus we have 
 $$\psi_h \psi_{h-1} \psi_{h-2} e_\stu \psi_{h-2}\psi_{h-1}\psi_{h} = (-\psi_{h-1}\psi_h \psi_{h-1} + 1) e_\sts.$$
Now, note that  $s_{h-1}s_ns_{h-1}(\res(\sts))$ is not the residue sequence of a standard tableau, so using (4.3) and (4.10) we have that $\psi_{h-1}\psi_h \psi_{h-1}e_\sts = 0$ and so the claim follows.
 \end{proof}

\begin{cor}\label{cor_residues}
Let $\sts\in\Std_n$. We have $e_{\sts}\in{\rm TL}_n(\alpha_1,\alpha_2, \extra )^{\leq k}$ where $k$ is maximal such that there is a standard tableau in the residue class of $\sts$ of shape $(k,\beta)$.
\end{cor}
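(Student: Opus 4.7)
The proof proceeds by combining Proposition~\ref{ladderssss} with Lemma~\ref{res=sim}, using the crucial observation that $e_\sts = e_{\res(\sts)}$ depends only on the residue sequence. By Lemma~\ref{res=sim}, the residue class of $\sts$ coincides with its $\sim$-equivalence class, so it suffices to exhibit any ladder tableau $\stt$ with $\stt\sim\sts$ and then apply Proposition~\ref{ladderssss}, since this will immediately force $e_\sts=e_\stt\in{\rm TL}_n(\alpha_1,\alpha_2,\extra)^{\leq k^*}$ for the shape $(k^*,\beta^*)$ of $\stt$.

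The plan for producing $\stt$ is to select, among all standard tableaux in the $\sim$-class of $\sts$, one whose underlying path maximises the horizontal displacement $x_\stt(n)-x_\stt(0)$. Such a maximum exists because the displacement is bounded above by $n$. I would then interpret this path as a tableau of shape ${\sf max}(\stt)=(k^*,\beta^*)\in\Lambda_n$. By the very definition of ${\sf max}$, this interpretation produces a genuine element of $\Std_n(k^*,\beta^*)$, and both ladder conditions then hold tautologically: the shape of $\stt$ equals ${\sf max}(\stt)$ by construction, while maximality of the displacement in the residue class holds by our choice of representative. Hence $\stt$ is a ladder tableau of shape $(k^*,\beta^*)$.

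Applying Proposition~\ref{ladderssss} gives $e_\stt\in{\rm TL}_n(\alpha_1,\alpha_2,\extra)^{\leq k^*}$. Since $(k^*,\beta^*)$ is the shape of a standard tableau in the residue class of $\sts$, the maximality of $k$ in the statement of the corollary forces $k^*\leq k$, and consequently ${\rm TL}_n^{\leq k^*}\subseteq{\rm TL}_n^{\leq k}$. Combined with the equality $e_\sts=e_\stt$, this gives the desired conclusion $e_\sts\in{\rm TL}_n(\alpha_1,\alpha_2,\extra)^{\leq k}$.

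The main technical obstacle is verifying that the displacement-maximising representative of the $\sim$-class does give a well-defined standard tableau of shape ${\sf max}(\stt)\in\Lambda_n$ --- that is, that its starting coordinate $x_\stt(0)$ sits at a suitable distance from some marked point labelled by $\alpha_i^{\pm 1}$ or $\extra$, so that ${\sf max}(\stt)$ is defined and lies in $\Lambda_n$. This is a consequence of the explicit action of the $\sim$-operations $\sigma\cdot_i$, $\rho_{2re}$, and path negation $\SSTP\mapsto-\SSTP$ on starting points, combined with the standing assumptions on $(\alpha_1,\alpha_2,\extra,q)$ which guarantee that any residue sequence arising from an element of $\Std_n$ is compatible with at least one shape in $\Lambda_n$.
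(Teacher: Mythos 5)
Your proposal is correct and takes essentially the same route as the paper's proof: find a ladder tableau in the residue class of $\sts$, apply \cref{ladderssss}, and use maximality of $k$ to conclude. The only difference is that you spell out why a ladder tableau exists in every residue class (maximise the displacement $x(n)-x(0)$ and re-interpret the resulting path as a tableau of shape ${\sf max}(\,\cdot\,)$), whereas the paper takes existence for granted with the phrase ``Take $\stt$ a ladder tableau in the residue class of $\sts$.''
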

\begin{proof}
Take $\stt$ a ladder tableau in the residue class of $\sts$ and let $(k',\beta')$ be its shape. From the preceding proposition, we have $e_{\sts}=e_{\stt}\in{\rm TL}_n(\alpha_1,\alpha_2, \extra )^{\leq k'}$ which is included in ${\rm TL}_n(\alpha_1,\alpha_2, \extra )^{\leq k}$ with $k$ as in the statement, by maximality of $k$.
\end{proof}

\subsection{The graded cellular basis  of the orientifold Temperley--Lieb algebra}
We are now ready to construct graded cellular bases of the orientifold Temperley--Lieb algebras.
This requires a few preparatory lemmas which will help us when proving the cellular ideal structure.

\begin{lem}\label{zajjlemma}
Let $\SSTT \in \Std_n(k, \beta)$ for $0 < k \leq n$ and $\beta \in \{\alpha_1^{\pm1},\alpha_2^{\pm1}\}$.
If  $s_0(\SSTT) \not  \in \Std_n(k, \beta)$ then 
$$\psi_0 \psi_\SSTT e_{\SSTT_{(k, \beta)}}\in {\rm TL}_n(\alpha_1,\alpha_2, \extra )^{<k}.$$
\end{lem}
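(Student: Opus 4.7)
The approach is to rewrite
\[
\psi_0 \psi_\SSTT e_{\SSTT_{(k,\beta)}} = e_{s_0(\res(\SSTT))} \, \psi_0 \psi_\SSTT e_{\SSTT_{(k,\beta)}}
\]
using Relation~\eqref{Rel:V3} combined with the idempotent identity $\psi_\SSTT e_{\SSTT_{(k,\beta)}} = e_{\res(\SSTT)} \psi_\SSTT e_{\SSTT_{(k,\beta)}}$. Thus the lemma reduces to showing that $e_{s_0(\res(\SSTT))} \in {\rm TL}_n(\alpha_1,\alpha_2,\extra)^{<k}$; by Corollary~\ref{cor_residues}, this amounts to proving that no $\stu \in \Std_n$ with $\res(\stu) = s_0(\res(\SSTT))$ has $\Shape(\stu) = (k',\beta')$ satisfying $k' \geq k$.

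The first step is to extract the combinatorial content of the hypothesis. The condition $s_0(\SSTT) \notin \Std_n(k,\beta)$ is equivalent to asserting that the entry $+1$ of $\SSTT$ lies in the bead-box or strictly to its right, which forces $\res_1(\SSTT) = \beta q^{2j}$ with $j \geq 0$, and hence $\res_1(\stu) = \beta^{-1} q^{-2j}$. For any hypothetical $\stu \in \Std_n(k',\beta')$, the value of $\res_1(\stu)$ is determined by the content of the box containing $\pm 1$ in $\stu$. Since $\beta' \in \{\alpha_1^{\pm 1},\alpha_2^{\pm 1}\}$ for $k' > 0$ while $\beta' = \extra$ forces $k'=0 < k$, the standing assumption that the $12$ points $\alpha_i^{\pm 1}, \alpha_i^{\pm 1} q^{\pm 2}$ are distinct restricts the admissible $(k',\beta')$ severely; generically no such $\stu$ exists, and the only residual case requiring closer analysis is $\beta' = \beta^{-1}$ with $j=0$ and $+1$ in the bead-box of $\stu$.

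For this residual case I would run an induction on $n$ modelled directly on the proof of Proposition~\ref{propdominant2}: strip off the largest entries $\pm n$ (and $\pm (n-1)$ where necessary) from both $\SSTT$ and $\stu$, verify that the residue-matching condition is preserved on the resulting $(n-1)$- or $(n-2)$-subtableaux, and apply the inductive hypothesis. The base cases $n=1,2$ handle the exceptional shapes in $\Lambda_n$ using the standing assumption that $\extra$ is not in the $q^{2\ZZ}$-orbit of any $\alpha_i^{\pm 1}$.

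The main obstacle will be the residual case $(k',\beta')=(k,\beta^{-1})$, whose shape is incomparable to $(k,\beta)$ in the partial order on $\Lambda_n$. Here one must leverage the asymmetry of the hypothesis ``$+1$ is at the bead-box or strictly to its right'' — a condition not preserved under the natural $\beta \leftrightarrow \beta^{-1}$ reflection of the lattice picture — to derive the needed contradiction; this is precisely the extra input that upgrades Proposition~\ref{propdominant2} (which treats only $\SSTT = \SSTT_{(k,\beta)}$) to the full statement for arbitrary $\SSTT \in \Std_n(k,\beta)$.
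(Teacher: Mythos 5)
Your approach of pushing the idempotent $e_{s_0(\res(\SSTT))}$ to the left and then invoking Corollary~\ref{cor_residues} is a legitimate strategy, but it commits you to proving a strengthening of Proposition~\ref{propdominant2} for \emph{arbitrary} $\SSTT\in\Std_n(k,\beta)$ with $+1$ at the bead, rather than just for $\SSTT_{(k,\beta)}$. The paper takes a shortcut that avoids this altogether: it observes that the hypothesis $s_0(\SSTT)\notin\Std_n(k,\beta)$ forces $+1$ to sit in the bead box of $\SSTT$ (so the set of positions carrying negative entries in $\SSTT$ coincides with that of $\SSTT_{(k,\beta)}$, and $w_\SSTT$ fixes $1$ and performs no sign changes), whence $w_\SSTT\in\langle s_2,\dots,s_{n-1}\rangle$. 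Since $\psi_0$ commutes with each of $\psi_2,\dots,\psi_{n-1}$ by relation~\eqref{Rel:V2}, one gets $\psi_0\psi_\SSTT e_{\SSTT_{(k,\beta)}}=\psi_\SSTT\psi_0 e_{\SSTT_{(k,\beta)}}$, and the problem reduces to $\psi_0 e_{\SSTT_{(k,\beta)}}=e_{s_0(\res(\SSTT_{(k,\beta)}))}\psi_0\in{\rm TL}_n^{<k}$, which is exactly Proposition~\ref{propdominant2} combined with Corollary~\ref{cor_residues}. You never use this commutativity, and that is the crux of what makes the paper's proof short.

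There are two concrete inaccuracies in your combinatorial analysis. First, the hypothesis is \emph{not} ``$+1$ lies in the bead-box or strictly to its right'': since $+1$ is the smallest positive entry and every box at or to the right of the bead carries a positive entry, $+1$ always sits at the bead or to its left; the precise criterion for $s_0(\SSTT)\notin\Std_n(k,\beta)$ is that $+1$ is \emph{at} the bead (the case $-1$ appearing, necessarily in the box immediately left of the bead, always gives a standard $s_0(\SSTT)$). So in your notation $j=0$ always, and the case $j>0$ you allow for never arises. Second, your claim that ``the only residual case requiring closer analysis is $\beta'=\beta^{-1}$'' is an oversimplification. The standing assumption only forces $\alpha_i^{\pm1}$, $\alpha_i^{\pm1}q^{\pm2}$ to be pairwise distinct; it does not prevent $\beta^{-1}q^{2l}$ or $\beta q^{2l}$ from landing on another $\alpha_j^{\pm1}$ for $l\geq 2$, so $\res_1(\stu)=\beta^{-1}$ is compatible with many pairs $(l,\beta')$ — the $\pm1$ box of $\stu$ can sit well to the left of its bead. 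Handling these uniformly for arbitrary $\SSTT$ (not just $\SSTT_{(k,\beta)}$) is more delicate than the stripping induction in Proposition~\ref{propdominant2}, because when you remove $\pm n$ the shape of the restricted $\SSTT$ can \emph{grow} by one (if $-n$ sits in the leftmost box), so the inequality $\Shape(\stu)\nless\Shape(\SSTT)$ is not automatically preserved. Your sketch does not address this. The commutativity observation dissolves all of these complications at once.
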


\begin{proof}
Note that $\sts = s_0(\stt)$ precisely when the path $\sts$ is obtained from $\stt$ by swapping the first $+\epsilon_1$ (respectively $+\epsilon_2$) step by a $+\epsilon_2$ (respectively $+\epsilon_1$) step. Equivalently, $\sts = s_0(\stt)$ if and only if $x_{\stt}(0) = c$, $x_{\sts}(0) = c\pm 2$ and $x_{\stt}(i) = x_{\sts}(i)$ for all $1\leq i\leq n$. 
Now it's easy to see that $\sts = s_0(\stt)\notin \Std_n(k,\beta)$ precisely when $x_{\stt}(0) = b-1$,  in which case $\SSTT   = w \SSTT_{(k, \beta)}$
 for some $w \in \langle s_2,\dots s_{n-1}\rangle$. Therefore 
$$\psi_0 ( \psi_\SSTT e_{\SSTT_{(k, \beta)}}) =  \psi_\SSTT (\psi_0e_{\SSTT_{(k, \beta)}})$$and so it will suffice  to show that 
$$ \psi_0 e_{\SSTT_{(k, \beta)}}
=  e(s_0\res(\SSTT_{(k, \beta)})) \psi_0
 \in  {\rm TL}_n(\alpha_1,\alpha_2, \extra )^{<k} $$ which 
follows immediately  from \cref{propdominant2} and \cref{cor_residues}.
\end{proof}

\begin{lem} \label{zajjlemma2} 
Let    $0 < k \leq n$ and $\beta \in \{\alpha_1^{\pm1},\alpha_2^{\pm1}\}$.
We have that 
$$\psi_j  e_{\SSTT_{(k, \beta)}}\in {\rm TL}_n(\alpha_1,\alpha_2, \extra )^{<k} $$for all
$n-k< j< n$.
\end{lem}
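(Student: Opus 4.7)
The plan is to follow the template of the preceding \cref{zajjlemma}. By the commutation relation~\eqref{Rel:V3} we rewrite
\[\psi_j e_{\SSTT_{(k,\beta)}} = e_{s_j(\res(\SSTT_{(k,\beta)}))}\,\psi_j,\]
so it suffices to show $e_{s_j(\res(\SSTT_{(k,\beta)}))}\in{\rm TL}_n(\alpha_1,\alpha_2,\extra)^{<k}$. By the defining relation~\eqref{fkdjhgdlsjhgdjlskhgsdfjklhgjkdflh} this idempotent either vanishes (in which case the conclusion is immediate) or equals $e_\SSTS$ for some $\SSTS\in\Std_n$ whose residue sequence is $s_j(\res(\SSTT_{(k,\beta)}))$; in the latter case \cref{cor_residues} places $e_\SSTS$ in ${\rm TL}_n^{\leq k'}$ where $k'$ is the largest first coordinate of any shape attained by a standard tableau with this residue. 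The lemma therefore reduces to establishing the following analogue of \cref{propdominant2}: if $\res(\SSTS) = s_j(\res(\SSTT_{(k,\beta)}))$ for some $\SSTS\in\Std_n$ and $n-k<j<n$, then $\Shape(\SSTS)<(k,\beta)$.

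\textbf{The local picture at positions $j$ and $j+1$.} Writing $j=2m+t$ with $m=\lfloor(n-k)/2\rfloor$ and $t\in\{1,\dots,k-1\}$, the path of $\SSTT_{(k,\beta)}$ in $\mathcal{L}_\beta$ consists of $m$ initial zigzag pairs followed by $k$ consecutive $\epsilon_1$-steps, so the residues in the tail positions $n-k+1,\dots,n$ form the arithmetic progression $q^{2m}\beta,\,q^{2(m+1)}\beta,\dots,q^{2(m+k-1)}\beta$. Thus $s_j(\res(\SSTT_{(k,\beta)}))$ differs from $\res(\SSTT_{(k,\beta)})$ only in positions $j,j+1$, where the adjacent values $q^{2(m+t-1)}\beta$ and $q^{2(m+t)}\beta$ have been interchanged. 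This swap creates the anomalous local relation $\res_j=q^2\res_{j+1}$, which cannot arise from two consecutive steps of a standard path of the same type for generic $\beta$.

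\textbf{Proof of the key claim and main obstacle.} To establish the claim I will adapt the inductive argument used in the proofs of \cref{propdominant1} and \cref{propdominant2}. Given $\SSTS\in\Std_n$ with the asserted residue sequence, restrict $\SSTS$ to the sub-tableau on entries $\pm 1,\dots,\pm(n-2)$ by deleting the entries $n-1$ and $n$. The restricted residue sequence is an initial segment of $\res(\SSTT_{(k,\beta)})$, so by \cref{propdominant1} the restricted shape is at most $(k-2,\beta)$; re-adding the two entries gives $\Shape(\SSTS)\leq(k,\beta)$ in the partial order, and the issue is to exclude equality. If $\Shape(\SSTS)=(k,\beta)$ then $\SSTS$ must coincide with $\SSTT_{(k,\beta)}$ on positions $1,\dots,j-1,j+2,\dots,n$, and the swapped residues at $j,j+1$ force the boxes of $\pm j$ and $\pm(j+1)$ in $\SSTS$ to occupy columns $j+1$ and $j$ respectively. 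Since $j>2m$, both columns lie strictly to the right of the bead, so the condition that negative entries appear only left of the bead (recalling $(k,\beta)\neq(0,\extra)$) forces both entries to be positive; but then the strict left-to-right increase is violated by the pair $+j,+(j+1)$, yielding a contradiction. The main obstacle is controlling the case where $\beta\in q^{\ZZ}$, which is permitted by the standing assumptions: in this regime the lattice $\mathcal{L}_\beta$ carries hyperplanes and the $\sim$-equivalence of paths admits extra reflections $\csigma\cdot_i$ and translations $\rho_{2re}$. Since all such operations preserve residues, they cannot transport $\SSTT_{(k,\beta)}$ to a path with the \emph{modified} residue sequence $s_j(\res(\SSTT_{(k,\beta)}))$; the verification reduces to checking that no direct modification of the step types at positions $j,j+1$ in a larger lattice can produce a path sitting inside the region $\mathbb{T}_{(k',\beta')}$ for any $(k',\beta')\geq(k,\beta)$, which is handled by the same column/bead comparison as in the generic case.
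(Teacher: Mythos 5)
Your overall strategy (rewrite $\psi_j e_{\SSTT_{(k,\beta)}} = e_{s_j\res(\SSTT_{(k,\beta)})}\psi_j$ and then show any standard tableau in that residue class has strictly smaller shape, invoking \cref{cor_residues}) is a correct reduction and matches what the paper needs. However, there are two genuine gaps in the way you try to establish the key claim.

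First, the restriction step is not valid for most $j$ in the range. You restrict $\SSTS$ to the entries $\pm 1,\dots,\pm(n-2)$ by deleting $n-1$ and $n$, and then claim the restricted residue sequence is an initial segment of $\res(\SSTT_{(k,\beta)})$. That is only true when $j = n-1$. For $n-k < j \leq n-2$ both of the swapped positions $j$ and $j+1$ survive the restriction, so the restricted sequence is $s_j$ applied to the first $n-2$ entries of $\res(\SSTT_{(k,\beta)})$, not an initial segment; \cref{propdominant1} therefore does not apply and the inductive comparison collapses. You would need either to delete entries $\pm j, \pm(j+1)$ instead (which destroys the initial-segment structure in a different way) or to run a genuine induction on $n$ with separate base cases for $j \in \{n-2,n-1\}$, neither of which your write-up carries out.

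Second, your final paragraph understates the hyperplane case, which is exactly where a standard tableau $\SSTS$ with residue sequence $s_j(\res(\SSTT_{(k,\beta)}))$ \emph{does} exist. A short computation with the residue formula shows that the swapped local pattern $\res_j = q^2 \res_{j+1}$ forces steps $j-1,j,j+1$ of any candidate $\SSTS$ to alternate direction, and the resulting compatibility condition on $x$-coordinates is precisely the condition that the vertex $\SSTT_{(k,\beta)}(j)$ lies on a hyperplane. When that condition fails, $e_{s_j\res(\SSTT_{(k,\beta)})}=0$ by \eqref{fkdjhgdlsjhgdjlskhgsdfjklhgjkdflh} and you are done immediately; when it holds, you cannot dismiss the case by the ``column/bead comparison,'' since that comparison only addresses the scenario $\Shape(\SSTS)=(k,\beta)$ under the (unjustified) assumption that $\SSTS$ agrees with $\SSTT_{(k,\beta)}$ outside positions $j,j+1$. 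The paper's proof handles this cleanly: it constructs the reflected path $\SSTU := s_j(\csigma\cdot_j\SSTT_{(k,\beta)})$, notes that any ladder tableau $\SSTT\sim\SSTU$ has strictly greater width $x_\SSTT(n)-x_\SSTT(0)$ and hence $\Shape(\SSTT)<(k,\beta)$, and then applies \cref{cor_residues}. You should replace the restriction argument by this explicit case split on the hyperplane condition; without it the proof is incomplete.
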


\begin{proof}
Our assumption that $n-k< j < n$ is equivalent to saying that
 the $j$ and $j+1$ steps in $\stt_{(k,\beta)}$ are both $+\epsilon_1$. 
 If $\SSTT_{(k, \beta)}(j)$ does not lie on a hyperplane, then 
we have  that there is no tableau with residue sequence 
 $s_j(\res(  \SSTT_{(k, \beta)}) )$  and 
   hence $\psi_j e_{\SSTT_{(k, \beta)}}=0 \in {\rm TL}_n(\alpha_1,\alpha_2, \extra )$ by \cref{whatisTL}.   
  If $\SSTT_{(k, \beta)}(j)$ does lie on a hyperplane, then   we let $\SSTU:= s_j(\csigma \cdot_j  \SSTT_{(k, \beta)})$ and we let $\SSTT \sim \SSTU$ denote   
  the corresponding  ladder tableau in this equivalence class, if it exists (see \cref{heresaladder2} for examples).   If no such $\stt$ exists then $e_\stu = 0$ and we're done.
Now, it is easy to see that if $\stt$ exists then  $\Shape(\SSTT)<(k,\beta)$ since the width of the path must have increased. We have using \cref{cor_residues}
\begin{equation}\label{asdkfjghsdrfljghfdjs}
\psi_j  e_{\SSTT_{(k, \beta)}}= e_\SSTT \psi_j \in  {\rm TL}_n(\alpha_1,\alpha_2, \extra )^{<k} 
\end{equation}as required.
\end{proof}

\begin{figure}[ht!]
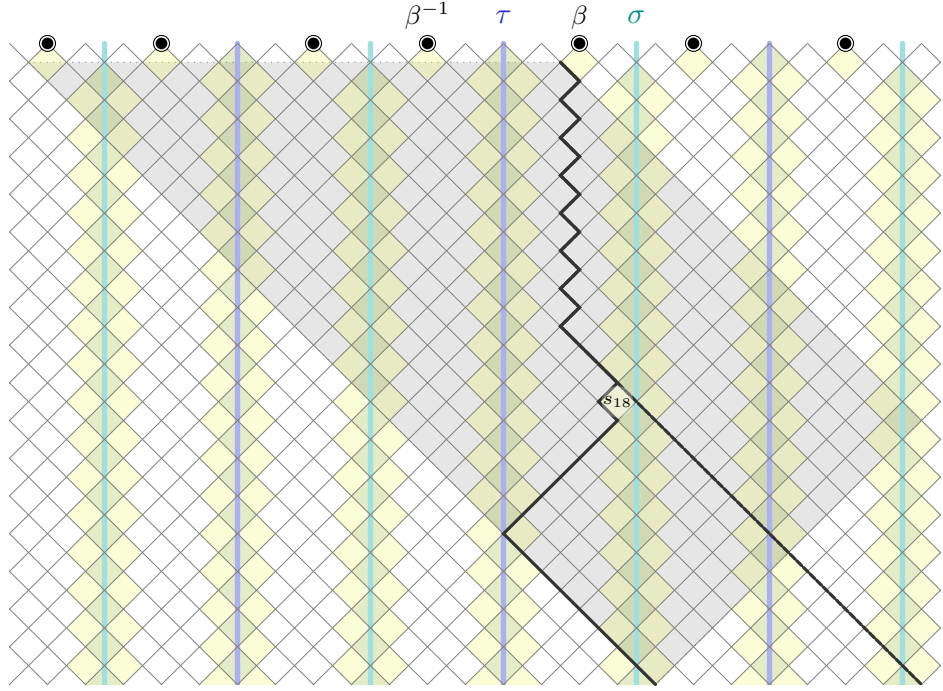

\[
\def\pN{33} 
\def\pB{29} 
\def\pA{3} 
\def\pE{7} 
\def\pWidth{25} 
\TIKZ[yscale=-1, scale=.25]{
\pgfmathsetmacro{\pC}{ceil(.5*\pN - .5*\pB)}
\pgfmathsetmacro{\pK}{floor(\pWidth/\pE)}
\pgfmathsetmacro{\pShift}{ceil(\pK/2)}
\pgfmathsetmacro{\pEParity}{Mod(\pE,2)}
\pgfmathsetmacro{\pNParity}{Mod(\pN,2)}
\pgfmathsetmacro{\pLeft}{ceil((\pA - \pB-1)/\pE)-1}
\coordinate (max-start) at (\pA, 0);
\path (max-start) to ++(\pN-\pB+\pNParity, \pN) coordinate (end);
\path (end) to ++(-\pN, -\pN) coordinate (min-start);
\path (end) to ++(.5*\pB-.5*\pNParity, -.5*\pB+.5*\pNParity) coordinate (p-corner);
\filldraw [pathsregion] (max-start) to (min-start) to (end) to (p-corner) to (max-start); 
\begin{scope}
\clip (\pA - \pB,-1.5) rectangle (\pA + \pN -.5*\pB + .5*\pNParity+1,\pN);
\foreach \x in {0,..., \pK}{
\coordinate (beta-marker\x) at (-2*\pShift*\pE + \pA+1  + \x*2*\pE, -1);
\coordinate (betainv-marker\x) at (2*\pShift*\pE- \pA -1 - \x*2*\pE, -1);
	\filldraw[plusone] (beta-marker\x) \TILE;
	\filldraw[plusone] (betainv-marker\x) \TILE;
}
\foreach \x in {-\pK, ..., \pK}{
	\foreach \y in {-1, 1,...,\pN}{
		\filldraw[minustwo] (2*\pE*\x, \y) \TILE;
		\filldraw[plusone] 
			(2*\pE*\x+1, \y+1) \TILE;
		\filldraw[plusone] 
			(2*\pE*\x-1, \y+1) \TILE;
		\filldraw[minustwo] (2*\pE*\x+\pE, \y+\pEParity) \TILE;
		\filldraw[plusone] 
			(2*\pE*\x+\pE+1, \y+\pEParity+1) \TILE;
		\filldraw[plusone] 
			(2*\pE*\x+\pE-1, \y+\pEParity+1) \TILE;
	}}
\foreach \x in {-\pK, ..., \pK}{
		\draw[tauplane] (\x*2*\pE,-1) to +(0,\pN+1);
		\draw[sigmaplane] (\x*2*\pE+\pE,-1) to +(0,\pN+1);
		}
\pgfmathsetmacro{\pGrids}{max(\pN, \pWidth)}
\foreach \x in {-\pGrids, ..., \pGrids}{
	\draw[gridstyle] (2*\x,-1) to +(-\pN-2,\pN+2);
	\draw[gridstyle] (2*\x,-1) to +(\pN+2,\pN+2);
}
\foreach \x in {0,..., \pK}{
	\node[M] at (beta-marker\x){};
	\node[M] at (betainv-marker\x){};
}
\end{scope}
\node[above] at (\pA+1, -1) {\small \strut $\beta$};
\node[above] at (-\pA-1, -1) {\small \strut $\beta^{-1}$};
\node[above, taucolor] at (0,-1) {\strut \normalsize $\tau$};
\node[above, sigmacolor] at (\pE, -1) {\strut \normalsize $\sigma$};
\draw[path1, densely dotted, very thick ] (max-start) 
\foreach \x in {1, ..., 7}{ to ++(1,1) to ++(-1,1)}
	to ++(19,19);
\draw[path1, very thick ] (max-start) 
\foreach \x in {1, ..., 7}{ to ++(1,1) to ++(-1,1)}
	 to ++(3, 3) 
	 to ++(-1,1) to ++(1,1)
	 to ++(-6,6) to (end);
\node[tilemarkers] at (6, 18) {\tiny $s_{18}$};
%
}
\]
\caption{Here $e=14$ and $\alpha_1= q^4$ with $\alpha_2 \not \in \alpha_1q^{2\ZZ}$. 
We depict two ladder tableaux  which 
 illustrate the second case of the proof of \cref{zajjlemma2}. 
We have that 
 $s_{18}  $ to the residue sequence of one path, we obtain the residue sequence of the other path.
 }
\label{heresaladder2}
\end{figure}


\begin{prop}\label{ydies}
Let $\SSTT \in \Std_n(k, \beta)$ for $(k, \beta)\in \Lambda_n$.
We have that 
$$y_j  e_{\SSTT_{(k, \beta)}}\in {\rm TL}_n(\alpha_1,\alpha_2, \extra )^{<k} $$for all $1\leq j \leq n$
(where we set ${\rm TL}_n(\alpha_1,\alpha_2, \extra )^{<0} = \{0\}$).

\end{prop}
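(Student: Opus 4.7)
My plan is to prove the proposition by induction on $(k,\beta)\in\Lambda_n$ ordered by the first coordinate. Set $e=e_{\stt_{(k,\beta)}}$ and $\ti=\res(\stt_{(k,\beta)})$, and unpack the residues: the alternating filling of $\stt_{(k,\beta)}$ gives $i_a=\beta q^{a-1}$ for $a$ odd and $i_a=\beta^{-1}q^{a}$ for $a$ even while $a\leq 2\lfloor (n-k)/2\rfloor$, followed by a "positive-only tail" $i_{a}=\beta q^{a-1}$ in which $i_{a+1}=q^2i_a$.

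I would first establish the case $j=1$, namely $y_1e\in{\rm TL}_n^{<k}$. For the base shape $(0,\extra)$ this is the defining relation of \cref{whatisTL} (noting ${\rm TL}_n^{<0}=\{0\}$). For $k>0$ we have $\beta\in\{\alpha_1^{\pm1},\alpha_2^{\pm1}\}$, so the last two cases of \eqref{Rel:V10} apply and give $\psi_0^2 e=\pm y_1 e$. \Cref{propdominant2} tells us $s_0\ti$ is not the residue of any standard tableau of shape $\geq(k,\beta)$, so by \cref{cor_residues} the idempotent $e_{s_0\ti}$ lies in ${\rm TL}_n^{<k}$; using $\psi_0 e=e_{s_0\ti}\psi_0$ I conclude $\psi_0^2e\in{\rm TL}_n^{<k}$ and hence $y_1e\in{\rm TL}_n^{<k}$.

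I would then propagate to all $j$ by induction on $j$, splitting into two regimes.  In the \emph{alternating regime} (roughly $a\leq n-k$), the ratio $i_{a+1}/i_a$ equals either $\beta^{-2}q^2$ or $\beta^2$; our standing assumption on $\alpha_i$ and $\extra$ rules out each of these being equal to $1$, $q^{2}$ or $q^{-2}$, so $i_a\neq i_{a+1}$ and the ``otherwise'' case of \eqref{Rel:V6} gives $\psi_a^2 e=e$. Combining this with commutation relation \eqref{Rel:V4}, I get the reduction
\[
y_j\psi_{j-1}e=\psi_{j-1}y_{j-1}e,\qquad y_je=y_j\psi_{j-1}^2e=(y_j\psi_{j-1})\psi_{j-1}e,
\]
so the inductive hypothesis $y_{j-1}e\in{\rm TL}_n^{<k}$ transfers to $y_je\in{\rm TL}_n^{<k}$.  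In the \emph{tail regime} $n-k<a<n$, consecutive residues satisfy $i_{a+1}=q^2 i_a$, so \eqref{Rel:V6} gives $\psi_a^2 e=(y_{a+1}-y_a)e$, i.e.\ $y_{a+1}e=y_ae+\psi_a^2e$.  Here $\psi_a e\in{\rm TL}_n^{<k}$ by \cref{zajjlemma2} (whose range of validity is exactly this tail), so $\psi_a^2 e\in{\rm TL}_n^{<k}$, and combined with $y_ae\in{\rm TL}_n^{<k}$ we obtain $y_{a+1}e\in{\rm TL}_n^{<k}$.

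The main obstacle is to verify the case analysis of the residue ratios matches the split used to apply \cref{zajjlemma2} (the tail length is exactly $k$, matching the $n-k<a<n$ hypothesis of that lemma) and to ensure none of the exceptional cases of \eqref{Rel:V6} sneak into the alternating regime --- this is where the full strength of the genericity standing assumptions on $\alpha_1,\alpha_2,\extra,q$ is used.  With these verifications in hand, the induction closes and gives $y_je\in{\rm TL}_n^{<k}$ for all $1\leq j\leq n$.
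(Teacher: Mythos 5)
Your base case $j=1$ and your tail-regime step ($\res_{a+1}=q^2\res_a$, use \eqref{Rel:V6} to write $y_{a+1}e=y_ae+\psi_a^2e$ and then \cref{zajjlemma2}) are both correct and match the paper. The gap is in the \emph{alternating regime} induction.

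There you write
\[
y_je \;=\; y_j\psi_{j-1}^2e \;=\;(y_j\psi_{j-1})\psi_{j-1}e \;=\;\psi_{j-1}y_{j-1}\psi_{j-1}e,
\]
and conclude from $y_{j-1}e\in{\rm TL}_n^{<k}$. But the resulting expression contains the factor $y_{j-1}\psi_{j-1}e$, not $y_{j-1}e$; since $\psi_{j-1}e = e_{s_{j-1}\ti}\psi_{j-1}$ with $\ti=\res(\stt_{(k,\beta)})$, the inductive hypothesis (which concerns $y_{j-1}e_\ti$ specifically) does not apply. Pushing the manipulation one step further using \eqref{Rel:V4} again gives $y_{j-1}\psi_{j-1}e_\ti=\psi_{j-1}y_je_\ti$, hence $y_je_\ti=\psi_{j-1}^2 y_je_\ti = y_je_\ti$: the reduction is a tautology and carries no information. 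The proof in the paper avoids this by sandwiching a \emph{vanishing} idempotent two layers deep: for $j$ odd (alternating) it computes $\psi_{j-1}\psi_j\,e(\ldots,\beta q^{j-1},\beta^{-1}q^{j+1},\beta^{-1}q^{j-1},\ldots)\,\psi_j\psi_{j-1}=(y_{j+1}-y_j)e_{\stt_{(k,\beta)}}$, and observes that the inner idempotent is $0$ (its residue sequence is not that of any standard tableau, so \eqref{fkdjhgdlsjhgdjlskhgsdfjklhgjkdflh} kills it), giving the genuine reduction $y_{j+1}e=y_je$. The crucial extra ingredient is the use of $\psi_j$ (not just $\psi_{j-1}$), which moves the residue sequence to one that cannot occur; a single $\psi_{j-1}$ only lands you on $s_{j-1}\ti$, which is still a valid residue sequence, so nothing vanishes.

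There is also a secondary parametric issue: your claim that $\psi_{j-1}^2e=e$ (the ``otherwise'' branch of \eqref{Rel:V6}) requires $\beta^2\notin\{1,q^2,q^{-2},q^4\}$. For $\beta=\alpha_i^{\pm1}$ all four exclusions follow from the $12$-distinct-points assumption, but for $\beta=\extra$ the standing assumption gives $\extra^2\neq 1,q^2,q^4$ and says nothing about $\extra^2\neq q^{-2}$. So already for the shape $(0,\extra)$ with $\extra=\pm q^{-1}$ the identity $\psi_{j-1}^2e=e$ can fail for $j$ odd. The paper's proof does not rely on this identity (it always passes through a vanishing idempotent, which needs only the weaker non-degeneracies on residues), so it does not run into this problem. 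To repair your argument you would need to imitate the two-layer sandwich, and also treat $j=2$ as a separate base case as the paper does (using a sandwich by $\psi_1\psi_0\psi_1$ rather than by $\psi_1$ alone).
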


\begin{proof}
We first suppose that  $j=1$.  If    $k\neq 0$ then   $s_0(\SSTT_{(k,\beta)}) \not  \in \Std_n(k, \beta)$ and so the result follows from \cref{zajjlemma} together with the fact that $y_1e_{\SSTT_{(k, \beta)}}=\pm \psi_0^2e_{\SSTT_{(k, \beta)}}$ using (4.8). If $k=0$ then $y_1  e_{\SSTT_{(0, \extra)}}=0$ by \cref{whatisTL}.
We now suppose that  $j=2$ and $k\neq n$.  
For $\beta \in \{\alpha_1^{\pm1},\alpha_2^{\pm1},\extra\}$ we have $e_{\SSTT_{(k,\beta)}}=e(\beta,\beta^{-1}q^2,...)$ and 
\begin{align*}
\psi_1\psi_0\psi_1e(\beta,\beta q^{-2},...)\psi_1\psi_0\psi_1
& = \psi_1\psi_0(y_2-y_1)e(\beta q^{-2},\beta,...)\psi_0\psi_1\\
& = \psi_1(y_2+y_1)e(\beta^{-1}q^2,\beta,...)\psi_1\\
& = (y_1+y_2)e(\beta,\beta^{-1}q^2,...)\\
& =(y_1+y_2)e_{\SSTT_{(k,\beta)}}\ ,
\end{align*}
where the first equality follows from (4.3) and (4.5), the second equality uses (4.3) and (4.8) with the fact that $\beta q^{-2}$ is not $\pm1$ and not in $\{\alpha_i^{\pm1}\}$ (see our standing assumptions), and the third equality uses (4.3), (4.4) and (4.5) with the fact that $\beta^{-1}q^2\notin\{\beta,\beta q^{\pm2}\}$ (again from our standing assumptions). We have that $e(\beta,\beta q^{-2},...)=0$ since the only possible residues after $\beta$ in a standard tableau are $\beta q^2$ and $\beta^{-1}q^2$ and these are different from $\beta q^{-2}$. Thus we have $y_2e_{\stt_{(k,\beta)} }= -y_1e_{\stt_{(k,\beta)}}=0$ as seen above.

We now suppose that $j=2$ and $k=n$. In this case, we have $e_{\SSTT_{(k,\beta)}}=e(\beta,\beta q^2,...)$ and
\[y_2e_{\SSTT_{(k,\beta)}}=y_2e(\beta,\beta q^2,...)=(\psi_1^2+y_1)e(\beta,\beta q^2,...)=(\psi_1^2+y_1)e_{\SSTT_{(k,\beta)}}\ \]
using (4.5). 
Now the result follows from  the case $j=1$ and \cref{zajjlemma2}.

Now assume, by induction, that the result holds for $j\geq 2$.  Suppose first that within
 $\SSTT_{(k,\beta)}$  the $j$th  and $(j+1)$th steps are $+\epsilon_1$. This implies that $\res_{j+1}(\stt_{(k,\beta)})= q^2\res_{j}(\stt_{(k,\beta)})$, so using (4.5), we have
\[y_{j+1}e_{\SSTT_{(k,\beta)}}=(\psi_j^2+y_j)e_{\SSTT_{(k,\beta)}}\ ,\]
and we can apply \cref{zajjlemma2} and the induction hypothesis.

Finally, suppose that within
 $\SSTT_{(k,\beta)}$ the $j$th  and $(j+1)$ steps are $+\epsilon_1$ and $+\epsilon_2$, respectively (that is, $j$ is odd). This means that $e_{\SSTT_{(k,\beta)}}=e(...,\beta^{-1}q^{j-1},\beta q^{j-1},\beta^{-1}q^{j+1},...)$ where we are showing only the residues in positions $j-1,j,j+1$. We have:
 \begin{align*}
& \psi_{j-1}\psi_je(...,\beta q^{j-1},\beta^{-1} q^{j+1},\beta^{-1}q^{j-1},...)\psi_j\psi_{j-1}\\
& = \psi_{j-1}(y_{j+1}-y_j)e(...,\beta q^{j-1},\beta^{-1}q^{j-1},\beta^{-1}q^{j+1},...)\psi_{j-1}\\
& = (y_{j+1}-y_j)e(...,\beta^{-1}q^{j-1},\beta q^{j-1},\beta^{-1} q^{j+1},...)\\
& = (y_{j+1}-y_j)e_{\SSTT_{(k,\beta)}}
\end{align*}
where the first inequality follows from (4.3) and (4.5) noting that the $j$th and $j+1$th residues differ by $q^2$. The second inequality follows again  from (4.3) and (4.5) using that $\beta^{-1}\notin\{\beta,\beta q^{\pm2}\}$. The result now follows using the induction hypothesis and the fact that $e(...,\beta q^{j-1},\beta^{-1} q^{j+1},\beta^{-1}q^{j-1},...)=0$ since it cannot be the residues sequence of a standard tableau. This latter fact is easily seen in the path model, where we can see that $s_{j}s_{j-1}\SSTT_{(k,\beta)}$ is not a valid path.

The other case where $j$ is even and the $j$ and $j+1$ steps are $+\epsilon_2$ and $+\epsilon_1$ respectively is similar and we leave it to the reader (one reverses the order of $j$ and $j-1$). 
\end{proof}

\begin{prop}\label{BKW}

Let $\ti  \in I^n$, and $w\in W(C_n)$ be an element written as a product of simple transpositions: $w=s_{t_1}s_{t_2}\dots s_{t_m}$ for some $0\leq t_1, \dots, t_m <n$. 
\begin{itemize}
\item [$(i)$]	If $s_{t_1}s_{t_2}\dots s_{t_m}$ is not a reduced word, then 
$\psi_{t_1}\psi_{t_2}\dots \psi_{t_m}e(i)\in \mathscr{H}_n  (\alpha_1,\alpha_2 , \vartheta)$ can be written as a linear combination of 
$$\psi_{t_{a_1}}\psi_{t_{a_2}}\dots \psi_{t_{a_b}}f(y)e(\ti)$$for 
$1\leq a_1 \leq \dots \leq a_b \leq m$, $b<m$,  
and 
$s_{t_{a_1}}s_{t_{a_2}}\dots s_{t_{a_b}}$ is a reduced word, and 
$f(y)$ is a polynomial in $y_1, \dots ,y_n$.

\item [$(ii)$]	
 We have that 
$$y_k\psi_{t_1}\psi_{t_2}\dots \psi_{t_m}e(\ti)=
 \psi_{t_1}\psi_{t_2}\dots \psi_{t_m} 	y_{w^{-1}(k)}	e(\ti)+\dots 
 $$where the $\dots $ denotes a   linear combination of terms of the form 
$$\psi_{t_{a_1}}\psi_{t_{a_2}}\dots \psi_{t_{a_b}}f(y)e(\ti)$$for 
$1\leq a_1 \leq \dots \leq a_b \leq m$, $b< m$,  
and 
$f(y)$ is a polynomial in $y_1, \dots ,y_n$.

\end{itemize}
\end{prop}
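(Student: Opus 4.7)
The plan is to prove part (ii) first, since part (i) will use it to move polynomials past $\psi$-generators, and then to prove part (i) by a Matsumoto-style argument that reduces the problem to applying a quadratic relation.

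For part (ii), I would induct on $m$. The base case $m=0$ is trivial. For the inductive step, commute $y_k$ past the leftmost generator $\psi_{t_1}$. If $t_1 \geq 1$, relation \eqref{Rel:V4} gives
\[ y_k \psi_{t_1} e(\ti) = \psi_{t_1} y_{s_{t_1}(k)} e(\ti) + \delta\, e(\ti), \]
where $\delta \in \{0, \pm 1\}$ depending on whether $k \in \{t_1, t_1+1\}$ and whether $i_{t_1} = i_{t_1+1}$. If $t_1 = 0$, relation \eqref{Rel:V8} gives $y_k \psi_0 e(\ti) = \psi_0 y_{s_0(k)} e(\ti) + \delta' e(\ti)$, where again $\delta' \in \{0, \pm 1\}$ (and $y_{s_0(1)} = -y_1$; here we use $\mathrm{char}(\Bbbk)\neq 2$ so that the $2e(\ti)$ in \eqref{Rel:V8} is interpreted correctly, but in any case the difference has fewer $\psi$s). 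The first summand can be processed by the induction hypothesis applied to the subword $s_{t_2}\cdots s_{t_m}$; the second has strictly fewer $\psi$-generators and is already of the desired form.

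For part (i), I proceed by induction on $m$. The key combinatorial input is that in any Coxeter group, a non-reduced word $s_{t_1}\cdots s_{t_m}$ admits a sequence of braid moves turning it into a word containing two adjacent identical letters $s_t s_t$ (Matsumoto's exchange condition). I would translate each such braid move into $\mathscr{H}_n$: by relations \eqref{Rel:V5} and \eqref{Rel:V11}, replacing $\psi_b\psi_{b+1}\psi_b$ by $\psi_{b+1}\psi_b\psi_{b+1}$ (or applying the length-$4$ braid involving $\psi_0,\psi_1$) introduces an error that is a $\Bbbk$-linear combination of products of strictly fewer $\psi$-generators, possibly multiplied by some $y_j$'s; together with the commutation relations \eqref{Rel:V2} this suffices to reorder. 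Part (ii) lets us push those $y_j$'s to the right of any remaining $\psi$'s at the cost of further lower-order terms (which satisfy the claim by induction). After finitely many braid moves and commutations, the original expression equals a linear combination of a word containing $\psi_t\psi_t$ (for some $t$) plus terms already in the desired form. Finally, apply the quadratic relation \eqref{Rel:V6} or \eqref{Rel:V10} to $\psi_t^2 e(\ti')$: the output is $0$, $e(\ti')$, or $\pm(y_{b+1}-y_b)e(\ti')$, $\pm y_1 e(\ti')$, all of which reduce the $\psi$-count by two (or by one if we must further process the polynomial via part (ii)). In all cases the resulting word has strictly fewer than $m$ $\psi$-generators, so the induction hypothesis applies.

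The main obstacle is bookkeeping rather than substance: relation \eqref{Rel:V11} (the type-$C$ braid at the boundary) and the quadratic relation \eqref{Rel:V10} each have several branches, and the error terms in some branches are themselves $\psi_0$ or $\psi_1$ terms (not just polynomials in $y$), so one must ensure that every branch still produces a word that can be reindexed as $\psi_{t_{a_1}}\cdots\psi_{t_{a_b}}f(y)e(\ti)$ with $b<m$ and $a_1 < \cdots < a_b$. This is handled by a final clean-up: whenever an error term contains some $\psi_{t_a}$'s not in increasing-index order, repeatedly apply part (ii) together with the commutation relations \eqref{Rel:V2} and \eqref{Rel:V3} to push all $y$-polynomials to the right; if at that stage the reindexed word is still non-reduced, apply the inductive hypothesis of part (i). The overall induction therefore runs on the lexicographic pair $(m, \text{number of $y$-factors})$, with both strictly decreasing at each step, which guarantees termination.
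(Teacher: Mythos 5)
Your proof mirrors the Brundan--Kleshchev--Wang strategy that the paper's one-line proof cites (prove (ii) first by commuting $y_k$ past each $\psi_{t_j}$, then prove (i) via Matsumoto's theorem for $W(C_n)$ plus the braid and quadratic relations), so the overall architecture is the intended one. Two remarks, one minor and one substantive.

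Minor: the scalar correction produced by relation (4.8) lies in $\{0,2\}$, not $\{0,\pm1\}$; this is immaterial since all you use is that it is a scalar, but the paper's standing hypothesis $\mathrm{char}(\Bbbk)\neq 2$ is needed precisely so this scalar does not vanish inconveniently.

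Substantive: the proposition asserts that the lower-order terms in part (i) are $\psi_{t_{a_1}}\cdots\psi_{t_{a_b}}f(y)e(\ti)$ with $a_1<\cdots<a_b$, i.e. the $\psi$-generators run along an increasing \emph{subsequence} of the original word $(t_1,\dots,t_m)$. Your argument does not deliver this. A braid move such as $\psi_b\psi_{b+1}\psi_b \mapsto \psi_{b+1}\psi_b\psi_{b+1}$ changes the \emph{multiset} of letters, not just their order, so after a chain of braid moves the $\psi$-word you manipulate is generically not a subword of the original; and the ``final clean-up'' you describe (reindexing out-of-order letters, pushing $y$'s right, re-applying (i) and (ii)) controls only the total number of $\psi$'s, not whether those $\psi$'s can be matched to an increasing subsequence of $(t_1,\dots,t_m)$. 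What you have proved is the weaker statement in which the $\psi_{t_{a_j}}$ are replaced by arbitrary $\psi_{u_j}$'s with $b<m$ letters. To obtain the subword form one needs the further fact, which is the real content of the BKW lemma being referenced: for a fixed reduced expression $\underline{w}=s_{t_1}\cdots s_{t_m}$ and any $u\le w$, any $\psi_{\underline{u}}e(\ti)$ (with $\underline{u}$ a reduced word for $u$) is a $\Bbbk[y]$-linear combination of $\psi$'s along reduced \emph{subwords} of $\underline{w}$. This is proved by a joint induction that simultaneously carries parts (i), (ii), and this Matsumoto-type ``independence of reduced expression up to lower order'' statement, tracking at every step that each error term's group element stays $\le w$ in Bruhat order (hence admits a subword reduced expression of $\underline{w}$) and then re-expressing it inductively. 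That extra layer is what your proof omits, and it is exactly the form of the conclusion that gets consumed later in the paper when these lower-order terms must be identified with $\psi_\SSTP$'s for standard tableaux $\SSTP$ of smaller length.
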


\begin{proof}
The proof proceeds exactly as in \cite[Lemma 2.5 and Proposition 2.5]{bkw11} with the   minor caveat that 
we appeal to   Matsumoto's theorem for $W(C_n)$, rather than Matsumoto's theorem for the finite symmetric group.
\end{proof}

\begin{prop}\label{almostcellular}
The algebra  ${\rm TL}_n(\alpha_1,\alpha_2, \extra )$ has a filtration by two-sided ideals 
\begin{align*}
0\leq{\rm TL}_n(\alpha_1,\alpha_2, \extra ) ^{\leq 0} \leq  {\rm TL}_n(\alpha_1,\alpha_2, \extra ) ^{\leq 1}\leq 
 {\rm TL}_n(\alpha_1,\alpha_2, \extra ) ^{\leq 3}\leq \dots 
\leq   {\rm TL}_n(\alpha_1,\alpha_2, \extra ) ^{\leq n} 
\\0\leq {\rm TL}_n(\alpha_1,\alpha_2, \extra ) ^{\leq 0}\leq 
 {\rm TL}_n(\alpha_1,\alpha_2, \extra ) ^{\leq 2}\leq \dots 
\leq   {\rm TL}_n(\alpha_1,\alpha_2, \extra ) ^{\leq n}  
\end{align*}for $n$ odd or even respectively, where
$ {\rm TL}_n(\alpha_1,\alpha_2, \extra ) ^{\leq n}  = {\rm TL}_n(\alpha_1,\alpha_2, \extra ) ^{\leq n} $.
Here  each 
 subquotient 
 $$
  {\rm TL}_n(\alpha_1,\alpha_2, \extra ) ^{\leq k}/  {\rm TL}_n(\alpha_1,\alpha_2, \extra ) ^{<k}
 $$%
 decomposes as 
\begin{equation}\label{bigsum}
\bigoplus _{\{\beta \mid (k,\beta) \in \Lambda_n\}} 
\Bbbk\text{--}{\rm span}
\{\psi_\SSTS e_{\SSTT_{(k,\beta)}} \psi_\SSTT^* +{\rm TL}_n(\alpha_1,\alpha_2, \extra ) ^{<k}
\mid 
\SSTS,\SSTT \in \Std_n(k,\beta)
\}
\end{equation}
and the decomposition of  \eqref{bigsum} is as a 
direct sum of   ${\rm TL}_n(\alpha_1,\alpha_2, \extra )$-bimodules.

\end{prop}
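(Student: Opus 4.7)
The plan is to establish three things: that each $\mathrm{TL}_n^{\leq k}$ is a two-sided ideal (immediate from the definition as a two-sided ideal generated by a sum of idempotents), that it contains $\mathrm{TL}_n^{<k}$ (trivial since the shapes are nested), and that modulo $\mathrm{TL}_n^{<k}$ the subquotient is spanned by $\psi_\SSTS e_{\SSTT_{(k,\beta)}} \psi_\SSTT^*$ for $\SSTS,\SSTT\in\Std_n(k,\beta)$ and $(k,\beta)\in\Lambda_n$, with the direct sum across $\beta$ holding as a bimodule decomposition.

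First I would invoke the standard spanning set of the orientifold quiver Hecke algebra: any element can be written as a linear combination of $\psi_{s_{t_1}}\cdots\psi_{s_{t_m}} f(y) e_\ti$. Thus $\mathrm{TL}_n^{\leq k}/\mathrm{TL}_n^{<k}$ is spanned modulo $\mathrm{TL}_n^{<k}$ by elements of the form $\psi_{s_{t_1}}\cdots\psi_{s_{t_m}}f(y)e_{\stt_{(k,\beta)}}g(y)\psi_{s_{t'_1}}\cdots\psi_{s_{t'_{m'}}}$, where $f,g$ are polynomials in $y_1,\ldots,y_n$ and the $s_{t_i}$ are simple reflections. Applying \cref{BKW}(ii) repeatedly, we can push all $y$-factors past the $\psi$'s to land directly on $e_{\stt_{(k,\beta)}}$, at the cost of lower-order terms involving fewer $\psi$'s (such terms are absorbed by induction on word length). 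By \cref{ydies}, any positive power of any $y_j$ acting on $e_{\stt_{(k,\beta)}}$ lies in $\mathrm{TL}_n^{<k}$. So modulo $\mathrm{TL}_n^{<k}$, the subquotient is spanned by $\psi_w e_{\stt_{(k,\beta)}}\psi_{w'}^\star$ for $w,w'\in W(C_n)$.

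Next I would reduce each such $\psi_w e_{\stt_{(k,\beta)}}$ (modulo $\mathrm{TL}_n^{<k}$) to a $\psi_\SSTS e_{\stt_{(k,\beta)}}$ with $\SSTS\in\Std_n(k,\beta)$. If the chosen expression for $w$ is not reduced, \cref{BKW}(i) rewrites it as a combination of shorter words times polynomials in $y$, and the argument of the previous paragraph puts these terms in $\mathrm{TL}_n^{<k}$. So we may assume $w=s_{t_1}\cdots s_{t_m}$ is reduced. Process the reduced word right-to-left: at each step we are looking at $\psi_{s_{t_i}}$ acting on some $\psi_{s_{t_{i+1}}}\cdots\psi_{s_{t_m}}e_{\stt_{(k,\beta)}}$ where, by induction, the partial product equals $\psi_{\SSTT'}e_{\stt_{(k,\beta)}}$ for a standard $\SSTT'$ (or lies in $\mathrm{TL}_n^{<k}$). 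If $s_{t_i}\cdot\SSTT'$ is standard, we absorb the new generator into $\psi_\SSTT$ and continue. If not, then either $t_i=0$ (and \cref{zajjlemma} puts the element in $\mathrm{TL}_n^{<k}$) or $t_i=j$ with $n-k<j<n$ relative to the shape of $\SSTT'$ (and \cref{zajjlemma2} does the same); the case where $s_{t_i}$ would collide with the path at a non-admissible spot without hyperplane is killed by \cref{fkdjhgdlsjhgdjlskhgsdfjklhgjkdflh} since the residue sequence of $s_{t_i}\SSTT'$ doesn't appear among standard tableaux. The analogous right-hand reduction yields $\psi_\SSTT^\star$.

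Finally, the direct sum across $\beta$ for fixed $k$ is obtained by observing that the tableaux $\stt_{(k,\beta)}$ have pairwise distinct residue sequences (by \cref{propdominant1}, the sequences identify the shape), so the idempotents $e_{\stt_{(k,\beta)}}$ are pairwise orthogonal. Hence $\psi_\SSTS e_{\stt_{(k,\beta)}}\psi_\SSTT^\star$ lies in the bimodule summand labelled by $\beta$, because left-multiplication by any $e_{\ti}$ kills it unless $\ti=\res(\SSTS)$, whose shape is $(k,\beta)$, and similarly on the right. The main obstacle will be the word-by-word reduction in the preceding paragraph: one has to verify that when a non-standard tableau $s_{t_i}\SSTT'$ arises mid-word, the entire prefix $\psi_{s_{t_1}}\cdots\psi_{s_{t_{i-1}}}$ acting on an element of $\mathrm{TL}_n^{<k}$ stays in $\mathrm{TL}_n^{<k}$ (which it does, since $\mathrm{TL}_n^{<k}$ is a two-sided ideal), and that the hypotheses of Lemmas \ref{zajjlemma} and \ref{zajjlemma2} correctly match the geometric configuration at shape $(k,\beta)$ after the partial action of $w$; a careful bookkeeping via the tiling/path dictionary of \cref{inductive1} and \cref{useful!!!} is needed to make this precise.
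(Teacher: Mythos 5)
Your overall strategy (spanning set, push the $y$'s onto $e_{\stt_{(k,\beta)}}$ via \cref{BKW}(ii) and kill them with \cref{ydies}, then reduce words $\psi_w e_{\stt_{(k,\beta)}}$ to standard-tableau form) is the same as the paper's, and your first paragraph and your treatment of the direct sum over $\beta$ are fine. The gap is in the word-by-word reduction. You claim that when $s_{t_i}\cdot\SSTT'$ fails to be standard, the failure is always covered by one of three cases: $t_i=0$ (\cref{zajjlemma}), $n-k<t_i<n$ (\cref{zajjlemma2}), or a vanishing idempotent. This trichotomy is incomplete. Lemmas \ref{zajjlemma} and \ref{zajjlemma2} only treat the situation where the offending generator can be commuted all the way down to $e_{\stt_{(k,\beta)}}$ itself (in \cref{zajjlemma} the hypothesis forces $w_\stt\in\langle s_2,\dots,s_{n-1}\rangle$, so $\psi_0$ slides past $\psi_\stt$); these are precisely the base cases. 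The generic failure mode is that $s_j(\SSTS)$ is non-standard for a standard $\SSTS$ of positive length because the new tile is \emph{unsupported} in the tiling $\mathbb T_\SSTS$ --- one of its two lower neighbours is present and the other absent --- and this happens in the interior of the region, far from $\stt_{(k,\beta)}$. In that situation the residue sequence $s_j(\res(\SSTS))$ need \emph{not} vanish: by \cref{res=sim} it can perfectly well be the residue sequence of some other standard tableau (e.g.\ one obtained by a hyperplane reflection), so you cannot kill the term with relation \eqref{fkdjhgdlsjhgdjlskhgsdfjklhgjkdflh}.

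Handling this interior case is the actual content of the paper's proof: one writes $\SSTS=s_{j\mp1}s_j\SSTU$ (or $\SSTS=s_0s_1s_0\SSTU$ for the $j=1$ case), applies the braid relations \eqref{Rel:V5} and \eqref{Rel:V11} to rewrite $\psi_j\psi_{j\mp1}\psi_j\psi_\SSTU$ as $\psi_{j\mp1}\psi_j\psi_{j\mp1}\psi_\SSTU$ plus correction terms ($\pm\psi_\SSTU$, $\pm\psi_1\psi_\SSTU$ or $\pm2\psi_0\psi_\SSTU$, depending on whether the relevant vertex lies on a hyperplane or at a marked point), and then invokes an induction on $\ell(\SSTS)+\ell(\SSTT)$ using that $s_{j\mp1}(\SSTU)$ is again non-standard but $\SSTU$ is shorter. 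Without this braid-relation step and the accompanying length induction (together with the explicit base cases where the idempotent genuinely vanishes), your right-to-left processing does not close up, so the proposal as written does not establish the spanning claim.
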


\begin{proof}
We proceed by induction on $0\leq k \leq n$.    For a fixed $0\leq k \leq n$ and any  $\ell \in \ZZ_{\geq0}$ we define 
$$	\Delta	_{\leq\ell} 	(k,\beta)  :=  	\Bbbk\text{--}{\rm span} \{	\psi_\SSTU e_{\SSTT_{(k,\beta)}} \psi_\SSTV^* + {\rm TL}_n(\alpha_1,\alpha_2, \extra ) ^{<k}
\mid 
\SSTU,\SSTV \in \Std_n(k,\beta)
,
  \ell(\SSTU)+\ell(\SSTV)\leq \ell 
\} .
$$We define $\Delta	_{<\ell} 	(k,\beta) $ similarly.
For a given  $\ell= \ell(\sts)+\ell(\stt)$ we will prove the following two claims by induction 
$$
 y_j	\psi_\SSTS e_{\SSTT_{(k,\beta)}} \psi_\SSTT^* 
  \in 
		\Delta	_{ <\ell} 	(k,\beta)   
	\qquad
 \psi_j	\psi_\SSTS e_{\SSTT_{(k,\beta)}} \psi_\SSTT^* 
\begin{cases}	
= \psi_\SSTP e_{\SSTT_{(k,\beta)}} \psi_\SSTT^* 		&\text{if }\sts< s_j(\SSTS)=\SSTP \in \Std_n(k,\beta)	
\\
\in	\Delta	_{<\ell} 	(k,\beta) 					&\text{otherwise.}
\end{cases}
$$
and hence deduce the result. 

\smallskip
\noindent{\bf Claim 1. } By \cref{BKW}$(ii)$ we have that 
$$
 y_j	\psi_\SSTS e_{\SSTT_{(k,\beta)}} \psi_\SSTT^* 
 =
\sum _{\SSTP \leq \SSTS} 	\psi_\SSTP f(y) e_{\SSTT_{(k,\beta)}} \psi_\SSTT^*  .
$$
If $\SSTP=\SSTS$ then we note that $f(y)$ is a polynomial of degree 1 (in fact, it's simply equal to $y_{w_\SSTS(j)}$) and hence this term is zero by   \cref{ydies}.  All other terms belong to $	\Delta	_{<\ell} 	(k,\beta)   $ by definition.

\smallskip
\noindent{\bf Claim 2. } We will break this claim down according to how (if at all) the tableau $s_j(\SSTS)$ fails to be a standard. 
  If $\SSTP:=s_j(\SSTS)\in \Std_n(k,\beta)$ is standard and $\SSTP>\SSTS$  then there is nothing to prove.  
 If $s_j(\SSTS)$ is standard with $ s_j(\SSTS)< \SSTS$ then  $  s_j w_\SSTS$ is non-reduced and we can rewrite 
 $\psi_j 	\psi_\SSTS e_{\SSTT_{(k,\beta)}} \psi_\SSTT^* $ in the required form using 
 \cref{BKW}$(i)$ to simplify the non-reduced word followed by \cref{ydies} (as any positive degree term in the polynomial $f(y)$ is zero).

Finally it remains to consider the case where  $ s_j(\SSTS)$ is not a standard tableau.  In this case, we let 
$\mathbb T_\sts $ denote the (admissible, reduced) tiling of  $ \sts \in \Std_n(k,\beta)$. 
The permutation $s_j w_\sts$ has admissible (non-reduced) tiling 
  $\mathbb T_\sts \cup \square$ for some unsupported $\square$ (with $y$-coordinate $y \in \ZZ_{\geq0}$).
There exist two tiles $\square'$ and $\square''$ adjacent to $\square$ which are strictly lower in the tile ordering; and 
without loss of generality we have that $\square'  \in \mathbb T_\sts $ and $\square'' \not \in \mathbb T_\sts $.  
(Note that  $\square',\square''  \in \mathbb T_\sts $ implies that $s_j(\SSTS)>\SSTS$ is standard 
and $\square',\square''  \not \in \mathbb T_\sts $ implies that $s_j(\SSTS)<\SSTS$ is standard.) 

{\em Base cases of induction. }
We first consider the base cases of the induction. 
These base cases are as follows: 
$(0)$ $\sts=\stt_{(k,\beta)}$ and  $\SSTP = s_j(\sts)\not \in \Std_n(k,\beta)$   
$(1)$ $\sts = s_j  (\stt_{(k,\beta)})\in \Std_n(k,\beta)$ and $ s_{j\pm1}(\sts)\not\in \Std_n(k,\beta)$ 
or 
$(2)$ $\sts = s_0 s_1  ( \stt_{(k,\beta)})\in \Std_n(k,\beta)$ and $s_1(\sts)\not \in \Std_n(k,\beta)$.
Case $(0)$ was already taken care of in \cref{zajjlemma2,zajjlemma}. 
 In case $(1)$ we have that $1\leq j \leq n-k$ and we 
 can assume that 
 $j$ is odd (the $j$ is even case is identical) and that $ s_{j+1}(\sts)\not\in \Std_n(k,\beta)$ (the case $ s_{j-1}(\sts)\not\in \Std_n(k,\beta)$ isidentical).
We have that 
 $$e_\sts= e(\ldots , 	\beta^{-1}q^{j+1} , \beta q^{j-1}	,  \beta q^{j+1} , \ldots  )
 $$
 where we have only written the residues in position $j, j+1$ and $j+2$. Note that 
 $$s_{j+1}(\res (\sts)) = e(\ldots , 	\beta^{-1}q^{j+1} , \beta q^{j+1}, \beta q^{j-1} , \ldots  )
$$ is not the residue sequence of a standard tableau. Indeed after the residue $\beta^{-1}q^{j+1}$, the two options are $\beta q^{j-1}$ and $\beta^{-1}q^{j+3}$ and none of them are equal to $\beta q^{j+1}$ by our standing assumptions on $\beta$.
Therefore $e_{s_{j+1}(\res (\sts))} = 0$ by (4.10). It follows from (4.3) that
$$\psi_{j+1}\psi_je_{\stt_{(k,\beta)}}\psi_{\stt}^* =  \psi_{j+1}e_\sts \psi_je_{\stt_{(k,\beta)}}\psi_{\stt}^* = e_{s_{j+1}(\res (\sts))} \psi_{j+1}\psi_je_{\stt_{(k,\beta)}}\psi_{\stt}^* = 0.$$

In case $(2)$ we have that 
 $$e_\sts= e( \beta q^{-2}, \beta 
 ,\ldots)
 \qquad
 e_{s_1(\res(\sts))} = = e(    \beta , \beta q^{-2}  , \ldots ).
 $$ 
 Again, this is not the residue sequence of a standard tableau as   $\beta q^{-2} \notin \{ \beta^{-1}q^2, \beta q^2\}$ by our standing assumptions. Thus the result follows as in case $(1)$.

%
%
%

{\em  Inductive step. }
Now assume that $\ell(\sts)\geq 2$ and $w_\sts \neq s_0s_1$.  Then we are in one of the following three cases:
$(i)$ $\sts =  s_0 s_1s_0  \stu$ for $\stu \in \Std_n(k,\beta)$, and $j=1$,
$(ii)$ $\sts = s_{j-1} s_{j} \stu$ with $\stu \in \Std_n(k,\beta)$
   or 
 $(iii)$ $\sts = s_{j+1} s_{j} \stu$ with $\stu \in \Std_n(k,\beta)$.
  In each of these three cases we have that $s_j(\sts)$ is not standard. 
Cases $(i)$ and $(ii)$ are depicted in \cref{OH-zajj}.

\begin{figure}[ht!]
\[
 \begin{tikzpicture}[scale=1.38251]
 \begin{scope}
\path (0,0)--++(135:0.4)--++(-135:0.4)--++(135:0.4)--++(-135:0.4) coordinate (sart1);
\path (0,0)--++(45:0.4)--++(-45:0.4)
	--++(45:0.4)--++(-45:0.4)
	--++(45:0.4)--++(-45:0.4)
	--++(45:0.4)--++(-45:0.4) coordinate(pp)
	;
\path(pp)--++(-135:0.4*5)--++(-45:0.4*5) coordinate(pp2);
\path (pp2)--++(135:0.4)--++(-135:0.4)
	--++(135:0.4)--++(-135:0.4)
	--++(135:0.4)--++(-135:0.4)
	--++(135:0.4)--++(-135:0.4)--++(135:0.4)--++(-135:0.4)--++(135:0.4)--++(-135:0.4)  coordinate(pp3)
	;
\clip (sart1)--(pp)--(pp2)--(pp3);
\path (0,0) coordinate (top00)
	--++(-45:0.4)  --++(45:0.4) coordinate (top1)
	--++(-45:0.4)  --++(45:0.4) coordinate (top2)
	--++(-45:0.4)  --++(45:0.4) coordinate (top3)
	--++(-45:0.4)  --++(45:0.4) coordinate (top4)
	--++(-45:0.4)  --++(45:0.4)coordinate  (top5)
	--++(-45:0.4)   coordinate(right1) --++(-135:0.4) 
	--++(-45:0.4)  coordinate (right2)--++(-135:0.4)  
	--++(-45:0.4)   coordinate (right3)--++(-135:0.4) 
	--++(-45:0.4) coordinate (right4) --++(-135:0.4)  
	--++(-45:0.4)  coordinate (right5)
	--++(-135:0.4)  
	coordinate (bottom5)    --++(135:0.4)    --++(-135:0.4)  
	coordinate (bottom4)    --++(135:0.4)    --++(-135:0.4)  
	coordinate (bottom3)    --++(135:0.4)    --++(-135:0.4)  
	coordinate (bottom2)    --++(135:0.4)    --++(-135:0.4)  
	coordinate (bottom1)    --++(135:0.4)    --++(-135:0.4)  
	coordinate (bottom0)    --++(135:0.4)    --++(-135:0.4)  
	coordinate (bottomm1)    --++(135:0.4)   
	coordinate (left5)    --++(45:0.4)     --++(135:0.4)   
	coordinate (left4)    --++(45:0.4)     --++(135:0.4)   
	coordinate (left3)    --++(45:0.4)     --++(135:0.4)   
	coordinate (left2)    --++(45:0.4)     --++(135:0.4)   
	coordinate (left1)    --++(45:0.4)     
	coordinate (topm1) --++(-45:0.4)    --++(45:0.4);
\path(top00)--++(-135:0.4) coordinate (PP);
 \path(top3)    --++(-45:0.2)  --++(45:0.2) coordinate (top45);
 \path(bottom3)    --++(-45:0.2)  --++(45:0.2) coordinate (bottom45);
\begin{scope}[gridstyle]
\draw
	(left5) to ++(-45:0.4)
	(left4) to (bottom0)
 	(left3) to (bottom1)
 	(left2) to (bottom2)  
	(left1) to (bottom3) 
	(topm1) to (bottom4)
	(top00) to (bottom5)
	(top1) to (right5)
	(top2) to (right4) 
	(top3) to (right3)
	(top4) to (right2)
	(top5) to (right1); 
\foreach \x [count=\c from 1] in {m1, 00, 1, 2, 3}{
	\draw(top\x) to ++(-135:0.4*\c*2 - .4);}
\draw
	(top4)--++(-135:0.4*10)
	(top5)--++(-135:0.4*10);
\foreach \x [count=\c from 1] in {5,4,...,1}{
	\draw(right\x) to ++(-135:0.4*\c*2 - .4);}
\end{scope}
\path(0,0)--++(45:0.8)--++(-45:1.2) coordinate (X);
\fill[opacity=0.3,cyan] 
	(X)--++(-135:0.4*3)  --++(-45:0.4*3)
	 --++(45:0.4) --++(135:0.4)  --++(45:0.4) 
	 --++(135:0.4) --++(45:0.4) --++(135:0.4);
\path (X)--++(135:0.4*1) --++(-135:0.4*1) --++(135:0.4*1) --++(-135:0.4*1) 
--++(135:0.4*1) --++(-135:0.4*1) coordinate (JJ)
--++(-45:0.4*3) ;
\fill[opacity=0.3,cyan](X)--++(135:0.4*1) --++(-135:0.4*3)  coordinate (PP)--++(-45:0.4*1);
\fill[opacity=0.3,](PP)--++(135:0.4*2) --++(45:0.4*1) --++(-45:0.4*1) 
 --++(45:0.4*1) --++(-45:0.4*1) ;
\fill[magenta,opacity=0.4](JJ)--++(-135:0.4)--++(-45:0.4)--++(45:0.4)--++(135:0.4);;
 \path(JJ)--++(-45:0.2)--++(-135:0.2)--++(-90:0.025) node {\scalefont{0.9}$s_1$};
\path(JJ)
 --++(-45:0.4)--++(45:0.4)
 --++(-45:0.2)--++(-135:0.2)--++(-90:0.025) node {\scalefont{0.9}$s_1$};
\path(JJ)
 --++(45:0.4)
 --++(-45:0.2)--++(-135:0.2)--++(-90:0.025) node {\scalefont{0.9}$s_0$};
\path(JJ)
 --++(45:0.4) --++(-45:0.4)--++(45:0.4)
 --++(-45:0.2)--++(-135:0.2)--++(-90:0.025) node {\scalefont{0.9}$s_0$};
 \path(JJ)--++(-45:0.6)--++(-135:0.2) node {\scalefont{0.9}$\times$};
\draw[path1](X)--++(-45:0.4)
--++(-135:0.4)--++(-45:0.4)
--++(-135:0.4)--++(-45:0.4)
--++(-135:0.4)--++(-45:0.4)
--++(-135:0.4)--++(-45:0.15)
coordinate (Y);
\draw[line width = 2.5, path1color,densely dotted] (Y) --++(-45:0.4);
\node[M] at (X) {};
\end{scope}
\node[above] at (X) {$\beta$};
\end{tikzpicture}
 \qquad\quad
\begin{tikzpicture}[scale=1.38251]
\begin{scope}
\path (0,0)--++(135:0.4)--++(-135:0.4)--++(135:0.4)--++(-135:0.4) coordinate (sart1);
\path (0,0)--++(45:0.4)--++(-45:0.4)
	--++(45:0.4)--++(-45:0.4)
	--++(45:0.4)--++(-45:0.4)
	--++(45:0.4)--++(-45:0.4) coordinate(pp)
	;
\path(pp)--++(-135:0.4*5)--++(-45:0.4*5) coordinate(pp2);
\path (pp2)--++(135:0.4)--++(-135:0.4)
	--++(135:0.4)--++(-135:0.4)
	--++(135:0.4)--++(-135:0.4)
	--++(135:0.4)--++(-135:0.4)--++(135:0.4)--++(-135:0.4)--++(135:0.4)--++(-135:0.4)  coordinate(pp3)
	;
\clip (sart1)--(pp)--(pp2)--(pp3);
\path (0,0) coordinate (top00)
	--++(-45:0.4)  --++(45:0.4) coordinate (top1)
	--++(-45:0.4)  --++(45:0.4) coordinate (top2)
	--++(-45:0.4)  --++(45:0.4) coordinate (top3)
	--++(-45:0.4)  --++(45:0.4) coordinate (top4)
	--++(-45:0.4)  --++(45:0.4)coordinate  (top5)
	--++(-45:0.4)   coordinate(right1) --++(-135:0.4) 
	--++(-45:0.4)  coordinate (right2)--++(-135:0.4)  
	--++(-45:0.4)   coordinate (right3)--++(-135:0.4) 
	--++(-45:0.4) coordinate (right4) --++(-135:0.4)  
	--++(-45:0.4)  coordinate (right5)
	--++(-135:0.4)  
	coordinate (bottom5)    --++(135:0.4)    --++(-135:0.4)  
	coordinate (bottom4)    --++(135:0.4)    --++(-135:0.4)  
	coordinate (bottom3)    --++(135:0.4)    --++(-135:0.4)  
	coordinate (bottom2)    --++(135:0.4)    --++(-135:0.4)  
	coordinate (bottom1)    --++(135:0.4)    --++(-135:0.4)  
	coordinate (bottom0)    --++(135:0.4)    --++(-135:0.4)  
	coordinate (bottomm1)    --++(135:0.4)   
	coordinate (left5)    --++(45:0.4)     --++(135:0.4)   
	coordinate (left4)    --++(45:0.4)     --++(135:0.4)   
	coordinate (left3)    --++(45:0.4)     --++(135:0.4)   
	coordinate (left2)    --++(45:0.4)     --++(135:0.4)   
	coordinate (left1)    --++(45:0.4)     
	coordinate (topm1) --++(-45:0.4)    --++(45:0.4);
\path(top00)--++(-135:0.4) coordinate (PP);
 \path(top3)    --++(-45:0.2)  --++(45:0.2) coordinate (top45);
 \path(bottom3)    --++(-45:0.2)  --++(45:0.2) coordinate (bottom45);
\begin{scope}[gridstyle]
\draw
	(left5) to ++(-45:0.4)
	(left4) to (bottom0)
 	(left3) to (bottom1)
 	(left2) to (bottom2)  
	(left1) to (bottom3) 
	(topm1) to (bottom4)
	(top00) to (bottom5)
	(top1) to (right5)
	(top2) to (right4) 
	(top3) to (right3)
	(top4) to (right2)
	(top5) to (right1); 
\foreach \x [count=\c from 1] in {m1, 00, 1, 2, 3}{
	\draw(top\x) to ++(-135:0.4*\c*2 - .4);}
\draw
	(top4)--++(-135:0.4*10)
	(top5)--++(-135:0.4*10);
\foreach \x [count=\c from 1] in {5,4,...,1}{
	\draw(right\x) to ++(-135:0.4*\c*2 - .4);}
\end{scope}
\path(0,0)--++(45:0.8)--++(-45:1.2) coordinate (X);
\path (X)--++(45:0.15) coordinate (Y);
\fill[opacity=0.3,cyan] (X)--++(-135:0.4*3) coordinate (JJ)--++(-45:0.4*2)
--++(-135:0.4*1)  --++(-45:0.4*2)
 --++(45:0.4) --++(135:0.4) --++(45:0.4) --++(135:0.4) --++(45:0.4) --++(135:0.4) --++(45:0.4) --++(135:0.4);
\path (X)--++(-135:0.4*4) coordinate (JJ);
  \fill[  opacity=0.25](JJ)--++(45:0.4)--++(-45:0.4*2)--++(-135:0.4)--++(135:0.4*2);
\fill[ magenta,opacity=0.4](JJ)--++(-135:0.4)--++(-45:0.4)--++(45:0.4)--++(135:0.4);
\path(JJ)--++(-45:0.2)--++(-135:0.2)--++(-90:0.025) node {\scalefont{0.8}$s_{j}$};
\path(JJ)
--++(45:0.4) 
--++(-45:0.2)--++(-135:0.2)--++(-90:0.025) node {\scalefont{0.8}$s_{j-1}$};
\path(JJ)
--++(-45:0.4)--++(45:0.4)
--++(-45:0.2)--++(-135:0.2)--++(-90:0.025) node {\scalefont{0.8}$s_{j}$};
%
\path(JJ)--++(-45:0.6)--++(-135:0.2) node {\scalefont{0.9}$\times$};

\draw[line width = 2.5, path1color, densely dotted](Y)--++(45:0.4);
\draw[path1](Y)--(X)--++(-45:0.4)
--++(-135:0.4)--++(-45:0.4)
--++(-135:0.4)--++(-45:0.4)
--++(-135:0.4)--++(-45:0.4)
--++(-135:0.4)--++(-45:0.15)
coordinate (Y);
\draw[line width = 2.5, path1color,densely dotted](Y)--++(-45:0.4);
\end{scope}
\end{tikzpicture}
\] 
 
 \caption{
 Cases $(i)$ and $(ii)$ of the inductive step.  
The union of the grey and blue tiles is equal to $\mathbb T_\sts$ for $\sts$ a standard tableau; 
the pink tile breaks the standardness condition.  The $\times$ marks the missing tile $\square''\not \in \mathbb T_\sts$.  We emphasise that we have drawn $\sts$ of minimal length such that the inclusion of the pink tile violates the standardness condition; the non-minimal cases follow by the commutativity relations.
 }
 \label{OH-zajj}
 \end{figure}
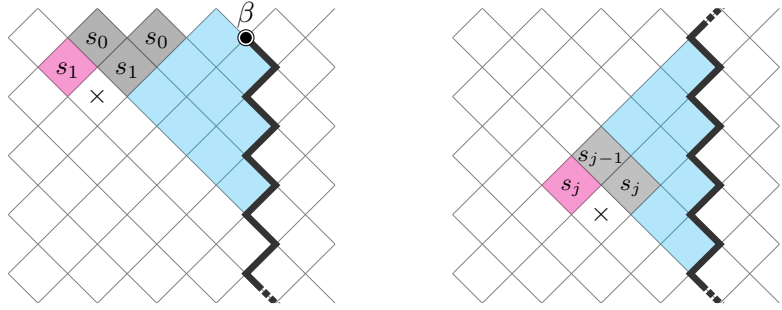

We consider case  $(ii)$ first (and we remark that  case $(iii)$ can be dealt with in an  identical manner). 
In case $(ii)$, using (4.6), we have that 
$$
 \psi_{j}	\psi_\SSTS e_{\SSTT_{(k,\beta)}} \psi_\SSTT^* = 
 \psi_{j} \psi_{j-1}  \psi_{j}		\psi_\SSTU	 e_{\SSTT_{(k,\beta)}} \psi_\SSTT^* =  
 \begin{cases}
 \psi_{j-1} \psi_{j}  \psi_{j-1}		\psi_\SSTU	 e_{\SSTT_{(k,\beta)}} \psi_\SSTT^* 	  \\
(  \psi_{j-1} \psi_{j}  \psi_{j-1}	 \pm1)	\psi_\SSTU	 e_{\SSTT_{(k,\beta)}} \psi_\SSTT^* 	  \end{cases}
$$
depending on whether $\sts(j-1)$ lies on a hyperplane or not. 
Note that $s_{j-1}(\stu)$ is not standard and $\ell(\stu)=\ell(\sts)-2$, so by induction we have $\psi_{j-1}\psi_\stu e_{\stt_{(k,\beta)}}\psi_\stt^* \in \Delta_{<\ell -2}(k,\beta)$ and $\psi_{j-1}\psi_j\psi_{j-1}\psi_\stu e_{\stt_{(k,\beta)}}\psi_\stt^*\in \Delta_{<\ell}(k,\beta)$. Moreover, by definition we have $\psi_\stu e_{\stt_{(k,\beta)}}\psi_\stt^*\in \Delta_{\leq \ell - 2}(k,\beta) \subseteq \Delta_{< \ell }(k,\beta)$, so in both cases we have $ \psi_{j}	\psi_\SSTS e_{\SSTT_{(k,\beta)}} \psi_\SSTT^* \in \Delta_{< \ell }(k,\beta)$.

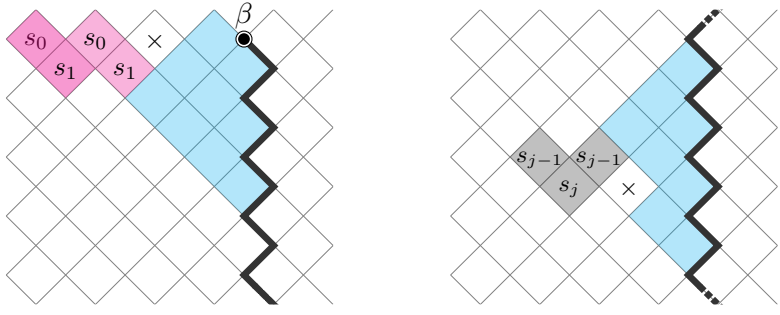
\begin{figure}[ht!]
\[\begin{tikzpicture}[scale=1.38251]
\begin{scope}
\path (0,0)--++(135:0.4)--++(-135:0.4)--++(135:0.4)--++(-135:0.4) coordinate (sart1);

\path (0,0)--++(45:0.4)--++(-45:0.4)
--++(45:0.4)--++(-45:0.4)
--++(45:0.4)--++(-45:0.4)
--++(45:0.4)--++(-45:0.4) coordinate(pp)
;
\path(pp)--++(-135:0.4*5)--++(-45:0.4*5) coordinate(pp2);
\path (pp2)--++(135:0.4)--++(-135:0.4)
--++(135:0.4)--++(-135:0.4)
--++(135:0.4)--++(-135:0.4)
--++(135:0.4)--++(-135:0.4)--++(135:0.4)--++(-135:0.4)--++(135:0.4)--++(-135:0.4)  coordinate(pp3)
;
\clip (sart1)--(pp)--(pp2)--(pp3);

\path (0,0) coordinate (top00)
 --++(-45:0.4)  --++(45:0.4) coordinate (top1)
  --++(-45:0.4)  --++(45:0.4) coordinate (top2)
   --++(-45:0.4)  --++(45:0.4) coordinate (top3)
    --++(-45:0.4)  --++(45:0.4) coordinate (top4)
     --++(-45:0.4)  --++(45:0.4)coordinate  (top5)
  --++(-45:0.4)   coordinate(right1) --++(-135:0.4) 
  --++(-45:0.4)  coordinate (right2)--++(-135:0.4)  
  --++(-45:0.4)   coordinate (right3)--++(-135:0.4) 
  --++(-45:0.4) coordinate (right4) --++(-135:0.4)  
  --++(-45:0.4)  coordinate (right5)
  --++(-135:0.4)  
coordinate (bottom5)    --++(135:0.4)    
  --++(-135:0.4)  
coordinate (bottom4)    --++(135:0.4)    
  --++(-135:0.4)  
coordinate (bottom3)    --++(135:0.4)    
  --++(-135:0.4)  
coordinate (bottom2)    --++(135:0.4)    
  --++(-135:0.4)  
coordinate (bottom1)    --++(135:0.4)    
  --++(-135:0.4)  
coordinate (bottom0)    --++(135:0.4)    
  --++(-135:0.4)  
coordinate (bottomm1)    
--++(135:0.4)   
coordinate (left5)    --++(45:0.4)     
--++(135:0.4)   
coordinate (left4)    --++(45:0.4)     
--++(135:0.4)   
coordinate (left3)    --++(45:0.4)     
--++(135:0.4)   
coordinate (left2)    --++(45:0.4)     
 --++(135:0.4)   
coordinate (left1)    --++(45:0.4)     
coordinate (topm1)
--++(-45:0.4)    --++(45:0.4);

 \path(top00)--++(-135:0.4) coordinate (PP);
 
 \path(top3)    --++(-45:0.2)  --++(45:0.2) coordinate (top45);
  \path(bottom3)    --++(-45:0.2)  --++(45:0.2) coordinate (bottom45);
 
\draw[gridstyle]
	(left5) to ++(-45:0.4)
	(left4) to (bottom0)
 	(left3) to (bottom1)
 	(left2) to (bottom2)  
	(left1) to (bottom3) 
	(topm1) to (bottom4)
	(top00) to (bottom5)
	(top1) to (right5)
	(top2) to (right4) 
	(top3) to (right3)
	(top4) to (right2)
	(top5) to (right1);

		\draw[gridstyle](topm1)--++(-135:0.4); 
		\draw[gridstyle](top00)--++(-135:0.4*3); 
		\draw[gridstyle](top1)--++(-135:0.4*5);
		\draw[gridstyle](top2)--++(-135:0.4*7);
		\draw[gridstyle](top3)--++(-135:0.4*9);
\draw[gridstyle](top4)--++(-135:0.4*10);
\draw[gridstyle](top5)--++(-135:0.4*10);
\draw[gridstyle](right1)--++(-135:0.4*9);
\draw[gridstyle](right2)--++(-135:0.4*7);
\draw[gridstyle](right3)--++(-135:0.4*5);
\draw[gridstyle](right4)--++(-135:0.4*3);
\draw[gridstyle](right5)--++(-135:0.4*1);

\path(0,0)--++(45:0.8)--++(-45:1.2) coordinate (X);

\fill[opacity=0.3,cyan] (X)--++(-135:0.4*3)  --++(-45:0.4*3)
 --++(45:0.4) --++(135:0.4)  --++(45:0.4) --++(135:0.4) --++(45:0.4) --++(135:0.4);

\path (X)--++(135:0.4*1) --++(-135:0.4*1) --++(135:0.4*1) --++(-135:0.4*1) 
--++(135:0.4*1) --++(-135:0.4*1) coordinate (JJ)
--++(-45:0.4*3) ;

 \fill[opacity=0.3,cyan](X)--++(135:0.4*1) --++(-135:0.4*3)  coordinate (PP)--++(-45:0.4*1) 
 ;
 
 \fill[opacity=0.3,magenta](PP)--++(135:0.4*2) --++(45:0.4*1) --++(-45:0.4*2) 
 --++(-135:0.4*1) ;

 \fill[ magenta,opacity=0.4](JJ)--++(-135:0.4)--++(-45:0.4)--++(45:0.4)--++(135:0.4);;

 \path(JJ)--++(-45:0.2)--++(-135:0.2)--++(-90:0.025) node {\scalefont{0.9}$s_1$};

 \path(JJ)--++(-45:0.2)--++(-135:0.2)--++(-90:0.025) --++(135:0.4)node {\scalefont{0.9}$s_0$};

 \fill[ magenta,opacity=0.4](JJ)
 --++(135:0.4)
 --++(-135:0.4)--++(-45:0.4)--++(45:0.4)--++(135:0.4);;

 \path(JJ)
 --++(-45:0.4)--++(45:0.4)
 --++(-45:0.2)--++(-135:0.2)--++(-90:0.025) node {\scalefont{0.9}$s_1$};

 \path(JJ)
 --++(45:0.4)
 --++(-45:0.2)--++(-135:0.2)--++(-90:0.025) node {\scalefont{0.9}$s_0$};
 \path(JJ)
 --++(45:0.4) --++(-45:0.4)--++(45:0.4)
 --++(-45:0.2)--++(-135:0.2)--++(-90:0.025) node {\scalefont{0.9}$\times$};


\draw[path1](X)--++(-45:0.4)
--++(-135:0.4)--++(-45:0.4)
--++(-135:0.4)--++(-45:0.4)
--++(-135:0.4)--++(-45:0.4)
--++(-135:0.4)--++(-45:0.15)
coordinate (Y);
\draw[path1,densely dotted](Y)--++(-45:0.4);
\end{scope}
\node[M] at (X) {};
\node[above] at (X) {$\beta$};
\end{tikzpicture}
 \qquad\quad
\begin{tikzpicture} [scale=1.38251]
\begin{scope}
\path (0,0)--++(135:0.4)--++(-135:0.4) --++(135:0.4)--++(-135:0.4)coordinate (sart1);

\path (0,0)--++(45:0.4)--++(-45:0.4)
--++(45:0.4)--++(-45:0.4)
--++(45:0.4)--++(-45:0.4)
--++(45:0.4)--++(-45:0.4) coordinate(pp)
;
\path(pp)--++(-135:0.4*5)--++(-45:0.4*5) coordinate(pp2);
\path (pp2)--++(135:0.4)--++(-135:0.4)
--++(135:0.4)--++(-135:0.4)
--++(135:0.4)--++(-135:0.4)
--++(135:0.4)--++(-135:0.4)--++(135:0.4)--++(-135:0.4)  --++(135:0.4)--++(-135:0.4)coordinate(pp3)
;

\clip (sart1)--(pp)--(pp2)--(pp3);

\path (0,0) coordinate (top00)
 --++(-45:0.4)  --++(45:0.4) coordinate (top1)
  --++(-45:0.4)  --++(45:0.4) coordinate (top2)
   --++(-45:0.4)  --++(45:0.4) coordinate (top3)
    --++(-45:0.4)  --++(45:0.4) coordinate (top4)
     --++(-45:0.4)  --++(45:0.4)coordinate  (top5)
  --++(-45:0.4)   coordinate(right1) --++(-135:0.4) 
  --++(-45:0.4)  coordinate (right2)--++(-135:0.4)  
  --++(-45:0.4)   coordinate (right3)--++(-135:0.4) 
  --++(-45:0.4) coordinate (right4) --++(-135:0.4)  
  --++(-45:0.4)  coordinate (right5)
  --++(-135:0.4)  
coordinate (bottom5)    --++(135:0.4)    
  --++(-135:0.4)  
coordinate (bottom4)    --++(135:0.4)    
  --++(-135:0.4)  
coordinate (bottom3)    --++(135:0.4)    
  --++(-135:0.4)  
coordinate (bottom2)    --++(135:0.4)    
  --++(-135:0.4)  
coordinate (bottom1)    --++(135:0.4)    
  --++(-135:0.4)  
coordinate (bottom0)    --++(135:0.4)    
  --++(-135:0.4)  
coordinate (bottomm1)    
--++(135:0.4)   
coordinate (left5)    --++(45:0.4)     
--++(135:0.4)   
coordinate (left4)    --++(45:0.4)     
--++(135:0.4)   
coordinate (left3)    --++(45:0.4)     
--++(135:0.4)   
coordinate (left2)    --++(45:0.4)     
 --++(135:0.4)   
coordinate (left1)    --++(45:0.4)     
coordinate (topm1)
--++(-45:0.4)    --++(45:0.4)      ;

 \path(top00)--++(-135:0.4) coordinate (PP);
 
 \path(top3)    --++(-45:0.2)  --++(45:0.2) coordinate (top45);
  \path(bottom3)    --++(-45:0.2)  --++(45:0.2) coordinate (bottom45);

 	\draw[gridstyle](left5)--++(-45:0.4);
 	\draw[gridstyle](left4)--(bottom0); 
 	\draw[gridstyle](left3)--(bottom1); 
 	\draw[gridstyle](left2)--(bottom2);  
 	\draw[gridstyle](left1)--(bottom3); 
	\draw[gridstyle](topm1)--(bottom4); 
	\draw[gridstyle](top00)--(bottom5); 
	\draw[gridstyle](top1)--(right5); 
	\draw[gridstyle](top2)--(right4); 
		\draw[gridstyle](top3)--(right3); 
	\draw[gridstyle](top4)--(right2); 
		\draw[gridstyle](top5)--(right1);

		\draw[gridstyle](topm1)--++(-135:0.4); 
		\draw[gridstyle](top00)--++(-135:0.4*3); 
		\draw[gridstyle](top1)--++(-135:0.4*5);
		\draw[gridstyle](top2)--++(-135:0.4*7);
		\draw[gridstyle](top3)--++(-135:0.4*9);
\draw[gridstyle](top4)--++(-135:0.4*10);
\draw[gridstyle](top5)--++(-135:0.4*10);
\draw[gridstyle](right1)--++(-135:0.4*9);
\draw[gridstyle](right2)--++(-135:0.4*7);
\draw[gridstyle](right3)--++(-135:0.4*5);
\draw[gridstyle](right4)--++(-135:0.4*3);
\draw[gridstyle](right5)--++(-135:0.4*1);

\path(0,0)--++(45:0.8)--++(-45:1.2) coordinate (X);
\path (X)--++(45:0.15) coordinate (Y);

\fill[opacity=0.3,cyan] (X)--++(-135:0.4*3) coordinate (JJ)--++(-45:0.4*2)
--++(-135:0.4*1)  --++(-45:0.4*2)
 --++(45:0.4) --++(135:0.4) --++(45:0.4) --++(135:0.4) --++(45:0.4) --++(135:0.4) --++(45:0.4) --++(135:0.4);

\path (X)--++(-135:0.4*4) coordinate (JJ);
  \fill[  opacity=0.25](JJ)
 --++(135:0.4)
  --++(-135:0.4)
   --++(-45:2*0.4)
     --++(45:2*0.4)
       --++(135:1*0.4)
   ;

\path(JJ)
--++(45:0.4) 
--++(-45:0.2)--++(-135:0.2)--++(-90:0.025) node {\scalefont{0.8}$s_{j-1}$};
\path(JJ)
--++(-45:0.4)--++(45:0.4)
--++(-45:0.2)--++(-135:0.2)--++(-90:0.025) node {\scalefont{0.8}$\times$};

\path(JJ)--++(-135:0.4) 
--++(45:0.4) 
--++(-45:0.2)--++(-135:0.2)--++(-90:0.025) node {\scalefont{0.8}$s_{j}$};

\path(JJ)--++(-135:0.4) --++(135:0.4) 
--++(45:0.4) 
--++(-45:0.2)--++(-135:0.2)--++(-90:0.025) node {\scalefont{0.8}$s_{j-1}$};


\draw[path1](Y) to (X)--++(-45:0.4)
--++(-135:0.4)--++(-45:0.4)
--++(-135:0.4)--++(-45:0.4)
--++(-135:0.4)--++(-45:0.4)
--++(-135:0.4)--++(-45:0.15)
coordinate (Z);
\draw[line width = 2.5, path1color, densely dotted](Y)--++(45:0.4);
\draw[line width = 2.5, path1color, densely dotted](Z)--++(-45:0.4);
\end{scope}
\end{tikzpicture}
\]
 
 \caption{
Rewriting cases $(i)$ and $(ii)$ of the inductive step.  
Compare the ``missing'' tile $\times$ here with that in 
\cref{OH-zajj} and notice that it is smaller in the ordering on tiles (and that this is the inductive step).
  }
 \label{OH-zajj2}
 \end{figure}

We now consider case $(i)$. 
Here, using (4.9), we have that 
$$
 \psi_1	\psi_\SSTS e_{\SSTT_{(j,\beta)}} \psi_\SSTT^* = 
 \psi_1 \psi_0 \psi_1 \psi_0	\psi_\SSTU e_{\SSTT_{(j,\beta)}} \psi_\SSTT^* = 
\begin{cases}
(\psi_0 \psi_1 \psi_0 \psi_1\pm 2\psi_0)
  	\psi_\SSTU e_{\SSTT_{(j,\beta)}} \psi_\SSTT^* 

 \\
(\psi_0 \psi_1 \psi_0 \psi_1  \pm \psi_1)
  	\psi_\SSTU e_{\SSTT_{(j,\beta)}} \psi_\SSTT^* 
 \\ 
 \psi_0 \psi_1 \psi_0 \psi_1 
  	\psi_\SSTU e_{\SSTT_{(j,\beta)}} \psi_\SSTT^*
\end{cases}.
 $$
 depending on whether $\stu(1)$ is on a hyperplane or close to one of the marked point $\pm \alpha_i$. Note that
 $  
  \ell(\SSTU) =\ell-3$ and $s _1(\SSTU)$ is  non-standard, so $\psi_1\psi_\stu e_{\stt_{(k,\beta)}}\psi_\stt^*$ and $\psi_0\psi_1\psi_0\psi_1\psi_\stu e_{\stt_{(k,\beta)}}\psi_\stt^*$ belong to $\Delta	_{<\ell} 	(k,\beta) $ by induction. 
Moreover $\psi_0\psi_\stu e_{\stt_{(k,\beta)}}\psi_\stt^*\in \Delta_{\leq \ell - 2}(k, \beta) \subseteq \Delta_{<\ell}(k,\beta)$. 
    \end{proof}

\begin{cor}\label{qhordering}
The  
  algebras   ${\rm TL}_n(\alpha_1,\alpha_2, \extra )$
and 
 $B_n(\kappa)$ are isomorphic. 
The algebra ${\rm TL}_n(\alpha_1,\alpha_2, \extra )$   is a quasi-hereditary 
graded cellular algebra with respect to the basis 
$$
\{\psi_{\sts\stt}:=\psi_\SSTS e_{\SSTT_{(k,\beta)}} \psi_\SSTT^*
\mid 
\SSTS,\SSTT \in \Std_n(k,\beta), 
 (k,\beta) \in \Lambda_n
\}$$the anti-involution $\ast$ and with 
respect to any total refinement of the  partial order $(\Lambda_n,\leq)$.  
\end{cor}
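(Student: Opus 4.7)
The plan is to combine the surjection of Proposition~\ref{quotientthm} with the filtration and spanning set of Proposition~\ref{almostcellular}, and close the argument via a dimension count. By Proposition~\ref{quotientthm} there is a surjective $\Bbbk$-algebra homomorphism
\[
\widetilde{\Theta}\ :\ {\rm TL}_n(\alpha_1,\alpha_2,\extra)\ \twoheadrightarrow\ B_n(\kappa),
\]
and by Proposition~\ref{almostcellular} the subquotients ${\rm TL}_n^{\leq k}/{\rm TL}_n^{<k}$ are spanned by the residues of the elements $\psi_{\sts\stt}=\psi_\sts e_{\SSTT_{(k,\beta)}}\psi_\stt^\ast$ with $\sts,\stt\in\Std_n(k,\beta)$. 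Iterating along the filtration shows that the full set $\{\psi_{\sts\stt}\}$ spans ${\rm TL}_n(\alpha_1,\alpha_2,\extra)$, with cardinality $\sum_{\lambda\in\Lambda_n}|\Std_n(\lambda)|^2$.

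The next step is to identify this number with $\dim_{\Bbbk}B_n(\kappa)$. The diagrammatic basis of Green--Martin--Parker (see \cite{MR2354870}) gives a $\Bbbk$-basis of $B_n(\kappa)$ whose size is independent of $\kappa$ and, by the calibrated-representation classification in Section~3 (especially Proposition~\ref{prop:calibrated B modules} and Theorem~\ref{thm:calibratedTL}), matches $\sum_{\lambda\in\Lambda_n}|\Std_n(\lambda)|^2$ under the bijection between $\Lambda_n$ and the index set of \cite[Theorem~8.13]{MR2354870}. Surjectivity of $\widetilde{\Theta}$ then forces $\{\widetilde{\Theta}(\psi_{\sts\stt})\}$ to be a basis of $B_n(\kappa)$, and hence the spanning set $\{\psi_{\sts\stt}\}$ of ${\rm TL}_n(\alpha_1,\alpha_2,\extra)$ is $\Bbbk$-linearly independent. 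This simultaneously establishes the isomorphism $\widetilde{\Theta}$ and that $\{\psi_{\sts\stt}\}$ is a $\Bbbk$-basis of ${\rm TL}_n(\alpha_1,\alpha_2,\extra)$.

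It remains to verify the graded cellular algebra axioms (in the sense of Hu--Mathas) with poset $(\Lambda_n,\leq)$ and cell datum $\{\psi_{\sts\stt}\}$. The anti-involution property $\psi_{\sts\stt}^\ast=\psi_{\stt\sts}$ is immediate from the definition $\psi_\sts^\ast$-style construction and the fact that $e_{\SSTT_\lambda}^\ast=e_{\SSTT_\lambda}$. The required multiplication rule
\[
a\,\psi_{\sts\stt}\equiv\sum_{\sts'\in\Std_n(\lambda)}r_a(\sts,\sts')\,\psi_{\sts'\stt}\pmod{{\rm TL}_n^{<\lambda}}
\]
is exactly the bimodule decomposition of Proposition~\ref{almostcellular}; the coefficients $r_a(\sts,\sts')$ depend only on $a$ and the pair $(\sts,\sts')$, not on $\stt$, because the subquotient is a direct sum of bimodules indexed by $\beta$ in which the right-hand factor $\psi_\stt^\ast$ is transparent to multiplication by $a$ on the left. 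Homogeneity $\deg(\psi_{\sts\stt})=\deg(\sts)+\deg(\stt)$ follows from the grading on $\mathscr{H}_n$ recalled in Theorem~4.2 together with our computation of the Soergel-style tile degree (Definition~\ref{Soergeldegreee}).

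Finally, quasi-heredity amounts to showing that for each $\lambda=(k,\beta)\in\Lambda_n$ the idempotent $e_{\SSTT_\lambda}$ does not lie in ${\rm TL}_n^{<\lambda}$, equivalently that the cell module $\Delta(\lambda)$ has non-zero head. Under $\widetilde\Theta$ this is equivalent to the statement that the residue sequence $\res(\SSTT_\lambda)$ survives in $B_n(\kappa)$ with the correct multiplicity; this is guaranteed by Propositions~\ref{propdominant1}--\ref{propdominant2}, which show that $\res(\SSTT_\lambda)$ first appears at shape $\lambda$, combined with the generic classification of Proposition~\ref{prop:calibrated B modules}. The main obstacle in executing this program is the bimodule-decomposition half of the multiplication axiom: one must check that the coefficients produced by the argument of Proposition~\ref{almostcellular} truly are independent of $\stt$, which requires tracking the inductive procedure of that proof carefully and noting that the reductions performed there only involve the left factor $\psi_\sts e_{\SSTT_\lambda}$ modulo ${\rm TL}_n^{<\lambda}$.
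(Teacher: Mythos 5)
Your proof is correct and follows essentially the same strategy as the paper: combine the surjection of \cref{quotientthm} with the spanning set of \cref{almostcellular}, compare ranks via the Green--Martin--Parker basis of $B_n(\kappa)$ (matching index sets through \cref{prop:calibrated B modules}), conclude that the surjection is an isomorphism and the spanning set is a basis, and then read off cellularity from the filtration already constructed. One small point: your quasi-heredity argument takes an unnecessary detour through \cref{propdominant1}, \cref{propdominant2} and the generic-parameter classification. Once the set $\{\psi_{\sts\stt}\}$ is known to be a basis of ${\rm TL}_n(\alpha_1,\alpha_2,\extra)$ (over the specialized, not generic, field), the element $\psi_{\SSTT_\lambda\SSTT_\lambda}=e_{\SSTT_\lambda}$ is by construction a nonzero element of ${\rm TL}_n^{\leq\lambda}/{\rm TL}_n^{<\lambda}$, so the bilinear form $\langle e_{\SSTT_\lambda},e_{\SSTT_\lambda}\rangle^\lambda=1$ is nonzero (equivalently, each cell ideal is generated by the idempotent $e_{\leq k}$), and quasi-heredity is automatic. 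Appealing to the generic classification at this point risks conflating the specialized base field with $\Bbbk(q,q_0,q_n)$ and adds nothing once the basis is in hand.
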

\begin{proof}
The symplectic blob algebra is a quotient of ${\rm TL}_n(\alpha_1,\alpha_2, \extra )$ by \cref{quotientthm}.
The spanning set of \cref{almostcellular}  is of rank equal to that of the blob algebra (to see this, note that the standard tableaux indexing set in \cref{almostcellular} is the same as that 
of the  blob algebra in \cref{prop:calibrated B modules,thm:calibratedTL}).  
Therefore the quotient map is  an isomorphism and the spanning set of \cref{almostcellular} is, in fact, a basis.  
The anti-involution map is compatible with the cellular structure, by definition. 
The other  axioms of cellularity were verified in \cref{almostcellular}.
That the algebra is quasi-hereditary follows since each cellular ideal is generated by an idempotent (by definition).
\end{proof}

Given $\la \in \Lambda_n$, we define the following left cell ideals in 
${\rm TL}_n(\alpha_1,\alpha_2,\vartheta)$.
\begin{align*}
{\rm TL}_n(\alpha_1,\alpha_2,\vartheta)^{\leq  \la}   
&=  
{\rm TL}_n(\alpha_1,\alpha_2,\vartheta)
e_{\stt_{\la}}
\\ 
{\rm TL}_n(\alpha_1,\alpha_2,\vartheta)^{<\la}  &=   
{\rm TL}_n(\alpha_1,\alpha_2,\vartheta)^{\leq  \la} \cap
\Bbbk \{ \psi_{\sts \stt} \mid 
\sts,\stt \in \Std_n  ( \mu ), \mu < \la \}.
\end{align*}  
The left cell module $\Delta_\Bbbk (\la)$ is given by
$$\Delta_\Bbbk (\la) = {\rm TL}_n(\alpha_1,\alpha_2,\vartheta)^{\leq  \la}  / {\rm TL}_n(\alpha_1,\alpha_2,\vartheta)^{<\la}.$$
By \cref{qhordering}, the cell module has a basis given by 
$$\{ \psi_\sts e_{\stt_\la} + {\rm TL}_n(\alpha_1,\alpha_2,\vartheta)^{<\la} \, | \, \sts \in \Std_n(\la)\}.$$
We abuse notation and write $\psi_\sts$ for $\psi_\sts e_{\stt_\la} + {\rm TL}_n(\alpha_1,\alpha_2,\vartheta)^{<\la} \in \Delta_{\Bbbk}(\la)$.
  We now recall how this  graded  cellular structure allows us to construct the graded 
   simple ${\rm TL}_n(\alpha_1,\alpha_2,\vartheta)$-modules. 
   For each
 $\la \in \Lambda_n$,  we define     a bilinear form  
  $ \langle - ,- \rangle ^{   \la}$ 
  on $ \Delta_\Bbbk(\la)  $ as follows
\begin{equation}\label{geoide}
   \psi  _{\stt_\la \sts}\psi _{\stt \stt_\la } \equiv
  \langle     \psi  _{\sts}, \psi _{\stt} 
  \rangle ^{   \la} \;  
e_{\stt_\la}\pmod{{\rm TL}_n(\alpha_1,\alpha_2,\vartheta)^{ < \lambda}}
  \end{equation}
for any $\sts,\stt \in \Std_n(\lambda  )$.  
Let   $\Bbbk$  be an arbitrary  field of   characteristic not equal to 2.    
Factoring out by the radicals of these forms,  we obtain a complete set of non-isomorphic simple $ {\rm TL}_n(\alpha_1,\alpha_2,\vartheta)$-modules 
   $$ 
  L_\Bbbk(\lambda) =
 \Delta_\Bbbk(\lambda) /
  \rad( \Delta_\Bbbk(\lambda) ),  
 $$for $\la \in \Lambda_n$.

\section{A conjectural  LLT-style algorithm for  graded decomposition matrices}

The original LLT conjecture was phrased in the language of Fock spaces of quantum groups
 \cite{LLT};  the orientifold quiver Temperley--Lieb algebras of this paper have no clear 
 connection to quantum groups (although there are connections to $\imath$quantum groups \cite{MR4666131}) and so the reader might wonder how one can possibly generalise the LLT conjecture to our setting.  
In \cite{KN10}, Kleshchev--Nash observed that every aspect of the original LLT theory
  can be reinterpreted 
 in an entirely elementary fashion within the language of graded tableaux combinatorics, thus recasting the LLT algorithm
 as a natural calculation within Hu--Mathas's graded cellular basis of $\Bbbk\mathfrak{S}_n$ \cite{hm10}. 
 In this paper we have provided precise orientifold analogues of the graded tableaux/paths and graded cellular bases of \cite{hm10,KN10}; in this section  we reap the rewards of our  graded tableaux/paths/cellular basis construction by providing a direct analogue of LLT/Kleshchev--Nash's algorithm and by proving (characteristic-free!) bounds on the dimensions of simple modules.
 
 We verify our conjecture for two important cases: $(i)$ the case where 
 $q$ is not a root of unity (here we see that any block has at most 5 simple modules)
 and $(ii)$ the case where $q$ is a root of unity, but  
 $\alpha_1,\alpha_2\not \in q^{\ZZ}$ (here we see that a block can have arbitrarily many simple modules as $n$ gets large).  These two cases are in some sense orthogonal to one another.
In case $(i)$ we will also see that the naive generalisation of the Nakayama conjecture fails for the orientifold Temperley--Lieb algebras. 
 
We note that there has been some work done already on the (ungraded) decomposition numbers for the symplectic blob algebra over $\CC$. Specifically, the non-zero homomorphisms between certain standard modules constructed in \cite{statmexblob2} imply that some decomposition numbers are non-zero.

\subsection{The conjecture} 
Rather than computing in a Fock space, we will work with the following tableaux-theoretic 
objects.

\begin{defn}For each pair  $\la=(\beta_1,k_1), \mu=(\beta_2,k_2),  \in \Lambda_n$ we  
 define
${\sf CStd}_n(\la, \mu)$ to be the set of standard tableaux $\sts\in \Std_n(\la)$  such that
$ \res(\sts)=\res(\stt_\mu)$ and  we refer to these as the 
 {\sf $\mu$-coloured}  standard tableaux of {\sf shape} $\la$.
 \end{defn}

Motivated by the analogous situation for LLT/Kazhdan--Lusztig theory, we consider the  tableaux-counting polynomials
\begin{align}\label{sghjdhjhgskjdhjhgdhjksghd}\textstyle 
\dim_v( e_{\stt_\mu}\Delta (\la)  )= 
  \sum_{\sts \in {\sf CStd}_n(\la,\mu)}v^{\deg(\sts)} .  
 \end{align} 
 We observe that the matrix 
$$
(\Delta_{\la,\mu})_{\la,\mu \in \Lambda_n}\qquad 
\Delta_{\la,\mu}:=\dim_v( e_{\stt_\mu}\Delta (\la)  )
$$is lower uni-triangular with respect to the natural ordering on $\Lambda_n$ in decreasing order.  
 Now, since  $e_{\stt_{\mu}}$ generates the simple module $L(\mu)$ we have that $[\Delta(\la):L(\mu)]\neq 0$ implies that $\Delta_{\la, \mu}\neq 0$. This implies that the equivalence classes of the equivalence relation on $\Lambda_n$ generated by $\Delta_{\la, \mu}\neq 0$, which we call $\Delta$-equivalence classes, are unions of blocks. Thus to describe the decomposition matrix for ${\rm TL}_n(\alpha_1, \alpha_2, \extra)$, it is enough to describe the submatrices corresponding to each $\Delta$-equivalence class.

Now, the matrix $\Delta$ can be factorised {\em uniquely} as a product
  $\Delta  = NA $
of lower uni-triangular matrices
\[N :=(n_{\la,\nu}(v))_{\la  , \mu \in  \Lambda_n}
\qquad
A :=(a_{\nu,\mu}(v))
_{\nu  ,\mu  \in  \Lambda_n}
\]
such that $n_{\la,\nu}(v)\in v\ZZ[v]$
for $\la \neq \nu$
and
$a_{\nu,\mu}(v)\in  \ZZ[v+v^{-1}]$.
A recursive algorithm for this matrix factorisation is given by  setting
$a_{\la,\la}(v)=1=n_{\la,\la}(v)$ and
  defining the polynomials
\[
a_{\la,\mu}(v)\in \ZZ [v+v^{-1}] \qquad
n_{\la,\mu}(v)\in v\ZZ [v]
\]
  by induction on the  order $\leq$ as follows
\begin{equation*}
a_{\la,\mu}(v) +n_{\lambda,\mu}(v) 
=
{ \sum}_{
\begin{subarray}c
 \sts  \in {\rm  CStd}(\la,  \mu)
\end{subarray}
}
  {  v^{\deg(\sts)}} 
  -
 {\sum}_{
\la <   \nu <    \mu
}     n_{\lambda,\nu}(v)
a_{\nu,\mu}(v).
\end{equation*}
We define  $n_{\la,\mu}(v)$  to be the  {\sf orientifold LLT polynomial} associated to $\la,\mu\in \Lambda_n$.

\begin{conj}\label{conjforus}
Over the complex field, the graded decomposition numbers of the orientifold quiver Temperley--Lieb algebra  are 
given by the orientifold LLT polynomials 
$$
\textstyle \sum_{k\in \ZZ}[\Delta(\la): L(\mu)\langle k \rangle ] v^k = n_{\la,\mu}(v)
$$for $\la,\mu \in \Lambda_n$.

\end{conj}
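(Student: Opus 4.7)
The plan is to mirror Ariki's proof of the original LLT conjecture, suitably adapted to the orientifold setting. First I would construct graded exact induction and restriction functors $F_i, E_i$ between ${\rm TL}_n$-gmod and ${\rm TL}_{n\pm 1}$-gmod, arising from the idempotent embeddings $e_{\ti} \otimes e_i \hookrightarrow e_{(\ti,i)}$. These should categorify an action of some coideal subalgebra --- an affine $\imath$-quantum group, given the two-boundary nature of the algebra and the connection to $\imath$quantum groups highlighted in the introduction --- on $\bigoplus_n K_0({\rm TL}_n\text{-gmod})_{\ZZ[v,v^{-1}]}$, where $v$ tracks the grading shift. The classes $\{[\Delta(\lambda)]\}_{\lambda \in \bigsqcup_n \Lambda_n}$ would form a "standard basis" of the resulting $\imath$-Fock space.

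The second step would identify this categorified module with a concrete combinatorial Fock space: the $\ZZ[v,v^{-1}]$-span of residue-classes of paths in the lattices $\mathcal{L}_\beta$ from Section 5, graded by the degree function of \cref{Soergeldegreee}. Under this identification, the character formula \eqref{sghjdhjhgskjdhjhgdhjksghd} for $\dim_v(e_{\stt_\mu}\Delta(\lambda))$ asserts exactly that $[\Delta(\lambda)]$ expands in the tableau basis as the generating function for $\mu$-coloured tableaux of shape $\lambda$. A bar involution on the Fock space matching a categorical duality --- for example, graded linear duality twisted by the anti-involution ${}^\star$ from \cref{definition:vv algebra} --- should send $[L(\mu)]$ to itself. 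Lusztig's uniqueness argument for bar-invariant bases with unitriangular expansion in the standard basis then characterises $[L(\mu)]$ precisely by the conditions defining $N$ and $A$; comparing recursions completes the proof.

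The main obstacle is the categorification step. Unlike the classical cyclotomic KLR setting, where Kang--Kashiwara categorify integrable highest-weight modules for $U_v(\widehat{\mathfrak{sl}}_e)$, essentially nothing is known about cyclotomic quotients of the orientifold quiver Hecke algebras --- as the authors emphasise in the introduction, it is not even known when such a quotient is nonzero. Concretely one must: (i) verify biadjunction and exactness of $F_i, E_i$ compatibly with the grading; (ii) identify the correct Chevalley--Serre--$\imath$ relations among them and so pin down the coideal subalgebra acting; (iii) match its Fock space, equipped with bar involution and canonical basis, with the combinatorial model above. Short of this general framework, a realistic intermediate target would be to prove the conjecture directly in the two regimes verified at the end of Section 6 ($q$ not a root of unity; and $q$ a root of unity with $\alpha_1,\alpha_2 \notin q^{\ZZ}$), by direct manipulation of the cellular basis of \cref{qhordering}, explicit evaluation of the form $\langle-,-\rangle^\lambda$ from \eqref{geoide}, and bounding $\dim L(\lambda)$ using the orientifold LLT polynomials as an upper estimate that already matches the lower bound coming from $N$ being lower unitriangular.
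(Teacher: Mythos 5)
The statement you were given is \cref{conjforus}, which is a \emph{conjecture}; the paper does not prove it, and there is no ``paper's own proof'' to compare against. Your proposal is an Ariki-style blueprint, not a proof, and you acknowledge as much. Crucially, you correctly identify the decisive obstruction: there is at present no theory of cyclotomic quotients of orientifold quiver Hecke algebras, no analogue of the Kang--Kashiwara categorification theorem, and (as the introduction stresses) not even a criterion for when such a quotient is non-zero. The remark immediately following the conjecture in the paper makes the same observation about a possible link to $\imath$Kazhdan--Lusztig theory and adds that ``this is pure speculation at this point in time.'' Each of the three steps (i)--(iii) in your final paragraph would be a substantial piece of new theory; none is currently available, so the blueprint cannot be executed as written.

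Your suggested ``realistic intermediate target'' is, however, precisely what the paper actually does. In the regime $q^{2e}=1$ with $\alpha_1,\alpha_2,\vartheta\notin q^{\ZZ}$ there are no hyperplanes, hence no tiles of negative degree, so every off-diagonal entry of the matrix $\Delta$ lies in $v\ZZ[v]$; this forces $A=\mathrm{Id}$ and $\Delta=N$ in the uni-triangular factorisation, and since graded cellularity guarantees that graded simple characters lie in $\ZZ_{\geq 0}[v+v^{-1}]$, the conjectured equality follows over any field of characteristic $\neq 2$. In the regime $q$ not a root of unity, the paper kills the relevant decomposition numbers by explicit evaluation of the cellular form $\langle-,-\rangle^{\lambda}$ on chosen basis elements, using the non-degeneracy bound coming from ladder tableaux in \cref{lad-bound} --- exactly the ``direct manipulation of the cellular basis and explicit evaluation of the form'' strategy you describe. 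So your fallback matches the paper's partial verification well; the general statement over $\CC$ remains open, which is precisely why it is stated as a conjecture and not a theorem.
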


\begin{rmk} The  complex  graded decomposition matrices for the classical Temperley--Lieb algebras and 
 for  one-boundary Temperley--Lieb algebras (the ``blob algebras'')
 were calculated in \cite{PR13,Pla13}.  
  In both cases, the answer is given in terms of (parabolic) Kazhdan--Lusztig polynomials of type $\widehat{A}_1$, which can be 
  calculated/defined via usual (non-orientifold) tableaux/path combinatorics in the exact same fashion as above, see for example \cite{KN10} and 
  \cite[Chapter 12]{MR4911527}.
  The complex graded decomposition matrices of {\em generalised  blob algebras} 
 were conjectured in \cite{MW00} and proven in   \cite{MR4401509}.   
 In this  case  the answer is given in terms of maximal parabolic  Kazhdan--Lusztig polynomials of type $\widehat{A}_\ell$ which can be 
  calculated/defined   in the exact same fashion as above.  
Our 
 \cref{conjforus} is   heavily  inspired   and motivated by  the theorems of  \cite{PR13,Pla13,MR4401509},
  however unlike in these classical cases our    {\em orientifold} LLT polynomial have no obvious interpretation in terms of Kazhdan--Lusztig theory. 
Given the emerging connections between orientifold quiver Hecke algebras and $\imath$quantum groups
(see for example \cite{MR4666131}) it seems natural to ask whether our orientifold LLT polynomials can be 
interpreted in the language of $\imath$Kazhdan--Lusztig theory of  \cite{MR3864017}, however this is pure speculation at this point in time.  

\end{rmk}

\subsection{Bounds on dimensions of simple modules  }
The following 
observation is quite standard within the theory of graded cellular algebras, but it is also powerful and useful in what follows.

\begin{prop}\label{lad-bound}
Let $\Bbbk$ be a field of   characteristic not equal to 2. 
We have the following bounds on the dimensions of simple modules 
\begin{equation}\label{ladderlem}
\dim(L_\Bbbk(\la))\geq 
\sharp
\{
\sts\in \Std_n(\la) \mid \sts \sim \stt \in \Std_n(\la) \text{ with }{\sf max}(\stt )=\la 
\}
\end{equation}
\end{prop}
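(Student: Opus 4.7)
The plan is to show that the images of $\{\psi_\sts : \sts \in S\}$ in $L_\Bbbk(\la)$ are linearly independent. First I would decompose $\Delta_\Bbbk(\la) = \bigoplus_{\ti \in I^n} e_\ti \Delta_\Bbbk(\la)$ into residue weight spaces, where $e_\ti \Delta_\Bbbk(\la)$ is spanned by $\{\psi_\sts : \sts \in \Std_n(\la),\, \res(\sts) = \ti\}$. Since the radical of the cellular form $\langle -,- \rangle^\la$ is stable under each idempotent $e_\ti$, it inherits the same weight-space decomposition. Using \cref{res=sim} to translate $\sim$ as equality of residue sequences, together with the observation that for $\stt \in \Std_n(\la)$ the condition ${\sf max}(\stt) = \la$ is equivalent to $\la$ being the maximum shape appearing in the $\sim$-class of $\res(\stt)$ (a consequence of how ${\sf max}$ is defined together with \cref{cor_residues}), one identifies $S$ with $\bigsqcup_\ti \Std_n(\la)_\ti$ indexed by the ``top'' residues $\ti$ for $\la$, that is, those $\ti$ for which $\la$ is the maximum shape with a standard tableau of residue $\ti$.

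The heart of the proof is to show that for each top residue $\ti$ the restriction of the cellular form $\langle -,- \rangle^\la$ from \eqref{geoide} to $e_\ti \Delta_\Bbbk(\la)$ is nondegenerate, or equivalently that $e_\ti \rad \Delta_\Bbbk(\la) = 0$. I would strengthen \cref{ladderssss}: its case-by-case inductive proof in fact delivers an \emph{exact} identity $e_\stt = a_\stt\, e_{\stt_\la}\, b_\stt$ inside ${\rm TL}_n(\alpha_1,\alpha_2,\extra)$ (not merely modulo ${\rm TL}_n^{<\la}$) for every $\stt \in \Std_n(\la)$ with ${\sf max}(\stt) = \la$. Combined with the vanishing $y_j e_{\stt_\la} \in {\rm TL}_n^{<\la}$ from \cref{ydies} and the reduction-to-reduced-expressions from \cref{BKW}, this rewriting renders each entry $\langle \psi_\sts, \psi_\stu \rangle^\la$ for $\sts,\stu \in \Std_n(\la)_\ti$ directly computable by reducing $e_{\stt_\la} \psi_\sts^* \psi_\stu e_{\stt_\la}$ modulo ${\rm TL}_n^{<\la}$. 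One then verifies that after ordering $\Std_n(\la)_\ti$ by a refinement of the tiling-tableau ordering behind $\tau(\sts)$, the Gram matrix is triangular with nonzero diagonal. Granting this, one concludes
$$\dim L_\Bbbk(\la) \;=\; \sum_\ti \dim e_\ti L_\Bbbk(\la) \;\geq\; \sum_{\ti \text{ top}} \dim e_\ti \Delta_\Bbbk(\la) \;=\; |S|.$$

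The main obstacle lies in the Gram matrix computation above. It needs to be carried out uniformly across tiles of the three possible degrees $0$, $+1$, $-2$, and must also account for the hyperplane reflections and the special behaviour at marked points labelled by $\alpha_1^{\pm 1}, \alpha_2^{\pm 1}$; this is precisely the setting in which the case analysis of the proof of \cref{ladderssss} had to be run, and the nondegeneracy here demands pushing that analysis one step further, now keeping track of the interaction of $\psi_\sts^*$ with $\psi_\stu$ rather than only the presence of $e_\stt$ inside the cellular ideal. The standing assumption $\mathrm{char}(\Bbbk) \neq 2$ enters exactly here, through the $\psi_0$ scalars produced by boundary marked points and hyperplane crossings.
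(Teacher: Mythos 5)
The key gap in your proposal is that you do not actually establish the nondegeneracy of the cellular form on the relevant weight spaces — you write ``Granting this, one concludes...'' and then concede in your final paragraph that the Gram matrix computation is the main obstacle. As stated, this leaves the claimed inequality unproved: everything hinges on that triangularity assertion, which is not at all routine across the different tile degrees, hyperplane reflections, and $\psi_0$-boundary behaviours that you flag.

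More importantly, the direct Gram computation is unnecessary, and the paper's proof shows why. The soft argument is this: if $\stt$ satisfies ${\sf max}(\stt)=\la$, then by \cref{ladderssss} (and \cref{cor_residues}) the idempotent $e_\stt$ lies in the two-sided ideal ${\rm TL}_n^{\leq k(\la)}$, so $e_\stt\,\Delta(\mu)=0$ for every $\mu>\la$. On the other hand, graded cellularity / quasi-heredity (\cref{qhordering}) forces every composition factor of $\Delta(\la)$ other than its simple head $L(\la)$ to be of the form $L(\mu)$ with $\mu>\la$. Combining these, the projection $\Delta(\la)\twoheadrightarrow L(\la)$ restricts to an isomorphism $e_\stt\Delta(\la)\xrightarrow{\sim} e_\stt L(\la)$ — i.e.\ your desired nondegeneracy of the form on the $e_\stt$-weight space comes for free, as a consequence of the module-theoretic structure rather than as an input requiring a hands-on Gram calculation. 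Counting $\dim e_\stt\Delta(\la)=\#\{\sts\in\Std_n(\la)\mid\sts\sim\stt\}$ (using \cref{res=sim}, as you correctly note) and summing over the relevant residue classes gives the bound. Your weight-space decomposition, the reinterpretation of $\sim$ via \cref{res=sim}, and your identification of which residues are ``top'' for $\la$ are all on the right track and are exactly the combinatorial preparation the paper also uses; the detour through the explicit Gram matrix is where the proposal departs and stalls. I'd also flag that your proposed strengthening of \cref{ladderssss} to an \emph{exact} identity $e_\stt=a_\stt\,e_{\stt_\la}\,b_\stt$ is not needed — the statement of \cref{ladderssss} as it is, namely membership in the two-sided ideal ${\rm TL}_n^{\leq k}$, already suffices for the soft argument.
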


\begin{proof}
We have that $e_\sts=e_\stt $ for $\sts,\stt \in \Std_n(\la)$ as in \cref{ladderlem}. 
We note that $  e_\stt\Delta(\mu) =0$ for $\mu >\la$ by \cref{ladderssss}. 
We further note that $[\Delta(\la) :L(\mu)]=0$ for $\mu \not >\la$ by 
\cref{qhordering}.  Putting these two facts together, we immediately deduce that 
\begin{equation}\label{forloic} \dim (e_\stt L(\la) )= \dim (e_\stt \Delta(\la)) = \sharp
\{
\sts\in \Std_n(\la) \mid \sts \sim \stt
\}
\end{equation}and the result follows.
\end{proof}

\subsection{Decomposition matrices 
for $\Bbbk$ arbitrary (of characteristic different from $2$), with  $q$  a root of unity  and 
 $\alpha_1,\alpha_2,\vartheta \not \in q^{\ZZ}$  }
In this case there are no hyperplanes and therefore none of the tiles in our lattice have negative degree.  
Thus the $\Bbbk$-algebra ${\rm TL}_n(\alpha_1,\alpha_2,\vartheta)$ is non-negatively graded for an arbitrary  field $\Bbbk$.  
We will first illustrate what happens in an example and then outline the general case.

\begin{eg}\label{aneasypeasyeg}
We let $\xi$ and $i$ denote primitive    5th and 4th roots of unity. 
We set $q^2=\xi$,  $\alpha_1= i $,  $\alpha_2=i\xi^2$, and $\vartheta= i \xi^3$.  
We colour these roots of unity as   illustrated in \cref{dsakjhlgajlkdfgh} below.

\noindent We shall consider  the $\Delta$-equivalence class given by 
  $$\{(16,\alpha_1 ), (12,\alpha_2 ),  (6,\alpha_1 ),(2,\alpha_2 ),
  (16,\alpha_1^{-1} ) , (10,\alpha_2^{-1}),(6,\alpha_1^{-1}),( 0,\vartheta)\}$$%
  The corresponding  block of the  matrix 
$(\Delta_{\la,\mu})_{\la,\mu \in \Lambda_n}$   is   as follows:
$$\def\arraystretch{1.1}
 \begin{array}{c|cccc|ccc|c}
&(16,\alpha_1)	&(12,\alpha_2)			 &(6,\alpha_1)	&(2,\alpha_2)	
	&(16,\alpha_1^{-1})	&(10,\alpha_2^{-1})		&(6,\alpha_1^{-1})	 		&(0,\vartheta)
\\
  \hline 
(16,\alpha_1)	&			1 & 0  & 0  & 0  & 0  & 0  & 0  & 0   \\	
(12,\alpha_2)	&			 v & 1  & 0  & 0  & 0  & 0  & 0 & 0     \\
(6,\alpha_1 )				&v^2 & v & 1  & 0  & 0  & 0  & 0  & 0    \\
(2,\alpha_2  )				&  v^3 & v^2 & v  & 1  & 0  & 0  & 0  & 0     \\
   \hline
(16,\alpha_1^{-1} )				&0 & 0  & 0  & 0  			& 1  & 0  & 0  & 0    \\
(10,\alpha_2 ^{-1})				&  0 & 0 & 0  & 0  				& v  & 1  & 0  & 0      \\
(6,\alpha_1 ^{-1} )				&0 & 0  &0  & 0  				& v^2  & v  & 1  & 0    \\
   \hline
(0,\vartheta)				& v^4 & v^3 &  v^2  & v  &  v^3  & v^2  & v     & 1   \\
\end{array} 
$$%
This is  not difficult to calculate, for example  the  non-zero entries of the  first column   
correspond to the 1-dimensional spaces spanned by the tableaux in \cref{uisdfuyigfduiogfuiogdfsuoigdfs}.  
These tableaux are simply obtained by ``translating the marked point rightwards''.  All other non-zero entries can be calculated in a similar manner.  
By definition 
$$(\Delta_{\la,\mu})_{\la,\mu \in \Lambda_n}=  (N_{\la,\mu})_{\la,\mu \in \Lambda_n} 
\qquad  (A_{\la,\mu})_{\la,\mu \in \Lambda_n} = {\rm Id} _{  \Lambda_n}.  $$The conjecture   is  true  in this case as the off-diagonal entries 
of $(\Delta_{\la,\mu})_{\la,\mu \in \Lambda_n}$ are all of strictly positive degree  (and the characters of graded simple modules must belong to $\ZZ_{\geq0}[q+q^{-1}]$ by graded cellularity, for a more detailed discussion we refer to \cite[Section 6.7]{MR4911527}).  We note in particular, that this $\Delta$-equivalence class is in fact a block of ${\rm TL}_n(\alpha_1, \alpha_2, \extra)$. 

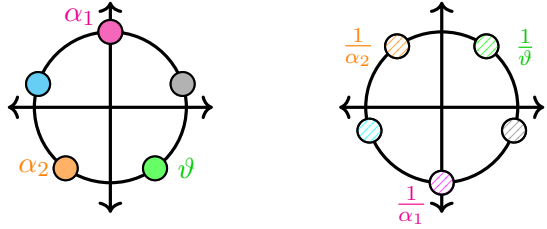
\begin{figure}[ht!]

 $$
 \begin{minipage}{2cm}
 \begin{tikzpicture} 
\clip(-1.75,-1.7) rectangle (1.75,1.7);
 \draw[very thick] (0,0) circle (1cm);
 \draw[very thick, <->](0,-1.4)--(0,1.4);
  \draw[very thick, <->]( -1.35,0)--(1.35,0);
   \draw[thick, fill= white](90:1) circle (4.5pt);
   \draw[thick, fill= magenta,opacity=0.6](90:1) circle (4.5pt);
      \draw[thick,  fill= white](90+72:1) circle (4.5pt);
      \draw[thick, fill= cyan,opacity=0.6](90+72:1) circle (4.5pt);

      \draw[thick, fill= white](90+72*2:1) circle (4.5pt);
      \draw[thick, fill= orange,opacity=0.6](90+72*2:1) circle (4.5pt);
      
            \draw[thick, fill= white](90+72*3:1) circle (4.5pt);
      \draw[thick, fill= green,opacity=0.6](90+72*3:1) circle (4.5pt);

      \draw[thick, fill= white](90+72*4:1) circle (4.5pt);
      \draw[thick, fill= gray,opacity=0.6](90+72*4:1) circle (4.5pt);

\draw(-0.4,1.2) node {$\color{magenta}\alpha_1$};
\draw(-1,-0.8) node {$\color{orange}\alpha_2$};
\draw(1,-0.8) node {$\color{green!80!black}\vartheta$};


 \end{tikzpicture}\end{minipage}
 \qquad \qquad \qquad
 \begin{minipage}{2cm}
 \begin{tikzpicture} [yscale=1,scale=0.5]
\clip(-1.75*2,-1.7*2) rectangle (1.75*2,1.7*2);
 \draw[very thick] (0,0) circle (1*2cm);
 \draw[very thick, <->](0,-1.4*2)--(0,1.4*2);
  \draw[very thick, <->]( -1.35*2,0)--(1.35*2,0);

   \draw[thick, fill= white](-90:1*2) circle (9pt);
     
    \draw[pattern=north east lines, pattern color=magenta, opacity=0.8,   thick] (-90:1*2) circle (9pt);

      \draw[thick, fill= white](-90+72:1*2) circle (9pt);
    \draw[pattern=north east lines, pattern color=gray, opacity=0.8,   thick] (-90+72:1*2) circle (9pt);

      \draw[thick, fill= white](-90+72*2:1*2) circle (9pt);
    \draw[pattern=north east lines, pattern color=green, opacity=0.8,   thick] (-90+72*2:1*2) circle (9pt);
      
            \draw[thick, fill= white](-90+72*3:1*2) circle (9pt);
    \draw[pattern=north east lines, pattern color=orange, opacity=0.8,   thick] (-90+72*3:1*2) circle (9pt);

      \draw[thick, fill= white](-90+72*4:1*2) circle (9pt);
    \draw[pattern=north east lines, pattern color=cyan, opacity=0.8,   thick] (-90+72*4:1*2) circle (9pt);

\draw(-0.4*2,-1.3*2) node {$\color{magenta}\tfrac{1}{\alpha_1}$};
\draw(-1.1*2,0.8*2) node {$\color{orange}\tfrac{1}{\alpha_2}$};
\draw(1.1*2,0.8*2) node {$\color{green!80!black}\tfrac{1}{\vartheta}$};

 \end{tikzpicture}\end{minipage}\quad$$

 \vspace{-0.25cm}
  \caption{The residues and  special points $\alpha_1,\alpha_2,\vartheta$
   and their inverses in \cref{aneasypeasyeg}.  On the left (respectively right) 
   we depict $i\xi^k$ (and $-i\xi^k$) for $0\leq k \leq 4$. 
}
  \label{dsakjhlgajlkdfgh}
 \end{figure}

\end{eg}

 \vspace{-0.4cm}
\begin{figure}[ht!]
 \[\begin{array}{cc}
  \begin{minipage}{12.5cm}\scalefont{0.9}
 \begin{tikzpicture}[scale=0.75]
 \path(0,0) coordinate (X);
 \fill[magenta,opacity=0.4] (X)--++(0:1)--++(-90:1)--++(180:1);
 \path(5,0) coordinate (X);
 \fill[magenta,opacity=0.4] (X)--++(0:1)--++(-90:1)--++(180:1);
 \path(10,0) coordinate (X);
 \fill[magenta,opacity=0.4] (X)--++(0:1)--++(-90:1)--++(180:1);
 \path(15,0) coordinate (X);
 \fill[magenta,opacity=0.4] (X)--++(0:1)--++(-90:1)--++(180:1);
  \path(1,0) coordinate (X);
 \fill[cyan,opacity=0.4] (X)--++(0:1)--++(-90:1)--++(180:1);
 \path(6,0) coordinate (X);
 \fill[cyan,opacity=0.4] (X)--++(0:1)--++(-90:1)--++(180:1);
 \path(11,0) coordinate (X);
 \fill[cyan,opacity=0.4] (X)--++(0:1)--++(-90:1)--++(180:1);
  \path(1+1,0) coordinate (X);
 \fill[orange,opacity=0.4] (X)--++(0:1)--++(-90:1)--++(180:1);
 \path(1+6,0) coordinate (X);
 \fill[orange,opacity=0.4] (X)--++(0:1)--++(-90:1)--++(180:1);
 \path(1+11,0) coordinate (X);
 \fill[orange,opacity=0.4] (X)--++(0:1)--++(-90:1)--++(180:1);
   \path(2+1,0) coordinate (X);
 \fill[green,opacity=0.4] (X)--++(0:1)--++(-90:1)--++(180:1);
 \path(2+6,0) coordinate (X);
 \fill[green,opacity=0.4] (X)--++(0:1)--++(-90:1)--++(180:1);
 \path(2+11,0) coordinate (X);
 \fill[green,opacity=0.4] (X)--++(0:1)--++(-90:1)--++(180:1);
 \path(3+1,0) coordinate (X);
 \fill[gray,opacity=0.4] (X)--++(0:1)--++(-90:1)--++(180:1);
 \path(3+6,0) coordinate (X);
 \fill[gray,opacity=0.4] (X)--++(0:1)--++(-90:1)--++(180:1);
 \path(3+11,0) coordinate (X);
 \fill[gray,opacity=0.4] (X)--++(0:1)--++(-90:1)--++(180:1);
 \draw[thick,path1color](0,0) --++(0:1) coordinate (x1)
  --++(0:1) coordinate (x2)
   --++(0:1) coordinate (x3)
    --++(0:1) coordinate (x4)
     --++(0:1) coordinate (x5)
      --++(0:1) coordinate (x6)
       --++(0:1) coordinate (x7)
        --++(0:1) coordinate (x8)
               --++(0:1) coordinate (x9)
        --++(0:1) coordinate (x10)
               --++(0:1) coordinate (x11)
                      --++(0:1) coordinate (x12)
                             --++(0:1) coordinate (x13)
                                    --++(0:1) coordinate (x14)
                                           --++(0:1) coordinate (x15)
                                                  --++(0:1) coordinate (x16)            
                                                                            --++(-90:1)--++(180:16)--++(90:1);
 \foreach \i in {1,2,3,4,5,6,7,8,...,16}
 {
 \draw[thick,path1color](x\i)--++(-90:1);
 \path(x\i)--++(-90:0.5)--++(180:0.5) coordinate (y\i);
 }
 \path(y8) node {$8$};
 \path(y7) node {$7$};
 \path(y6) node {$6$};
 \path(y5) node {$5$};
 \path(y4) node {$4$};
 \path(y3) node {$3$};
 \path(y2) node {$2$};
 \path(y1) node {$1$};

 \path(y9) node {$9$};
 \path(y10) node {$10$};
 \path(y11) node {$11$};
 \path(y12) node {$12$};
 \path(y13) node {$13$};
 \path(y14) node {$14$};
 \path(y15) node {$15$};
 \path(y16) node {$16$};
 \path(0,0)--++(90:0.4)--++(180:0.25) node {$\alpha_1$};
  \node[M] at (0,0){};
\draw[   thick,,densely dotted] (x8)--++(-90:1.35) ;
\draw[   thick,,densely dotted] (x8)--++(90:0.35) ;

\end{tikzpicture}\end{minipage}
\\ \vspace{-0.2cm}

  \begin{minipage}{12.5cm}\scalefont{0.9}
 \begin{tikzpicture}[scale=0.75]
 \path(0,0) coordinate (X);
 \fill[magenta,opacity=0.4] (X)--++(0:1)--++(-90:1)--++(180:1);
 \path(5,0) coordinate (X);
 \fill[magenta,opacity=0.4] (X)--++(0:1)--++(-90:1)--++(180:1);
 \path(10,0) coordinate (X);
 \fill[magenta,opacity=0.4] (X)--++(0:1)--++(-90:1)--++(180:1);
 \path(15,0) coordinate (X);
 \fill[magenta,opacity=0.4] (X)--++(0:1)--++(-90:1)--++(180:1);
  \path(1,0) coordinate (X);
 \fill[cyan,opacity=0.4] (X)--++(0:1)--++(-90:1)--++(180:1);
 \path(6,0) coordinate (X);
 \fill[cyan,opacity=0.4] (X)--++(0:1)--++(-90:1)--++(180:1);
 \path(11,0) coordinate (X);
 \fill[cyan,opacity=0.4] (X)--++(0:1)--++(-90:1)--++(180:1);
  \path(1+1,0) coordinate (X);
 \fill[orange,opacity=0.4] (X)--++(0:1)--++(-90:1)--++(180:1);
 \path(1+6,0) coordinate (X);
 \fill[orange,opacity=0.4] (X)--++(0:1)--++(-90:1)--++(180:1);
 \path(1+11,0) coordinate (X);
 \fill[orange,opacity=0.4] (X)--++(0:1)--++(-90:1)--++(180:1);
   \path(2+1,0) coordinate (X);
 \fill[green,opacity=0.4] (X)--++(0:1)--++(-90:1)--++(180:1);
 \path(2+6,0) coordinate (X);
 \fill[green,opacity=0.4] (X)--++(0:1)--++(-90:1)--++(180:1);
 \path(2+11,0) coordinate (X);
 \fill[green,opacity=0.4] (X)--++(0:1)--++(-90:1)--++(180:1);
 \path(3+1,0) coordinate (X);
 \fill[gray,opacity=0.4] (X)--++(0:1)--++(-90:1)--++(180:1);
 \path(3+6,0) coordinate (X);
 \fill[gray,opacity=0.4] (X)--++(0:1)--++(-90:1)--++(180:1);
 \path(3+11,0) coordinate (X);
 \fill[gray,opacity=0.4] (X)--++(0:1)--++(-90:1)--++(180:1);
 \draw[thick,path1color](0,0) --++(0:1) coordinate (x1)
  --++(0:1) coordinate (x2)
   --++(0:1) coordinate (x3)
    --++(0:1) coordinate (x4)
     --++(0:1) coordinate (x5)
      --++(0:1) coordinate (x6)
       --++(0:1) coordinate (x7)
        --++(0:1) coordinate (x8)
               --++(0:1) coordinate (x9)
        --++(0:1) coordinate (x10)
               --++(0:1) coordinate (x11)
                      --++(0:1) coordinate (x12)
                             --++(0:1) coordinate (x13)
                                    --++(0:1) coordinate (x14)
                                           --++(0:1) coordinate (x15)
                                                  --++(0:1) coordinate (x16)            
                                                                            --++(-90:1)--++(180:16)--++(90:1);
 \foreach \i in {1,2,3,4,5,6,7,8,...,16}
 {
 \draw[thick,path1color](x\i)--++(-90:1);
 \path(x\i)--++(-90:0.5)--++(180:0.5) coordinate (y\i);
 }
 \path(y8) node {$8$};
 \path(y7) node {$7$};
 \path(y6) node {$6$};
 \path(y5) node {$5$};
 \path(y4) node {$4$};
 \path(y3) node {$3$};
 \path(y2) node {$2$};
 \path(y1) node {$1$};

 \path(y9) node {$9$};
 \path(y10) node {$10$};
 \path(y11) node {$11$};
 \path(y12) node {$12$};
 \path(y13) node {$13$};
 \path(y14) node {$14$};
 \path(y15) node {$15$};
 \path(y16) node {$16$};
 \path(0,0)--++(90:0.4)--++(180:0.25) node {$\phantom{\alpha_2}$};

 \path(x2)--++(90:0.4)--++(180:0.25) node {$\alpha_2$};
  \node[M] at (x2){};
\draw[   thick,,densely dotted] (x8)--++(-90:1.35) ;
\draw[   thick,,densely dotted] (x8)--++(90:0.35) ;

\end{tikzpicture}\end{minipage}
\\ \vspace{-0.1cm}

   \begin{minipage}{12.5cm}\scalefont{0.9}
 \begin{tikzpicture}[scale=0.75]
 \path(0,0) coordinate (X);
 \fill[magenta,opacity=0.4] (X)--++(0:1)--++(-90:1)--++(180:1);
 \path(5,0) coordinate (X);
 \fill[magenta,opacity=0.4] (X)--++(0:1)--++(-90:1)--++(180:1);
 \path(10,0) coordinate (X);
 \fill[magenta,opacity=0.4] (X)--++(0:1)--++(-90:1)--++(180:1);
 \path(15,0) coordinate (X);
 \fill[magenta,opacity=0.4] (X)--++(0:1)--++(-90:1)--++(180:1);
  \path(1,0) coordinate (X);
 \fill[cyan,opacity=0.4] (X)--++(0:1)--++(-90:1)--++(180:1);
 \path(6,0) coordinate (X);
 \fill[cyan,opacity=0.4] (X)--++(0:1)--++(-90:1)--++(180:1);
 \path(11,0) coordinate (X);
 \fill[cyan,opacity=0.4] (X)--++(0:1)--++(-90:1)--++(180:1);
  \path(1+1,0) coordinate (X);
 \fill[orange,opacity=0.4] (X)--++(0:1)--++(-90:1)--++(180:1);
 \path(1+6,0) coordinate (X);
 \fill[orange,opacity=0.4] (X)--++(0:1)--++(-90:1)--++(180:1);
 \path(1+11,0) coordinate (X);
 \fill[orange,opacity=0.4] (X)--++(0:1)--++(-90:1)--++(180:1);
   \path(2+1,0) coordinate (X);
 \fill[green,opacity=0.4] (X)--++(0:1)--++(-90:1)--++(180:1);
 \path(2+6,0) coordinate (X);
 \fill[green,opacity=0.4] (X)--++(0:1)--++(-90:1)--++(180:1);
 \path(2+11,0) coordinate (X);
 \fill[green,opacity=0.4] (X)--++(0:1)--++(-90:1)--++(180:1);
 \path(3+1,0) coordinate (X);
 \fill[gray,opacity=0.4] (X)--++(0:1)--++(-90:1)--++(180:1);
 \path(3+6,0) coordinate (X);
 \fill[gray,opacity=0.4] (X)--++(0:1)--++(-90:1)--++(180:1);
 \path(3+11,0) coordinate (X);
 \fill[gray,opacity=0.4] (X)--++(0:1)--++(-90:1)--++(180:1);
 \draw[thick,path1color](0,0) --++(0:1) coordinate (x1)
  --++(0:1) coordinate (x2)
   --++(0:1) coordinate (x3)
    --++(0:1) coordinate (x4)
     --++(0:1) coordinate (x5)
      --++(0:1) coordinate (x6)
       --++(0:1) coordinate (x7)
        --++(0:1) coordinate (x8)
               --++(0:1) coordinate (x9)
        --++(0:1) coordinate (x10)
               --++(0:1) coordinate (x11)
                      --++(0:1) coordinate (x12)
                             --++(0:1) coordinate (x13)
                                    --++(0:1) coordinate (x14)
                                           --++(0:1) coordinate (x15)
                                                  --++(0:1) coordinate (x16)            
                                                                            --++(-90:1)--++(180:16)--++(90:1);
 \foreach \i in {1,2,3,4,5,6,7,8,...,16}
 {
 \draw[thick,path1color](x\i)--++(-90:1);
 \path(x\i)--++(-90:0.5)--++(180:0.5) coordinate (y\i);
 }
 \path(y8) node {$8$};
 \path(y7) node {$7$};
 \path(y6) node {$6$};
 \path(y5) node {$5$};
 \path(y4) node {$4$};
 \path(y3) node {$3$};
 \path(y2) node {$2$};
 \path(y1) node {$1$};

 \path(y9) node {$9$};
 \path(y10) node {$10$};
 \path(y11) node {$11$};
 \path(y12) node {$12$};
 \path(y13) node {$13$};
 \path(y14) node {$14$};
 \path(y15) node {$15$};
 \path(y16) node {$16$};
 \path(0,0)--++(90:0.4)--++(180:0.25) node {$\phantom{\alpha_2}$};

 \path(x5)--++(90:0.4)--++(180:0.25) node {$\alpha_1$};
  \node[M] at (x5){};
\draw[   thick,,densely dotted] (x8)--++(-90:1.35) ;
\draw[   thick,,densely dotted] (x8)--++(90:0.35) ;

\end{tikzpicture}\end{minipage}
\\ \vspace{-0.125cm}

   \begin{minipage}{12.5cm}\scalefont{0.9}
 \begin{tikzpicture}[scale=0.75]
 \path(0,0) coordinate (X);
 \fill[magenta,opacity=0.4] (X)--++(0:1)--++(-90:1)--++(180:1);
 \path(5,0) coordinate (X);
 \fill[magenta,opacity=0.4] (X)--++(0:1)--++(-90:1)--++(180:1);
 \path(10,0) coordinate (X);
 \fill[magenta,opacity=0.4] (X)--++(0:1)--++(-90:1)--++(180:1);
 \path(15,0) coordinate (X);
 \fill[magenta,opacity=0.4] (X)--++(0:1)--++(-90:1)--++(180:1);
  \path(1,0) coordinate (X);
 \fill[cyan,opacity=0.4] (X)--++(0:1)--++(-90:1)--++(180:1);
 \path(6,0) coordinate (X);
 \fill[cyan,opacity=0.4] (X)--++(0:1)--++(-90:1)--++(180:1);
 \path(11,0) coordinate (X);
 \fill[cyan,opacity=0.4] (X)--++(0:1)--++(-90:1)--++(180:1);
  \path(1+1,0) coordinate (X);
 \fill[orange,opacity=0.4] (X)--++(0:1)--++(-90:1)--++(180:1);
 \path(1+6,0) coordinate (X);
 \fill[orange,opacity=0.4] (X)--++(0:1)--++(-90:1)--++(180:1);
 \path(1+11,0) coordinate (X);
 \fill[orange,opacity=0.4] (X)--++(0:1)--++(-90:1)--++(180:1);
   \path(2+1,0) coordinate (X);
 \fill[green,opacity=0.4] (X)--++(0:1)--++(-90:1)--++(180:1);
 \path(2+6,0) coordinate (X);
 \fill[green,opacity=0.4] (X)--++(0:1)--++(-90:1)--++(180:1);
 \path(2+11,0) coordinate (X);
 \fill[green,opacity=0.4] (X)--++(0:1)--++(-90:1)--++(180:1);
 \path(3+1,0) coordinate (X);
 \fill[gray,opacity=0.4] (X)--++(0:1)--++(-90:1)--++(180:1);
 \path(3+6,0) coordinate (X);
 \fill[gray,opacity=0.4] (X)--++(0:1)--++(-90:1)--++(180:1);
 \path(3+11,0) coordinate (X);
 \fill[gray,opacity=0.4] (X)--++(0:1)--++(-90:1)--++(180:1);
 \draw[thick,path1color](0,0) --++(0:1) coordinate (x1)
  --++(0:1) coordinate (x2)
   --++(0:1) coordinate (x3)
    --++(0:1) coordinate (x4)
     --++(0:1) coordinate (x5)
      --++(0:1) coordinate (x6)
       --++(0:1) coordinate (x7)
        --++(0:1) coordinate (x8)
               --++(0:1) coordinate (x9)
        --++(0:1) coordinate (x10)
               --++(0:1) coordinate (x11)
                      --++(0:1) coordinate (x12)
                             --++(0:1) coordinate (x13)
                                    --++(0:1) coordinate (x14)
                                           --++(0:1) coordinate (x15)
                                                  --++(0:1) coordinate (x16)            
                                                                            --++(-90:1)--++(180:16)--++(90:1);
 \foreach \i in {1,2,3,4,5,6,7,8,...,16}
 {
 \draw[thick,path1color](x\i)--++(-90:1);
 \path(x\i)--++(-90:0.5)--++(180:0.5) coordinate (y\i);
 }
 \path(y8) node {$8$};
 \path(y7) node {$7$};
 \path(y6) node {$6$};
 \path(y5) node {$5$};
 \path(y4) node {$4$};
 \path(y3) node {$3$};
 \path(y2) node {$2$};
 \path(y1) node {$1$};

 \path(y9) node {$9$};
 \path(y10) node {$10$};
 \path(y11) node {$11$};
 \path(y12) node {$12$};
 \path(y13) node {$13$};
 \path(y14) node {$14$};
 \path(y15) node {$15$};
 \path(y16) node {$16$};
 \path(0,0)--++(90:0.4)--++(180:0.25) node {$\phantom{\alpha_2}$};

 \path(x7)--++(90:0.4)--++(180:0.25) node {$\alpha_2$};
  \node[M] at (x7){};
\draw[   thick,,densely dotted] (x8)--++(-90:1.35) ;
\draw[   thick,,densely dotted] (x8)--++(90:0.35) ;

\end{tikzpicture}\end{minipage}
\\ \vspace{-0.125cm}

   \begin{minipage}{12.5cm}\scalefont{0.9}
 \begin{tikzpicture}[scale=0.75]
 \path(0,0) coordinate (X);
 \fill[magenta,opacity=0.4] (X)--++(0:1)--++(-90:1)--++(180:1);
 \path(5,0) coordinate (X);
 \fill[magenta,opacity=0.4] (X)--++(0:1)--++(-90:1)--++(180:1);
 \path(10,0) coordinate (X);
 \fill[magenta,opacity=0.4] (X)--++(0:1)--++(-90:1)--++(180:1);
 \path(15,0) coordinate (X);
 \fill[magenta,opacity=0.4] (X)--++(0:1)--++(-90:1)--++(180:1);
  \path(1,0) coordinate (X);
 \fill[cyan,opacity=0.4] (X)--++(0:1)--++(-90:1)--++(180:1);
 \path(6,0) coordinate (X);
 \fill[cyan,opacity=0.4] (X)--++(0:1)--++(-90:1)--++(180:1);
 \path(11,0) coordinate (X);
 \fill[cyan,opacity=0.4] (X)--++(0:1)--++(-90:1)--++(180:1);
  \path(1+1,0) coordinate (X);
 \fill[orange,opacity=0.4] (X)--++(0:1)--++(-90:1)--++(180:1);
 \path(1+6,0) coordinate (X);
 \fill[orange,opacity=0.4] (X)--++(0:1)--++(-90:1)--++(180:1);
 \path(1+11,0) coordinate (X);
 \fill[orange,opacity=0.4] (X)--++(0:1)--++(-90:1)--++(180:1);
   \path(2+1,0) coordinate (X);
 \fill[green,opacity=0.4] (X)--++(0:1)--++(-90:1)--++(180:1);
 \path(2+6,0) coordinate (X);
 \fill[green,opacity=0.4] (X)--++(0:1)--++(-90:1)--++(180:1);
 \path(2+11,0) coordinate (X);
 \fill[green,opacity=0.4] (X)--++(0:1)--++(-90:1)--++(180:1);
 \path(3+1,0) coordinate (X);
 \fill[gray,opacity=0.4] (X)--++(0:1)--++(-90:1)--++(180:1);
 \path(3+6,0) coordinate (X);
 \fill[gray,opacity=0.4] (X)--++(0:1)--++(-90:1)--++(180:1);
 \path(3+11,0) coordinate (X);
 \fill[gray,opacity=0.4] (X)--++(0:1)--++(-90:1)--++(180:1);
 \draw[thick,path1color](0,0) --++(0:1) coordinate (x1)
  --++(0:1) coordinate (x2)
   --++(0:1) coordinate (x3)
    --++(0:1) coordinate (x4)
     --++(0:1) coordinate (x5)
      --++(0:1) coordinate (x6)
       --++(0:1) coordinate (x7)
        --++(0:1) coordinate (x8)
               --++(0:1) coordinate (x9)
        --++(0:1) coordinate (x10)
               --++(0:1) coordinate (x11)
                      --++(0:1) coordinate (x12)
                             --++(0:1) coordinate (x13)
                                    --++(0:1) coordinate (x14)
                                           --++(0:1) coordinate (x15)
                                                  --++(0:1) coordinate (x16)            
                                                                            --++(-90:1)--++(180:16)--++(90:1);
 \foreach \i in {1,2,3,4,5,6,7,8,...,16}
 {
 \draw[thick,path1color](x\i)--++(-90:1);
 \path(x\i)--++(-90:0.5)--++(180:0.5) coordinate (y\i);
 }
 \path(y8) node {$8$};
 \path(y7) node {$7$};
 \path(y6) node {$6$};
 \path(y5) node {$5$};
 \path(y4) node {$4$};
 \path(y3) node {$3$};
 \path(y2) node {$2$};
 \path(y1) node {$1$};

 \path(y9) node {$9$};
 \path(y10) node {$10$};
 \path(y11) node {$11$};
 \path(y12) node {$12$};
 \path(y13) node {$13$};
 \path(y14) node {$14$};
 \path(y15) node {$15$};
 \path(y16) node {$16$};
 \path(0,0)--++(90:0.4)--++(180:0.25) node {$\phantom{\alpha_2}$};

%
%
%
%
%

%
 %
%
\draw[   thick,,densely dotted] (x8)--++(-90:1.35) ;
\draw[   thick,,densely dotted] (x8)--++(90:0.35) ;
 \path(x8)--++(90:0.4)--++(180:0.25) node {$\vartheta$};
\draw[very thick, fill=white] (x8) circle (3pt);%

\end{tikzpicture}\end{minipage}

\end{array}
\] 
 
 \caption{All 
elements of ${\sf CStd}(\la,\mu)$ for $\la\in   \Lambda_{16} $
 and $\mu=(16,\alpha_1)$ as in \cref{aneasypeasyeg}. 
 We have pictured these as tableaux 
 as it highlights the idea of  ``translating the marked point rightwards''.  
 }
\label{uisdfuyigfduiogfuiogdfsuoigdfs} 
 \end{figure}
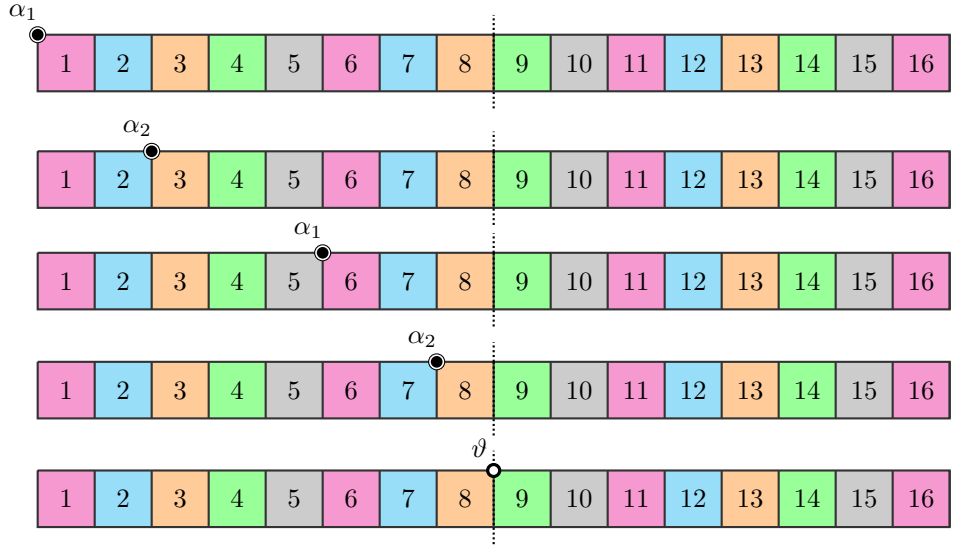

 \vspace{-0.6cm}
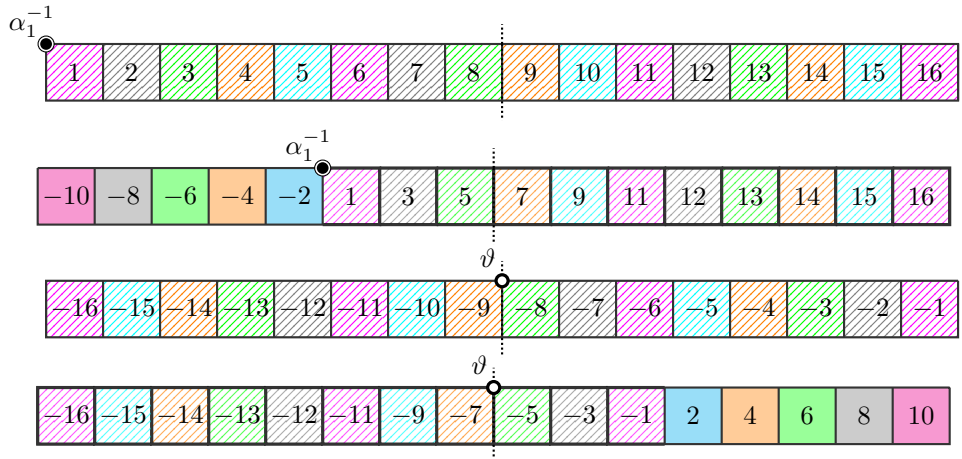
\begin{figure}[ht!]

\[\begin{array}{cc}
  \begin{minipage}{12.5cm}\scalefont{0.9}
 \begin{tikzpicture}[scale=0.75]
 \path(0,0) coordinate (X);
   \draw[pattern=north east lines, pattern  color = magenta, ] (X)--++(0:1)--++(-90:1)--++(180:1);
 \path(5,0) coordinate (X);
   \draw[pattern=north east lines, pattern  color = magenta, ] (X)--++(0:1)--++(-90:1)--++(180:1);
 \path(10,0) coordinate (X);
   \draw[pattern=north east lines, pattern  color = magenta, ] (X)--++(0:1)--++(-90:1)--++(180:1);
 \path(15,0) coordinate (X);
   \draw[pattern=north east lines, pattern  color = magenta, ] (X)--++(0:1)--++(-90:1)--++(180:1);
  \path(1,0) coordinate (X);
   \draw[pattern=north east lines, pattern  color = gray, ] (X)--++(0:1)--++(-90:1)--++(180:1);
 \path(6,0) coordinate (X);
   \draw[pattern=north east lines, pattern  color = gray, ] (X)--++(0:1)--++(-90:1)--++(180:1);
 \path(11,0) coordinate (X);
   \draw[pattern=north east lines, pattern  color = gray, ] (X)--++(0:1)--++(-90:1)--++(180:1);
  \path(1+1,0) coordinate (X);
   \draw[pattern=north east lines, pattern  color = green, ] (X)--++(0:1)--++(-90:1)--++(180:1);
 \path(1+6,0) coordinate (X);
   \draw[pattern=north east lines, pattern  color = green, ] (X)--++(0:1)--++(-90:1)--++(180:1);
 \path(1+11,0) coordinate (X);
   \draw[pattern=north east lines, pattern  color = green, ] (X)--++(0:1)--++(-90:1)--++(180:1);
   \path(2+1,0) coordinate (X);
   \draw[pattern=north east lines, pattern  color = orange, ] (X)--++(0:1)--++(-90:1)--++(180:1);
 \path(2+6,0) coordinate (X);
   \draw[pattern=north east lines, pattern  color = orange, ] (X)--++(0:1)--++(-90:1)--++(180:1);
 \path(2+11,0) coordinate (X);
   \draw[pattern=north east lines, pattern  color = orange, ] (X)--++(0:1)--++(-90:1)--++(180:1);
 \path(3+1,0) coordinate (X);
   \draw[pattern=north east lines, pattern  color = cyan, ] (X)--++(0:1)--++(-90:1)--++(180:1);
 \path(3+6,0) coordinate (X);
   \draw[pattern=north east lines, pattern  color = cyan, ] (X)--++(0:1)--++(-90:1)--++(180:1);
 \path(3+11,0) coordinate (X);
   \draw[pattern=north east lines, pattern  color = cyan, ] (X)--++(0:1)--++(-90:1)--++(180:1);
 \draw[thick,path1color](0,0) --++(0:1) coordinate (x1)
  --++(0:1) coordinate (x2)
   --++(0:1) coordinate (x3)
    --++(0:1) coordinate (x4)
     --++(0:1) coordinate (x5)
      --++(0:1) coordinate (x6)
       --++(0:1) coordinate (x7)
        --++(0:1) coordinate (x8)
               --++(0:1) coordinate (x9)
        --++(0:1) coordinate (x10)
               --++(0:1) coordinate (x11)
                      --++(0:1) coordinate (x12)
                             --++(0:1) coordinate (x13)
                                    --++(0:1) coordinate (x14)
                                           --++(0:1) coordinate (x15)
                                                  --++(0:1) coordinate (x16)            
                                                                            --++(-90:1)--++(180:16)--++(90:1);
 \foreach \i in {1,2,3,4,5,6,7,8,...,16}
 {
 \draw[thick,path1color](x\i)--++(-90:1);
 \path(x\i)--++(-90:0.5)--++(180:0.5) coordinate (y\i);
 }
 \path(y8) node {$8$};
 \path(y7) node {$7$};
 \path(y6) node {$6$};
 \path(y5) node {$5$};
 \path(y4) node {$4$};
 \path(y3) node {$3$};
 \path(y2) node {$2$};
 \path(y1) node {$1$};

 \path(y9) node {$9$};
 \path(y10) node {$10$};
 \path(y11) node {$11$};
 \path(y12) node {$12$};
 \path(y13) node {$13$};
 \path(y14) node {$14$};
 \path(y15) node {$15$};
 \path(y16) node {$16$};
 \path(0,0)--++(90:0.4)--++(180:0.25) node {$\alpha_1^{-1}$};
  \node[M] at (0,0){};
 \path(16,0)--++(90:0.4)--++(0:0.25) node {$\phantom{\alpha_1^{-1}}$};

\draw[   thick,,densely dotted] (x8)--++(-90:1.35) ;
\draw[   thick,,densely dotted] (x8)--++(90:0.35) ;

\end{tikzpicture}\end{minipage}
\\
\vspace{-0.15cm}

 \begin{minipage}{12.5cm}\scalefont{0.9}
 \begin{tikzpicture}[scale=0.75]
 \path(0,0) coordinate (X);
   \fill[magenta,opacity=0.4] (X)--++(0:1)--++(-90:1)--++(180:1);
 \path(5,0) coordinate (X);
  \draw[pattern=north east lines, pattern color=magenta, opacity=0.8, very thick] (X)--++(0:1)--++(-90:1)--++(180:1);
 \path(10,0) coordinate (X);
  \draw[pattern=north east lines, pattern color=magenta, opacity=0.8, very thick] (X)--++(0:1)--++(-90:1)--++(180:1);
 \path(15,0) coordinate (X);
  \draw[pattern=north east lines, pattern color=magenta, opacity=0.8, very thick](X)--++(0:1)--++(-90:1)--++(180:1);

  \path(1,0) coordinate (X);
  \fill[gray,opacity=0.4]  (X)--++(0:1)--++(-90:1)--++(180:1);
 \path(6,0) coordinate (X);
\draw[pattern=north east lines, pattern color=gray, opacity=0.8, very thick]  (X)--++(0:1)--++(-90:1)--++(180:1);
 \path(11,0) coordinate (X);
\draw[pattern=north east lines, pattern color=gray, opacity=0.8, very thick] (X)--++(0:1)--++(-90:1)--++(180:1);

  \path(1+1,0) coordinate (X);
  \fill[green,opacity=0.4]  (X)--++(0:1)--++(-90:1)--++(180:1);
 \path(1+6,0) coordinate (X);
\draw[pattern=north east lines, pattern color=green, opacity=0.8, very thick]  (X)--++(0:1)--++(-90:1)--++(180:1);
 \path(1+11,0) coordinate (X);
\draw[pattern=north east lines, pattern color=green, opacity=0.8, very thick]  (X)--++(0:1)--++(-90:1)--++(180:1);

   \path(2+1,0) coordinate (X);
\fill[orange,opacity=0.4]   (X)--++(0:1)--++(-90:1)--++(180:1);
 \path(2+6,0) coordinate (X);
  \draw[pattern=north east lines, pattern color=orange, opacity=0.8, very thick] (X)--++(0:1)--++(-90:1)--++(180:1);
 \path(2+11,0) coordinate (X);
  \draw[pattern=north east lines, pattern color=orange, opacity=0.8, very thick]  (X)--++(0:1)--++(-90:1)--++(180:1);
 \path(3+1,0) coordinate (X);
  \fill[cyan,opacity=0.4]   (X)--++(0:1)--++(-90:1)--++(180:1);
 \path(3+6,0) coordinate (X);
\draw[pattern=north east lines, pattern color=cyan, opacity=0.8, very thick]  (X)--++(0:1)--++(-90:1)--++(180:1);
 \path(3+11,0) coordinate (X);
\draw[pattern=north east lines, pattern color=cyan, opacity=0.8, very thick] (X)--++(0:1)--++(-90:1)--++(180:1);
 \draw[thick,path1color](0,0) --++(0:1) coordinate (x1)
  --++(0:1) coordinate (x2)
   --++(0:1) coordinate (x3)
    --++(0:1) coordinate (x4)
     --++(0:1) coordinate (x5)
      --++(0:1) coordinate (x6)
       --++(0:1) coordinate (x7)
        --++(0:1) coordinate (x8)
               --++(0:1) coordinate (x9)
        --++(0:1) coordinate (x10)
               --++(0:1) coordinate (x11)
                      --++(0:1) coordinate (x12)
                             --++(0:1) coordinate (x13)
                                    --++(0:1) coordinate (x14)
                                           --++(0:1) coordinate (x15)
                                                  --++(0:1) coordinate (x16)            
                                                                            --++(-90:1)--++(180:16)--++(90:1);
 \foreach \i in {1,2,3,4,5,6,7,8,...,16}
 {
 \draw[thick,path1color](x\i)--++(-90:1);
 \path(x\i)--++(-90:0.5)--++(180:0.5) coordinate (y\i);
 }
 \path(y8) node {$5$};
 \path(y7) node {$3$};
 \path(y6) node {$1$};
 \path(y5) node {$-2$};
 \path(y4) node {$-4$};
 \path(y3) node {$-6$};
 \path(y2) node {$-8$};
 \path(y1) node {$-10$};

 \path(y9) node {$7$};
 \path(y10) node {$9$};
 \path(y11) node {$11$};
 \path(y12) node {$12$};
 \path(y13) node {$13$};
 \path(y14) node {$14$};
 \path(y15) node {$15$};
 \path(y16) node {$16$};%
  \path(0,0)--++(90:0.4)--++(180:0.25) node {$\phantom{\alpha_1}$};
 \path(16,0)--++(90:0.4)--++(0:0.25) node {$\phantom{\alpha_1}$}; 
 \path(x5)--++(90:0.4)--++(180:0.25) node {$\alpha_1^{-1}$};
  \node[M] at (x5){};
\draw[   thick,,densely dotted] (x8)--++(-90:1.35) ;
\draw[   thick,,densely dotted] (x8)--++(90:0.35) ;
\end{tikzpicture}\end{minipage}
 \\

\vspace{-0.15cm}
   \begin{minipage}{12.5cm}\scalefont{0.9}
 \begin{tikzpicture}[scale=0.75,xscale=-1]
 \path(0,0) coordinate (X);
   \draw[pattern=north east lines, pattern  color = magenta, ] (X)--++(0:1)--++(-90:1)--++(180:1);
 \path(5,0) coordinate (X);
   \draw[pattern=north east lines, pattern  color = magenta, ] (X)--++(0:1)--++(-90:1)--++(180:1);
 \path(10,0) coordinate (X);
   \draw[pattern=north east lines, pattern  color = magenta, ] (X)--++(0:1)--++(-90:1)--++(180:1);
 \path(15,0) coordinate (X);
   \draw[pattern=north east lines, pattern  color = magenta, ] (X)--++(0:1)--++(-90:1)--++(180:1);
  \path(1,0) coordinate (X);
   \draw[pattern=north east lines, pattern  color = gray, ] (X)--++(0:1)--++(-90:1)--++(180:1);
 \path(6,0) coordinate (X);
   \draw[pattern=north east lines, pattern  color = gray, ] (X)--++(0:1)--++(-90:1)--++(180:1);
 \path(11,0) coordinate (X);
   \draw[pattern=north east lines, pattern  color = gray, ] (X)--++(0:1)--++(-90:1)--++(180:1);
  \path(1+1,0) coordinate (X);
   \draw[pattern=north east lines, pattern  color = green, ] (X)--++(0:1)--++(-90:1)--++(180:1);
 \path(1+6,0) coordinate (X);
   \draw[pattern=north east lines, pattern  color = green, ] (X)--++(0:1)--++(-90:1)--++(180:1);
 \path(1+11,0) coordinate (X);
   \draw[pattern=north east lines, pattern  color = green, ] (X)--++(0:1)--++(-90:1)--++(180:1);
   \path(2+1,0) coordinate (X);
   \draw[pattern=north east lines, pattern  color = orange, ] (X)--++(0:1)--++(-90:1)--++(180:1);
 \path(2+6,0) coordinate (X);
   \draw[pattern=north east lines, pattern  color = orange, ] (X)--++(0:1)--++(-90:1)--++(180:1);
 \path(2+11,0) coordinate (X);
   \draw[pattern=north east lines, pattern  color = orange, ] (X)--++(0:1)--++(-90:1)--++(180:1);
 \path(3+1,0) coordinate (X);
   \draw[pattern=north east lines, pattern  color = cyan, ] (X)--++(0:1)--++(-90:1)--++(180:1);
 \path(3+6,0) coordinate (X);
   \draw[pattern=north east lines, pattern  color = cyan, ] (X)--++(0:1)--++(-90:1)--++(180:1);
 \path(3+11,0) coordinate (X);
   \draw[pattern=north east lines, pattern  color = cyan, ] (X)--++(0:1)--++(-90:1)--++(180:1);
 \draw[thick,path1color](0,0) --++(0:1) coordinate (x1)
  --++(0:1) coordinate (x2)
   --++(0:1) coordinate (x3)
    --++(0:1) coordinate (x4)
     --++(0:1) coordinate (x5)
      --++(0:1) coordinate (x6)
       --++(0:1) coordinate (x7)
        --++(0:1) coordinate (x8)
               --++(0:1) coordinate (x9)
        --++(0:1) coordinate (x10)
               --++(0:1) coordinate (x11)
                      --++(0:1) coordinate (x12)
                             --++(0:1) coordinate (x13)
                                    --++(0:1) coordinate (x14)
                                           --++(0:1) coordinate (x15)
                                                  --++(0:1) coordinate (x16)            
                                                                            --++(-90:1)--++(180:16)--++(90:1);
 \foreach \i in {1,2,3,4,5,6,7,8,...,16}
 {
 \draw[thick,path1color](x\i)--++(-90:1);
 \path(x\i)--++(-90:0.5)--++(180:0.5) coordinate (y\i);
 }
 \path(y8) node  {$-8$};
 \path(y7) node  {$-7$};
 \path(y6) node  {$-6$};
 \path(y5) node  {$-5$};
 \path(y4) node  {$-4$};
 \path(y3) node  {$-3$};
 \path(y2) node  {$-2$};
 \path(y1) node  {$-1$};

 \path(y9) node  {$-9$};
 \path(y10) node  {$-10$};
 \path(y11) node  {$-11$};
 \path(y12) node  {$-12$};
 \path(y13) node  {$-13$};
 \path(y14) node  {$-14$};
 \path(y15) node  {$-15$};
 \path(y16) node  {$-16$};
 \path(0,0)--++(90:0.4)--++(180:0.25) node {\phantom{$\alpha_1^{-1}$}};
 \path(16,0)--++(90:0.4)--++(0:0.25) node {$\phantom{\alpha_1^{-1}}$};

\draw[   thick,,densely dotted] (x8)--++(-90:1.35) ;
\draw[   thick,,densely dotted] (x8)--++(90:0.35) ;

 \path(x8)--++(90:0.4)--++(0:0.25) node {$\vartheta$};
\draw[very thick, fill=white] (x8) circle (3pt);

\end{tikzpicture}\end{minipage}

\\
\vspace{-0.15cm}

 \begin{minipage}{12.5cm}\scalefont{0.9}
 \begin{tikzpicture}[scale=0.75,xscale=-1]
\path(0,0) coordinate (X);
   \fill[magenta,opacity=0.4] (X)--++(0:1)--++(-90:1)--++(180:1);
 \path(5,0) coordinate (X);
  \draw[pattern=north east lines, pattern color=magenta, opacity=0.8, very thick] (X)--++(0:1)--++(-90:1)--++(180:1);
 \path(10,0) coordinate (X);
  \draw[pattern=north east lines, pattern color=magenta, opacity=0.8, very thick] (X)--++(0:1)--++(-90:1)--++(180:1);
 \path(15,0) coordinate (X);
  \draw[pattern=north east lines, pattern color=magenta, opacity=0.8, very thick](X)--++(0:1)--++(-90:1)--++(180:1);

  \path(1,0) coordinate (X);
  \fill[gray,opacity=0.4]  (X)--++(0:1)--++(-90:1)--++(180:1);
 \path(6,0) coordinate (X);
\draw[pattern=north east lines, pattern color=gray, opacity=0.8, very thick]  (X)--++(0:1)--++(-90:1)--++(180:1);
 \path(11,0) coordinate (X);
\draw[pattern=north east lines, pattern color=gray, opacity=0.8, very thick] (X)--++(0:1)--++(-90:1)--++(180:1);

  \path(1+1,0) coordinate (X);
  \fill[green,opacity=0.4]  (X)--++(0:1)--++(-90:1)--++(180:1);
 \path(1+6,0) coordinate (X);
\draw[pattern=north east lines, pattern color=green, opacity=0.8, very thick]  (X)--++(0:1)--++(-90:1)--++(180:1);
 \path(1+11,0) coordinate (X);
\draw[pattern=north east lines, pattern color=green, opacity=0.8, very thick]  (X)--++(0:1)--++(-90:1)--++(180:1);

   \path(2+1,0) coordinate (X);
\fill[orange,opacity=0.4]   (X)--++(0:1)--++(-90:1)--++(180:1);
 \path(2+6,0) coordinate (X);
  \draw[pattern=north east lines, pattern color=orange, opacity=0.8, very thick] (X)--++(0:1)--++(-90:1)--++(180:1);
 \path(2+11,0) coordinate (X);
  \draw[pattern=north east lines, pattern color=orange, opacity=0.8, very thick]  (X)--++(0:1)--++(-90:1)--++(180:1);
 \path(3+1,0) coordinate (X);
  \fill[cyan,opacity=0.4]   (X)--++(0:1)--++(-90:1)--++(180:1);
 \path(3+6,0) coordinate (X);
\draw[pattern=north east lines, pattern color=cyan, opacity=0.8, very thick]  (X)--++(0:1)--++(-90:1)--++(180:1);
 \path(3+11,0) coordinate (X);
\draw[pattern=north east lines, pattern color=cyan, opacity=0.8, very thick] (X)--++(0:1)--++(-90:1)--++(180:1);
 \draw[thick,path1color](0,0) --++(0:1) coordinate (x1)
  --++(0:1) coordinate (x2)
   --++(0:1) coordinate (x3)
    --++(0:1) coordinate (x4)
     --++(0:1) coordinate (x5)
      --++(0:1) coordinate (x6)
       --++(0:1) coordinate (x7)
        --++(0:1) coordinate (x8)
               --++(0:1) coordinate (x9)
        --++(0:1) coordinate (x10)
               --++(0:1) coordinate (x11)
                      --++(0:1) coordinate (x12)
                             --++(0:1) coordinate (x13)
                                    --++(0:1) coordinate (x14)
                                           --++(0:1) coordinate (x15)
                                                  --++(0:1) coordinate (x16)            
                                                                            --++(-90:1)--++(180:16)--++(90:1);
 \foreach \i in {1,2,3,4,5,6,7,8,...,16}
 {
 \draw[thick,path1color](x\i)--++(-90:1);
 \path(x\i)--++(-90:0.5)--++(180:0.5) coordinate (y\i);
 }
 \path(y8) node {$-5$};
 \path(y7) node {$-3$};
 \path(y6) node {$-1$};
 \path(y5) node {$2$};
 \path(y4) node {$4$};
 \path(y3) node {$6$};
 \path(y2) node {$8$};
 \path(y1) node {$10$};

 \path(y9) node {$-7$};
 \path(y10) node {$-9$};
 \path(y11) node {$-11$};
 \path(y12) node {$-12$};
 \path(y13) node {$-13$};
 \path(y14) node {$-14$};
 \path(y15) node {$-15$};
 \path(y16) node {$-16$};%
\draw[   thick,,densely dotted] (x8)--++(-90:1.35) ;
\draw[   thick,,densely dotted] (x8)--++(90:0.35) ;
 \path(16,0)--++(90:0.4)--++(0:0.25) node {$\phantom{\alpha_1}$};
%
%
%
 \path(x8)--++(90:0.4)--++(0:0.25) node {$\vartheta$};
\draw[very thick, fill=white] (x8) circle (3pt);
\end{tikzpicture}\end{minipage}

\end{array}
\]

 \caption{Let
 $\nu =(16,\alpha_1^{-1})$
  $\mu=(6,\alpha_1^{-1})$ and $\la= (0,\vartheta)$ as in \cref{aneasypeasyeg}. We depict the tableaux
    $\stt_{\nu}$, 
    $\stt_{\mu}$  and the unique elements of ${\sf CStd}_n(\la,\nu)$ and  ${\sf CStd}_n(\la,\mu)$.	
 Notice that we have translated the marked point {\em and also flipped the signs and ordering} (because $\vartheta \not \in q^\ZZ\alpha_1^{-1}$, but $\vartheta^{-1}  \in q^\ZZ\alpha_1^{-1}$). }
 \label{kjhghgjkldfshjkgfsdhkjgdgdhjfghdjkfhjkdgfhjlkgdfhjlkadsfjhkdfjkhldfg}
 
 \end{figure}

The general case (with  $q$   a root of unity, but  
 $\alpha_1,\alpha_2,\vartheta \not \in q^{\ZZ}$) is no more difficult than \cref{aneasypeasyeg}.
For $\la=(k_1,\beta_1) , \mu=(k_2,\beta_2) \in \Lambda_n $ 
we have that
 ${\sf CStd}_n(\la,\mu) \neq \emptyset $ if and only if 
   $\beta_1  = \beta_2   q^{2a} $ for some $0\leq  a <e$ and $k_2 - k_1  \in  2a+ 2e\ZZ_{\geq0}$. 
Each  non-zero weight space is again spanned  by a single  tableau which is   obtained by ``translating the marked point rightwards'' in the exact same fashion;   the conjecture holds (without restriction on $\Bbbk$) because characters of graded simple modules must belong to $\ZZ_{\geq0}[q+q^{-1}]$.
  Note that if $\vartheta\in \alpha_jq^\ZZ$, for  $\mu= (k, \alpha_j^{-1})$ and $\la= (0,\vartheta)$ we  must flip  the order and the signs  of tableau  entries
   as $\alpha_j^{-1} \not \in \vartheta q^\ZZ$, see \cref{kjhghgjkldfshjkgfsdhkjgdgdhjfghdjkfhjkdgfhjlkgdfhjlkadsfjhkdfjkhldfg}.

\subsection{Decomposition matrices for $q$ not a root of unity and  Nakayama's  conjecture }
%
%
%
 Let $\alpha_1,\alpha_2 \in q^\ZZ$ with  $q$ not a root of unity 
and suppose that $\alpha_1=q^{a_1}$ and $\alpha_2=q^{a_2}$ with $0<a_1 < a_2$. 
By definition, for $\la,\mu \in \{(k ,\alpha_1^{\pm1})\mid 0< k \leq n\}$ 
we have that 
 ${\sf CStd}_n(	\la , \mu ) =\emptyset $ unless
 $\la=(k-2a_1,\alpha_1) $ and  $\mu=(k ,\alpha_1^{-1}) $ 
(or the trivial case, where $\la=\mu$). In fact, there is a 
 unique element of 
${\sf CStd}_n( (k-2a_1,\alpha_1), (k ,\alpha_1^{-1}) )$ and it is of degree 0.  
We  claim  that 
\begin{equation}\label{tadddaaaaaaa}
[\Delta(k-2a_1,\alpha_1)  : L(k ,\alpha_1^{-1})]=0
\end{equation}over any field $\Bbbk$ (of characteristic  not equal to 2)  (from which \cref{conjforus} follows in this case).  
 We 
 set $\la=(k-2a_1,\alpha_1)$ and $\mu = (k ,\alpha_1^{-1})$,  and we 
define elements of $\Std_n(\la)$ as follows:
\begin{itemize}
\item we let ${\color{violet}\sts }\in \Std_n(\la)$ denote the tableau obtained from $\stt_\mu$ by 
 translating the marked point $a_1$ boxes rightwards;
 \item we let ${\color{orange}\stt}\in \Std_n(\la)$ denote the unique minimal length   tableau which is not in the same residue class as  a ladder tableau;
 \item we let $\stu<{\color{orange}\stt}$ denote the unique tableau obtained by removing a tile from $\mathbb T_\stt$;  
 \item we let $\stv\in \Std_n(\la) $ denote the unique element  such that $\stu\sim \stv$  (and $\stu \neq \stv$);
\end{itemize}
and we refer to  \cref{asfoiuglisdguiodugiofdug98t5euriougiosdjgkdf} for  examples.
One can prove this by verifying \begin{equation}\label{inthesimplehead}
 \langle \psi_{{\color{violet}\sts}} ,  \psi_{\color{violet}\sts}\rangle e_{\stt_\la}
=
 \psi_{{\color{violet}\sts}}^*\psi_{\color{violet}\sts} e_{\stt_\la}
=\pm \psi_{{\color{orange}\stt}}^* \psi_{\color{orange}\stt} e_{\stt_\la}
=\pm \psi_{{\color{black}\stu}}^* y_1 \psi_{\color{black}\stu} e_{\stt_\la}
=\pm 2 \psi_{\stu}^*   \psi_{\color{black}\stv} e_{\stt_\la}
= \pm 
\langle 2 \psi_{\stu} ,   \psi_{\color{black}\stv}\rangle  e_{\stt_\la}
\end{equation}
for $\langle  \psi_{\stu} ,    \psi_{\color{black}\stv}\rangle \neq 0$,
The first equality of \eqref{inthesimplehead} is by definition. 
The second equality of \eqref{inthesimplehead}
follows by the commuting relations and \eqref{skjghskljdhgkjldhkjgdshgkjdshfgjkdfhgkjdsfghkdshgfjkdshfgjksfhdgkjhgdksjlghsd}. 
The third follows by relation (4.7). 
The fifth equality holds by definition, by grading considerations
 $\langle \psi_\stu ,  \psi_\stu\rangle = 0=
\langle \psi_\stv ,  \psi_\stv\rangle $; and by  
 \cref{forloic} the Gram matrix of this weight space has full rank (as $\stu$ is a ladder tableau) and hence 
 $\langle  \psi_{\stu} ,   \psi_{\color{black}\stv}\rangle \neq 0$. 
 We will prove the fourth equality in a specific example (the general case is similar).
We then note that \eqref{inthesimplehead} immediately implies that $\psi_{\color{violet}\sts }\not \in \rad \langle - , - \rangle_\la$ and so 
$[\Delta(k-2a_1,\alpha_1)  : L(k ,\alpha_1^{-1})]=0$.

   \vspace{-0.15cm}
 \begin{figure}[ht!] 
  \[
\def\pE{28}
\def\pN{18}
\def\pA{8}
\def\pB{3}
\TIKZ[yscale=-1, scale=.3, font=\scriptsize]{
\clip(-10,-3) rectangle (15,18);

\pgfmathsetmacro{\pEParity}{Mod(\pE,2)}
\pgfmathsetmacro{\pNParity}{Mod(\pN,2)}
\coordinate (max-start) at (\pA-1, 0);
\path (max-start) to ++(\pN-2*\pB, \pN) coordinate (end);
\path (end) to ++(-\pN, -\pN) coordinate (min-start);
\path (end) to ++(\pB, -\pB) coordinate (p-corner);

 \path(max-start)--++(-4,0) coordinate (akunam);

 \draw[pathsregion] (akunam)--++(12,12) --++(-6,6)--++(-18,-18)--++(4,0) coordinate (INVER);

\coordinate (beta2) at (4, -1); \coordinate (beta2inv) at (-4, -1);
\coordinate (beta1) at (8, -1); \coordinate (beta1inv) at (-8, -1);
\filldraw[plusone] (beta2) \TILE;
\draw[tauplane] (0,-1) to +(0,\pN+1);
\draw[sigmaplane] (-\pE,-1) to +(0,\pN+1);
\draw[sigmaplane] (\pE,-1) to +(0,\pN+1);
\begin{scope}
\clip (-17, -1) rectangle (17, \pN);
	\foreach \x in {-20, ..., 20}{
		\draw[gridstyle] (2*\x,-1) to +(\pN+1,\pN+1);
		\draw[gridstyle] (2*\x,-1) to +(-\pN-1,\pN+1);
	}
\end{scope}
\foreach \x in {beta1, beta1inv}
	{\node[M1] at (\x){};}
\foreach \x in {beta2, beta2inv}
	{\node[M2] at (\x){};}
\node[above] at (beta1) {\small \strut $\alpha_2$};
	\node[above] at (beta2) {\small \strut $\alpha_1$};
\node[above] at (beta1inv) {\small \strut $\alpha_2^{-1}$};
	\node[above] at (beta2inv) {\small \strut $\alpha_1^{-1}$};

  \draw[line width=3, cyan] (akunam)--++(1,1)	--++(-1,1)
 --++(1,1)	--++(-1,1)
  --++(1,1)	--++(-1,1)
   --++(1,1)	--++(-1,1)
    --++(1,1)	--++(-1,1)
     --++(1,1)	--++(-1,1)
 --++(2,2) --++(4,4)	 ;

 \draw[line width=1.4] (INVER)--++(1,1)	--++(-1,1)
 --++(1,1)	--++(-1,1)
 --++(10,10)--++(4,4)	 ;

	 }
	 \qquad 
	 \TIKZ[yscale=-1, scale=.3, font=\scriptsize]{
\clip(-10,-3) rectangle (15,18);

\pgfmathsetmacro{\pEParity}{Mod(\pE,2)}
\pgfmathsetmacro{\pNParity}{Mod(\pN,2)}
\coordinate (max-start) at (\pA-1, 0);
\path (max-start) to ++(\pN-2*\pB, \pN) coordinate (end);
\path (end) to ++(-\pN, -\pN) coordinate (min-start);
\path (end) to ++(\pB, -\pB) coordinate (p-corner);

 \path(max-start)--++(-4,0) coordinate (akunam);

 \draw[pathsregion] (akunam)--++(12,12) --++(-6,6)--++(-18,-18)--++(4,0) coordinate (INVER);

\coordinate (beta2) at (4, -1); \coordinate (beta2inv) at (-4, -1);
\coordinate (beta1) at (8, -1); \coordinate (beta1inv) at (-8, -1);
\filldraw[plusone] (beta2) \TILE;
\draw[tauplane] (0,-1) to +(0,\pN+1);
\draw[sigmaplane] (-\pE,-1) to +(0,\pN+1);
\draw[sigmaplane] (\pE,-1) to +(0,\pN+1);
\begin{scope}
\clip (-17, -1) rectangle (17, \pN);
	\foreach \x in {-20, ..., 20}{
		\draw[gridstyle] (2*\x,-1) to +(\pN+1,\pN+1);
		\draw[gridstyle] (2*\x,-1) to +(-\pN-1,\pN+1);
	}
\end{scope}
\foreach \x in {beta1, beta1inv}
	{\node[M1] at (\x){};}
\foreach \x in {beta2, beta2inv}
	{\node[M2] at (\x){};}
\node[above] at (beta1) {\small \strut $\alpha_2$};
	\node[above] at (beta2) {\small \strut $\alpha_1$};
\node[above] at (beta1inv) {\small \strut $\alpha_2^{-1}$};
	\node[above] at (beta2inv) {\small \strut $\alpha_1^{-1}$};

%
%

 \draw[line width=3.2,orange] (INVER) 
 --++(9,9) 	--++(-1,1)
 --++(1,1)	--++(-1,1)	 --++(2,2)--++(4,4)	 ;

\path(INVER) --++(2,0) coordinate (INVER);
 
 \draw[line width=1.4,black]  (INVER) 	--++(-1,1)
 --++(4,4) coordinate (X)  --++(4,4) 	--++(-1,1)
 --++(1,1)	--++(-1,1)	 --++(2,2)--++(4,4)	 ;

  \draw[line width=1.4, black] (X)--++(4,-4)--++(-1,-1);

  \fill[magenta,opacity=0.4] (X)--++(1,-1)--++(-1,-1)--++(1,-1)--++(-1,-1)--++(1,-1)--++(-1,-1)
  --++(-1,1)--++(1,1)  --++(-1,1)--++(1,1)  --++(-1,1)--++(1,1);

\path(X)--++(1,-1) coordinate (Y);
  \fill[cyan,opacity=0.4] (Y)--++(1,-1)--++(-1,-1)--++(1,-1)--++(-1,-1)
  --++(-1,1)--++(1,1)  --++(-1,1)--++(1,1);

\path(Y)--++(1,-1) coordinate (Y);
  \fill[green,opacity=0.4] (Y)--++(1,-1)--++(-1,-1)--++(1,-1)--++(-1,-1)
  --++(-1,1)--++(1,1)  --++(-1,1)--++(1,1);

\path(Y)--++(1,-1) coordinate (Y);
  \fill[orange,opacity=0.4] (Y)--++(1,-1)--++(-1,-1) --++(-1,1)--++(1,1);

\path(X)--++(-1,-1) coordinate (Y);
  \fill[cyan,opacity=0.4] (Y)--++(1,-1)--++(-1,-1)--++(1,-1)--++(-1,-1)
  --++(-1,1)--++(1,1)  --++(-1,1)--++(1,1);

\path(Y)--++(-1,-1) coordinate (Y);
  \fill[green,opacity=0.4] (Y)--++(1,-1)--++(-1,-1)--++(1,-1)--++(-1,-1)
  --++(-1,1)--++(1,1)  --++(-1,1)--++(1,1);

\path(Y)--++(-1,-1) coordinate (Y);
  \fill[orange,opacity=0.4] (Y)--++(1,-1)--++(-1,-1) --++(-1,1)--++(1,1);

	 }
\]

\caption{Here $\alpha_1=q^4$ and $\alpha_2=q^8$.  
On the left we depict the non-ladder tableau $\sts$ and the tableau $\color{cyan}\stt_{\la}$. 
On the right we depict the non-ladder tableau $\color{orange}\stt$ and the pair of tableaux 
$\stu \sim \stv$ such that $\stu$ is a ladder tableau.  These are the tableaux from the proof  in \cref{surprisesemisimple}. 
}
\label{asfoiuglisdguiodugiofdug98t5euriougiosdjgkdf}
\end{figure}

\begin{eg} \label{surprisesemisimple}
 Let $\alpha_1,\alpha_2 \in q^\ZZ$ with  $q$ not a root of unity 
and suppose that $\alpha_1=q^{4}$ and $\alpha_2=q^{8}$. 
We will show that 
$y_1\psi_\stu=2\psi_\stv$ for $\stu,\stv$ as in \cref{asfoiuglisdguiodugiofdug98t5euriougiosdjgkdf}.  We have that
\begin{align*}
y_1\psi_\stu &=	 y_1 \color{orange} \psi_1
\color{green!70!black}  (\psi_0\psi_2)
\color{cyan}(\psi_1\psi_3)  
\color{magenta} (\psi_0\psi_2 \psi_{4}) 
\color{cyan}  (\psi_1\psi_3) 
\color{green!70!black} (\psi_0\psi_2)
\color{orange}\psi_1
\color{black} \psi_\stv
\\
&=  
  \psi_1
    \psi_0\psi_2 
  \psi_1\psi_3   
   \psi_0\psi_2  (\psi_{4}y_5  +1)
    \psi_1\psi_3  
   \psi_0\psi_2 
 \psi_1
   \psi_\stv
   \\
   &=  
     \psi_1
    \psi_0\psi_2 
  \psi_1	 
   \psi_0	(\psi_3   \psi_2   \psi_3  )
    \psi_1	 
   \psi_0\psi_2 
 \psi_1
   \psi_\stv
   \\
   &=  
     \psi_1
    \psi_0\psi_2 
  \psi_1	 
   \psi_0	(\psi_2   \psi_3   \psi_2 + 1  )
    \psi_1	 
   \psi_0\psi_2 
 \psi_1
   \psi_\stv
   \\
   &=  
     \psi_1
    \psi_0\psi_2 
(  \psi_1	 
   \psi_0 
    \psi_1	 
   \psi_0)\psi_2 
 \psi_1
   \psi_\stv
\\
   &=  
     \psi_1
    \psi_0\psi_2 
(  	 
   \psi_0 
    \psi_1	 
   \psi_0 \psi_1+2\psi_0)\psi_2 
 \psi_1
   \psi_\stv
\\     &=  
  2    \psi_1
    \psi_0\psi_0\psi_2 
  \psi_2 
 \psi_1
   \psi_\stv
=  2   \psi_\stv
\end{align*} 
where the first equality follows by definition (compare the colouring with \cref{asfoiuglisdguiodugiofdug98t5euriougiosdjgkdf}); the  second   follows by repeated applications of (4.2); 
the third, fifth, and seventh equalities follow by   residue considerations and  (4.9) (and the commutation relations);
 the fourth equality follows by (4.5);
 the sixth   follows by (4.8); the eighth  follows by  (4.4).  
 Thus we have verified \eqref{inthesimplehead} and 
 hence verified \eqref{tadddaaaaaaa} in this case.
   For all other pairs $\la,\mu \in \Lambda_n\setminus \{(0,\vartheta)\}$ in this block, we have that 
 $$
 \textstyle \sum_{\sts \in {\sf CStd}_n(\la,\mu)}v^{\deg(\sts)}\in \ZZ[q]$$and so 
the remaining entries of the decomposition matrix can be deduced immediately. For   $\Bbbk$ of characteristic not 2,
 the corresponding block of the graded decomposition matrix is
independent of the field $\Bbbk$ and is   
 as follows
$$\def\arraystretch{1.1}
  \begin{array}{c|cccc} 
   & (18,\alpha_2^{-1}) & (14,\alpha_1^{-1}) & (6,\alpha_1) & (2,\alpha_2) 
   \\
   \hline 
   (18,\alpha_2^{-1}) & 1		&0&0&0
   \\
   (14,\alpha_1^{-1}) &v		&1&0&0
   \\ 
   (6,\alpha_1) 		& v		&0&1&0	
   \\
   (2,\alpha_2) 		&v^2		&v	&v	&1
   \end{array}
$$ 
\end{eg}

\begin{rmk}
The above example  illustrates that Nakayama's conjecture fails for  ${\rm TL}_n(\alpha_1,\alpha_2,\vartheta)$.  
By which we mean: the blocks of ${\rm TL}_n(\alpha_1,\alpha_2,\vartheta)$ are not given by the $W(C_n)$-orbits of  residue classes 
of 
 shapes/tableaux
 (in contrast to the case of cyclotomic Hecke and Temperley--Lieb algebras of type $A$). 
To see this, simply note that if $\alpha_1 \in q^{a_1}$ but $\alpha_2  \not \in q^\ZZ$ and $\vartheta \not \in q^\ZZ$ then  the two simples in \cref{surprisesemisimple} have the same $W(C_n)$-orbit of residues, but do not belong to the same block.
\end{rmk}

\begin{Acknowledgements*}
The first  author is grateful for funding from EPSRC grant EP/V00090X/1.  This paper was written while all five authors were at the ICERM programme  {`Categorification and Computation in Algebraic Combinatorics'} in Autumn 2025.
\end{Acknowledgements*}

 \scalefont{0.9}


\begin{thebibliography}{dGNPR05}
\scalefont{0.9}
\bibitem[AP23]{MR4666131}
A.~Appel and T.~Prze\'zdziecki, \emph{Generalized {S}chur-{W}eyl dualities for
  quantum affine symmetric pairs and orientifold {KLR} algebras}, Adv. Math.
  \textbf{435} (2023), Paper No. 109383, 52. 

 
\bibitem[BN05]{MR2174270}
D.~Bar-Natan, \emph{Khovanov's homology for tangles and cobordisms}, Geom.
  Topol. \textbf{9} (2005), 1443--1499. 

\bibitem[Bax82]{statmex1}
R.~J. Baxter, \emph{Exactly solved models in statistical mechanics}, Academic
  Press, London, 1982.


\bibitem[Bow22]{MR4401509}
C.~Bowman, \emph{The many integral graded cellular bases of {H}ecke algebras of
  complex reflection groups}, Amer. J. Math. \textbf{144} (2022), no.~2,
  437--504. 

\bibitem[Bow25]{MR4911527}
\bysame, \emph{Diagrammatic algebra}, Universitext, Springer,  
  \copyright 2025, With a foreword by Geordie Williamson. 


\bibitem[BC18]{MR3820251}
C.~Bowman and A.  Cox, \emph{Modular decomposition numbers of cyclotomic
  {H}ecke and diagrammatic {C}herednik algebras: a path theoretic approach},
  Forum Math. Sigma \textbf{6} (2018), Paper No. e11, 66. 


\bibitem[BCH23]{MR4611117}
C.~Bowman, A.~Cox, and A.~Hazi, \emph{Path isomorphisms between quiver {H}ecke
  and diagrammatic {B}ott-{S}amelson endomorphism algebras}, Adv. Math.
  \textbf{429} (2023), Paper No. 109185, 106.


 





\bibitem[BDD+]{BDDHMS2}
C.~Bowman, A.~Dell'Arciprete, M.~{De Visscher}, A.~Hazi, R.~Muth, and
  C.~Stroppel, \emph{Quiver presentations and {S}chur--{W}eyl duality for
  {K}hovanov arc algebras}, (to appear in Math. Z.) \href{https://arxiv.org/abs/2411.15520}{\color{black}arXiv:2411.15520}.

 \bibitem[BHDS]{stropandus}
C.~Bowman, A.~Hazi, M.~{De~{V}isscher}, and C.~Stroppel, \emph{Quiver
  presentations and isomorphisms of Hecke categories and Khovanov arc
  algebras}, \href{https://arxiv.org/abs/2309.13695}{\color{black}arXiv:2309.13695}.



\bibitem[BK09]{bk09}
J.~Brundan and A.~Kleshchev,
  \emph{\href{http://dx.doi.org/10.1016/j.aim.2009.06.018}{\color{black}Graded decomposition
  numbers for cyclotomic {{H}ecke} algebras}}, Adv.\ Math. \textbf{222} (2009),
  no.~6, 1883--1942.

\bibitem[BKW11]{bkw11}
J.~Brundan, A.~Kleshchev, and W.~Wang,
  \emph{\href{http://dx.doi.org/10.1515/CRELLE.2011.033}{\color{black}Graded {Specht}
  modules}}, J.\ Reine Angew.\ Math. \textbf{655} (2011), 61--87.

 
\bibitem[BS11]{MR2918294}
J.~Brundan and C.~Stroppel, \emph{Highest weight categories arising from
  {K}hovanov's diagram algebra {I}: cellularity}, Mosc. Math. J. \textbf{11}
  (2011), no.~4, 685--722, 821--822.

\bibitem[BW18]{MR3864017}
H.~Bao and W.~Wang, \emph{A new approach to {K}azhdan-{L}usztig theory of type
  {$B$} via quantum symmetric pairs}, Ast\'erisque (2018), no.~402, vii+134.

\bibitem[dGN09]{GN}
J.~de~Gier and A. Nichols, \emph{The two-boundary {T}emperley--{L}ieb
  algebra}, J. Algebra \textbf{321} (2009), no.~4, 1132--1167. 

\bibitem[dGNPR05]{statmex6}
J.~de~Gier, A.~Nichols, P.~Pyatov, and V. Rittenberg, \emph{Magic in the
  spectra of the {XXZ} quantum chain with boundaries at {$\Delta=0$} and
  {$\Delta=-1/2$}}, Nuclear Physics B \textbf{729} (2005), 387--418.

\bibitem[dGP04]{statmex5}
J.~de~Gier and P. Pyatov, \emph{Bethe ansatz for the {T}emperley--{L}ieb
  loop model with open boundaries}, Journal of Statistical Mechanics: Theory
  and Experiment \textbf{2004} (2004), no.~03, P03002.

\bibitem[DR25a]{DR25b}
Z.~Daugherty and A.~Ram, \emph{Calibrated representations of two boundary
  {T}emperley--{L}ieb algebras}, Ann. Represent. Theory \textbf{2} (2025),
  405--438. 

\bibitem[DR25b]{DR25a}
\bysame, \emph{Two boundary {H}ecke algebras and combinatorics of type {$C$}},
  Ann. Represent. Theory \textbf{2} (2025),   355--404. 

\bibitem[EK06]{MR2279279}
N.~Enomoto and M.~Kashiwara, \emph{Symmetric crystals and affine {H}ecke
  algebras of type {B}}, Proc. Japan Acad. Ser. A Math. Sci. \textbf{82}
  (2006), no.~8, 131--136. 

\bibitem[EL]{ELpaper}
B.~Elias and I.~Losev, \emph{Modular representation theory in type $A$ via
  Soergel bimodules},
  \href{https://arxiv.org/pdf/1701.00560.pdf}{\color{black}arXiv:1701.00560}.

\bibitem[Eli10]{MR2726291}
B.~Elias, \emph{A diagrammatic {T}emperley--{L}ieb categorification}, Int. J.
  Math. Math. Sci. (2010), Art. ID 530808, 47. 

\bibitem[Ern12]{MR2927180}
D.~Ernst, \emph{Diagram calculus for a type affine {$C$} {T}emperley--{L}ieb
  algebra, {I}}, J. Pure Appl. Algebra \textbf{216} (2012),   67--88.

\bibitem[Ern18]{MR3818281}
\bysame, \emph{Diagram calculus for a type affine {$C$} {T}emperley--{L}ieb
  algebra, {II}}, J. Pure Appl. Algebra \textbf{222} (2018),  
  795--830. 

\bibitem[GMP08]{statmexblob3}
R.~M. Green, P.~P. Martin, and A.~E. Parker, \emph{On the non-generic
  representation theory of the symplectic blob algebra}, arXiv preprint (2008).

\bibitem[GMP12]{MR2928127}
\bysame, \emph{A presentation for the symplectic blob algebra}, J. Algebra
  Appl. \textbf{11} (2012), no.~3, 1250060, 22. 

\bibitem[GMP17]{statmexblob2}
\bysame, \emph{On quasi-heredity and cell module homomorphisms in the
  symplectic blob algebra}, arXiv:0807.4101.



\bibitem[HGP19]{statmexblob66}
A.~Harbat, C.~Gonz\'alez, and D.~Plaza, \emph{Type $\widehat{C}$ Temperley--Lieb
  algebra quotients and Catalan combinatorics}, arXiv:1904.08351.


\bibitem[HM10]{hm10}
J.~Hu and A.~Mathas,
  \emph{\href{http://dx.doi.org/10.1016/j.aim.2010.03.002}{\color{black}Graded cellular
  bases for the cyclotomic {Khovanov}--{Lauda}--{Rouquier} algebras of type\
  {$A$}}}, Adv.\ Math. \textbf{225} (2010), no.~2, 598--642.

\bibitem[Kho00]{MR1740682}
M.~Khovanov, \emph{A categorification of the {J}ones polynomial}, Duke Math. J.
  \textbf{101} (2000), no.~3, 359--426.

\bibitem[KK12]{kk12}
S.-J. Kang and M.~Kashiwara,
  \emph{\href{http://dx.doi.org/10.1007/s00222-012-0388-1}{\color{black}Categorification of
  highest weight modules via {K}hovanov--{L}auda--{R}ouquier algebras}},
  Invent.\ Math. \textbf{190} (2012), no.~3, 699--742.

\bibitem[KMP16]{statmexblob1}
O.~H. King, P.~P. Martin, and A.~E. Parker, \emph{Decomposition matrices and
  blocks for the symplectic blob algebra over the complex field}, arXiv
  preprint (2016), 41 pages; arXiv:1611.06968 [math.RT].

\bibitem[KN10]{KN10}
A.~Kleshchev and D.~Nash,
  \emph{\href{http://dx.doi.org/10.1080/00927870903386536}{\color{black}An interpretation of
  the {Lascoux--Leclerc--Thibon} algorithm and graded representation theory}},
  Comm.\ Algebra \textbf{38} (2010), no.~12, 4489--4500.

\bibitem[LLT96]{LLT}
A.~Lascoux, B.~Leclerc, and J.-Y. Thibon,
  \emph{\href{http://projecteuclid.org/euclid.cmp/1104287629}{\color{black}{H}ecke algebras
  at roots of unity and crystal bases of quantum affine algebras}}, Comm.\
  Math.\ Phys. \textbf{181} (1996), no.~1, 205--263.

\bibitem[LP20]{MR4100120}
N.~Libedinsky and D.~Plaza, \emph{Blob algebra approach to modular
  representation theory}, Proc. Lond. Math. Soc. (3) \textbf{121} (2020),
  no.~3, 656--701. 

\bibitem[LS22]{MR4353348}
A.~D. Lauda and J.~Sussan, \emph{An invitation to categorification}, Notices
  Amer. Math. Soc. \textbf{69} (2022), no.~1, 11--21. 

\bibitem[LV11]{MR2822211}
A.~D. Lauda and M.~Vazirani, \emph{Crystals from categorified quantum groups},
  Adv. Math. \textbf{228} (2011), no.~2, 803--861. 

\bibitem[Man22]{Manolescu2022FourDimensionalTopology}
C.~Manolescu, \emph{Four-dimensional topology},
  \url{https://web.stanford.edu/~cm5/4D.pdf}, 2022, Accessed: 2025-10-29.

\bibitem[MGP07]{MR2354870}
P.~Martin, R.~M. Green, and A.~Parker, \emph{Towers of recollement and bases
  for diagram algebras: planar diagrams and a little beyond}, J. Algebra
  \textbf{316} (2007), no.~1, 392--452. 

\bibitem[MS94]{MR1267001}
P.~Martin and H.~Saleur, \emph{The blob algebra and the periodic
  {T}emperley--{L}ieb algebra}, Lett. Math. Phys. \textbf{30} (1994), no.~3,
  189--206. 

\bibitem[MW00]{MW00}
P.~Martin and D.~Woodcock,
  \emph{\href{http://dx.doi.org/10.1006/jabr.1999.7948}{\color{black}On the structure of the
  blob algebra}}, J.\ Algebra \textbf{225} (2000), no.~2, 957--988.

\bibitem[Nic06a]{statmex32}
A.~Nichols, \emph{Structure of the two-boundary {XXZ} model with non-diagonal
  boundary terms}, Journal of Statistical Mechanics: Theory and Experiment
  (2006), no.~02, L02004.

\bibitem[Nic06b]{statmex31}
\bysame, \emph{The {T}emperley--{L}ieb algebra and its generalizations in the
  {Potts} and {XXZ} models}, Journal of Statistical Mechanics: Theory and
  Experiment (2006), no.~01, P01003.

\bibitem[PdR21]{MR4250039}
L.~Poulain~d'Andecy and S.~Rostam, \emph{Morita equivalences for cyclotomic
  {H}ecke algebras of types {B} and {D}}, Bull. Soc. Math. France \textbf{149}
  (2021), no.~1, 179--233. 

\bibitem[PdW20]{MR4085039}
L.~Poulain~d'Andecy and R.~Walker, \emph{Affine {H}ecke algebras and
  generalizations of quiver {H}ecke algebras of type {$B$}}, Proc. Edinb. Math.
  Soc. (2) \textbf{63} (2020), no.~2, 531--578. 

\bibitem[Pic20]{MR4076631}
L.~Piccirillo, \emph{The {C}onway knot is not slice}, Ann. of Math. (2)
  \textbf{191} (2020), no.~2, 581--591.

\bibitem[Pla13]{Pla13}
D.~Plaza, \emph{\href{http://dx.doi.org/10.1016/j.jalgebra.2013.07.017}{\color{black}Graded
  decomposition numbers for the blob algebra}}, J.\ Algebra \textbf{394}
  (2013), 182--206.

\bibitem[PRH14]{PR13}
D.~Plaza and S.~Ryom-Hansen,
  \emph{\href{http://dx.doi.org/10.1007/s10801-013-0481-6}{\color{black}Graded cellular
  bases for {{T}emperley--{L}ieb} algebras of type {$A$} and {$B$}}}, J.\
  Algebraic Combin. \textbf{40} (2014), no.~1, 137--177.

\bibitem[Ree18]{statmexblob67}
A.~Reeves, \emph{Tilting modules for the symplectic blob algebra}, arXiv
  preprint (2018).

\bibitem[Str]{ICM1}
C.~Stroppel, \emph{Categorification: tangle invariants and {TQFT}s},
  \href{https://arxiv.org/abs/2207.05139}{\color{black}arXiv:2207.05139}.

\bibitem[Str09]{MR2521250}
\bysame, \emph{Parabolic category {$\mathscr O$}, perverse sheaves on
  {G}rassmannians, {S}pringer fibres and {K}hovanov homology}, Compos. Math.
  \textbf{145} (2009), no.~4, 954--992.

\bibitem[TL71]{MR498284}
H.~N.~V. {T}emperley and E.~H. {L}ieb, \emph{Relations between the
  ``percolation'' and ``colouring'' problem and other graph-theoretical
  problems associated with regular planar lattices: some exact results for the
  ``percolation'' problem}, Proc. Roy. Soc. London Ser. A \textbf{322} (1971),
  no.~1549, 251--280. 

\bibitem[VV11]{MR2827096}
M.~Varagnolo and E.~Vasserot, \emph{Canonical bases and affine {H}ecke algebras
  of type {B}}, Invent. Math. \textbf{185} (2011), no.~3, 593--693.

\end{thebibliography}
%

 \end{document}